\documentclass[11pt,a4paper,reqno]{amsart}

\usepackage{etoolbox}
\patchcmd{\subsubsection}{\itshape}{\bfseries}{}{}

\usepackage{tikz}
\usetikzlibrary{intersections,calc,arrows.meta}

\usepackage{physics}
\usepackage{color}
\usepackage{indentfirst}
\usepackage[top=30truemm,bottom=30truemm,left=25truemm,right=25truemm]{geometry}
\usepackage{bm}
\usepackage{amsmath,amssymb}
\usepackage{amsthm}
\usepackage[numbers]{natbib}
\usepackage{subcaption}
\usepackage{here}  
\usepackage{mathtools}
\usepackage{url}
\usepackage{bbm}
\usepackage{breqn}

\makeatletter\newcommand\exchange[2]{\let\@tempa#1\let#1#2\let#2\@tempa}\makeatother
\exchange\epsilon\varepsilon 

\makeatletter\newcommand\exchangee[2]{\let\@tempa#1\let#1#2\let#2\@tempa}\makeatother
\exchangee\phi\varphi 


\renewcommand{\labelenumi}{\arabic{enumi}.}

\numberwithin{equation}{section}

\usepackage[hidelinks]{hyperref}

\newtheorem{thm}{Theorem}[section]
\newtheorem{lem}[thm]{Lemma}
\newtheorem{cor}[thm]{Corollary}
\newtheorem{prp}[thm]{Proposition}

\theoremstyle{definition}

\newtheorem{cond}[thm]{Condition}
\newtheorem{rem}[thm]{Remark}
\newtheorem{exm}[thm]{Example}

\newcommand{\al}{\alpha}
\newcommand{\be}{\beta}
\newcommand{\ga}{\gamma}

\newcommand{\la}{\lambda}

\newcommand{\si}{\sigma}
\newcommand{\Ga}{\Gamma}
\newcommand{\De}{\Delta}

\newcommand{\La}{\Lambda}
\newcommand{\Si}{\Sigma}

\newcommand{\mA}{\mathcal{A}}

\newcommand{\mC}{\mathcal{C}}

\newcommand{\mF}{\mathcal{F}}
\newcommand{\mG}{\mathcal{G}}
\newcommand{\mH}{\mathcal{H}}

\newcommand{\mL}{\mathcal{L}}

\newcommand{\mN}{\mathcal{N}}

\newcommand{\mT}{\mathcal{T}}

\newcommand{\R}{\mathbb{R}}

\newcommand{\Z}{\mathbb{Z}}

\DeclareMathOperator{\p}{\mathbb{P}}
\DeclareMathOperator{\e}{\mathbb{E}}
\DeclareMathOperator{\Var}{Var}
\DeclareMathOperator{\cov}{Cov}

\newcommand{\indi}{1}

\newcommand{\flr}[1]{\left\lfloor#1\right\rfloor}

\newcommand{\Po}{\mathrm{Po}}

\DeclareMathOperator{\gen}{\mA}

\def\edge{{\begin{tikzpicture}[scale=0.12]%
			\node at (0,0) [draw,circle,fill=black,inner sep=0.45pt] (A) {};%
			\node at (0.8,1) [draw,circle,fill=black,inner sep=0.45pt] (B) {};%
			\draw (A) -- (B); %
\end{tikzpicture}}}

\def\twostar{{\begin{tikzpicture}[scale=0.15]%
			\node at (0,0.79) [draw,circle,fill=black,inner sep=0.45pt] (A) {};%
			\node at (0.5,0) [draw,circle,fill=black,inner sep=0.45pt] (B) {};%
			\node at (1,0.79) [draw,circle,fill=black,inner sep=0.45pt] (C) {};%
			\draw (A) -- (B); %
			\draw (B) -- (C); %
\end{tikzpicture}}}

\begin{document}
	\title[Normal approximation for isolated edges and 2-stars in uniform simple graphs]{Normal approximation of the numbers of isolated edges and isolated 2-stars in uniform simple graphs with given vertex degrees}
	\author[R.~Imai]{Ryo Imai}
		\address[R.~Imai]{Department of Statistics and Data Science, National University of Singapore.}
		\email{rimai@u.nus.edu}
	\date{First version: May 9, 2026. This version: May 28, 2026}
	\subjclass[2010]{}
	
	\begin{abstract}
We consider the configuration model and the uniform simple graph with given degree sequence $\bm{d}=\left(d_i\right)_{i=1}^n$. We derive quantitative bounds for the errors in (i) joint normal-Poisson approximation to the numbers of isolated edges, isolated 2-stars, self-loops and double edges in the configuration model, and (ii) normal approximation to the numbers of isolated edges and isolated 2-stars conditioned on that the configuration model is simple. The latter provides the first finite sample normal approximation results for the uniform simple graph with given vertex degrees. To achieve this, we develop a new Stein's method for joint normal-Poisson approximation and a new coupling approach to sums of indicators, which may be of independent interest.
	\end{abstract}
	
	\maketitle
	
	\section{Introduction}
	
The \emph{configuration model}, which is a random multigraph with prescribed degrees, is defined by taking a set of labeled $d_i$ \emph{half-edges} at each vertex $i$ and then joining the half-edges into edges by taking a uniform partition of the set of all half-edges into pairs. Since we match the half-edges uniformly, we may create self-loops and multiple edges. It was originally introduced by \cite{Bollo80} to obtain the asymptotics of the number of regular graphs and has gained attention from both theoretical and applied research; see, for instance, \cite{J09,J14,AHH19} on the Poisson approximation to the numbers of self-loops and multiple edges (pairs of parallel edges) and its variants, \cite{MolloyReed1995,MolloyReed1998,JL09,BolloRiordan2015,BallNeal2017,JPRR18} on the existence of a giant component in the supercritical case, and  \cite{Hofstad17,FLNU18} on the use of it as the null model in network analysis.
	
	\medskip
	A more precise description of the model is as follows. Let $\bm{d}=\qty(d_i)_{i=1}^n$ be a degree sequence (a sequence of $n$ nonnegative integers) and let $N=\sum_{i=1}^{n}d_i$ be the total number of half-edges. Assume that $N$ is even. We will denote the vertices by $v_1,\dots,v_n$; thus $v_i$ has by definition degree $d_i$. The half-edges are numbered in an arbitrary order from $1$ to $N$, and we write $s\in v_i$ if the half-edge $s$ is incident to $v_i$. Let $\mathfrak{g}$ denote the uniform random partition of the half-edges $\{1,\dots,N\}$ into pairs (e.g., $\qty{\{1,2\},\dots,\{N-1,N\}}$ is one of possible realizations of the random partition $\mathfrak{g}$).\footnote{$\mathfrak{g}$ is often referred to as the \emph{configuration}. In this paper, a realization of $\mathfrak{g}$ is also often called a ``configuration'', slightly abusing the term.} Then the configuration model $\mathrm{CM}_n(\bm{d})$ is defined through $X_{ii}$, the number of self-loops in $v_i$, and $X_{ij}$, $i\neq j$, the number of edges between vertices $v_i$ and $v_j$, as 
	\begin{equation}\label{configuration model definition}
	X_{ii}\coloneqq\sum_{s<t\in v_i}I_{st},\quad\quad X_{ij}\coloneqq\sum_{s\in v_i,t\in v_j}I_{st},
	\end{equation}
	where $I_{st}$ denotes the random indicator that the half-edges $s$ and $t$ are paired in $\mathfrak{g}$. (Thus, $I_{st}=I_{ts}$.) As is well-known, when conditioned on the event that it is simple, the configuration model is uniform over all simple graphs with degree sequence $\bm{d}$ (assuming tacitly that $\bm{d}$ is such that some such a simple graph exists, \cite[Proposition 7.15]{Hofstad17}). In particular,
	\begin{equation}\label{configuration model uniform over simple}
\p\qty(\mathrm{CM}_n(\bm{d})=G)=\frac{\prod_{i=1}^nd_i!}{(N-1)!!}\quad\text{for any simple graph $G$ with degree sequence $\bm{d}$},
	\end{equation}
where $(N-1)!!$ denotes the double factorial.\footnote{We understand that $1/(N-1)!!=1$ if $N=0$, i.e., $d_i=0$ for all $i$.} We include a formal proof of \eqref{configuration model uniform over simple} in Appendix \ref{configuration model uniform over simple proof} for the sake of completeness. The random graph chosen uniformly at random among all such simple graphs is referred to as the \emph{uniform simple graph} with degree sequence $\bm{d}$ in this paper.

	\medskip
	In contrast to Poisson and other approximation to the number of self-loops and multiple edges and the probability of simplicity, asymptotic normality results (central limit theorems) for the configuration model have not been obtained in the literature until recently. \cite{AY18} showed asymptotic normality of certain statistics in a subcritical case using the martingale CLT. \cite{BR19} used Stein's method and proved a theorem on normal approximation of a class of ``local'' statistics that include, for example, the number of small components of a given type. They further used this to show asymptotic normality of the size of the giant component in the supercritical case. However, they left the problem of transferring the distributional limit results to the uniform simple graph being open (\cite[Remark 2.5]{BR19}, see also \cite[Remark 1.4, the second paragraph]{J10}).
	
	\medskip
	Finally, \cite{J20} gave the asymptotic distributions (under the common assumption on the degree sequence $(d_i^{(n)})_{i=1}^n$ in asymptotics as $n\rightarrow\infty$) for the numbers of small components of different types in the uniform simple graph, specifically, asymptotic normality of the numbers of isolated trees and Poisson convergence of the numbers of isolated connected unicyclic multigraphs. He used the method of moments to show the joint convergence of them and the numbers of self-loops and double edges in the configuration model, where the normal and Poisson parts are asymptotically independent and all the Poisson parts are asymptotically independent (Lemma 7.10 ibidem).  Then by conditioning on the numbers of self-loops and double edges being $0$, he obtained the same joint convergence of the numbers of isolated trees and isolated connected unicyclic multigraphs in the uniform simple graph (Theorem 3.9 ibidem). He conjectured that a combination of Stein's method for normal approximation and the Chen--Stein method for Poisson approximation might be used to prove the same results (see Remark 1.2 ibidem).
	
	\medskip
In this paper, we study distributional approximation in the configuration model and the uniform simple graph with given vertex degrees via Stein's method. In particular, we consider normal approximation to the numbers of two kinds of the simplest trees, isolated edges and isolated 2-stars, along with Poisson approximation to the numbers of self-loops and double edges (pairs of parallel edges), in the configuration model. Building on a new development of Stein's method for sums of indicator random variables, we derive quantitative bounds for the errors in (i) joint normal-Poisson approximation to the numbers of isolated edges, isolated 2-stars, self-loops and double edges, and (ii) normal approximation to the numbers of isolated edges and isolated 2-stars conditioned on the event that the configuration model is simple. The latter provides the first finite sample normal approximation results for the uniform simple graph with given vertex degrees.
	
	\medskip
	More precisely, we consider the following four quantities in the configuration model:
	\begin{align*}
		(\text{\# of isolated edges})\:\: \overline{Z}_{\edge}=&\sum_{\substack{1\le i<j\le n\\ \mathrm{s.t.}\ d_i=d_j=1}}X_{ij},\\
		(\text{\# of isolated 2-stars})\:\: \overline{Z}_{\twostar}=&\sum_{\substack{1\le i<j\le n,\ 1\le k\le n\\ \mathrm{s.t.}\ d_i=d_j=1,\ d_k=2}}X_{ik}X_{jk},\\
		(\text{\# of self-loops})\:\: S_n=&\sum_{1\le i\le n}X_{ii},\\
		(\text{\# of double edges})\:\: M_n=&\sum_{1\le i<j\le n}\binom{X_{ij}}{2}.
		\end{align*}
		We define the normalized versions of $\overline{Z}_{\edge}$ and $\overline{Z}_{\twostar}$ as follows, in accordance with \cite[(3.2)]{J20}:
		\begin{equation}\label{noramalized sums edge twostar}
		W_\edge\coloneqq\frac{\overline{Z}_{\edge}-\e\overline{Z}_{\edge}}{\sqrt{n}},\qquad W_\twostar\coloneqq\frac{\overline{Z}_{\twostar}-\e\overline{Z}_{\twostar}}{\sqrt{n}}.
	\end{equation}
	Then the conditional distribution $\mL\qty(W_\edge,W_\twostar\:\middle|\:S_n+M_n=0)$ equals the corresponding distribution of $\qty(W_\edge, W_\twostar)$ in the uniform simple graph with the degree sequence $\bm{d}$. (Note that we continue to normalize $\overline{Z}_{\edge}$ and $\overline{Z}_{\twostar}$ using the expectations with respect to the unconditional $\mathrm{CM}_n(\bm{d})$, cf.\ \cite[Theorem 3.9]{J20}.) We will obtain the quantitative error bounds for joint normal-Poisson approximation to $\mL\qty(W_\edge,W_\twostar, S_n,M_n)$ (Theorem \ref{configuration model clt}(a)), Poisson approximation to the probability of simplicity $\p\qty(S_n+M_n=0)$ (Theorem \ref{configuration model clt}(b)), and normal approximation to the conditional distribution $\mL\qty(W_\edge,W_\twostar\:\middle|\:S_n+M_n=0)$ (Theorem \ref{configuration model clt}(c) and Theorem \ref{another conditional clt}). Furthermore, we deduce the weak convergence of the joint distribution $\mL\qty(W_\edge,W_\twostar, S_n,M_n)$ and the conditional distribution $\mL\qty(W_\edge,W_\twostar\:\middle|\:S_n+M_n=0)$ from these bounds under the same set of assumptions on the asymptotics of the degree sequence as in \cite{J20}; see Corollary \ref{weak convergence result} below. This partially answers the conjecture in \cite[Remark 1.2]{J20} mentioned above.
	
	\medskip
	Toward these goals, we prepare an abstract framework to approximate the distributions of sums of indicator random variables and apply it to the problem of the motif counts in the configuration model. In Section \ref{new method}, we develop a new	Stein's method for joint normal-Poisson approximation by introducing a new Stein equation, constructing a solution based on a combination of the Slepian interpolation for normal approximation and a (new) interpolation for multivariate Poisson approximation, and show the smoothness estimates of the solution. In Section \ref{indicator framework}, we propose a new coupling concept for indicator summands, which is a kind of rendition of the procedure described in \cite[Section 3.4]{BR19} and provides a powerful means to grasp the (even global) dependence structure among the summands. We prove an abstract theorem about joint normal-Poisson approximation to the sums of indicators based on the coupling, along with the new Stein equation and	solution from the previous section. These developments would be of
	independent interest.
	
\medskip
As for our new method developed in Sections \ref{new method} and \ref{indicator framework}, we mention the differences from \cite{BP14}, who also derive joint normal-Poisson approximation results using Stein's method. The differences are threefold: First, their approximation error bounds are expressed in terms of Malliavin-type operators defined on the space of functionals
of a Poisson measure and it seems difficult to apply their results to our problem of the configuration model. Second, their proof for the multivariate Poisson approximation part is based on the argument by \cite{AGG89}, which decomposes the discrepancy as a telescoping sum of the coordinate-wise discrepancies and controls each term appearing in the sum separately by the one-dimensional Chen--Stein method. In contrast, our approach is based on an interpolation version of the generator method by \cite{B88}, which is known to get close to the best known rate in the case of multivariate Poisson-binomial distribution (see \cite{B05} for details).

\medskip
Third, the most importantly, their proof strategy is to use a separate parametrized Stein equation in each case of normal approximation and Poisson approximation and combine the estimates using the triangle inequality. On the other hand, we consider a single Stein equation for joint normal-Poisson approximation and construct a new solution to it, which produces the magic factor even for the `cross' term. However,  the recent literature on the Stein--Malliavin calculus and quantitative asymptotic independence \cite{Pim24,TudorZurcher2023,Tudor2025,TudorZurcher2025,TudorZurcher2024} focuses on distributional approximation to the first coordinate random varianble and its asymptotic independence of the second coordinate random variable and uses the parametrized Stein equation for the first coordinate variable only, which is attractive if one is interested only in the conditional CLT and not in the joint approximation. We also employ this idea for normal approximation in uniform simple graphs with given vertex degrees (Theorem \ref{another conditional clt} below).
	
	\subsection{Organization} The rest of this paper is organized as follows. In Section \ref{new method}, we develop a new version of Stein's method for joint normal-Poisson approximation based on the interpolation approach. In Section \ref{indicator framework}, we propose a new coupling approach to sums of indicator random variables, and prove an abstract approximation bound for the sums of indicators.  In Section \ref{configuration model}, we apply the framework developed in the previous sections to our problem in the configuration model, and obtain the main results (Theorems \ref{configuration model clt} and \ref{another conditional clt}). Section \ref{concluding remarks} provides concluding remarks and discusses future research directions. Finally, Appendices \ref{configuration model uniform over simple proof}--\ref{proof of auxiliary lemmas} contain all the proofs deferred in the main text.
	
	\subsection{Notation and conventions}
	For a positive integer $n$, $[n]\coloneqq\{1,\dots, n \}$. For $a,b\in\R$, ($a\vee b$) and ($a\wedge b$) denote the maximum and minimum of $a$ and $b$, respectively. For $a\in\R$, $a_+\coloneqq(a\vee0)$.  
	
	\smallskip
	If $S$ is a statement, we use $\indi_S$ to denote its indicator, thus $1_S=1$ when $S$ is true and $1_S=0$ when $S$ is false. If $E$ is a set, we write $\indi_E$ for the indicator function $\indi_E(\omega)\coloneqq \indi_{\omega\in E}$.
	
	\smallskip
	We use $\log^+$ to denote the positive part of the natural logarithm function $x\mapsto\log x$, that is, $\log^+x=(\log x)\vee 0=(\log x)\indi_{x>1}$. 
	
	\smallskip
	We write $\qty|E|$ for the cardinality of a finite set $E$. We will often write $\qty|\sum_{\al\in E}|$ for $\sum_{\al\in E}1 =\qty|E|$.

	\smallskip
	In $\R^d$, $|\:\cdot\:|$ denotes the Euclidean norm. We use $\Z_+\coloneqq\{0,1,2,\dots\}$ to denote the non-negative integers. We regard $\Z_+^r$ as a subspace of $\R^r$, and the induced relative topology on $\Z_+^r$ coincides with the discrete topology. For functions, $\|\:\cdot\:\|$ denotes the supremum norm. Precisely, for every function $f:\R^d\times\Z^r_+\rightarrow\R^m$, $g:\R^d\rightarrow\R^m$ and $h:\Z_+^r\rightarrow\R^m$, let
	\begin{align*}
		\|f\|=\sup_{(x,y)\in\R^d\times\Z^r_+}|f(x,y)|,\quad\quad\|g\|=\sup_{x\in\R^d}|g(x)|,\quad\quad\|h\|=\sup_{y\in\Z_+^r}|h(y)|.
	\end{align*}
	Also, for functions on $\R^d$, $|\:\cdot\:|_{\mathrm{Lip}}$ denotes the Lipschitz seminorm
	\begin{align*}
		|g|_{\mathrm{Lip}}=\sup_{x\neq x'}\frac{|g(x)-g(x')|}{|x-x'|}.
	\end{align*}
	When $g:\R^d\rightarrow\R$ is of class $C^k$, the partial derivatives of order $\ell\le k$ are abbreviated as $\partial_{j_1\cdots j_\ell} g =\frac{\partial^\ell}{\partial x_{j_1}\cdots\partial x_{j_\ell}}g$. Let $C^k_b(\R^d)$ be the class of real-valued $C^k$-functions $g$ on $\R^d$ such that $g$ and all its partial derivatives up to the order $k$ are bounded. If $g\in C^k_b(\R^d)$, for $\ell\in\{1,\dots,k\}$, let
	\begin{align*}
		|g|_\ell=\max_{1\le j_1\le\cdots\le j_\ell\le d}\|\partial_{j_1\cdots j_\ell} g \|.
	\end{align*}
	If $g:\R^d\rightarrow\R$ is of class $C^1$, we write $\grad{g}(x)$ for the gradient vector of $g$, i.e., $\grad{g}(x)=\qty(\partial_1 g(x),\dots,\partial_dg(x))^\top$. If $g\in C^1_b(\R^d)$, we have $|g|_1\le|g|_{\mathrm{Lip}}=\|\grad{g}\|\le\sqrt{d}|g|_1$.
	
	\smallskip
	In this paper, we reserve the notation $\{e_j\}_{1\le j\le r}$ for a vector $e_j=(\indi_{k=j})_{1\le k\le r}\in\Z_+^r$, $1\le j\le r$. To avoid confusion, we use $\{e_{j,d}\}_{1\le j\le d}$ to denote the standard basis for $\R^d$ if necessary, though it is rare. Given a function $h:\Z_+^r\rightarrow\R$, we define the first-order partial differences $\De_jh:\Z_+^r\rightarrow\R$ by $\Delta_jh(y)=h(y+e_j)-h(y)$. Then the second-order partial differences $\De_{jk}h:\Z_+^r\rightarrow\R$ are defined by $\Delta_{jk}h=\Delta_j(\Delta_kh)$. For a function $f(x,y):\R^d\times\Z^r_+\rightarrow\R$ which is sufficiently smooth in $x$, we often need to take partial derivatives about $x$ of $f$ keeping $y$ fixed, and partial differences about $y$ of $f$ keeping $x$ fixed. Define the $x$-section $f_x$ and the $y$-section $f^y$ of $f$ by $f_x(y)=f^y(x)=f(x,y)$. Then we abide by the convention that $\partial_{j}f(x,y)=\partial_jf^y(x)$ and $\De_jf(x,y)=\De_jf_x(y)$.

	\smallskip
	$U(0,1)$ denotes the uniform distribution over $(0,1)$. For a positive integer $n$, $\mathrm{Unif}[n]$ denotes the discrete uniform distribution over $[n]=\{1,\dots,n\}$. For a positive semidefinite matrix $\Si$, $\mN_d(0,\Si)$ denotes the $d$-dimensional normal distribution with mean 0 and covariance matrix $\Si$. For $\la>0$, $\Po(\la)$ denotes a Poisson measure on $\Z_+$ with mean $\la$. For an $r$-tuple $\la=(\la_1,\dots,\la_r)$ of positive real numbers, $\Po(\la)$ then denotes the product measure $\prod_{j=1}^r\Po(\la_j)$ on $\Z_+^r$. For two random elements $X$ and $Y$, $X\stackrel{d}{=}Y$ means that the distributions of $X$ and $Y$ are the same.
	
	\section{Stein's method for joint normal-Poisson approximation}
	\label{new method}
	
	Throughout this section, we assume that the following independent random elements are defined on a probability space $(\Omega,\mF,\p)$:
	
	{\setlength{\leftmargini}{29.5pt}  	
		\begin{itemize}
			\setlength{\labelsep}{7pt}     
			\setlength{\itemsep}{3pt}      
			
			\item $Z\sim\mN_d(0,\Si)$, a $d$-dimensional normal random vector with a possibly singular covariance matrix $\Si=\mqty(\si_{jk})$.
			
			\item $N=(N_1,\dots,N_r)\sim\Po(\la)=\prod_{j=1}^r\Po(\la_j)$, a random vector in $Z_+^r$ whose coordinates are independent Poisson random variables $N_j\sim\Po(\la_j)$ with an $r$-tuple $\la=(\la_1,\dots,\la_r)$ of positive means.

			\item $\eta_{j,1},\eta_{j,2},\dots$, $1\le j\le r$, independent exponentially distributed random variables with mean $1$.
			
			\item $\zeta_{j,1},\zeta_{j,2},\dots$, $1\le j\le r$, independent exponentially distributed random variables with mean $1$.

	\end{itemize} }
	
	Then:
	
	{\setlength{\leftmargini}{29.5pt}  	
		\begin{itemize}
			\setlength{\labelsep}{7pt}     
			\setlength{\itemsep}{3pt}      
			
			\item  Define a stochastic process $\{Z_0(s),s\in[0,1] \}$ on $\Z_+^r$ by setting its $j$-th coordinate to be a compound Poisson process  $Z_{0,j}(s)=\sum_{\ell=1}^{N_j}\indi_{\{\eta_{j,\ell}\ge-\log s \}}$ for $s\in(0,1]$ (recall that $N_j\sim\Po(\la_j)$) and $Z_{0,j}(0)\coloneqq 0$. Note that $Z_0(0)=0$ and
			\begin{align*}
				Z_0(s)\sim\Po(s\la)=\prod_{j=1}^r\Po(s\la_j),\quad\quad s\in(0,1].
			\end{align*}
			Indeed, for $s\in(0,1]$ the probability generating function of $Z_{0,j}(s)$ is
			\begin{align*}
				\e\qty[z^{ Z_{0,j}(s) }]=\sum_{n=0}^\infty e^{-\la_j}\frac{{\la_j}^n}{n!}\qty(1-s+sz)^n=e^{-\la_j}e^{\la_j(1-s+sz)}=\exp{-s\la_j(1-z)},\quad|z|\le1,
			\end{align*}
			the probability generating function of $\Po(s\la_j)$.

			\item For each $y\in\Z_+^r$, let $\{D_y(s),s\in[0,1] \}$ be a stochastic process on $\Z_+^r$ whose $j$-th coordinate is $D_{y,j}(s)=\sum_{\ell=1}^{y_j}\indi_{\{\zeta_{j,\ell}>-\log (1-s) \}}$ for $s\in[0,1)$ and $D_{y,j}(1)=0$. Note that $D_y(0)=y$ a.s.\ and 
			\begin{align*}
				D_y(s)\sim\mathrm{Bin}(y,1-s)=\prod_{j=1}^{r}\mathrm{Bin}(y_j,1-s), \quad\quad s\in[0,1].
			\end{align*}

	\end{itemize} }

	\medskip
	
	Let
	\begin{equation}\label{C3}
		\mC_3=\qty{g\in C_b^3(\R^d):|g|_{\mathrm{Lip}}\vee|g|_2\vee|g|_3\le1}.
	\end{equation}
	Consider a test function class
	\begin{equation}\label{H3}
		\mH_3=\qty{h:\R^d\times\Z_+^r\rightarrow\R:\text{$\|h\|\le1 $, and $h(\cdot,y)\in \mC_3$ for each $y\in\Z^r_+$} }.
	\end{equation}
	These are \cite[Definitions 3.1 and 3.3]{BP14}'s $\mC_3$ and $\mH_3$. Note that, for $h\in\mH_3$, $k\in\{1,2,3\}$ and $j_1,\dots,j_k\in\{1,\dots,d\}$, $\|\partial_{j_1\cdots j_k} h\|\le1$. The test function class $\mH_3$ is large enough to characterize convergence in distribution in $\R^d\times\Z_+^r$; we can show that a sequence of random elements $(X_n,Y_n),n=1,2,\dots$ in $\R^d\times\Z_+^r$ converges in distribution to a random element $(X,Y)$ in $\R^d\times\Z_+^r$ if and only if $\lim_n\e[h(X_n,Y_n)]=\e[h(X,Y)]$ for all $h\in\mH_3$ (Proposition \ref{convergence in distribution in RZ}). Thus, taking the limit random element as $(X,Y)=(Z,N)$, it is natural to aim to bound the quantity 
	\begin{equation}\label{goal quantity}
		\sup_{h\in\mH_3}\Big|\e\qty[h\qty(X_n,Y_n)]-\e\qty[h\qty(Z,N)]  \Big|
	\end{equation}
	to quantify the error in joint normal-Poisson approximation to the distribution of $(X_n,Y_n)$. Define an operator $\gen_{\Si,\la}$ that acts on a function $f(x,y):\R^d\times\Z_+^r\rightarrow\R$ whose any $y$-section $f^y$ is of class $C^2$ by the formula
		\begin{equation}\label{new generator}
			\begin{split}
			(\gen_{\Si,\la} f)(x,y)=&\sum_{j,k=1}^d\si_{jk}(\partial_{jk}f )(x,y)-\sum_{j=1}^dx_j (\partial_jf)(x,y)\\
			&+2\sum_{j=1}^{r}\qty[\la_j\Big(f(x,y+e_j)-f(x,y)\Big)+y_j\Big(f(x,y-e_j)-f(x,y)\Big) ].
			\end{split}
		\end{equation}
		If $y_j=0$, $f(x,y-e_j)$ can be defined arbitrarily and the term 
		$y_j\qty(f(x,y-e_j)-f(x,y))$ should be understood as $0$. Now for $h\in\mH_3$, consider the following Stein equation
		\begin{align}\label{eq Stein equation original}
			(\gen_{\Si,\la} f)(x,y)=h(x,y)-\e[h(Z,N)],\quad\quad (x,y)\in\R^d\times\Z_+^r.
		\end{align}
		From now on, we will show that the equation \eqref{eq Stein equation original} admits a solution $f_h$ whose any $y$-section has continuous bounded first, second and third partial derivatives. With this solution, we have
		\begin{align*}
			\Big|\e\qty[h\qty(X_n,Y_n)]-\e\qty[h\qty(Z,N)]  \Big|\le\Big|\e\qty[(\gen_{\Si,\la} f_h)(X_n,Y_n)]\Big|,
		\end{align*}
	and if we can evaluate the right-hand side (using the suitable smoothness estimates about the solution $f_h$) in such a way that is uniform in $h\in\mH_3$, we obtain an upper bound for the quantity \eqref{goal quantity}. In order to construct such a solution, we take an \emph{interpolation} argument, which will be detailed below.
		
		\medskip
		 For $s\in[0,1]$, define an interpolation operator $T_s$ that acts on a function $h:\R^d\times\Z_{+}^r\rightarrow\R$ in $\mH_3$ and is defined by $T_sh: \R^d\times\Z_+^r\rightarrow\R$ given as
	\begin{equation}\label{new interpolation}
		T_sh(x,y)=\e\qty[h\qty(\sqrt{1-s}x+\sqrt{s}Z,D_y(s)+Z_0(s))  ]\quad\quad((x,y)\in\R^d\times\Z_+^r).
	\end{equation}
	Then we have
	\begin{align*}
		T_0h(x,y)=h(x,y),\quad\quad T_1h(x,y)=\e[h(Z,N)].
	\end{align*}
	For each $y\in\Z_+^r$, let $S_s h(\cdot,y):\R^d\rightarrow\R$ be the Slepian interpolation:
	\begin{align*}
		S_s h(x,y)=\e\qty[h\qty(\sqrt{1-s}x+\sqrt{s}Z,y)  ]\quad\quad(x\in\R^d).
	\end{align*}
	Then by the independence of $Z$ and $D_y(s)+Z_0(s)$,
	\begin{equation}\label{T independence}
		T_sh(x,y)=\sum_{z\in\Z_+^r}S_s h(x,z)p_s(y,z),
	\end{equation}
	where $p_s(y,z)=\p(D_y(s)+Z_0(s)=z)$, whose concrete expression is given by the formula
	\begin{equation}\label{transition probability}
		p_s(y,z)=\prod_{j=1}^{r}	\sum_{k=0}^{y_j\wedge z_j}\binom{y_j}{k}(1-s)^{k}s^{y_j-k}[\la_js]^{z_j-k}\frac{e^{-\la_js}}{(z_j-k)!}.
	\end{equation}
Conversely, we may \emph{define} the interpolation operator $T_s$ through \eqref{T independence} with $p_s(y,z)$ defined by the right-hand side of \eqref{transition probability}. One can check that this is well-defined by $|S_sh(x,z)|\le 1 $ and \eqref{M bound} and \eqref{M finite} below. We will adopt this definition of $T_sh$ without introducing the processes $Z_0(s)$ and $D_y(s)$ in the subsequent sections. However, in order to prove various properties of $T_s$ and the Stein solution $f_h$ which will be defined in \eqref{eq new solution} below, it is convenient to introduce these processes as in the beginning of this section and express $T_sh$ in the form \eqref{new interpolation}, and we may do it without loss of generality by extending the probability space.

\begin{rem}
The use of the process $D_y(s)+Z_0(s)$ and its marginal distributions \eqref{transition probability} here is inspired by the so-called generator approach to Stein's method for multivariate Poisson approximation first introduced by \cite{B88}. Indeed, if one makes the
	substitution $s=1-e^{-t}$, $t\ge0$, then \eqref{transition probability} becomes the transition probabilities of the Markov pure jump process on $\Z_+^r$ whose coordinates are independent and each of them is an immigration-death process with immigration rate $\la_j$ and unit \emph{per capita} death rate starting at $y_j$ (cf.\ \cite[equation (3.2.40)]{And91} with $a=\la_j$ and $\mu=1$). This is analogous to the relationship between the Slepian interpolation and the Ornstein--Uhlenbeck semigroup. An advantage of the interpolation approach is that we can eschew the theory of semigroups or Markov processes in justifying Stein's method.
\end{rem}
	
	\medskip
	For $h\in\mH_3$, $s\mapsto S_sh(x,y)$ is continuous on $[0,1]$ and continuously differentiable on $(0,1)$ with the derivative satisfying
	\begin{equation}\label{h Slepian backward}
		\pdv{s}S_s h(x,y)=\frac{1}{2(1-s)}\qty(\sum_{j,k=1}^d\si_{jk}(\partial_{jk}S_s h)(x,y)-\sum_{j=1}^dx_j (\partial_jS_s h)(x,y)),\quad\quad s\in(0,1)
	\end{equation}
	(see Lemma \ref{Slepian backward equation}). Similarly, it can be directly verified that  $s\mapsto p_s(y,z)$ is continuous on $[0,1]$ and continuously differentiable on $(0,1)$ with the derivative satisfying
	\begin{equation}\label{h immigration death backward}
		\pdv{s}p_s(y,z)=\frac{1}{1-s}\sum_{j=1}^{r}\qty[\la_j\Big(p_s(y+e_j,z)-p_s(y,z)\Big)+y_j\Big(p_s(y-e_j,z)-p_s(y,z)\Big) ],\quad s\in(0,1)
	\end{equation}
	(see Lemma \ref{immigration death backward equation}). Again, if $y_j=0$, $p_s(y-e_j,z)$ can be defined arbitrarily and the second term of the $j$-th summand in \eqref{h immigration death backward} is understood as $0$.

	\medskip
	We summarize the useful properties of the interpolation operator $T_s$.
	\begin{prp}\label{T interpolation backward}
		For $h\in\mH_3$, $x\mapsto T_sh(x,y)$ is of class $C^2$, and $s\mapsto T_sh(x,y) $ is continuous on $[0,1]$ and continuously differentiable on $(0,1)$ with the derivative satisfying
		\begin{equation}\label{eq T interpolation backward}
			\pdv{s}T_sh(x,y)=\frac{1}{2(1-s)}\qty(\gen_{\Si,\la} T_sh)(x,y),\quad\quad s\in(0,1),
		\end{equation}
		where $\gen_{\Si,\la}$ is the operator defined in \eqref{new generator}.
	\end{prp}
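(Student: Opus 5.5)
The plan is to work from the representation \eqref{T independence}, $T_sh(x,y)=\sum_{z}S_sh(x,z)p_s(y,z)$, and to combine the two backward equations \eqref{h Slepian backward} and \eqref{h immigration death backward} by the product rule. The key observation is that the Gaussian part of $\gen_{\Si,\la}$ acts only on the $x$-variable, through $S_sh(\cdot,z)$. The Poisson part acts only on the $y$-variable, through $p_s(\cdot,z)$. The factor $2$ in front of the Poisson part of \eqref{new generator} is exactly what reconciles the prefactor $\frac{1}{2(1-s)}$ in \eqref{h Slepian backward} with the prefactor $\frac{1}{1-s}$ in \eqref{h immigration death backward}.

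\emph{Regularity.} For each fixed $z$, $S_sh(\cdot,z)$ is $C^2$ in $x$, and by dominated convergence its derivatives up to second order are bounded uniformly in $z$ and $x$ (by $1$ for $s<1$, since $h\in\mH_3$). Since $\sum_z p_s(y,z)=1$, the series $\sum_z\partial_{jk}S_sh(x,z)p_s(y,z)$ converges uniformly in $x$. Term-by-term differentiation therefore gives that $T_sh(\cdot,y)$ is $C^2$, with $\partial_{jk}T_sh(x,y)=\sum_z\partial_{jk}S_sh(x,z)p_s(y,z)$. The $s=1$ case is trivial: there $T_1h$ is constant.

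\emph{Continuity in $s$ on $[0,1]$.} Each $S_sh(x,z)$ and each $p_s(y,z)$ is continuous in $s$, and $|S_sh|\le1$. It then suffices to show that $\{p_s(y,\cdot)\}_{s\in[0,1]}$ is uniformly summable, i.e.\ that the tails $\sum_{|z|>K}p_s(y,z)$ vanish uniformly in $s$. This holds because $D_y(s)+Z_0(s)$ is dominated coordinatewise by $y+N$. (The same role is played by the paper's bounds \eqref{M bound}, \eqref{M finite}, which I would invoke here.) Continuity of $T_sh$ in $s$ then follows by dominated convergence in $z$.

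\emph{Differentiability and the identity on $(0,1)$.} I differentiate term by term:
\[
\pdv{s}T_sh=\sum_z\Big(\pdv{s}S_sh(x,z)\Big)p_s(y,z)+\sum_z S_sh(x,z)\pdv{s}p_s(y,z).
\]
By \eqref{h Slepian backward} and the regularity step, the first sum equals $\frac{1}{2(1-s)}$ times the diffusion part of $\gen_{\Si,\la}T_sh$. By \eqref{h immigration death backward}, the second sum equals
\[
\frac{1}{1-s}\sum_j\Big[\la_j\big(T_sh(x,y+e_j)-T_sh(x,y)\big)+y_j\big(T_sh(x,y-e_j)-T_sh(x,y)\big)\Big],
\]
which is $\frac{1}{2(1-s)}$ times the jump part of \eqref{new generator}. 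Adding the two gives \eqref{eq T interpolation backward}.

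\emph{Main obstacle.} The only real difficulty is justifying the interchange of $\pdv{s}$ with $\sum_z$, which requires a dominating bound for $\sup_{s\in[a,b]}|\pdv{s}p_s(y,z)|$ summable in $z$ on compacts $[a,b]\subset(0,1)$. I would obtain it from the backward equation itself. That equation bounds the derivative by $\frac{1}{1-s}\sum_j(\la_j+y_j)$ times a sum of $p_s(y',z)$ over $y'\in\{y,y\pm e_j\}$. Each $p_s(y',\cdot)$ is dominated by the law of $y'+N$, hence is summable uniformly in $s$. Together with $|S_sh|\le1$, this gives uniform convergence of the differentiated series. The first sum is handled the same way: $\pdv{s}S_sh$ is bounded on $[a,b]$ uniformly in $z$, locally in $x$, by \eqref{h Slepian backward} and the bounds on the $x$-derivatives. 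Continuity of the derivative on $(0,1)$ then follows from uniform convergence of continuous summands.
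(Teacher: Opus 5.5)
Your proposal is correct and follows essentially the paper's proof. It expands $T_sh$ via \eqref{T independence}, justifies term-by-term differentiation in $x$ and $s$ by summable control of $p_s(y,\cdot)$ together with the derivative bounds from the two backward equations, and adds \eqref{h Slepian backward} and \eqref{h immigration death backward}, with the factor $2$ in \eqref{new generator} reconciling the prefactors. The only differences are that you control $p_s(y,\cdot)$ through the coordinatewise domination $D_y(s)+Z_0(s)\le y+N$ (a uniform tail bound rather than a pointwise majorant, which is all your uniform-convergence argument needs), whereas the paper uses the explicit majorant $M_{y,z}$ of \eqref{M bound}--\eqref{M finite} and computes the $x$-derivatives under the expectation in \eqref{new interpolation}.
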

	\begin{proof}
		Set
		\begin{align*}
			M_{y_j,z_j}\coloneqq\begin{cases}
				1	& (z_j\le y_j)\\
				\frac{1}{(z_j-y_j)!}\qty(1\vee[\la_j]^{z_j})& (z_j> y_j)
			\end{cases},\quad\quad M_{y,z}\coloneqq\prod_{j=1}^{r}M_{y_j,z_j}.
		\end{align*}
		Then
		\begin{equation}\label{M bound}
			\forall s\in[0,1],\quad p_s(y,z)\le M_{y,z},
		\end{equation}
		and 
		\begin{equation}\label{M finite}
			\sum_{z\in\Z_+^r}M_{y,z}=\prod_{j=1}^{r}\qty(\sum_{z_j=0}^{\infty}M_{y_j,z_j})<\infty.
		\end{equation}
		Since $h(\cdot,y)\in\mC_3$ for each $y\in\Z_+^r$, $x\mapsto S_sh(x,y)$ is of class $C^2$ with the derivative
		\begin{equation}\label{x derivative of slepian y fixed}
			\begin{split}
				(\partial_j S_s h)(x,y)&=\sqrt{1-s}\e\qty[(\partial_j h)(\sqrt{1-s}x+\sqrt{s}Z,y)],\\
				(\partial_{jk} S_s h)(x,y) &=(1-s)\e\qty[(\partial_{jk} h)(\sqrt{1-s}x+\sqrt{s}Z,y)],
			\end{split}
		\end{equation}
		and $s\mapsto S_sh(x,y)$ is continuous on $[0,1]$ and continuously differentiable on $(0,1)$ by Lemma \ref{Slepian backward equation}. By \eqref{T independence}, $|S_sh(x,z)|\le 1 $, \eqref{M bound} and \eqref{M finite}, and taking a sequence $s_n\rightarrow s$ and dominated convergence, one can see that $s\mapsto T_sh(x,y)$ is continuous on $[0,1]$. (Or one can resort to the Weierstrass M-test to conclude that the series in \eqref{T independence} is uniformly convergent on $[0,1]$.) 
		
		\medskip
		Next, note that
		\begin{equation}\label{Slepian x derivative bound}
			\qty|(\partial_j S_s h)(x,y)|\le\sqrt{1-s}\le1,\quad\quad
			\qty|(\partial_{jk} S_s h)(x,y)|\le 1-s\le1
		\end{equation}
		by \eqref{x derivative of slepian y fixed}. Thus for each compact interval $[s_1,s_2]\subset(0,1)$, we have
		\begin{align}\label{Slepian time derivative bound}
			\forall s\in[s_1,s_2],\quad\;\qty|\pdv{s}S_s h(x,z)|\le\frac{1}{2(1-s_2)}\qty(\sum_{j,k=1}^d|\si_{jk}|+\sum_{j=1}^d|x_j|)
		\end{align}
		by \eqref{h Slepian backward}. Similarly, by \eqref{h immigration death backward} and \eqref{M bound}, for each compact interval $[s_1,s_2]\subset(0,1)$ we have
		\begin{align}\label{immigration death time derivative bound}
			\forall s\in[s_1,s_2],\quad\;\qty|\pdv{s}p_s(y,z)|\le\frac{1}{1-s_2} \sum_{j=1}^{r}\qty[\la_j\qty(M_{y+e_j,z}+M_{y,z})+y_j\qty(M_{y-e_j,z}+M_{y,z})].
		\end{align}
		Hence, by \eqref{M bound}, \eqref{Slepian time derivative bound}, $|S_sh(x,z)|\le1$ and \eqref{immigration death time derivative bound}, for each compact interval $[s_1,s_2]\subset(0,1)$, we have
		\begin{equation*}\label{Ts s derivative bound}
			\begin{split}
				\forall s\in[s_1,s_2],\quad&\qty|\qty(\pdv{s}S_s h(x,z))p_s(y,z)|+\qty|S_s h(x,z)\pdv{s}p_s(y,z)|\\
				\le \frac{1}{2(1-s_2)}&\qty[\qty(\sum_{j,k=1}^d|\si_{jk}|+\sum_{j=1}^d|x_j|)M_{y,z}+2\sum_{j=1}^{r}\qty[\la_j\qty(M_{y+e_j,z}+M_{y,z})+y_j\qty(M_{y-e_j,z}+M_{y,z})]].
			\end{split}
		\end{equation*}
		Note that
		\begin{equation*}\label{Ts s derivative dominating function}
			\sum_{z\in\Z_+^r}	\qty[\qty(\sum_{j,k=1}^d|\si_{jk}|+\sum_{j=1}^d|x_j|)M_{y,z}+2\sum_{j=1}^{r}\qty[\la_j\qty(M_{y+e_j,z}+M_{y,z})+y_j\qty(M_{y-e_j,z}+M_{y,z})]]<\infty
		\end{equation*}
		by \eqref{M finite}. Therefore term by term differentiation in \eqref{T independence} is justified\footnote{To see this, one may regard the series in \eqref{T independence} as an integral with respect to counting measure and differentiate under the integral sign, or may invoke the Weierstrass M-test to obtain the uniform convergence of the series expressing the derivative on each compact set and use the theorem on term by term differentiation (e.g., \cite[Corollary 7.3 of Chapter IX]{Lang97}).} and we have
		\begin{equation}\label{term by term differentiation}
			\begin{split}
				\forall s\in(0,1),\quad	\pdv{s}T_sh(x,y)=&\sum_{z\in\Z_+^r}\qty(\qty(\pdv{s}S_s h(x,z))p_s(y,z)+S_s h(x,z)\pdv{s}p_s(y,z))\\
				=&\sum_{z\in\Z_+^r}\qty(\pdv{s}S_s h(x,z))p_s(y,z)+\sum_{z\in\Z_+^r}S_s h(x,z)\pdv{s}p_s(y,z).
			\end{split}
		\end{equation}
		Continuity of $s\mapsto \pdv{s}T_sh(x,y)$ on $(0,1)$ also follows.
		
		\medskip
		By differentiating with respect to $x$ under the integral sign, we have
		\begin{equation}\label{Ts x derivative}
			\begin{split}
				\qty(\partial_jT_sh)(x,y)&=\sqrt{1-s}\e\qty[(\partial_jh)\qty(\sqrt{1-s}x+\sqrt{s}Z,D_y(s)+Z_0(s))], \\
				\qty(\partial_{jk}T_sh)(x,y)&=(1-s)\e\qty[(\partial_{jk}h)\qty(\sqrt{1-s}x+\sqrt{s}Z,D_y(s)+Z_0(s))].
			\end{split}
		\end{equation}
		$\qty(\partial_{j}T_sh)(x,y) $ and $\qty(\partial_{jk}T_sh)(x,y)  $ are continuous in $x$ on $\R^d$ by bounded convergence. This shows that $x\mapsto T_sh(x,y)$ is of class $C^2$. By \eqref{x derivative of slepian y fixed} and the independence of $Z$ and $D_y(s)+Z_0(s)$, we have
		\begin{equation}\label{Ts x derivative iterated integral}
			\begin{split}
				\qty(\partial_jT_sh)(x,y)&=\sum_{z\in\Z_+^r}\qty(\partial_jS_s h)(x,z)p_s(y,z),   \\
				\qty(\partial_{jk}T_sh)(x,y)&=\sum_{z\in\Z_+^r}\qty(\partial_{jk}S_s h)(x,z)p_s(y,z). 
			\end{split}
		\end{equation}
		Since $\qty(\partial_{j}S_sh)(x,y) $ and $\qty(\partial_{jk}S_sh)(x,y)  $ are continuous in $s$ on $[0,1]$ by bounded convergence (see \eqref{x derivative of slepian y fixed}), $\qty(\partial_{j}T_sh)(x,y) $ and $\qty(\partial_{jk}T_sh)(x,y)  $ are also continuous in $s$ on $[0,1]$ by \eqref{M bound}, \eqref{M finite}, \eqref{Slepian x derivative bound}, and \eqref{Ts x derivative iterated integral}. By \eqref{h Slepian backward} and \eqref{Ts x derivative iterated integral}, we have
		\begin{equation}\label{backward Ornstein Uhlenbeck part}
			\sum_{z\in\Z_+^r}\qty(\pdv{s}S_s h(x,z))p_s(y,z)=\frac{1}{2(1-s)}\qty(\sum_{j,k=1}^d\si_{jk}(\partial_{jk}T_s h)(x,y)-\sum_{j=1}^dx_j (\partial_jT_s h)(x,y)).
		\end{equation}
		On the other hand, by \eqref{T independence} and \eqref{h immigration death backward}, we have
		\begin{align}\label{backward immigration death part}
			&\sum_{z\in\Z_+^r}S_s h(x,z)\pdv{s}p_s(y,z)  \notag \\ 
			=&\frac{1}{1-s}\sum_{z\in\Z_+^r}S_s h(x,z)\qty(\sum_{j=1}^{r}\qty[\la_j\Big(p_s(y+e_j,z)-p_s(y,z)\Big)+y_j\Big(p_s(y-e_j,z)-p_s(y,z)\Big) ]) \notag  \\
			=&\frac{1}{1-s}\sum_{j=1}^{r}\qty[\la_j\Big(T_sh(x,y+e_j)-T_sh(x,y)\Big)+y_j\Big(T_sh(x,y-e_j)-T_s(x,y)\Big) ].
		\end{align}
		Combining \eqref{term by term differentiation}, \eqref{backward Ornstein Uhlenbeck part} and \eqref{backward immigration death part} gives \eqref{eq T interpolation backward}. 
	\end{proof}
	\begin{rem}
		That $\qty(\partial_{j}T_sh)(x,y) $ and $\qty(\partial_{jk}T_sh)(x,y)  $ are continuous in $s$ on $[0,1]$ cannot be deduced from \eqref{Ts x derivative} and bounded convergence because for a fixed $\omega$, the path $s\mapsto D_y(s)(\omega)+Z_0(s)(\omega)$ is right-continuous but not left-continuous.
	\end{rem}
	\begin{prp}\label{new solution}
		For any $h\in\mH_3$, the function $f_h:\R^d\times\Z_+^r\rightarrow\R$ given by
		\begin{equation}\label{eq new solution}
			f_h(x,y)=-\int_{0}^{1}\frac{1}{2(1-s)}\qty[T_sh(x,y)-\e h(Z,N)]\dd{s}
		\end{equation}
		is well-defined. In particular, the integral in \eqref{eq new solution} is absolutely convergent.
	\end{prp}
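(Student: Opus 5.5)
The plan is to show that the integrand in \eqref{eq new solution} is continuous on $[0,1)$ and is $O((1-s)^{-1/2})$ as $s\uparrow1$. Since $(1-s)^{-1/2}$ is integrable on $[0,1)$, this gives absolute convergence. Fix $h\in\mH_3$ and $(x,y)\in\R^d\times\Z_+^r$.

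\emph{Away from $s=1$.} By Proposition \ref{T interpolation backward}, $s\mapsto T_sh(x,y)$ is continuous on $[0,1]$, so the integrand is measurable. Since $\|h\|\le1$, we have $|T_sh(x,y)-\e h(Z,N)|\le2$. On any $[0,s_0]$ with $s_0<1$ the factor $1/(2(1-s))$ is bounded, so there is nothing to prove there. The only issue is the non-integrable singularity $1/(2(1-s))$ at $s=1$. For that we need $|T_sh(x,y)-\e h(Z,N)|\to0$ at a polynomial rate faster than nothing, namely $(1-s)^{1/2}$.

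\emph{Coupling.} Work on the extended probability space with the processes $Z_0$, $D_y$ from the beginning of the section, and use the representation \eqref{new interpolation}. Take $N:=Z_0(1)$; this has law $\Po(\la)$, is independent of $Z$, and so $\e h(Z,N)=\e h(Z,Z_0(1))$. Write $Y_s:=D_y(s)+Z_0(s)$. We split the difference as
\begin{align*}
T_sh(x,y)-\e h(Z,N)=\e\bigl[h(\sqrt{1-s}x+\sqrt sZ,Y_s)-h(Z,Y_s)\bigr]+\e\bigl[h(Z,Y_s)-h(Z,Z_0(1))\bigr].
\end{align*}

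\emph{Gaussian term.} Each section $h(\cdot,w)$ lies in $\mC_3$, so it is $1$-Lipschitz. Hence the first term is bounded by
\[
\e\bigl|\sqrt{1-s}\,x+(\sqrt s-1)Z\bigr|\le\sqrt{1-s}\,|x|+(1-\sqrt s)\,\e|Z|.
\]
Since $1-\sqrt s\le\sqrt{1-s}$, this is at most $\sqrt{1-s}\,(|x|+\e|Z|)$, and $\e|Z|\le(\operatorname{tr}\Si)^{1/2}$.

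\emph{Poisson term.} By $\|h\|\le1$, the second term is bounded by $2\p(Y_s\neq Z_0(1))$, which we split into two events.
\begin{itemize}
\item $D_y(s)\neq0$. This requires some $\zeta_{j,\ell}>-\log(1-s)$ with $\ell\le y_j$. Each such event has probability $1-s$, so the union bound gives at most $(1-s)\sum_j y_j$.
\item $Z_0(s)\neq Z_0(1)$. This requires some $\ell\le N_j$ with $\eta_{j,\ell}<-\log s$, an event of probability $1-s$ conditionally on $N$. Taking expectations gives at most $(1-s)\sum_j\la_j$.
\end{itemize}
So the Poisson term is at most $2(1-s)\sum_j(y_j+\la_j)$.

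\emph{Conclusion.} Combining the two terms,
\[
\frac{|T_sh(x,y)-\e h(Z,N)|}{2(1-s)}\le \frac{|x|+\e|Z|}{2\sqrt{1-s}}+\sum_{j=1}^r(y_j+\la_j).
\]
The right-hand side is integrable on $[0,1)$, with integral at most $|x|+(\operatorname{tr}\Si)^{1/2}+\sum_j(y_j+\la_j)$. So the integral in \eqref{eq new solution} converges absolutely and $f_h$ is well-defined.

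The main obstacle is the rate at $s=1$. Boundedness of $h$ alone only gives an $O(1)$ bound, which is not enough against $1/(1-s)$. The fix is the coupling $N=Z_0(1)$, which yields an $O(1-s)$ rate on the discrete side, together with the Lipschitz property of $h$ in $x$, which yields an $O(\sqrt{1-s})$ rate on the Gaussian side.
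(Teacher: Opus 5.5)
Your proof is correct. The Gaussian half matches the paper's argument: both use the Lipschitz bound to get an integrand of order $(1-s)^{-1/2}$. The only difference there is that the paper evaluates $\int_0^1\frac{1-\sqrt s}{2(1-s)}\,ds$ exactly, giving $\frac12\e|Z|$ where you get $\e|Z|$.

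The Poisson half takes a different route.

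\emph{The paper's route.} It extends the space by an independent $\tilde N\sim\Po(\la)$ and fresh exponentials $\tilde\zeta_{j,\ell}$. From these it builds a residual death process $\tilde D(s)\sim\Po((1-s)\la)$ with $\tilde D(s)+Z_0(s)\sim\Po(\la)$. It then controls the mismatch through the time $\tau$ at which both $D_y$ and $\tilde D$ have died out, using
\[
\int_0^1\frac{\p(\tau>s)}{1-s}\,ds=\e[-\log(1-\tau)],
\]
which equals the expected maximum of the relevant exponentials.

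\emph{Your route.} You observe that $N=Z_0(1)$ already holds. By Poisson thinning, $Z_0(1)-Z_0(s)$ then plays the role of $\tilde D(s)$ with no new randomness. You replace the maximum computation with a per-particle union bound, each particle contributing probability $1-s$.

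\emph{Comparison.} Both routes give the same final Poisson contribution, $\sum_j(y_j+\la_j)$. Yours is leaner: no auxiliary processes, no hitting time, no $\e[-\log(1-\tau)]$ identity. The paper's hitting-time formulation keeps slightly finer information. By the R\'enyi representation (its footnote), the expected maximum is a harmonic number, so it is only logarithmic in $\sum_j y_j+\sum_j N_j$. The paper does not exploit this and bounds it by the same linear quantity you obtain.
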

	\begin{proof}
		By extending the probability space, we may assume that additional independent random elements
		
		{\setlength{\leftmargini}{29.5pt}  	
			\begin{itemize}
				\setlength{\labelsep}{7pt}    
				\setlength{\itemsep}{3pt}

				\item $\tilde{N}=(\tilde{N}_1,\dots,\tilde{N}_r)\sim\Po(\la)=\prod_{j=1}^r\Po(\la_j)$,

				\item $\tilde{\zeta}_{j,1},\tilde{\zeta}_{j,2},\dots$, $1\le j\le r$, independent exponentially distributed random variables with mean $1$,

		\end{itemize} }
		
		are defined on the same probability space $(\Omega,\mF,\p)$, independently of everything else. Let $\{\tilde{D}(s),s\in[0,1] \}$ be a stochastic process on $\Z_+^r$ whose $j$-th coordinate is a compound Poisson process  $\tilde{D}_{j}(s)=\sum_{\ell=1}^{\tilde{N}_j}\indi_{\{\tilde{\zeta}_{j,\ell}>-\log (1-s) \}}$ for $s\in[0,1)$ and $\tilde{D}_{j}(1)=0$. Then $\tilde{D}(s)\sim\Po((1-s)\la)$ for $s\in[0,1)$ and $\tilde{D}(s)+Z_0(s)\sim\Po(\la)$ for all $s\in[0,1]$. This decomposition of the measure $\Po(\la)$ is reminiscent of the discrete stability proposed in \cite{SH79} (the Poisson distribution is discrete stable with exponent one). Define
		\begin{align*}
			\tau&\coloneqq\inf\{s\in[0,1]:D_y(s)=\tilde{D}(s)=0\}.
		\end{align*}
		Then
		\begin{align*}
			\tau&=1-\exp\qty{-\max\{\zeta_{j,1},\dots,\zeta_{j,y_j},\tilde{\zeta}_{j,1},\dots,\tilde{\zeta}_{j,N_j},1\le j\le r \}}\\
			&=1-\exp\qty{-\sum_{n\in\Z_+^r}\indi_{\{N=n\}}\max\{\zeta_{j,1},\dots,\zeta_{j,y_j},\tilde{\zeta}_{j,1},\dots,\tilde{\zeta}_{j,n_j},1\le j\le r \}},
		\end{align*}
		which is a random variable. Moreover, $D_y(s)=\tilde{D}(s)=0$ if and only if $s\ge\tau$. Hence
		\begin{align}\label{well defined bound}
			&\int_{0}^{1}\qty|\frac{1}{2(1-s)}\qty[T_sh(x,y)-\e h(Z,N)]|\dd{s}  \notag \\
			=&\int_{0}^{1}\frac{1}{2(1-s)}\qty|\e\qty[h\qty(\sqrt{1-s}x+\sqrt{s}Z,D_y(s)+Z_0(s))-h(Z,\tilde{D}(s)+Z_0(s))]|\dd{s} \notag \\
			\le&\int_{0}^{1}\frac{1}{2(1-s)}\e\qty|h\qty(\sqrt{1-s}x+\sqrt{s}Z,Z_0(s))-h(Z,Z_0(s))|\dd{s}+\int_{0}^{1}\frac{1}{1-s}\p(\tau>s)\dd{s}.
		\end{align}
		For the first term in \eqref{well defined bound}, since $|h(\cdot,Z_0(s))|_{\mathrm{Lip}}\le1$,
		\begin{align*}
			&\int_{0}^{1}\frac{1}{2(1-s)}\e\qty|h\qty(\sqrt{1-s}x+\sqrt{s}Z,Z_0(s))-h(Z,Z_0(s))|\dd{s}\\
			\le&\int_{0}^{1}\frac{1}{2(1-s)}\qty(\sqrt{1-s}|x|+(1-\sqrt{s})\e|Z|)\dd{s}\\
			=&|x|\int_{0}^{1}\frac{1}{2\sqrt{1-s}}\dd{s}+\e|Z|\int_{0}^{1}\frac{1}{2(1+\sqrt{s})}\dd{s}\le|x|+\frac{1}{2}\e|Z|<\infty.
		\end{align*}
		The second term in \eqref{well defined bound} can be written as
		\begin{align*}
			\int_{0}^{1}\frac{1}{1-s}\p(\tau>s)\dd{s}=&\e\qty[\int_{0}^{\tau}\frac{1}{1-s}\dd{s}]\\
			=& \e\qty[-\log(1-\tau)] \\
			=&\e\qty[\max\{\zeta_{j,1},\dots,\zeta_{j,y_j},\tilde{\zeta}_{j,1},\dots,\tilde{\zeta}_{j,N_j},1\le j\le r \}].
		\end{align*}
		For $n\in\Z_+^r$, we have\footnote{$\e[\max\{\zeta_{j,1},\dots,\zeta_{j,y_j},\tilde{\zeta}_{j,1},\dots,\tilde{\zeta}_{j,n_j},1\le j\le r \}]=\psi(\sum_{j=1}^{r}y_j+\sum_{j=1}^{r}n_j)$, where $\psi(m)=\sum_{\ell=1}^{m}1/\ell$, by the R\'{e}nyi Representation of the order statistics of i.i.d.\ exponentials.}
		\begin{align*}
			\e\qty[\max\{\zeta_{j,1},\dots,\zeta_{j,y_j},\tilde{\zeta}_{j,1},\dots,\tilde{\zeta}_{j,n_j},1\le j\le r \}]\le\sum_{j=1}^{r}y_j+\sum_{j=1}^{r}n_j.
		\end{align*}
		Since $N$ is independent of the family $ \{\zeta_{j,1},\zeta_{j,2},\dots,\tilde{\zeta}_{j,1},\tilde{\zeta}_{j,2},\dots,1\le j\le r \}$, we have
		\begin{align*}
			\int_{0}^{1}\frac{1}{1-s}\p(\tau>s)\dd{s}=\e\qty[\max\{\zeta_{j,1},\dots,\zeta_{j,y_j},\tilde{\zeta}_{j,1},\dots,\tilde{\zeta}_{j,N_j},1\le j\le r \}]\le \sum_{j=1}^{r}y_j+\sum_{j=1}^{r}\la_j.
		\end{align*}
		Therefore, the sum of two integrals in \eqref{well defined bound} is finite and the integral in \eqref{eq new solution} is absolutely convergent.
	\end{proof}

	\begin{prp}\label{Stein equation}
		For all $h\in\mH_3$, the function $f_h$ defined in Proposition \ref{new solution} is of class $ C^2$ in $x$ and satisfies the equations
		\begin{align}\label{eq Stein equation}
			(\gen_{\Si,\la} f_h)(x,y)=h(x,y)-\e[h(Z,N)],\quad\quad (x,y)\in\R^d\times\Z_+^r.
		\end{align}
	\end{prp}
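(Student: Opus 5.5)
The plan is the standard interpolation argument: apply $\gen_{\Si,\la}$ under the integral sign in \eqref{eq new solution}, use Proposition \ref{T interpolation backward} to turn the integrand into an exact $s$-derivative, and integrate. Write $c\coloneqq\e h(Z,N)$. Note that $\gen_{\Si,\la}$ annihilates constants, so $\gen_{\Si,\la}(T_sh-c)=\gen_{\Si,\la}T_sh$.

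\textbf{Step 1: $x$-regularity and the $x$-derivative terms.} By \eqref{Ts x derivative},
\[
|\partial_jT_sh|\le\sqrt{1-s},\qquad |\partial_{jk}T_sh|\le 1-s.
\]
Hence the would-be derivatives
\[
-\int_0^1\tfrac{1}{2(1-s)}\,\partial_jT_sh\,\dd{s},\qquad -\int_0^1\tfrac{1}{2(1-s)}\,\partial_{jk}T_sh\,\dd{s}
\]
have integrands dominated by $\tfrac12(1-s)^{-1/2}$ and $\tfrac12$ respectively, uniformly in $x$. For the first derivative I would write
\[
f_h(x',y)-f_h(x,y)=-\int_0^1\tfrac{1}{2(1-s)}\bigl[T_sh(x',y)-T_sh(x,y)\bigr]\dd{s},
\]
use the mean value theorem on the bracket with the bound $\sqrt{1-s}\,|x'-x|$, and apply dominated convergence. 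This gives $\partial_jf_h=-\int_0^1\frac{1}{2(1-s)}\partial_jT_sh\,\dd{s}$. The same argument one level up gives the second derivatives, and continuity in $x$ follows from the continuity of $\partial_{jk}T_sh$ in $x$ (shown in the proof of Proposition \ref{T interpolation backward}) together with dominated convergence. So $f_h$ is $C^2$ in $x$.

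\textbf{Step 2: the $y$-difference terms.} These are finite sums of values of $f_h$ at neighbouring points, so linearity of the absolutely convergent integral (Proposition \ref{new solution}) passes them inside directly. Combining with Step 1,
\[
(\gen_{\Si,\la}f_h)(x,y)=-\int_0^1\frac{1}{2(1-s)}\,(\gen_{\Si,\la}T_sh)(x,y)\,\dd{s}.
\]
Each piece is absolutely integrable: the second-derivative term because $\partial_{jk}T_sh/(1-s)$ is bounded, the first-derivative term because $x_j\,\partial_jT_sh/(1-s)=O((1-s)^{-1/2})$, and the difference terms by Proposition \ref{new solution} applied at $y$ and $y\pm e_j$.

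\textbf{Step 3: integrate the derivative.} By \eqref{eq T interpolation backward}, on $(0,1)$ the integrand equals $-\pdv{s}T_sh(x,y)$. Therefore, for $0<a<b<1$,
\[
-\int_a^b\frac{1}{2(1-s)}\,\gen_{\Si,\la}T_sh\,\dd{s}=-\bigl(T_bh-T_ah\bigr)(x,y),
\]
by the fundamental theorem of calculus, which applies since the derivative is continuous on $(0,1)$. Now let $a\downarrow0$ and $b\uparrow1$. Continuity of $s\mapsto T_sh$ on $[0,1]$ together with $T_0h=h$ and $T_1h=c$ gives the limit $h(x,y)-c$. The left side converges to the full integral because the integrand is absolutely integrable on $(0,1)$ by Step 2. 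This yields \eqref{eq Stein equation}.

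\textbf{Main obstacle.} The delicate point is the singularity $1/(1-s)$ at $s=1$. Step 2 must supply genuine integrability of each term near $s=1$ for the limit in Step 3 to be valid, and the tail of $f_h(\cdot,y\pm e_j)$ is controlled only through the coupling estimate of Proposition \ref{new solution}. If absolute integrability of the whole combination proved awkward to state cleanly, the fallback is to define $f_h$ as $\lim_{b\uparrow1}$ of the truncated integrals, which is legitimate by Proposition \ref{new solution}, and to apply $\gen_{\Si,\la}$ to the truncated integrals, where everything is bounded.
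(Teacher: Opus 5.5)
Your proposal is correct and is essentially the paper's argument. The paper applies $\gen_{\Si,\la}$ to the truncated solution $f_h^{a,b}$, where everything is bounded, and uses \eqref{eq T interpolation backward} with the fundamental theorem of calculus to get $T_ah-T_bh$. It then lets $a\to0$, $b\to1$ using absolute convergence of the integrals for $f_h$, $f_h(\cdot,y\pm e_j)$ and the $x$-derivatives, which is exactly your fallback. Your main route only swaps the order, passing $\gen_{\Si,\la}$ through the full integral before truncating, and it rests on the same integrability inputs: the bounds from \eqref{Ts x derivative} and Proposition \ref{new solution} at $y$ and $y\pm e_j$.
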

	\begin{proof}
		For $0\le a<b\le1$, let $f_h^{a,b}$ be the truncated version of $f_h$
		\begin{equation}\label{truncated solution}
			f_h^{a,b}(x,y)=-\int_{a}^{b}\frac{1}{2(1-s)}\qty[T_sh(x,y)-\e h(Z,N)]\dd{s}.
		\end{equation}
		By \eqref{Ts x derivative},
		\begin{align*}
			&\int_{a}^{b}\frac{1}{2(1-s)}\qty|(\partial_jT_sh)(x,y)|\dd{s}\le\int_{a}^{b}\frac{1}{2\sqrt{1-s}}\dd{s}\le1,\\
			&\int_{a}^{b}\frac{1}{2(1-s)}\qty|(\partial_{jk}T_sh)(x,y)|\dd{s}\le\int_{a}^{b}\frac{1}{2}\dd{s}\le\frac{1}{2}.
		\end{align*}
		Thus differentiating under the integral sign is justified and we have
		\begin{equation}\label{truncated solution derivative}
			(\partial_jf_h^{a,b})(x,y)=-\int_{a}^{b}\frac{1}{2(1-s)}(\partial_jT_sh)(x,y)\dd{s},\quad(\partial_{jk}f_h^{a,b})(x,y)=-\int_{a}^{b}\frac{1}{2(1-s)}(\partial_{jk}T_sh)(x,y)\dd{s}.
		\end{equation}
		By taking a sequence $x_n\rightarrow x$ and dominated convergence, we see that $(\partial_jf_h^{a,b})(x,y)$ and $(\partial_{jk}f_h^{a,b})(x,y)$ are continuous in $x$. For $0< a<b<1$,  
		\begin{align}\label{truncated Stein equation}
			(\gen_{\Si,\la} f_h^{a,b})(x,y)=&-\int_{a}^{b}\frac{1}{2(1-s)}(\gen_{\Si,\la} T_sh)(x,y)\dd{s}=-\int_{a}^{b}\pdv{s}T_sh(x,y)\dd{s}  \notag \\
			=&	T_ah(x,y)- T_bh(x,y),
		\end{align}
		by Proposition \ref{T interpolation backward} and the fundamental theorem of calculus. Since the integrals in \eqref{truncated solution} and \eqref{truncated solution derivative} with $a=0$ and $b=1$ are absolutely convergent, letting $a\rightarrow0$ and $b\rightarrow1$ yields
		\begin{align*}
			f_h^{a,b}(x,y)\rightarrow f_h(x,y),\quad (\partial_jf_h^{a,b})(x,y)\rightarrow (\partial_jf_h)(x,y),\quad (\partial_{jk}f_h^{a,b})(x,y)\rightarrow (\partial_{jk}f_h)(x,y).
		\end{align*}
		Therefore, letting $a\rightarrow0$ and $b\rightarrow1$ in both sides of \eqref{truncated Stein equation} yields
		\begin{align*}
			(\gen_{\Si,\la} f_h)(x,y)=T_0h(x,y)- T_1h(x,y)=h(x,y)-\e[h(Z,N)].
		\end{align*}
	\end{proof}
	We can further show that, for $h\in\mH_3$ and $y\in\Z_+^r$, $f_h(\cdot,y)$ has continuous bounded first, second and third partial derivatives. We give the proof in Appendix \ref{sec: smoothness of the solution}.
	\begin{lem}\label{smoothness x}
		For fixed $h\in\mH_3$ and $y\in\Z_+^r$, $f_h(\cdot,y):\R^d\rightarrow\R$ is of class $ C^3$ with estimates
		\begin{align*}
			&|f_h(\cdot,y)|_{\mathrm{Lip}}\le1;\\
			&|f_h(\cdot,y)|_k\le\frac{1}{k},\quad k=2,3.
		\end{align*}
		If furthermore $h$ is such that $h(\cdot,y')$ is a constant function on $\R^d$ for any $y'\in\Z_+^r$, we have $|f_h(\cdot,y)|_{\mathrm{Lip}}=|f_h(\cdot,y)|_2=|f_h(\cdot,y)|_3=0$.
	\end{lem}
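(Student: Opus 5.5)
We differentiate under the integral sign in \eqref{eq new solution}, using the representation \eqref{new interpolation} of $T_sh$, and then integrate the resulting pointwise bounds in $s$. The constant $\e h(Z,N)$ does not depend on $x$, so it drops out of every $x$-derivative. For $k\le 2$, \eqref{Ts x derivative} already gives the derivatives of $T_sh$. Since each $h(\cdot,y')\in\mC_3$, the same argument also yields the third derivative
\begin{align*}
(\partial_{jkl}T_sh)(x,y)=(1-s)^{3/2}\e\qty[(\partial_{jkl}h)\qty(\sqrt{1-s}x+\sqrt{s}Z,D_y(s)+Z_0(s))].
\end{align*}
We justify this by dominated convergence, because $\|\partial_{jkl}h\|\le1$, and continuity in $x$ follows by bounded convergence.

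The derivatives of $f_h$ are then given by the untruncated versions of \eqref{truncated solution derivative} (with $a=0$, $b=1$), together with the analogous third-order formula. The integrands are dominated by $\frac{1}{2}(1-s)^{k/2-1}$ for $k=1,2,3$, and these are integrable on $(0,1)$. This domination is exactly what justifies differentiating under the integral and passing $a\to0$, $b\to1$ as in the proof of Proposition \ref{Stein equation}. Continuity of the derivatives in $x$ then follows from dominated convergence applied to the $s$-integral, so $f_h(\cdot,y)\in C^3$.

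\textbf{Bounds for $k=2,3$.} Using $\|\partial_{j_1\cdots j_k}h\|\le1$, we get
\begin{align*}
|f_h(\cdot,y)|_k\le\int_0^1\frac{(1-s)^{k/2}}{2(1-s)}\dd{s}=\frac12\cdot\frac{2}{k}=\frac1k .
\end{align*}

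\textbf{Lipschitz bound.} Taking componentwise absolute values loses a factor $\sqrt d$, so we instead work with the gradient as a vector. For a unit vector $u$,
\begin{align*}
u^\top\grad f_h(x,y)=-\int_0^1\frac{\sqrt{1-s}}{2(1-s)}\e\qty[u^\top(\grad h)\qty(\sqrt{1-s}x+\sqrt{s}Z,D_y(s)+Z_0(s))]\dd{s}.
\end{align*}
Here $|u^\top\grad h|\le|h(\cdot,y')|_{\mathrm{Lip}}\le1$, so $|u^\top\grad f_h|\le\int_0^1\frac{1}{2\sqrt{1-s}}\dd{s}=1$. Hence $\|\grad f_h(\cdot,y)\|\le1$, and this equals the Lipschitz seminorm. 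Alternatively, one can bound $|f_h(x,y)-f_h(x',y)|$ directly: $h$ is $1$-Lipschitz in its first argument, so $|T_sh(x,y)-T_sh(x',y)|\le\sqrt{1-s}|x-x'|$.

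\textbf{Degenerate case.} If every $h(\cdot,y')$ is constant, then $T_sh(x,y)$ does not depend on $x$, by \eqref{new interpolation}. Hence $f_h(\cdot,y)$ is constant and all three seminorms vanish.

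The only delicate step is the third-derivative differentiation, where we must check domination uniformly near $s=1$. The factor $(1-s)^{3/2}$ makes this straightforward, so no step should be a real obstacle.
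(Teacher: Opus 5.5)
Your proof is correct and follows the paper's argument. You differentiate $f_h$ under the $s$-integral using $|(\partial_{j_1\cdots j_k}T_sh)(x,y)|\le(1-s)^{k/2}$ and integrate $\frac12(1-s)^{k/2-1}$ to get $1/k$; the paper proves the Lipschitz bound via your ``alternative'' direct estimate $|T_sh(x,y)-T_sh(x',y)|\le\sqrt{1-s}\,|x-x'|$, of which your unit-vector gradient argument is an equivalent variant. One correction to your closing remark: the only singular weight near $s=1$ is the first-order one, $\frac{1}{2\sqrt{1-s}}$, whereas the third-order integrand is bounded by $\frac12\sqrt{1-s}$, so the third derivative is actually the easiest case.
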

	The next lemma collects the smoothness estimates for the solution with respect to $y$:
		\begin{lem}\label{smoothness y}
		For each $h\in\mH_3$, $x\in\R^d$ and $y\in\Z_+^r$, we have
		\begin{align}
			&\qty|\De_{j}f_h(x,y)|\le\min\qty{1,\sqrt{\frac{2}{e\la_j}}}; \label{smoothness first order difference} \\
			&\qty|\De_{jj}f_h(x,y)|\le\min\qty{1,\frac{8}{3}\sqrt{\frac{1}{e\la_j}},\frac{2+2\log^+(e\la_j)}{e\la_j}}; \label{smoothness second order difference diagonal}  \\
			&\qty|\De_{jk}f_h(x,y)|\le\min\qty{1,\frac{4}{3}\sqrt{\frac{2}{e\la_j}},\frac{4}{3}\sqrt{\frac{2}{e\la_k}},\frac{1+\log^+(2e(\la_j\wedge\la_k))}{e(\la_j\wedge\la_k)}},\quad j\neq k; \label{smoothness second order difference off diagonal}  \\
			&|\De_k(\partial_jf_h)(x,y)|\le \min\qty{\frac{2}{3},\frac{\pi}{2\sqrt{2}}\sqrt{\frac{1}{e\la_k}}};  \label{smoothness first order difference of first derivative} \\
			&|\De_\ell(\partial_{jk}f_h)(x,y)|\le \min\qty{\frac{1}{2},\frac{4}{3\sqrt{2}}\sqrt{\frac{1}{e\la_\ell}}}.  \label{smoothness first order difference of second derivative}
		\end{align}
		If furthermore $h$ is such that $h(x',y')=0$ for $y'\neq0$, these bounds improve
		\footnote{That $\frac{1-e^{-x}}{x}<\frac{1}{\sqrt{x}}$ for $x>0$ may be checked as follows: For $x>0$, by the AM--GM inequality, $\sqrt{x}\le\frac{1+x}{2}<\frac{e^x}{2}$ and $(\sqrt{x}+e^{-x}-1)'=(2\sqrt{x})^{-1}-e^{-x}>0$. Thus $x\mapsto\sqrt{x}+e^{-x}-1$ is strictly increasing on $x\ge0$ and we have $\sqrt{x}+e^{-x}-1>0$ for $x>0$. Thus, since $\frac{1}{2}<\sqrt{\frac{2}{e}}$, \eqref{smoothness first order difference improved}, \eqref{smoothness second order difference diagonal improved} and \eqref{smoothness second order difference off diagonal improved} are indeed better than \eqref{smoothness first order difference}, \eqref{smoothness second order difference diagonal} and \eqref{smoothness second order difference off diagonal}, respectively.
			
			That \eqref{smoothness first order difference of first derivative improved} is better than \eqref{smoothness first order difference of first derivative} can be seen as follows: Since $\frac{\pi}{2\sqrt{2}}\sqrt{\frac{1}{ex}}>\frac{1}{3}$ for $0<x<3$, it suffices to show that $\frac{1-e^{-x}}{2x}<\frac{\pi}{2\sqrt{2}}\sqrt{\frac{1}{ex}}$ only for $x\ge3$. Then it is straightforward to check that $x\mapsto\frac{1-e^{-x}}{\sqrt{x}}$ is strictly decreasing on $x\ge3$, implying that $\frac{1-e^{-x}}{2x}<\frac{1}{2\sqrt{3}\sqrt{x}}<\frac{\pi}{2\sqrt{2}}\sqrt{\frac{1}{ex}}$ on $x\ge3$. That \eqref{smoothness first order difference of second derivative improved} is better than \eqref{smoothness first order difference of second derivative} can be seen similarly.} as
		\begin{align}
			&\qty|\De_jf_h(x,y)|\le\min\qty{\frac{1}{2},\frac{1-e^{-\la_j}}{2\la_j}}; \label{smoothness first order difference improved} \\
			&\qty|\De_{jj}f_h(x,y)|\le\min\qty{\frac{1}{4},\frac{1-e^{-\la_j}}{2\la_j}};  \label{smoothness second order difference diagonal improved} \\
			&\qty|\De_{jk}f_h(x,y)|\le\min\qty{\frac{1}{4},\frac{1-e^{-(\la_j+\la_k)}}{2(\la_j+\la_k)}},\quad j\neq k; \label{smoothness second order difference off diagonal improved} \\
			&\qty|\De_k(\partial_jf_h)(x,y)|\le\min\qty{\frac{1}{3},\frac{1-e^{-\la_k}}{2\la_k}};  \label{smoothness first order difference of first derivative improved} \\
			&\qty|\De_\ell(\partial_{jk}f_h)(x,y)|\le\min\qty{\frac{1}{4},\frac{1-e^{-\la_\ell}}{2\la_\ell}}.  \label{smoothness first order difference of second derivative improved}
		\end{align}
		Alternatively, if $h$ is such that $h(x',\cdot)$ is a constant function on $\Z_+^r$ for any $x'\in\R^d$, then all the bounds can be taken to be $0$.
	\end{lem}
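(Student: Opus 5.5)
We start from the integral representation \eqref{eq new solution}. Since $\Delta_j$ and $\partial_j$ can be moved inside the $s$-integral (the same domination as in Propositions \ref{new solution} and \ref{Stein equation}), we get
\begin{align*}
\Delta_j f_h(x,y)=-\int_0^1\frac{1}{2(1-s)}\,\Delta_jT_sh(x,y)\,\dd{s},
\end{align*}
and analogous formulas for $\Delta_{jk}f_h$, $\Delta_k\partial_jf_h$ and $\Delta_\ell\partial_{jk}f_h$. The plan is then to bound $\Delta_j T_sh$ and its relatives pointwise in $s$, with a factor that absorbs $1/(1-s)$, and to integrate.

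\textbf{Coupling for the differences.} Write $D_{y+e_j}(s)=D_y(s)+e_j\indi_{\{\zeta>-\log(1-s)\}}$, where $\zeta$ is an extra exponential variable, so the extra individual survives to time $s$ with probability $1-s$. Then
\begin{align*}
\Delta_jT_sh(x,y)=(1-s)\,\e\bigl[\Delta_j h(\sqrt{1-s}x+\sqrt{s}Z,\,D_y(s)+Z_0(s))\bigr].
\end{align*}
Similarly, $\Delta_{jk}T_sh$ gives $(1-s)^2\e[\Delta_{jk}h(\cdots)]$. For mixed derivatives, \eqref{Ts x derivative} adds a factor $\sqrt{1-s}$ per $x$-derivative.

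\textbf{Crude bounds.} Using $\|h\|\le1$ gives $|\Delta_jh|\le2$, so $|\Delta_jf_h|\le\int_0^1 1\,\dd{s}=1$ and $|\Delta_{jj}f_h|\le\int_0^1 2(1-s)\,\dd{s}=1$. For $\Delta_k\partial_jf_h$, use $\|\partial_jh\|\le1$ and the factor $(1-s)^{3/2}$: this gives $\int_0^1(1-s)^{1/2}\,\dd{s}=2/3$. For $\Delta_\ell\partial_{jk}f_h$, the factor $(1-s)^2/(2(1-s))$ yields $1/2$.

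\textbf{$\la$-dependent bounds.} Here the key step is to move the difference onto the Poisson component. The $j$-th coordinate of $Z_0(s)$ is $\Po(s\la_j)$ and independent of the rest, so
\begin{align*}
\e[\Delta_jh(u,W+Z_{0,j}e_j)]=\sum_m h(\cdot,m)\bigl(p_{s\la_j}(m-1)-p_{s\la_j}(m)\bigr),
\end{align*}
which is bounded by $d_{TV}(\Po(\mu),\Po(\mu)+1)\le 2\max_m p_\mu(m)\le 2/\sqrt{2e\mu}$ (Barbour–Hall–Janson type bounds) with $\mu=s\la_j$.

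Then
\begin{align*}
|\Delta_jf_h|\le\int_0^1\min\{1,(2es\la_j)^{-1/2}\}\,\dd{s}\le\sqrt{2/(e\la_j)}.
\end{align*}
The bounds with $\sqrt{1/(e\la)}$ times $\pi/(2\sqrt2)$ and $4/(3\sqrt2)$ come from integrals such as $\int_0^1\sqrt{(1-s)/s}\,\dd{s}=\pi/2$ and $\int_0^1 (1-s)/\sqrt{s}\,\dd{s}=4/3$. For second differences, either put one difference on Poisson, giving the $(4/3)\sqrt{2/(e\la)}$ and $(8/3)$-type constants, or put both on Poisson using $\|\Delta^2 p_\mu\|_1\lesssim 1/\mu$. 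The latter bound blows up near $s=0$, so split $\int_0^1$ at $s=1/\la$, using the crude bound below the split. This is what produces the $\log^+$ terms, and getting the exact constants $(2+2\log^+(e\la_j))/(e\la_j)$ is the main bookkeeping obstacle; I would first try the split at $s_0=1/(e\la)$ together with the bound $|\Delta^2$-TV$|\le 2/(e\mu)$.

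\textbf{Improved case $h(\cdot,y')=0$ for $y'\ne0$.} Here $h(u,w)$ is nonzero only at $w=0$. Then $T_sh(x,y)=S_sh(x,0)p_s(y,0)$, and differences act only on
\begin{align*}
p_s(y,0)=\prod_j s^{y_j}e^{-\la_js}.
\end{align*}
Explicitly, $\Delta_jp_s(y,0)=-(1-s)p_s(y,0)$, so $|\Delta_jf_h|\le\int_0^1\tfrac12 e^{-s\la_j}\,\dd{s}=(1-e^{-\la_j})/(2\la_j)$, which is at most $1/2$. The other cases follow from similar exact integrals: for $\Delta_{jk}$ the factor is $e^{-s(\la_j+\la_k)}$, and the extra $(1-s)$ powers give $1/4$ and $1/3$.

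\textbf{Degenerate case.} If $h(x',\cdot)$ is constant on $\Z_+^r$, then $T_sh$ does not depend on $y$, so all differences vanish.
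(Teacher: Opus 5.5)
Apart from one step, your argument is the paper's. The shared ingredients are: the extra-individual coupling giving the factors $(1-s)$ and $(1-s)^2$; moving a difference onto the independent coordinate $Z_{0,j}(s)\sim\Po(s\la_j)$; the identity $\sum_m|\Po(\mu)\{m\}-\Po(\mu)\{m-1\}|=2\max_m\Po(\mu)\{m\}\le 2\min\{1,(2e\mu)^{-1/2}\}$; and integration in $s$. Your improved case via the explicit formula $p_s(y,0)=\prod_j s^{y_j}e^{-\la_j s}$ is a slightly more direct form of the paper's observation that only the $\Po(s\la)\{0\}$ term survives. Your one-difference bound $\frac43\sqrt{2/(e\la_j)}$ for $\Delta_{jj}f_h$ is sharper than the stated $\frac83\sqrt{1/(e\la_j)}$.

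The gap is the $\log^+$ bound in \eqref{smoothness second order difference diagonal}. For it you need an explicit estimate
\[
\sum_{m\ge0}\bigl|\Po(\mu)\{m\}-2\Po(\mu)\{m-1\}+\Po(\mu)\{m-2\}\bigr|\le\frac{4}{e\mu},
\]
and you give none. The estimate ``$\lesssim 1/\mu$'' carries no constant. The bound $2/(e\mu)$ you propose is false if read as this $\ell^1$ norm: by the local CLT the norm behaves like $4e^{-1/2}/(\sqrt{2\pi}\,\mu)\approx 0.97/\mu$ for large $\mu$, which exceeds $2/(e\mu)\approx0.74/\mu$.

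The missing idea is the paper's splitting trick. Write $Z_{0,j}(s)\stackrel{d}{=}Y_1(s)+Y_2(s)$ with independent $Y_1(s),Y_2(s)\sim\Po(s\la_j/2)$, and let each of the two differences act on its own summand, exactly as when $j\neq k$. This gives
\[
|\e[\Delta_{jj}h(\cdots)]|\le 4\bigl(\max_m\Po(s\la_j/2)\{m\}\bigr)^2\le 4\min\{1,(es\la_j)^{-1}\},
\]
hence $|\Delta_{jj}f_h|\le\int_0^1 2(1-s)\min\{1,(es\la_j)^{-1}\}\,ds$. Splitting at $s_0=\frac{1}{e\la_j}\wedge 1$ bounds this by $2s_0+\frac{2}{e\la_j}\log(1/s_0)\le\frac{2+2\log^+(e\la_j)}{e\la_j}$.

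For $j\neq k$ no trick is needed, since the two differences already fall on the independent coordinates $Z_{0,j}(s)$ and $Z_{0,k}(s)$. The bound $4\min\{1,(2es\la_j)^{-1/2}\}\min\{1,(2es\la_k)^{-1/2}\}$, split at $\frac{1}{2e(\la_j\wedge\la_k)}\wedge 1$, gives the constant in \eqref{smoothness second order difference off diagonal}.
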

	\begin{rem}\label{smoothness y uniform}
		Since $x\mapsto(1+\log^+x)/x$, $x>0$ is strictly decreasing,\footnote{$x\mapsto(1+\log x)/x$, $x\ge1$ is strictly decreasing.} we obtain
		\begin{align*}
			&\qty|\De_{jj}f_h(x,y)|\le\min\qty{1,\frac{8}{3}\sqrt{\frac{1}{e\min_j\la_j}},\frac{2+2\log^+(e\min_j\la_j)}{e\min_j\la_j}}, \\
			&\qty|\De_{jk}f_h(x,y)|\le\min\qty{1,\frac{4}{3}\sqrt{\frac{2}{e\min_j\la_j}},\frac{1+\log^+(2e\min_j\la_j)}{e\min_j\la_j}},\quad j\neq k
		\end{align*}
		from \eqref{smoothness second order difference diagonal} and \eqref{smoothness second order difference off diagonal}. Using $\log^+(2x)\le\log2+\log^+x$ for $x>0$, we obtain a uniform bound
		\begin{align}
			\qty|\De_{jk}f_h(x,y)|\le&\min\qty{1,\frac{8}{3}\sqrt{\frac{1}{e\min_j\la_j}},\frac{2+2\log^+(e\min_j\la_j)}{e\min_j\la_j}}\notag\\
			=&\min\qty{1,\frac{2+2\log^+(e\min_j\la_j)}{e\min_j\la_j}}\quad\text{for any pair $\{j,k\}$}.\label{smoothness second order difference uniform}
		\end{align}
		The last equality can be seen as follows: It suffices to show that $1+\log x<\frac{4}{3}\sqrt{x}$ for $x\ge 64/9$. We can check that $x\mapsto\frac{4}{3}\sqrt{x}-\log x-1$ is strictly increasing on $x\ge64/9$ and $\frac{23}{9}>\log\qty(\frac{64}{9})$.  
	\end{rem}
	\begin{rem}
		The improvement of \eqref{smoothness first order difference} to \eqref{smoothness first order difference improved} for $h$ such that $h(x',y')=0$ for $y'\neq0$ is analogous to the last part of \cite[Lemma 1]{AGG89}, which is about the one-dimensional case.
	\end{rem}
Since the `cross' smoothness estimates \eqref{smoothness first order difference of first derivative} and \eqref{smoothness first order difference of first derivative improved} are new and will be used later, we only prove them here. We give the proofs for the remaining estimates in Appendix \ref{sec: smoothness of the solution}.
	\begin{proof}[Proof of \eqref{smoothness first order difference of first derivative} and \eqref{smoothness first order difference of first derivative improved}]
				The proof is done by a suitable adaptation of the proof of \cite[Lemmas 10.2.9]{BHJ92}. Note that
		\begin{align*}
			(\partial_j f_h)(x,y)=-\int_{0}^{1}\frac{1}{2\sqrt{1-s}}\e\qty[(\partial_jh)\qty(\sqrt{1-s}x+\sqrt{s}Z,D_y(s)+Z_0(s))]\dd{s}
		\end{align*}
		(cf.\ \eqref{solution kth derivative} in Appendix \ref{sec: smoothness of the solution}) and $\|\partial_jh\|\le 1$ holds.  For $s\in[0,1)$, since
		\begin{align*}
			D_{y+e_k}(s)=D_y(s)+e_k\indi_{\{\zeta_{k,y_k+1}>-\log(1-s) \}},
		\end{align*}
		we have
		\begin{align*}
			&\left.(\partial_jh)\qty(\sqrt{1-s}x+\sqrt{s}Z,D_{y+e_k}(s)+Z_0(s))-(\partial_jh)\qty(\sqrt{1-s}x+\sqrt{s}Z,D_{y}(s)+Z_0(s))   \right.\\
			=&\indi_{\{\zeta_{k,y_k+1}>-\log(1-s)\}}\De_{k}(\partial_jh)\qty(\sqrt{1-s}x+\sqrt{s}Z,D_y(s)+Z_0(s))
		\end{align*}
		and 
		\begin{align*}
			&\e\qty[(\partial_jh)\qty(\sqrt{1-s}x+\sqrt{s}Z,D_{y+e_k}(s)+Z_0(s))-(\partial_jh)\qty(\sqrt{1-s}x+\sqrt{s}Z,D_{y}(s)+Z_0(s)) ]\\
			=&(1-s)\e\qty[\De_{k}(\partial_jh)\qty(\sqrt{1-s}x+\sqrt{s}Z,D_y(s)+Z_0(s))].
		\end{align*}
		This identity extends to hold for $s=1$. It thus follows that
		\begin{align*}
			\De_{k}(\partial_j f_h)(x,y)
			=-\int_{0}^{1}\frac{\sqrt{1-s}}{2}\e\qty[\De_{k}(\partial_jh)\qty(\sqrt{1-s}x+\sqrt{s}Z,D_y(s)+Z_0(s))]\dd{s}.
		\end{align*}
		Let $Z_{0,-k}(s)\coloneqq Z_0(s)-Z_{0,k}(s)e_k$. Defining $\widetilde{\partial_jh}:\R^d\times\Z^r_+\times\Z_+\rightarrow[-1,1]$ by
		\begin{align*}
			\widetilde{\partial_jh}(x,y,n)=\partial_jh(x,y+ne_k),
		\end{align*}
		we have 
		\begin{align*}
			&\De_{k}(\partial_j f_h)(x,y)\\
			=&-\int_{0}^{1}\frac{\sqrt{1-s}}{2}\e\bigg[\widetilde{\partial_jh}\qty(\sqrt{1-s}x+\sqrt{s}Z,D_y(s)+Z_{0,-k}(s),Z_{0,k}(s)+1) \bigg.  \\
			&\quad\quad\quad\quad\quad\quad\quad\quad\quad\quad	\bigg.	-\widetilde{\partial_jh}\qty(\sqrt{1-s}x+\sqrt{s}Z,D_y(s)+Z_{0,-k}(s),Z_{0,k}(s)) \bigg]\dd{s}.
		\end{align*}
		For $s\in(0,1]$, given that $(Z,D_y(s)+Z_{0,-k}(s))=(z,y')$, the expectation is just
		\begin{align}\label{first order difference of first derivative conditional expectation}
			-\widetilde{\partial_jh}(\sqrt{1-s}x+\sqrt{s}z,y',0)\Po\qty(s\la_k)\{0\}	-\sum_{n\ge1}		\widetilde{\partial_jh}(\sqrt{1-s}x+\sqrt{s}z,y',n)\Big[\Po\qty(s\la_k)\{n\}-\Po\qty(s\la_k)\{n-1\}\Big],
		\end{align}
		since $Z_{0,k}(s)\sim\Po(s\la_k)$ for $s\in(0,1]$ and $Z_{0,k}(s)$ is independent of $(Z,D_y(s)+Z_{0,-k}(s))$. It is hence bounded in magnitude by
		\begin{align*}
			2\max_{n\ge0}\Po\qty(s\la_k)\{n\}\le2\min\{1,(2es\la_k)^{-1/2}\},
		\end{align*}
	where the first bound comes from the unimodality of a Poisson distribution (see Lemma \ref{Poisson smoothness} in Appendix \ref{sec: smoothness of the solution} for a precise statement) and the last inequality is \cite[Proposition A.2.7]{BHJ92}. Thus 
		\begin{align*}
			\qty|\De_k(\partial_jf_h)(x,y)|&\le \int_{0}^{1}\sqrt{1-s}\min\{ 1, (2es\la_k)^{-1/2} \}\dd{s}\le\min\qty{\frac{2}{3},\frac{\pi}{2\sqrt{2}}\sqrt{\frac{1}{e\la_k}}}.
		\end{align*}
		If furthermore $h$ is such that $h(x',y')=0$ for $y'\neq0$, $\partial_jh(x',y')=0$ for $y'\neq0$ and $\widetilde{\partial_jh}(x',y',n)=0$ unless $y'=n=0$. \eqref{first order difference of first derivative conditional expectation} is then bounded in magnitude by $\Po\qty(s\la_k)\{0\}=e^{-\la_ks}$. This gives the improved bound in \eqref{smoothness first order difference of first derivative improved}:
		\begin{align*}
			\qty|\De_k(\partial_jf_h)(x,y)|\le \int_{0}^{1}\frac{\sqrt{1-s}}{2}e^{-\la_ks}\dd{s}\le\min\qty{\frac{1}{3},\frac{1-e^{-\la_k}}{2\la_k}}.
		\end{align*}
	\end{proof}
		As a preliminary result, we derive a comparison bound for two joint normal-Poisson random vectors. Suppose furthermore that the following independent random vectors are defined on the probability space $(\Omega,\mF,\p)$:
		
				{\setlength{\leftmargini}{29.5pt}  	
			\begin{itemize}
				\setlength{\labelsep}{7pt}     
				\setlength{\itemsep}{3pt}      
				
			\item $Z'\sim\mN_d(0,\Si')$, a $d$-dimensional normal random vector with a possibly singular covariance matrix $\Si'=\mqty(\si'_{jk})$.
				
				\item $N'=(N'_1,\dots,N'_r)\sim\Po(\la')=\prod_{j=1}^r\Po(\la'_j)$, a random vector in $Z_+^r$ whose coordinates are independent Poisson random variables $N'_j\sim\Po(\la'_j)$ with an $r$-tuple $\la'=(\la'_1,\dots,\la'_r)$ of positive means.
				
		\end{itemize} }

	\begin{thm}\label{comparison bound joint normal-Poisson}
	For $Z,N,Z',N'$ defined above, we have
	\begin{align*}
		\sup_{h\in\mH_3}\Big|\e\qty[h(Z,N)]-\e\qty[h(Z',N')]\Big|
		\le\frac{1}{2}\sum_{j,k=1}^d\qty|\si_{jk}-\si'_{jk}|+2\sum_{j=1}^{r}\qty|\la_j-\la'_j|.
	\end{align*}
	\end{thm}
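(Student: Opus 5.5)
We use the Stein solution from Proposition \ref{new solution} for the target $(Z,N)$ and plug in the other law, $(Z',N')$. For $h\in\mH_3$, Proposition \ref{Stein equation} gives
\begin{align*}
\e[h(Z',N')]-\e[h(Z,N)]=\e\qty[(\gen_{\Si,\la}f_h)(Z',N')].
\end{align*}
The key observation is that $(Z',N')$ is itself exactly characterized by the operator $\gen_{\Si',\la'}$. Since $Z'$ and $N'$ are independent, Gaussian integration by parts conditionally on $N'$ gives
\begin{align*}
\e\qty[\sum_{j,k}\si'_{jk}\partial_{jk}f_h(Z',N')-\sum_j Z'_j\partial_jf_h(Z',N')]=0 .
\end{align*}
Poisson integration by parts, $\e[N'_jg(N'-e_j)]=\la'_j\e[g(N')]$, applied coordinatewise conditionally on the other coordinates and on $Z'$, gives
\begin{align*}
\e\qty[\la'_j\De_jf_h(Z',N')+N'_j\qty(f_h(Z',N'-e_j)-f_h(Z',N'))]=0 .
\end{align*}
Hence $\e[(\gen_{\Si',\la'}f_h)(Z',N')]=0$, and subtracting,
\begin{align*}
\e[h(Z',N')]-\e[h(Z,N)]=\e\qty[\sum_{j,k}(\si_{jk}-\si'_{jk})\partial_{jk}f_h(Z',N')+2\sum_j(\la_j-\la'_j)\De_jf_h(Z',N')].
\end{align*}
We then apply Lemma \ref{smoothness x} and Lemma \ref{smoothness y}: $\|\partial_{jk}f_h\|\le|f_h(\cdot,y)|_2\le 1/2$ and $|\De_jf_h|\le1$. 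This yields exactly $\frac12\sum_{j,k}|\si_{jk}-\si'_{jk}|+2\sum_j|\la_j-\la'_j|$, and taking the supremum over $h$ completes the argument.

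The main obstacle is justifying the two integration-by-parts identities with enough integrability. The Gaussian step needs $\e|Z'_j\partial_jf_h|<\infty$, which holds since $|\partial_jf_h|\le1$ by Lemma \ref{smoothness x}. It also needs $f_h(\cdot,y)\in C^2$ with bounded second derivatives, which Lemma \ref{smoothness x} supplies for each $y$.

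The Poisson step needs $\e[N'_j|f_h(Z',N')|]<\infty$. For this we use the bound from the proof of Proposition \ref{new solution}, $|f_h(x,y)|\le|x|+\frac12\e|Z|+\sum_jy_j+\sum_j\la_j$. With it, the relevant expectations are finite because Poisson and Gaussian variables have all moments.

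One must also check that the boundary convention at $y_j=0$ matches the Poisson identity. It does: the term with $N'_j=0$ vanishes on both sides, and the identity is just a reindexing $n\mapsto n-1$ of the Poisson sum.
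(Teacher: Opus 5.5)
Your proposal is correct and is essentially the paper's proof. You evaluate the Stein equation for $(Z,N)$ at $(Z',N')$, subtract the vanishing terms $\e[(\gen_{\Si',\la'}f_h)(Z',N')]=0$ obtained from Gaussian integration by parts (Lemma \ref{multivariate Stein identity}, which also covers singular $\Si'$) and from the Poisson identity, and bound the remainder using $|f_h(\cdot,y)|_2\le 1/2$ from Lemma \ref{smoothness x} and $|\De_jf_h|\le1$ from \eqref{smoothness first order difference}.

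Your integrability discussion is more explicit than the paper's one-line citation of Lemmas \ref{smoothness x} and \ref{smoothness y}. For the Poisson step you could simply apply the identity to the bounded function $\De_jf_h$, so the growth bound on $f_h$ is not needed.
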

	\begin{proof}
Fix an arbitrary $h\in\mH_3$, and let $f=f_h$ be given by \eqref{eq new solution} with this $h$. By Proposition \ref{Stein equation}, it solves the Stein equation \eqref{eq Stein equation} and satisfies
\begin{equation}\label{comparison first expression}
	\begin{split}
	&\e[h(Z',N')]-\e\qty[h(Z,N)]	=\e[\gen_{\Si,\la} f(Z',N')]\\
	=&\e\qty[\sum_{j,k=1}^d\si_{jk}(\partial_{jk}f )(Z',N')-\sum_{j=1}^dZ'_{j} (\partial_jf)(Z',N')]\\
	&\quad\quad+2\sum_{j=1}^{r}\e\Big[\la_j\De_jf(Z',N')-N'_{j}\De_jf(Z',N'-e_j) \Big].
	\end{split}
\end{equation}
The integrability of each term is ensured by Lemma \ref{smoothness x} and Lemma \ref{smoothness y}. By the independence of $Z'$ and $N'$, we have
\begin{align}
	\sum_{j=1}^d\e\qty[\sum_{k=1}^d\si'_{jk}(\partial_{jk}f )(Z',N')-Z'_{j} (\partial_jf)(Z',N')]&=0,\label{OU part zero}\\
	2\sum_{j=1}^{r}\e\Big[\la'_j\De_jf(Z',N')-N'_{j}\De_jf(Z',N'-e_j) \Big]&=0.\notag
\end{align}
(As for \eqref{OU part zero}, see the multivariate Stein identity in the form of Lemma \ref{multivariate Stein identity}.) By subtracting these two terms from \eqref{comparison first expression}, we have
\begin{align*}
\e[h(Z',N')]-\e\qty[h(Z,N)]	=&\e\qty[\sum_{j,k=1}^d\qty(\si_{jk}-\si'_{jk})(\partial_{jk}f )(Z',N')]+2\sum_{j=1}^{r}\e\Big[\qty(\la_j-\la'_j)\De_jf(Z',N') \Big].
\end{align*}
 Taking absolute values, we obtain the upper bound
 \begin{align*}
	\Big|\e\qty[h(Z,N)]-\e\qty[h(Z',N')]\Big|
\le\frac{1}{2}\sum_{j,k=1}^d\qty|\si_{jk}-\si'_{jk}|+2\sum_{j=1}^{r}\qty|\la_j-\la'_j|
 \end{align*}
 by Lemma \ref{smoothness x} and Lemma \ref{smoothness y}, \eqref{smoothness first order difference}. Taking the supremum over $h\in\mH_3$ completes the proof.
	\end{proof}
	
	\section{A new coupling approach to sums of indicators}
	\label{indicator framework}
	
	Let $(I_\al,\al\in\Ga)$ be a finite collection of 0--1 valued random variables. Suppose that for each $\al$ there exist 0--1 valued random variables $(J_{\beta\al},\be\in\Ga)$ defined on the same probability space as $(I_\beta,\beta\in\Ga)$ with\footnote{We understand $\mL(J_{\be\al},\be\in\Ga|I_\al=1)$ as a regular conditional distribution $\mu(s,B)$ of $(J_{\be\al},\be\in\Ga)$ given $I_\al$ with $s=1$. This definition of conditional distribution is valid even when $\p(I_\al=1)=p_\al=0$.}
	\begin{equation}\label{Stein coupling conditional dist}
		\mL(J_{\be\al},\be\in\Ga|I_\al=1)=\mL(I_\be,\be\in\Ga).
	\end{equation}
	 Now, assume that there is a partition $\Ga=\Ga_1\uplus\Ga_2$, $\Ga_{1}=\biguplus_{j=1}^d\Ga_{1j}$ and $\Ga_2=\biguplus_{j=1}^r\Ga_{2j}$. In our application in the configuration model, $d=2$ and $\Ga_{11}$ is to be taken as the set of possible isolated edges and $\Ga_{12}$ as the set of possible isolated 2-stars, whereas $r=2$ and $\Ga_{21}$ is to be taken as the set of possible self-loops and $\Ga_{22}$ as the set of possible double edges (we will make this more precise in Section \ref{configuration model}). Set $p_\al=\e I_\al$,
	\begin{gather*}
		W_{1j}=\frac{1}{s_{j}}\sum_{\al\in\Ga_{1j}}(I_\al-p_\al),\quad j=1,\dots,d,\quad\quad W_1=(W_{11},\dots,W_{1d})^\top,\\
		W_{2j}=\sum_{\al\in\Ga_{2j}}I_\al,\quad j=1,\dots,r,\quad\quad W_2=(W_{21},\dots,W_{2r})^\top,
	\end{gather*}
	where $s_j$ is a positive scaling constant. If each $\e I_\al$ and the cardinality $|\Ga|$ depend on a diverging positive integer $n$, we usually take $s_j=s_{n,j}$ such that $\si^2_j=\si^2_{n,j}=\Var(W_{1j})$ converges to a finite constant as $n\rightarrow\infty$, and if $\Var(\sum_{\al\in\Ga_{1j}}I_\al)$ and its limit are all positive we may want to take $s_{j}$ to be $\sqrt{\Var(\sum_{\al\in\Ga_{1j}}I_\al)}$. As in \eqref{noramalized sums edge twostar}, we take $s_{j}=\sqrt{n}$ for both $\Ga_{11}$ and $\Ga_{12}$ in our application in the configuration model, where $n$ denotes the number of vertices. Set
	\begin{align*}
		\Si=\cov(W_1),\quad\quad	\la_j = \e W_{2j},\quad j=1,\dots,r.
	\end{align*}
Assume that $\la_j>0$ for all $1\le j\le r$.	We regard $(W_1,W_2)$ as a random element in $\R^d\times\Z_+^r$. Let $Z$ be a $\mN_d(0,\Si)$-vector and let $N$ be a $\prod_{j=1}^{r}\Po(\la_j)$-vector. Suppose that $Z$ and $N$ are independent.
	\begin{thm}\label{Stein coupling theorem}
	Let $\mH_3$ be as in \eqref{H3} and let $f_h$ be given by \eqref{eq new solution} with $h\in\mH_3$ (where $T_sh$ is defined through \eqref{T independence} and \eqref{transition probability}). Under the above circumstances, for each $h\in\mH_3$, we have
		\begin{equation}\label{Stein coupling bound}
			\begin{split}
				\Big|\e\qty[h(W_1,W_2)]-\e\qty[h(Z,N)]\Big|
				\le&\sum_{j,k=1}^d\frac{\|\partial_{jk}f_h\|}{s_{j}s_{k}}\sqrt{\Var\qty(\e\qty[\sum_{\al\in\Ga_{1j}}I_\al\sum_{\be\in\Ga_{1k}}(I_{\be}-J_{\be\al})\Big|W_1,W_2])}\\
				&+\frac{1}{2}\sum_{j,k,\ell=1}^d\frac{\|\partial_{jk\ell}f_h\|}{s_{j}s_{k}s_{\ell}}\sum_{\al\in\Ga_{1j}}\e\qty|I_\al\sum_{\be\in\Ga_{1k}}\qty(I_{\be}-J_{\be\al})\sum_{\ga\in\Ga_{1\ell}}\qty(I_{\ga}-J_{\ga\al})|\\
				&+\sum_{j=1}^{d}\sum_{k=1}^r\frac{\|\De_k\qty(\partial_jf_h)\|}{s_{j}}\sum_{\al\in\Ga_{1j}}\e\qty|I_\al\sum_{\be\in\Ga_{2k}}\qty(I_\be-J_{\be\al})|\\
				&+2\sum_{k=1}^r\sum_{j=1}^d\frac{\|\De_k\qty(\partial_jf_h)\|}{s_{j}}\sum_{\al\in\Ga_{2k}}\e\qty|I_\al\sum_{\be\in\Ga_{1j}}\qty(I_\be-J_{\be\al})|\\
				&+2\max_{1\le j,k\le r}\|\De_{jk}f_h\|\qty(\sum_{\al\in\Ga_2} p_{\al}^2+\sum_{\al\in\Ga_2}\sum_{\be\in\Ga_2\setminus\{\al\}}\e\qty[I_\al|I_\beta-J_{\beta\al}|] ).
			\end{split}
		\end{equation}
	\end{thm}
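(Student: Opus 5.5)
Fix $h\in\mH_3$ and write $f=f_h$. By Proposition \ref{Stein equation} it suffices to bound $|\e[(\gen_{\Si,\la}f)(W_1,W_2)]|$. This quantity splits into a Gaussian part $\e[\sum_{j,k}\si_{jk}\partial_{jk}f-\sum_j W_{1j}\partial_jf]$ and a Poisson part $2\sum_k\e[\la_k\De_kf(W)-W_{2k}\De_kf(W-e_k)]$, where $W=(W_1,W_2)$. Each part is handled by the coupling \eqref{Stein coupling conditional dist}. For any bounded $g$ and any $\al$,
\[
\e[I_\al g(I_\be,\be\in\Ga)]=p_\al\e[g(J_{\be\al},\be\in\Ga)].
\]
Let $W^{(\al)}=(W_1^{(\al)},W_2^{(\al)})$ be $W$ recomputed with the $J_{\be\al}$ in place of the $I_\be$ (keeping the centering $p_\be$). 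Then $\e[I_\al g(W^{(\al)})]=p_\al\e g(W)$ after pulling out $I_\al$: we have $\e[I_\al\phi(J_{\cdot\al})]=p_\al\e\phi(I)$, and when $J$ is defined on the same space we replace this by an expression in which the $W^{(\al)}$ terms cancel against $\e g(W)$.

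\textbf{Gaussian part.} Write
\[
\e[W_{1j}\partial_jf(W)]=s_j^{-1}\sum_{\al\in\Ga_{1j}}\e\bigl[I_\al\bigl(\partial_jf(W)-\partial_jf(W^{(\al)})\bigr)\bigr],
\]
using $\e[I_\al\partial_jf(W^{(\al)})]=p_\al\e\partial_jf(W)$. Then go from $W^{(\al)}$ to $W$ in two steps, first changing the $\R^d$ coordinate with the $\Z_+^r$ coordinate held at $W_2^{(\al)}$, then changing the $\Z_+^r$ coordinate from $W_2^{(\al)}$ to $W_2$.

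\emph{Normal step.} Taylor expansion to second order in $x$ gives the leading term
\[
\sum_k s_j^{-1}s_k^{-1}\,\e\Bigl[I_\al\sum_{\be\in\Ga_{1k}}(I_\be-J_{\be\al})\,\partial_{jk}f(W)\Bigr],
\]
with a remainder bounded by the $\|\partial_{jk\ell}f\|$ term. Since $I$ and $J$ are both 0--1 valued, the second-order correction is $\tfrac12$ times a product of two differences, which is exactly the second line of \eqref{Stein coupling bound}. Summing over $\al$, the leading term equals $\e[\partial_{jk}f(W)\,\e[\cdot\mid W]]$. Its mean is $\si_{jk}$, because $\cov(W_{1j},W_{1k})=s_j^{-1}s_k^{-1}\sum_\al\e[I_\al\sum_\be(I_\be-J_{\be\al})]$ by the same coupling identity. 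Subtracting $\si_{jk}\e\partial_{jk}f$ therefore leaves a covariance, and Cauchy--Schwarz gives the first line.

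\emph{Cross step.} The change of the Poisson argument from $W_2^{(\al)}$ to $W_2$ at a fixed $x$ is a telescoping sum of unit steps $\pm e_k$. There are $|\sum_{\be\in\Ga_{2k}}(I_\be-J_{\be\al})|$ such steps in coordinate $k$, each costing at most $\|\De_k\partial_jf\|$, which gives the third line.

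\textbf{Poisson part.} Write $\e[W_{2k}\De_kf(W-e_k)]=\sum_{\al\in\Ga_{2k}}p_\al\e[\De_kf(W^{(\al)}-e_k)]$. Here $W^{(\al)}$ should be understood as the configuration in which $J_{\al\al}$ is replaced by $1$, so that $W_2^{(\al)}-e_k$ is the count with $\al$ removed, as in the standard Chen--Stein argument. Compare this with $\la_k\e\De_kf(W)=\sum_\al p_\al\e\De_kf(W)$. The $\Z_+^r$ change contributes second differences $\De_{k\ell}f$, weighted by $\sum_{\be\ne\al}|I_\be-J_{\be\al}|$ plus the self term $p_\al^2$; together with the factor $2$ this gives the last line. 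The $\R^d$ change gives differences in $x$ of $\De_kf$, which are bounded by $\|\De_k\partial_jf\|\,s_j^{-1}|\sum_{\be\in\Ga_{1j}}(I_\be-J_{\be\al})|$; with the factor $2$ this is the fourth line.

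\textbf{Main obstacle.} The delicate point is the bookkeeping in the normal step. The expansion must be organized so that the linear term reproduces $\si_{jk}$ exactly, with no leftover first-order terms, because the cross effects have to be isolated into the $\De_k\partial_jf$ terms. This requires the order of the two interpolations (normal coordinate first with the Poisson coordinate frozen at the coupled value, then the Poisson coordinate) to be chosen consistently.
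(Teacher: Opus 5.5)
Your proposal has two genuine gaps. Both concern how the coupling \eqref{Stein coupling conditional dist} is combined with the interpolation.

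\textbf{Gaussian part: the two steps are in the wrong order.} Your path is $(W_1^{(\al)},W_2^{(\al)})\to(W_1,W_2^{(\al)})\to(W_1,W_2)$. Along it, the Taylor step expands $\partial_jf(\cdot,W_2^{(\al)})$, so its linear term is $\sum_k\partial_{jk}f(W_1,W_2^{(\al)})(W_{1k}-W_{1k}^{(\al)})$, not $\partial_{jk}f(W)(\cdots)$ as you wrote. The factor $\partial_{jk}f(W_1,W_2^{(\al)})$ depends on $\al$ through the $J_{\cdot\al}$ and is not a function of $(W_1,W_2)$. So you cannot pull it out, condition on $(W_1,W_2)$, and cancel against $\si_{jk}\e\partial_{jk}f(W)$. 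Replacing it by $\partial_{jk}f(W_1,W_2)$ costs an extra term of the form
\[
\|\De_\ell\partial_{jk}f\|\,s_j^{-1}s_k^{-1}\sum_\al\e\Bigl|I_\al\sum_{\be\in\Ga_{1k}}(I_\be-J_{\be\al})\sum_{\ga\in\Ga_{2\ell}}(I_\ga-J_{\ga\al})\Bigr|,
\]
which does not appear in \eqref{Stein coupling bound}. The paper uses the opposite order:
\[
\partial_jf(W_1,W_2)-\partial_jf(W_{1\al},W_{2\al})=\bigl[\partial_jf(W_1,W_2)-\partial_jf(W_{1\al},W_2)\bigr]+\bigl[\partial_jf(W_{1\al},W_2)-\partial_jf(W_{1\al},W_{2\al})\bigr].
\]
Here the Poisson coordinate is moved with the normal coordinate frozen at the coupled value $W_{1\al}$; telescoping this gives the third line. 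The Taylor expansion is then taken about $W_1$ with $y=W_2$, so the linear term carries $\partial_{jk}f(W_1,W_2)$. From there your conditioning and Cauchy--Schwarz argument, and your remainder estimate, work as described. You identified the right obstacle but resolved it the wrong way round.

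\textbf{Poisson part: you used the size-bias identity.} The formula $\e[I_\al g(W)]=p_\al\e[g(W^{(\al)})]$, with $J_{\al\al}$ set to $1$, holds when $\mL(J_{\cdot\al})=\mL(I\mid I_\al=1)$, cf.\ \eqref{size bias coupling conditional dist}. Hypothesis \eqref{Stein coupling conditional dist} says the reverse, $\mL(J_{\cdot\al}\mid I_\al=1)=\mL(I)$. It only lets you turn $p_\al\e g(W)$ into $\e[I_\al g(W_{1\al},W_{2\al})]$ and says nothing about $\mL(I\mid I_\al=1)$. Even if your identity held, it would produce the $p_\al\e|\cdot|$ weights of \eqref{size bias bound}, not the $I_\al$-weighted expectations in the fourth and fifth lines.

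The fix is to convert the $\la_k$ term instead: $\la_k\e\De_kf(W)=\sum_{\al\in\Ga_{2k}}\e[I_\al\De_kf(W_{1\al},W_{2\al})]$. The other term needs no coupling: $\e[W_{2k}\De_kf(W_1,W_2-e_k)]=\sum_{\al\in\Ga_{2k}}\e[I_\al\De_kf(W_1,W_2-I_\al e_k)]$. Moving $x$ from $W_{1\al}$ to $W_1$ at $y=W_{2\al}$ gives the fourth line. Moving $y$ from $W_{2\al}$ to $W_2-I_\al e_k$, one index $\be\in\Ga_2$ at a time, produces second differences. The $\be=\al$ step contributes $\e[I_\al J_{\al\al}]\,\|\De_{kk}f\|=p_\al^2\|\De_{kk}f\|$, which is where the $p_\al^2$ really comes from. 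Each $\be\neq\al$ step contributes $\e[I_\al|I_\be-J_{\be\al}|]\max_{j,\ell}\|\De_{j\ell}f\|$.
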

	\begin{proof}
Fix an arbitrary $h\in\mH_3$, and let $f=f_h$ be given by \eqref{eq new solution} with this $h$. Then it solves the Stein equation \eqref{eq Stein equation} and satisfies
		\begin{align}
			&\e[h(W_1,W_2)]-\e\qty[h(Z,N)]	=\e[\gen_{\Si,\la} f(W_1,W_2)]\notag\\
			=&\e\qty[\sum_{j,k=1}^d\si_{jk}(\partial_{jk}f )(W_1,W_2)-\sum_{j=1}^dW_{1j} (\partial_jf)(W_1,W_2)]\label{OU part}\\
			&\quad\quad+2\sum_{j=1}^{r}\e\Big[\la_j\De_jf(W_1,W_2)-W_{2j}\De_jf(W_1,W_2-e_j) \Big].\label{ID part}
		\end{align}
		The integrability of each term is ensured by Lemma \ref{smoothness x} and Lemma \ref{smoothness y}. The proof below is a suitable adaptation of the proofs in the size-bias coupling approach (\cite[Theorem 1.2 and Lemma 2.1]{GR96} on multivariate normal approximation and \cite[Theorem 10.I]{BHJ92} on multivariate Poisson approximation) for our coupling \eqref{Stein coupling conditional dist}. Let us introduce a ``mark'' $Y_\be$ with each $\be\in\Ga$ such that
		\begin{align*}
			Y_\be=\sum_{j=1}^dj\indi_{\be\in\Ga_{1j} }+\sum_{j=1}^rj\indi_{\be\in\Ga_{2j} }.
		\end{align*}
		Using these marks, $W_1=(W_{11},\dots,W_{1d})^\top$ and $W_2=(W_{21},\dots,W_{2r})^\top$ can be written as
		\begin{align*}
			W_{1}=\sum_{\be\in\Ga_1}\frac{1}{s_{Y_\be}}(I_{\be}-p_\be)e_{Y_\be,d}	,\quad\quad W_2=\sum_{\be\in\Ga_2}I_\be e_{Y_\be},
		\end{align*}
		where $\{e_{j,d}\}_{1\le j\le d}$ denotes the standard basis for $\R^d$. 	For each $\al\in\Ga$, let
		\begin{align*}
			W_{1\al}=\sum_{\be\in\Ga_1}\frac{1}{s_{Y_\be}}(J_{\be\al}-p_\be)e_{Y_\be,d},\quad\quad W_{2\al}=\sum_{\be\in\Ga_2}J_{\be\al}e_{Y_\be}.
		\end{align*}
		Then $\mL(W_{1\al},W_{2\al}|I_\al=1)=\mL(W_1,W_2)$ by \eqref{Stein coupling conditional dist}.
		
		\medskip
		\underline{Bounding \eqref{OU part}}.  Note that, for $\al\in\Ga_{1j}$,
		\begin{align*}
			&\e\qty[I_\al(\partial_jf)(W_{1\al},W_{2\al}) ]\\
			=&p_\al\e\qty[I_\al(\partial_jf)(W_{1\al},W_{2\al})|I_\al=1]+(1-p_\al)\e\qty[I_\al(\partial_jf)(W_{1\al},W_{2\al})|I_\al=0 ]\\
			=&p_\al\e\qty[(\partial_jf)(W_{1\al},W_{2\al})|I_\al=1]\\
			=&p_\al\e\qty[(\partial_jf)(W_{1},W_{2})],
		\end{align*}
		by a standard disintegration argument using a regular conditional distribution. Thus,
		\begin{align*}
			&\e\qty[W_{1j}(\partial_jf)(W_1,W_2) ]\\
			=&\frac{1}{s_{j}}\sum_{\al\in\Ga_{1j}}\e\qty[(I_\al-p_\al)(\partial_jf)(W_1,W_2)]  \\
			=&\frac{1}{s_{j}}\sum_{\al\in\Ga_{1j}}\e\qty[I_\al\Big((\partial_jf)(W_{1},W_{2})-(\partial_jf)(W_{1\al},W_{2\al})\Big)]\\
			=&\frac{1}{s_{j}}\sum_{\al\in\Ga_{1j}}\e\qty[I_\al\Big((\partial_jf)(W_{1},W_{2})-(\partial_jf)(W_{1\al},W_{2})\Big)]\\
			&+\frac{1}{s_{j}}\sum_{\al\in\Ga_{1j}}\e\qty[I_\al\Big((\partial_jf)(W_{1\al},W_{2})-(\partial_jf)(W_{1\al},W_{2\al})\Big)].
		\end{align*}
		On the other hand, a similar argument shows that
		\begin{align*}
			\e\qty[I_{\al}W_{1\al k}]=p_\al\e\qty[W_{1 k}]
		\end{align*}
		for $\al\in\Ga_{1j}$ and $1\le k\le d$, where $W_{1\al k}$ is the $k$-th coordinate of $W_{1\al}$, that is,
		\begin{gather*}
			W_{1\al k}=\frac{1}{s_{k}}\sum_{\be\in\Ga_{1k}}(J_{\be\al}-p_\be),
		\end{gather*}
		and thus we have
		\begin{align*}
			\si_{jk}=\e\qty[W_{1j}W_{1k} ]=&\frac{1}{s_{j}}\sum_{\al\in\Ga_{1j}}\e\qty[\qty(I_\al-p_\al)W_{1 k}]\\
			=&\frac{1}{s_{j}}\sum_{\al\in\Ga_{1j}}\e\qty[I_\al\qty(W_{1 k}-W_{1\al k})].
		\end{align*}
		Thus,
		\begin{align}
			&\e\qty[\sum_{j,k=1}^d\si_{jk}(\partial_{jk}f )(W_1,W_2)-\sum_{j=1}^dW_{1j} (\partial_jf)(W_1,W_2)]\notag\\
			=&\e\qty[\sum_{j,k=1}^{d}\frac{1}{s_{j}}\sum_{\al\in\Ga_{1j}}\Big(\e\qty[I_\al\qty(W_{1k}-W_{1\al k})]-I_\al(W_{1k}-W_{1\al k})\Big)(\partial_{jk}f)(W_1,W_2)  ]\notag\\
			&-\e\qty[\sum_{j=1}^d\frac{1}{s_{j}}\sum_{\al\in\Ga_{1j}}I_\al\qty((\partial_jf)(W_{1},W_{2})-(\partial_jf)(W_{1\al},W_{2})-\sum_{k=1}^d(\partial_{jk}f)(W_1,W_2)(W_{1k}-W_{1\al k}))]\notag\\
			&-\sum_{j=1}^d\frac{1}{s_{j}}\sum_{\al\in\Ga_{1j}}\e\qty[I_\al\Big((\partial_jf)(W_{1\al},W_{2})-(\partial_jf)(W_{1\al},W_{2\al})\Big)]\notag\\
			\eqqcolon&R_1+R_2+R_3.\label{OU part decomposition}
		\end{align}
		First,
		\begin{align*}
			R_1=\sum_{j,k=1}^d\frac{1}{s_{j}}\e\qty[\qty(\e\qty[\sum_{\al\in\Ga_{1j}}I_\al\qty(W_{1\al k}-W_{1k})]-\e\qty[\sum_{\al\in\Ga_{1j}}I_\al(W_{1\al k}-W_{1k})\Big|W_1,W_2])(\partial_{jk}f)(W_1,W_2)  ].
		\end{align*}
		By the Cauchy--Schwarz inequality,
		\begin{align*}
			|R_1|\le&\sum_{j,k=1}^d\frac{\|\partial_{jk}f\|}{s_{j}}\e\left|\e\qty[\sum_{\al\in\Ga_{1j}}I_\al\qty(W_{1\al k}-W_{1k})]-\e\qty[\sum_{\al\in\Ga_{1j}}I_\al(W_{1\al k}-W_{1k})\Big|W_1,W_2]\right|\\
			\le&\sum_{j,k=1}^d\frac{\|\partial_{jk}f\|}{s_{j}}\sqrt{\Var\qty(\e\qty[\sum_{\al\in\Ga_{1j}}I_\al(W_{1\al k}-W_{1k})\Big|W_1,W_2])}\\
			=&\sum_{j,k=1}^d\frac{\|\partial_{jk}f\|}{s_{j}s_{k}}\sqrt{\Var\qty(\e\qty[\sum_{\al\in\Ga_{1j}}I_\al\sum_{\be\in\Ga_{1k}}(I_{\be}-J_{\be\al})\Big|W_1,W_2])}.
		\end{align*}
		Next, by the multivariate version of Taylor's theorem,
		\begin{align*}
			&\e\qty[I_\al\qty((\partial_jf)(W_{1\al},W_{2})-(\partial_jf)(W_{1},W_{2})-\sum_{k=1}^d(\partial_{jk}f)(W_1,W_2)(W_{1\al k}-W_{1k}))]\\
			=&\e\qty[I_\al(1-U_1)\sum_{k,\ell=1}^d\qty(\partial_{jk\ell}f)\qty(W_{1}+U_1(W_{1\al}-W_1),W_2)\qty(W_{1\al k}-W_{1k})\qty(W_{1\al \ell}-W_{1\ell})],
		\end{align*}
		where $U_1$ is a $U(0,1)$ random variable independent of everything else. Thus we have
		\begin{align*}
			|R_2|\le&\sum_{j=1}^d\frac{1}{s_{j}}\sum_{\al\in\Ga_{1j}}\sum_{k,\ell=1}^d\|\partial_{jk\ell}f\|\e\qty[ I_\al(1-U_1)\Big|\qty(W_{1k}-W_{1\al k})\qty(W_{1\ell}-W_{1\al \ell})\Big|]\\
			=&\frac{1}{2}\sum_{j,k,\ell=1}^d\frac{\|\partial_{jk\ell}f\|}{s_{j}s_{k}s_{\ell}}\sum_{\al\in\Ga_{1j}}\e\qty|I_\al\sum_{\be\in\Ga_{1k}}\qty(I_{\be}-J_{\be\al})\sum_{\ga\in\Ga_{1\ell}}\qty(I_{\ga}-J_{\ga\al})|.
		\end{align*}
		Finally, writing $W_{2\al}=(W_{2\al1},\dots,W_{2\al r})^\top$, where $W_{2\al k}=\sum_{\be\in\Ga_{2k}}J_{\be\al}$, we have
		\begin{align*}
			&(\partial_jf)(W_{1\al},W_{2})-(\partial_jf)(W_{1\al},W_{2\al})\\
			=&\sum_{k=1}^r\Big[(\partial_jf)(W_{1\al},W_{21},\dots,W_{2(k-1)},W_{2 k},W_{2\al(k+1)},\dots,W_{2\al r})\Big.\\
			&\Big.\quad\quad\quad-(\partial_jf)(W_{1\al},W_{21},\dots,W_{2 (k-1)},W_{2\al k},W_{2\al(k+1)},\dots,W_{2\al r})\Big].
		\end{align*}
		Further writing 
		\begin{align*}
			&(\partial_jf)(W_{1\al},W_{21},\dots,W_{2(k-1)},W_{2 k},W_{2\al(k+1)},\dots,W_{2\al r})\\
			&\quad-(\partial_jf)(W_{1\al},W_{21},\dots,W_{2 (k-1)},W_{2\al k},W_{2\al(k+1)},\dots,W_{2\al r})
		\end{align*}
		as a sum of $\qty|W_{2k}(\omega)-W_{2\al k}(\omega)|$ telescoping terms, we have
		\begin{align*}
			&\Big|(\partial_jf)(W_{1\al},W_{21},\dots,W_{2(k-1)},W_{2 k},W_{2\al(k+1)},\dots,W_{2\al r})\Big.\\
			&\Big.\quad-(\partial_jf)(W_{1\al},W_{21},\dots,W_{2 (k-1)},W_{2\al k},W_{2\al(k+1)},\dots,W_{2\al r})\Big|\\
			\le&\|\De_k\qty(\partial_jf)\|\qty|W_{2k}-W_{2\al k}|
		\end{align*}
		and
		\begin{align*}
			\Big|	(\partial_jf)(W_{1\al},W_{2})-(\partial_jf)(W_{1\al},W_{2\al})\Big|	\le&\sum_{k=1}^{r}\|\De_k\qty(\partial_jf)\|\qty|W_{2k}-W_{2\al k}|\\
			=&\sum_{k=1}^{r}\|\De_k\qty(\partial_jf)\|\qty|\sum_{\be\in\Ga_{2k}}\qty(I_{\be}-J_{\be\al})|.
		\end{align*}
		Thus
		\begin{align*}
			|R_3|&\le\sum_{j=1}^{d}\frac{1}{s_{j}}\sum_{\al\in\Ga_{1j}}\e\qty[I_\al\sum_{k=1}^r\|\De_k\qty(\partial_jf)\|\qty|\sum_{\be\in\Ga_{2k}}\qty(I_\be-J_{\be\al})|]\\
			&=\sum_{j=1}^{d}\sum_{k=1}^r\frac{\|\De_k\qty(\partial_jf)\|}{s_{j}}\sum_{\al\in\Ga_{1j}}\e\qty|I_\al\sum_{\be\in\Ga_{2k}}\qty(I_\be-J_{\be\al})|.
		\end{align*}
		
		\medskip
		\underline{Bounding \eqref{ID part}}.	For each $\al\in\Ga_2$, define
		\begin{align*}
			V_{2\al}=\sum_{\substack{\be\in\Ga_2\\\be\neq\al}}I_{\be} e_{Y_\be}=W_2-I_\al e_{Y_\al}.  
		\end{align*}
		Then, if $\al\in\Ga_{2j}$, since $e_{Y_\al}=e_j$, 
		\begin{align*}
			I_{\al}\De_j f(W_1,W_2-e_j)=I_\al\De_j f(W_1,W_2-I_\al e_j)=I_{\al}\De_j f(W_1,V_{2\al}).
		\end{align*}
		Also it follows that, if $\al\in\Ga_{2j}$,
		\begin{align*}
			&\e\qty[I_\al\De_j f(W_{1\al},W_{2\al})]\\
			=&p_\al\e\qty[I_\al\De_j f(W_{1\al},W_{2\al})|I_\al=1]+(1-p_\al)\e[I_\al\De_j f(W_{1\al},W_{2\al})|I_\al=0]\\
			=&p_\al\e\qty[\De_j f(W_{1\al},W_{2\al}) |I_\al=1]\\
			=&p_\al\e\qty[\De_j f(W_{1},W_{2}) ].
		\end{align*}
		Thus
		\begin{align*}
			&\sum_{j=1}^{r}\e\Big[\la_j\De_j f(W_1,W_2)-W_{2j}\De_j f(W_1,W_2-e_j) \Big]\\
			=&\sum_{j=1}^r\sum_{\al\in\Ga_{2j}}\Big[p_\al\e\qty[\De_j f(W_1,W_2)]-\e\qty[I_\al \De_j f(W_1,W_2-e_j)]\Big]\\
			=&\sum_{j=1}^r\sum_{\al\in\Ga_{2j}}\e\qty[I_\al\Big(\De_j f(W_{1\al},W_{2\al})-\De_j f(W_{1},V_{2\al}) \Big)]\\
			=&\sum_{j=1}^r\sum_{\al\in\Ga_{2j}}\e\qty[I_\al\Big(\De_j f(W_{1\al},W_{2\al})-\De_j f(W_{1},W_{2\al}) \Big)]\\
			&\quad\quad\quad+\sum_{j=1}^r\sum_{\al\in\Ga_{2j}}\e\qty[I_\al\Big(\De_j f(W_{1},W_{2\al})-\De_j f(W_{1},V_{2\al}) \Big)]\\
			\eqqcolon &R_4+R_5.
		\end{align*}
		By the fundamental theorem of calculus,
		\begin{align*}
			R_4=&-\sum_{j=1}^r\sum_{\al\in\Ga_{2j}}\e\qty[I_\al\De_j \Big(f(W_{1},W_{2\al})-f(W_{1\al},W_{2\al})\Big) ]\\
			=&-\sum_{j=1}^r\sum_{\al\in\Ga_{2j}}\e\qty[I_\al\De_j\qty(\sum_{k=1}^d(\partial_kf)\qty(W_{1\al}+U_2(W_1-W_{1\al}),W_{2\al})(W_{1k}-W_{1\al k})) ]\\
			=&-\sum_{j=1}^r\sum_{\al\in\Ga_{2j}}\sum_{k=1}^d\e\qty[I_\al(W_{1k}-W_{1\al k})(\De_j \partial_kf)\qty(W_{1\al}+U_2(W_1-W_{1\al}),W_{2\al}) ]\\
			=&-\sum_{j=1}^r\sum_{\al\in\Ga_{2j}}\sum_{k=1}^d\frac{1}{s_{k}}\e\qty[I_\al\qty(\sum_{\be\in\Ga_{1k}}\qty(I_{\be}-J_{\be\al }))(\De_j \partial_kf)\qty(W_{1\al}+U_2(W_1-W_{1\al}),W_{2\al}) ],
		\end{align*}
		where $U_2$ is a $U(0,1)$ random variable independent of everything else. Thus we have
		\begin{align*}
			|R_4|\le\sum_{j=1}^r\sum_{k=1}^d\frac{\|\De_j\qty(\partial_kf)\|}{s_{k}}\sum_{\al\in\Ga_{2j}}\e\qty|I_\al\sum_{\be\in\Ga_{1k}}\qty(I_\be-J_{\be\al})|.
		\end{align*}
		Next, note that
		\begin{align*}
			R_5=&\sum_{j=1}^r\sum_{\al\in\Ga_{2j}}\e\qty[I_\al\Big(\De_j f(W_{1},W_{2\al})-\De_j f(W_{1},V_{2\al}) \Big)]\\
			=&\sum_{\al\in\Ga_2}\sum_{j=1}^r\indi_{\al\in\Ga_{2j}}\e\qty[I_\al\Big(\De_j f(W_{1},W_{2\al})-\De_j f(W_{1},V_{2\al})\Big)  ].
		\end{align*}
		Enumerate the elements of $\Ga_2$ as $\be(1),\dots,\be(|\Ga_2|)$. Let, for $\al\in\Ga_2$ and $0\le i\le|\Ga_2|$, 
		\begin{align*}
			U^i_\al\coloneqq\begin{cases}
				V_{2\al}+\sum_{j=1}^{i}\{J_{\be(j)\al}-I_{\be(j)} \}e_{Y_{\be(j)}}  &  \text{if $\al\notin\{ \be(1),\dots,\be(i)\}$} \\
				V_{2\al}+I_{\al}e_{Y_\al}+\sum_{j=1}^{i}\{J_{\be(j)\al}-I_{\be(j)} \}e_{Y_{\be(j)}}   &  \text{if $\al\in\{ \be(1),\dots,\be(i)\}$}
			\end{cases},
		\end{align*}
		so that $U^0_\al=V_{2\al}$ and $U^{|\Ga_2|}_\al=W_{2\al}$.	Then
		\begin{align*}
			&\sum_{j=1}^r\indi_{\al\in\Ga_{2j}}\e\qty[I_\al\Big(\De_j f(W_{1},W_{2\al})-\De_j f(W_{1},V_{2\al})\Big) ]\\
			=&\sum_{i=1}^{|\Ga_2|}\e\sum_{j=1}^r\indi_{\al\in\Ga_{2j}}\qty[I_\al\Big(\De_j f\qty(W_{1},U^i_\al)-\De_j f\qty(W_{1},U^{i-1}_\al)\Big) ].
		\end{align*}
		Note that
		\begin{align*}
			U^i_\al=\begin{cases}
				U^{i-1}_\al+J_{\al\al} e_{Y_\al}&  \text{if $\al=\be(i)$}   \\
				U^{i-1}_\al+(J_{\be(i)\al}-I_{\be(i)})e_{Y_{\be(i)}} &\text{if $\al\neq\be(i)$}
			\end{cases}.
		\end{align*}
		So if $\al=\be(i)$,
		\begin{align*}
			&\De_j f\qty(W_{1},U^i_\al)-\De_j f\qty(W_{1},U^{i-1}_\al)  \\
			=&\sum_{k=1}^{r}\indi_{\al\in\Ga_{2k}}\qty[\De_j f\qty(W_{1},U^{i-1}_\al+J_{\al\al} e_k)-\De_j f\qty(W_{1},U^{i-1}_\al)  ]\\
			=&\sum_{k=1}^{r}\indi_{\al\in\Ga_{2k}}J_{\al\al}\qty[\De_j f\qty(W_{1},U^{i-1}_\al+e_k)-\De_j f\qty(W_{1},U^{i-1}_\al)   ]\\
			=&\sum_{k=1}^{r}\indi_{\al\in\Ga_{2k}}J_{\al\al}\De_{kj} f\qty(W_{1},U^{i-1}_\al)  ,
		\end{align*}
		and if $\al\neq\be(i)$,
		\begin{align*}
			&\De_j f\qty(W_{1},U^i_\al)-\De_j f\qty(W_{1},U^{i-1}_\al)  \\
			=&\sum_{k=1}^{r}\indi_{\be(i)\in\Ga_{2k}}\bigg[\De_j f\qty(W_{1},U^{i-1}_\al+(J_{\be(i)\al}-I_{\be(i)}) e_k)-\De_j f\qty(W_{1},U^{i-1}_\al)  \bigg] \\
			=&\sum_{k=1}^{r}\indi_{\be(i)\in\Ga_{2k}}|I_{\beta(i)}-J_{\beta(i)\al}|\\
			&\quad\quad\times\bigg\{ \indi_{\{I_{\be(i)}-J_{\be(i)\al}=-1 \}} \qty[\De_j f\qty(W_{1},U^{i-1}_\al+e_k)-\De_j f\qty(W_{1},U^{i-1}_\al)    ]    \bigg.\\
			&\quad\quad\quad\quad\quad\bigg. +\indi_{\{I_{\be(i)}-J_{\be(i)\al}=1 \}} \qty[\De_j f\qty(W_{1},U^{i-1}_\al-e_k)-\De_j f\qty(W_{1},U^{i-1}_\al)  ]    \bigg\}\\
			=&\sum_{k=1}^{r}\indi_{\be(i)\in\Ga_{2k}}|I_{\beta(i)}-J_{\beta(i)\al}|\\
			&\times\bigg\{ \indi_{\{I_{\be(i)}-J_{\be(i)\al}=-1 \}} \De_{kj} f\qty(W_{1},U^{i-1}_\al)      -\indi_{\{I_{\be(i)}-J_{\be(i)\al}=1 \}} \De_{kj} f\qty(W_{1},U^{i-1}_\al-e_k) \bigg\}.
		\end{align*}
		Thus, writing $\al\in\Ga_2$ as $\al=\be(i_0)$,
		\begin{align*}
			R_5=&\sum_{\al\in\Ga_2}\sum_{i=1}^{|\Ga_2|}\e\sum_{j=1}^r\indi_{\al\in\Ga_{2j}}\qty[I_\al\Big(\De_j f\qty(W_{1},U^i_\al)-\De_j f\qty(W_{1},U^{i-1}_\al)\Big) ]	\\
			=&\sum_{\al\in\Ga_2}\e\Biggl\{ I_\al J_{\al\al}\sum_{j=1}^{r}\indi_{\al\in\Ga_{2j}}\De_{jj}f\qty(W_{1},U^{i_0-1}_\al) \Biggr\}\\
			&+\sum_{\al\in\Ga_2}\sum_{\substack{1\le i\le|\Ga_2|\\\be(i)\neq\al}}\e\Biggl\{I_\al|I_{\beta(i)}-J_{\beta(i)\al}|\sum_{j=1}^{r}\indi_{\al\in\Ga_{2j}}\sum_{k=1}^{r}\indi_{\be(i)\in\Ga_{2k}}\Biggr.\\
			&\quad\quad\times\Bigl\{ \indi_{\{I_{\be(i)}-J_{\be(i)\al}=-1 \}} \De_{kj} f\qty(W_{1},U^{i-1}_\al)   -\indi_{\{I_{\be(i)}-J_{\be(i)\al}=1 \}} \De_{kj} f\qty(W_{1},U^{i-1}_\al-e_k)\Bigr\}\Biggr\}.
		\end{align*}
		Therefore, we obtain
		\begin{align*}
			|R_5|&\le \sum_{\al\in\Ga_2}\e\qty[I_\al J_{\al\al}]\sum_{j=1}^{r}\indi_{\al\in\Ga_{2j}}\|\Delta_{jj}f\|+\sum_{\al\in\Ga_2}\sum_{\substack{\be\in\Ga_2\\\be\neq\al}}\e\qty[I_\al|I_{\beta}-J_{\beta\al}|]\sum_{j=1}^{r}\indi_{\al\in\Ga_{2j}}\sum_{k=1}^{r}\indi_{\be\in\Ga_{2k}}\|\Delta_{kj}f\|\\
			&=\sum_{j=1}^{r}\|\Delta_{jj}f\|\sum_{\al\in\Ga_{2j}}\e\qty[I_\al J_{\al\al}]+\sum_{j=1}^{r}\sum_{k=1}^{r}\|\Delta_{kj}f\|\sum_{\al\in\Ga_{2j}}\sum_{\substack{\be\in\Ga_{2k}\\\be\neq\al}}\e\qty[I_\al|I_\beta-J_{\beta\al}|]\\
			&	\le\max_{1\le j,k\le r}\|\Delta_{jk}f\|\qty(\sum_{\al\in\Ga_2}\e\qty[I_\al J_{\al\al}]+\sum_{\al\in\Ga_2}\sum_{\be\in\Ga_2\setminus\{\al\}}\e\qty[I_\al|I_\beta-J_{\beta\al}|] ).
		\end{align*}
		Finally, the condition \eqref{Stein coupling conditional dist} gives $\e[I_\al J_{\al\al}]=p_\al\e[J_{\al\al}|I_\al=1]=p_\al^2$.
	\end{proof}
	
	\medskip
Taking the supremum over $h\in\mH_3$, we can derive a bound for the error in joint normal-Poisson approximation to the distribution of $(W_1,W_2)$ from Theorem \ref{Stein coupling theorem}. This, together with the smoothness estimates about the Stein solution $f_h$ which are uniform in $h$ (Lemmas \ref{smoothness x} and \ref{smoothness y}), tells us that if we can construct the coupled indicators $(J_{\be\al},\be\in\Ga)$ so that the moment terms about $(I_\be)$ and $(J_{\be\al})$ appearing in the bound \eqref{Stein coupling bound} are small, then the distributional approximation works. In order to gain some intuition as to why this is the case, we informally interpret the condition \eqref{Stein coupling conditional dist} on our coupling and the bound \eqref{Stein coupling bound} in terms of dependence among the indicators $I_\al$, $\al\in\Ga$: The condition \eqref{Stein coupling conditional dist} implies that for each $\al\in\Ga$,
\begin{equation}\label{coupling condition independence interpretation}
	\p\Big((J_{\be\al},\be\neq\al)\in B,\ I_\al=1\Big)=\p\Big((I_{\be},\be\neq\al)\in B\Big)\p\qty(I_\al=1).
\end{equation}
If the moment terms in the bound \eqref{Stein coupling bound} are small, $(J_{\be\al})$ are close to $(I_\be)$ in some sense, and interpreting $J_{\be\al}\approx I_\be$ in \eqref{coupling condition independence interpretation} tells us that $I_\be$, $\be\neq\al$ are approximately independent of $I_\al$. This is so for each $\al\in\Ga$, thus the dependence among $I_\al$, $\al\in\Ga$ is weak, and in such a case normal or Poisson approximation to the sum of indicators typically works well.
	\begin{exm}[Independent indicators]
		If $I_\al$, $\al\in\Ga$ are independent, then we may take $J_{\be\al}\equiv I_\be$ for $\be\neq\al$ and $J_{\al\al}=I'_{\al}$, an independent copy of $I_\al$ independent of everything else. For the first term in \eqref{Stein coupling bound}, we use 
		\begin{align*}
		\Var\qty(\e\qty[\sum_{\al\in\Ga_{1j}}I_\al(I_{\al}-I'_{\al})\Big|W_1,W_2])\le&\Var\qty(\e\qty[\sum_{\al\in\Ga_{1j}}I_\al(I_{\al}-I'_{\al})\Big|(I_\al,\al\in\Ga)])\\
		=&\Var\qty(\sum_{\al\in\Ga_{1j}}I_\al(1-p_\al)).
		\end{align*}
	A straightforward computation shows that, for each $h\in\mH_3$, we have
		\begin{equation*}
			\begin{split}
				\Big|\e\qty[h(W_1,W_2)]-\e\qty[h(Z,N)]\Big|
				\le&\sum_{j=1}^d\frac{\|\partial_{jj}f_h\|}{{s_{j}}^2}\sqrt{\sum_{\al\in\Ga_{1j}}p_\al(1-p_\al)^3}+\frac{1}{2}\sum_{j=1}^d\frac{\|\partial_{jjj}f_h\|}{{s_{j}}^3}\sum_{\al\in\Ga_{1j}}p_\al(1-p_\al)\\
				&+2\max_{1\le j,k\le r}\|\De_{jk}f_h\|\sum_{\al\in\Ga_2} p_{\al}^2.
			\end{split}
		\end{equation*}
		The first two terms correspond to the normal approximation to $W_1$ and the last term corresponds to the Poisson approximation to $W_2$, each of which being of the correct order of magnitude.
	\end{exm}
	\begin{rem}[Relation to Stein coupling]
		The argument appearing in bounding \eqref{OU part} can be generalized as follows: Let $\mathfrak{a}$ be a random index uniformly distributed on $\Ga_1$ and independent of all else. Let
		\begin{align*}
G_\al=-\frac{|\Ga_1|}{s_{Y_\al}}I_\al e_{Y_\al,d}\quad(\al\in\Ga_1),\quad G=G_{\mathfrak{a}},\quad \qty(W_1',W_2')=\qty(W_{1\mathfrak{a}},W_{2\mathfrak{a}}).
		\end{align*}
		Then
		\begin{align*}
			\e\qty[G^\top\qty(F(W_1',W_2')-F(W_1,W_2))]=\e\qty[W_1^\top F(W_1,W_2)]
		\end{align*}
		for all $F:\R^d\times\Z_+^r\rightarrow\R^d$ for which the expectations are finite. In particular, the triplet $\qty(W_1,W_1',G)$ is a $d$-dimensional Stein coupling in the sense of \cite[Definition 2.1]{FR15}. This is analogous to \cite[Lemma 3.2]{BR19}, which only considers one-dimensional normal approximation.
	\end{rem}
	\begin{rem}[Comparison with size-bias coupling]\label{size bias remark}
		The coupling \eqref{Stein coupling conditional dist} and the bound \eqref{Stein coupling bound} are similar to, but different from those of the \emph{size-bias coupling} approach (cf.\ \cite{BHJ92, GR96, J94, Ross11}). Indeed, suppose alternatively that for each $\al$, the 0--1 valued random variables $(K_{\beta\al},\be\in\Ga)$ are defined on the same probability space as $(I_\beta,\beta\in\Ga)$ with
		\begin{equation}\label{size bias coupling conditional dist}
			\mL(K_{\be\al},\be\in\Ga)=\mL(I_\be,\be\in\Ga|I_\al=1).
		\end{equation}
		Then we can show that, for each $h\in\mH_3$,
		\begin{equation}\label{size bias bound}
			\begin{split}
				\Big|\e\qty[h(W_1,W_2)]-\e\qty[h(Z,N)]\Big|\le&\sum_{j,k=1}^d\frac{\|\partial_{jk}f_h\|}{s_{j}s_{k}}\sqrt{\Var\qty(\e\qty[\sum_{\al\in\Ga_{1j}}p_\al\sum_{\be\in\Ga_{1k}}(I_{\be}-K_{\be\al})\Big|W_1,W_2])}\\
				&+\frac{1}{2}\sum_{j,k,\ell=1}^d\frac{\|\partial_{jk\ell}f_h\|}{s_{j}s_{k}s_{\ell}}\sum_{\al\in\Ga_{1j}}p_\al\e\qty|\sum_{\be\in\Ga_{1k}}\qty(I_{\be}-K_{\be\al})\sum_{\ga\in\Ga_{1\ell}}\qty(I_{\ga}-K_{\ga\al})|\\
				&+\sum_{j=1}^{d}\sum_{k=1}^r\frac{\|\De_k\qty(\partial_jf_h)\|}{s_{j}}\sum_{\al\in\Ga_{1j}}p_\al\e\qty|\sum_{\be\in\Ga_{2k}}\qty(I_\be-K_{\be\al})|\\
				&+2\sum_{k=1}^r\sum_{j=1}^d\frac{\|\De_k\qty(\partial_jf_h)\|}{s_{j}}\sum_{\al\in\Ga_{2k}}p_\al\e\qty|\sum_{\be\in\Ga_{1j}}\qty(I_\be-K_{\be\al})|\\
				&+2\max_{1\le j,k\le r}\|\De_{jk}f_h\|\qty(\sum_{\al\in\Ga_2}p_\al^2+\sum_{\al\in\Ga_2}\sum_{\be\in\Ga_2\setminus\{\al\}}p_\al\e\qty[|I_\beta-K_{\beta\al}|] ).
			\end{split}
		\end{equation}	
		The proof of the bound \eqref{size bias bound} under the assumption \eqref{size bias coupling conditional dist} is parallel to that of \eqref{Stein coupling bound} under the assumption \eqref{Stein coupling conditional dist} and thus omitted (although the bound \eqref{size bias bound} in this joint normal-Poisson approximation fashion is also new even for the size-bias coupling \eqref{size bias coupling conditional dist}). 
	\end{rem}
	By considering a subset of $\mH_3$ and combining Theorem \ref{Stein coupling theorem} with the smoothness estimates about the solution (Lemmas \ref{smoothness x} and \ref{smoothness y}), we can also obtain a bound for the error in normal or Poisson approximation alone.
	\begin{exm}[One-dimensional Poisson approximation]\label{one dimensional Poisson approximation}
			Set $r=1$ and $\la=\e[W_2]$. Suppose that $\la>0$. Combined with the last part of Lemma \ref{smoothness x} and Lemma \ref{smoothness y}, \eqref{smoothness second order difference diagonal improved}, Theorem \ref{Stein coupling theorem} gives
		\begin{equation}\label{one dim Poisson approximation zero probability bound}
			\qty|\p\qty(W_2=0)-e^{-\la}|\le\min\qty{\frac{1}{2},\frac{1-e^{-\la}}{\la}}\qty(\sum_{\al\in\Ga_2} p_{\al}^2+\sum_{\al\in\Ga_2}\sum_{\be\in\Ga_2\setminus\{\al\}}\e\qty[I_\al|I_\beta-J_{\beta\al}|] )
		\end{equation}
		in the case of one-dimensional Poisson approximation. Having the upper bound $1/2$ in the constant (when we restrict the test function to be $\indi_{y=0}$) may look new to the literature. In Appendix \ref{sec: smoothness of the solution}, we prove the smoothness estimate \eqref{smoothness second order difference diagonal improved} for our solution as a by-product of a probabilistic argument. Actually, one can reproduce the estimate \eqref{one dim Poisson approximation zero probability bound} by exploiting the explicit formula for the solution to the Chen--Stein equation for one-dimensional Poisson approximation
		\begin{equation}\label{Chen Stein equation}
			\la g(j+1)-jg(j)=\indi_{j=0}-\Po(\la)\{0\},\quad j\ge0
		\end{equation}
		(cf.\ \cite[(1.10)]{BHJ92}). The solution to \eqref{Chen Stein equation} is given by
		\begin{align*}
			g_{\la,\{0\}}(0)=0,\quad g_{\la,\{0\}}(j+1)=\frac{j!}{\la^{j+1}}\sum_{k=j+1}^{\infty}\frac{\la^ke^{-\la}}{k!},\quad j\ge0
		\end{align*}
		(cf.\ \cite[(1.18)]{BHJ92}). Then it is easy to verify that
		\begin{align*}
			g_{\la,\{0\}}(j+2)-g_{\la,\{0\}}(j+1)=&-\sum_{\ell=0}^{\infty}\frac{\la^\ell e^{-\la}}{\ell!}\frac{(\ell+1)!}{(\ell+j+1)\cdots(j+1)}\frac{1}{\ell+j+2},\quad j\ge0,
		\end{align*}
		whence $\sup_{j\ge1}\qty|g_{\la,\{0\}}(j+1)-g_{\la,\{0\}}(j)|\le1/2$. Now deriving the estimate \eqref{one dim Poisson approximation zero probability bound} under the assumption \eqref{Stein coupling conditional dist} is a standard exercise of the Chen--Stein method (see also the corresponding part of the proof of Theorem \ref{Stein coupling theorem} for bounding \eqref{ID part}).
	\end{exm}

	\section{The configuration model}
	\label{configuration model}

	\textbf{Additional notation.} Let $n_k=n_k(\bm{d})\coloneqq\qty|\qty{i\in[n]:d_i=k}|$, the number of vertices of degree $k$ in $\mathrm{CM}_n(\bm{d})$. For an integer $m$ and a positive integer $r$, let $(m)_r\coloneqq m!/(m-r)!=m(m-1)\cdots(m-r+1)$ be the descending factorial. We adopt the convention that $(m)_r=0$ if $r>m$, in particular, whenever $m\le0$. Similarly, $((m))_r\coloneqq m!!/(m-2r)!!=m(m-2)\cdots(m-2(r-1))$. We write $X=O(Y)$ to denote an inequality of the form $|X|\le CY$ for some constant $C$.

	\medskip
	 We rephrase our problem of the configuration model in terms of the notation introduced in Section \ref{indicator framework}. By making the substitution \eqref{configuration model definition} (the definition of $\mathrm{CM}_n(\bm{d})$), we can rewrite the four quantities of our interest in terms of the matching of half-edges:
		\begin{align*}
		(\text{\# of isolated edges})\:\: \overline{Z}_{\edge}=&\sum_{\substack{1\le i<j\le n\\ \mathrm{s.t.}\ d_i=d_j=1}}X_{ij}=\sum_{\substack{1\le i<j\le n\\ \mathrm{s.t.}\ d_i=d_j=1}}\sum_{s\in v_i,t\in v_j}I_{st},\\
		(\text{\# of isolated 2-stars})\:\: \overline{Z}_{\twostar}=&\sum_{\substack{1\le i<j\le n,\ 1\le k\le n\\ \mathrm{s.t.}\ d_i=d_j=1,\ d_k=2}}X_{ik}X_{jk}\\
		=&\sum_{\substack{1\le i<j\le n,\ 1\le k\le n\\ \mathrm{s.t.}\ d_i=d_j=1,\ d_k=2}}\sum_{\substack{s\in v_i,t\in v_j\\u<v\in v_k}}\qty(I_{su}I_{tv}+I_{sv}I_{tu})\\
		=&\sum_{\substack{1\le i<j\le n,\ 1\le k\le n\\ \mathrm{s.t.}\ d_i=d_j=1,\ d_k=2}}\sum_{\substack{s\in v_i,t\in v_j\\u\neq v\in v_k}}I_{su}I_{tv},\\
		(\text{\# of self-loops})\:\: S_n=&\sum_{1\le i\le n}X_{ii}=\sum_{1\le i\le n}\sum_{s<t\in v_i}I_{st},\\
		(\text{\# of double edges})\:\: M_n=&\sum_{1\le i<j\le n}\binom{X_{ij}}{2}\\
		=&\sum_{1\le i<j\le n}\sum_{\substack{s<t\in v_i\\ u<v\in v_j}}\qty(I_{su}I_{tv}+I_{sv}I_{tu})\\
		=&\sum_{1\le i<j\le n}\sum_{\substack{s<t\in v_i\\ u\neq v\in v_j}}I_{su}I_{tv}.
	\end{align*}
	Then we write these sums of indicators introducing index sets $\Ga_{11},\Ga_{12},\Ga_{21},\Ga_{22}$ as
	\begin{equation}\label{possible unlabeled copies indicator representation}
		\overline{Z}_{\edge}=\sum_{\al\in\Ga_{11}}I_\al,\quad\overline{Z}_{\twostar}=\sum_{\al\in\Ga_{12}}I_\al,\quad S_n=\sum_{\al\in\Ga_{21}}I_\al,\quad M_n=\sum_{\al\in\Ga_{22}}I_\al,
	\end{equation}
	with the following one-to-one correspondences (and with $I_\al=I_{su}I_{tv}$ for $\al$ in $\Ga_{12}$ or $\Ga_{22}$):
	{\setlength{\leftmargini}{29.5pt}  	
		\begin{itemize}
			\setlength{\labelsep}{7pt}     
			\setlength{\itemsep}{3pt}     
			
			\item  $\al\in\Ga_{11}$ $\leftrightarrow$ $\{s,t\}$, $s\in v_i$, $t\in v_j$, $d_i=d_j=1$, $1\le i<j\le n$;
			
			\item  $\al\in\Ga_{12}$ $\leftrightarrow$ $\Big\{\{s,u\},\{t,v\}\Big\}$, $s\in v_i$, $t\in v_j$, $u\neq v\in v_k$, $d_i=d_j=1$, $d_k=2$, $1\le i<j\le n$, $1\le k\le n$;

			\item  $\al\in\Ga_{21}$ $\leftrightarrow$ $\{s,t\}$, $s<t\in v_i$, $1\le i\le n$;
			
			\item  $\al\in\Ga_{22}$ $\leftrightarrow$ $\Big\{\{s,u\},\{t,v\}\Big\}$, $s<t\in v_i$, $u\neq v\in v_j$, $1\le i<j\le n$.
			
	\end{itemize} }
	Namely, $\Ga_{11}$, $\Ga_{12}$, $\Ga_{21}$ and $\Ga_{22}$ correspond to the sets of `possible' isolated edges, isolated 2-stars, self-loops and double edges, respectively. We call a member $\al$ in $\Ga_{11}$, $\Ga_{12}$, $\Ga_{21}$ and $\Ga_{22}$ as a \emph{possible isolated edge}, a \emph{possible isolated 2-star}, a \emph{possible self-loop} and a \emph{possible double edge}, respectively.
			{\setlength{\leftmargini}{29.5pt}  	
		\begin{itemize}
			\setlength{\labelsep}{7pt}    
			\setlength{\itemsep}{3pt}      
			
			\item  $\al\in\Ga_{11}$ is specified by a pair of two half-edges incident to two distinct degree 1 vertices. Thus $	\qty|\Ga_{11}|=(n_1)_2/2$.
			
			\item  $\al\in\Ga_{12}$ is specified by two pairs $\{s,u\}$ and $\{t,v\}$, where $s$ and $t$ are incident to two distinct degree 1 vertices, whereas $u$ and $v$ are incident to a degree 2 vertex. Note that, given $s\in v_i$ and $t\in v_j$ with $d_i=d_j=1$, $i\neq j$ and $u<v\in v_k$ with $d_k=2$, we distinguish two possible 2-stars, the one formed by $\{s,u\}$ and $\{t,v\}$, and the one formed by $\{s,v\}$ and $\{t,u\}$. Thus $\qty|\Ga_{12}|=(n_1)_2n_2$. 
					{\setlength{\leftmargini}{29.5pt}  	
				\begin{itemize}
					\setlength{\labelsep}{7pt}     
					\setlength{\itemsep}{3pt}      
					
					\item 	For future reference we write a possible isolated 2-star as $(st)\cup (uv)$, where $st$ and $uv$ correspond to its edges and $t$ and $u$ are incident to its degree 2 vertex in this expression (this will be specified every time).

			\end{itemize} }

			\item  $\al\in\Ga_{21}$ is specified by a pair of two distinct half-edges incident to the same vertex. Thus $\qty|\Ga_{21}|=\sum_{i\in[n]}(d_i)_2/2$.
			
			\item  $\al\in\Ga_{22}$ is specified by two pairs $\{s,u\}$ and $\{t,v\}$, where $s$ and $t$ are incident to the same vertex and $u$ and $v$ are incident to another vertex. Note that, given $s<t\in v_i$ and $u<v\in v_j$ with $i\neq j$, we distinguish two possible double edges, the one formed by $\{s,u\}$ and $\{t,v\}$, and the one formed by $\{s,v\}$ and $\{t,u\}$. Thus $\qty|\Ga_{22}|=\sum_{1\le i<j\le n}(d_i)_2(d_j)_2/2$.
			
	\end{itemize} }

	\medskip
	Also we introduce the \emph{type} of a member of each of $\Ga_{11}$, $\Ga_{12}$, $\Ga_{21}$ and $\Ga_{22}$ as $\mathsf{edge}$, $\mathsf{2\mathchar`-star}$, $\mathsf{self\mathchar`-loop}$ and $\mathsf{double\ edge}$, respectively. We also say that the types of the sets $\Ga_{11}$, $\Ga_{12}$, $\Ga_{21}$ and $\Ga_{22}$ are $\mathsf{edge}$, $\mathsf{2\mathchar`-star}$, $\mathsf{self\mathchar`-loop}$ and $\mathsf{double\ edge}$, respectively. If $H=\mathsf{edge}$ (resp.\ $\mathsf{2\mathchar`-star}$, $\mathsf{self\mathchar`-loop}$, $\mathsf{double\ edge}$), then the number of vertices of $H$ is defined as $v(H)=2$ (resp.\ $3$, $1$, $2$) and the number of edges of $H$ is defined as $e(H)=1$ (resp.\ $2$, $1$, $2$).
	
	\medskip
	Finally, let $\Ga_1=\Ga_{11}\uplus\Ga_{12}$, $\Ga_2=\Ga_{21}\uplus\Ga_{22}$ and $\Ga=\Ga_{1}\uplus\Ga_{2}$. For $\al\in\Ga$ of type $H$, $I_\al$ in \eqref{possible unlabeled copies indicator representation} then denotes the random indicator that $\al$ is indeed realized in $\mathfrak{g}$, i.e., all the $e(H)$ pairs of half-edges which specify $\al$ are realized in $\mathfrak{g}$. (For example, if $\al$ is a possible isolated 2-star $(st)\cup (uv)$, then $I_\al=I_{st}I_{uv}$.) Since each configuration (a matching of all the half-edges in pairs) has equal probability $1/(N-1)!!$, while there are in total $(N-2e(H)-1)!!$ possible configurations which have the $e(H)$ pairs specifying $\al$, we have
	\begin{equation}\label{success probability}
		p_\al=\e I_\al=\frac{(N-2e(H)-1)!!}{(N-1)!!}=\frac{1}{((N-1))_{e(H)}}.
	\end{equation}
	
	\subsection{Coupling construction via switching}
	
Inspired by \cite[Section 3.4]{BR19}, we construct such a coupling \eqref{Stein coupling conditional dist} for the indicators $(I_\be,\be\in\Ga)$ in \eqref{possible unlabeled copies indicator representation} via the \emph{switching} procedure. We introduce some notation. Let $\mG$ denote the set of perfect matchings of the $N$ half-edges. An element $g\in\mG$ is a partition of the set of the $N$ half-edges into $N/2$ pairs, e.g., $g=\qty{\{1,2\},\dots,\{N-1,N\}}$. For two distinct half-edges $s$ and $t$, $st$ denotes a possible edge that can be formed by joining $s$ and $t$. (Thus, $st=ts$.) Here, we are abusing the term ``edge'' to include a possible self-loop, i.e., the case when $s$ and $t$ are incident to the same vertex. Also note that it is not necessarily isolated. It can be identified with a pair $\{s,t\}$, but for notational reasons it is still convenient to make a distinction between a possible edge $st$ and a mere set of the half-edges $\{s,t\}$.\footnote{For example, for two possible edges $st$ and $uv$, we write $(st)\cap (uv)\neq\varnothing$ in later sections if those two possible edges share a vertex. Since they are not necessarily isolated ones, it is possible that $\{s,t\}\cap\{u,v\}=\varnothing$ as subsets of half-edges even if they share a vertex.} We define the type of a possible edge $st$ as $\mathsf{edge}$ (resp.\ $\mathsf{self\mathchar`-loop}$) if $s$ and $t$ are incident to two distinct vertices (resp.\  the same vertex). For $g\in\mG$ and $\al \in\Ga$ or $\al$ being a possible edge, we write $g\supset \al$ to mean that $\al$ is actually formed in $g$. If we identify $\al$ of type $H$ with a partition of the set of its $2e(H)$ half-edges into $e(H)$ pairs, this notation becomes consistent (e.g., for $g\in\mG$ and distinct half-edges $s$ and $t$, $g\supset st$ is equivalent to $g\supset\qty{\{s,t\}}$). 
	
	\medskip
	Let $\al\in\Ga$ or be a possible edge and let $H$ denote its type. In our case for $H=\mathsf{edge}$, $\mathsf{2\mathchar`-star}$, $\mathsf{self\mathchar`-loop}$ or $\mathsf{double\ edge}$, the number of edges $e(H)$ is at most $2$, but we describe our construction of the coupling in a somewhat general fashion; see also Remark \ref{unlabeled copy generalization} below. Let $s_{\ell,1}s_{\ell,2}$, $1\le\ell\le e(H)$ denote the $e(H)$ edges of $\al$ arranged in ascending order such that $s_{1,1}\wedge s_{1,2}<\cdots<s_{e(H),1}\wedge s_{e(H),2}$.  Then for $g\in\mG$, $g\supset s_{m,1}s_{m,2}$, $\ell\le m\le e(H)$ means that the possible edges $s_{m,1}s_{m,2}$, $\ell\le m\le e(H)$ are actually formed in $g$. Set 
	\begin{align*}
		\mG_\al\coloneqq &\{g\in\mG:g\supset\al\},\\
		\mG_{\al,\ell}\coloneqq&\qty{g\in\mG: g\supset s_{m,1}s_{m,2},\ \ell+1\le m\le e(H)}=\bigcap_{m=\ell+1}^{e(H)}\mG_{s_{m,1}s_{m,2}}, \quad0\le\ell\le e(H)-1.
	\end{align*}
	Since $g\supset \al$ and $g\supset s_{m,1}s_{m,2}$, $1\le m\le e(H)$ are equivalent, $\mG_{\al,0}=\mG_{\al}$. Also set $\mG_{\al,e(H)}\coloneqq\mG$.  This defines increasing sets of configurations $\mG_\al=\mG_{\al,0}\subset\cdots\subset\mG_{\al,e(H)}=\mG$.
	
	\medskip
	Now let $\al\in\Ga$ with type $H$. We will construct from $\mathfrak{g}$ a new configuration (a random perfect matching of the $N$ half-edges) $\mathfrak{g}_\al$ having the distribution such that
	\begin{align}\label{Stein coupling configuration conditional dist}
		\mL(\mathfrak{g}_\al|I_\al=1)=\mL(\mathfrak{g}).
	\end{align}
	Then we define the indicators $(J_{\be\al},\be\in\Ga)$ coupled with $(I_\be,\be\in\Ga)$ in \eqref{possible unlabeled copies indicator representation} satisfying \eqref{Stein coupling conditional dist} in the obvious manner; we define $J_{\be\al}$ to be the indicator that $\be$ is realized in $\mathfrak{g}_\al$, i.e., $J_{\be\al}\coloneqq\indi_{\mathfrak{g}_\al\supset\be}$. If $I_\al=0$, i.e., if even one of the edges of $\al$ is not realized in $\mathfrak{g}$, we do nothing and set $\mathfrak{g}_\al=\mathfrak{g}$. If $I_\al=1$, i.e., if all the $e(H)$ edges of $\al$ are realized in $\mathfrak{g}$, we have $\mathfrak{g}\in\mG_\al$, and we resample a perfect matching of the $N$ half-edges using a map
	\begin{equation}\label{coupling construction f al}
		f_\al:\mG_\al\times\prod_{\ell=1}^{e(H)}[N-2(e(H)-\ell)-1]\rightarrow\mG
	\end{equation}
	to be defined below and uniform indices $B_{\al,1},\dots,B_{\al,e(H)}$ as $\mathfrak{g}_\al=f_\al(\mathfrak{g},B_{\al,1},\dots,B_{\al,e(H)})$. 
	
	\medskip
	For each $1\le \ell\le e(H)$, we define a map
	\begin{equation}\label{coupling construction f al ell}
		f_{\al,\ell}:\mG_{\al,\ell-1}\times[N-2(e(H)-\ell)-1]\rightarrow\mG_{\al,\ell},
	\end{equation}
	as follows. Let $g\in\mG_{\al,\ell-1}$ and $b\in[N-2(e(H)-\ell)-1]$. Among the $N-2(e(H)-\ell)-1$ half-edges other than $s_{\ell,1}\wedge s_{\ell,2}$ and $s_{m,1},s_{m,2}$, $\ell+1\le m\le e(H)$, namely,
	\begin{equation}\label{f al ell choice set}
		\{1,\dots,N \}\setminus\qty(\{s_{\ell,1}\wedge s_{\ell,2} \}\uplus\biguplus_{m=\ell+1}^{e(H)}\{s_{m,1},s_{m,2} \}),
	\end{equation}
	choose the $b$-th smallest half-edge $t_{\ell,1}$ and call $t_{\ell,1}$'s partner in $g$ as $t_{\ell,2}$. Since $s_{\ell,1}$ and $s_{\ell,2}$ are paired to each other in $g$, $t_{\ell,2}=s_{\ell,1}\wedge s_{\ell,2}$ if $t_{\ell,1}= s_{\ell,1}\vee s_{\ell,2}$, and $t_{\ell,2}\neq s_{\ell,1}\wedge s_{\ell,2}$ if $t_{\ell,1}\neq s_{\ell,1}\vee s_{\ell,2}$. Then we define
	\begin{align*}
		f_{\al,\ell}(g,b)\coloneqq\qty(g\setminus\qty{ \{s_{\ell,1},s_{\ell,2}\},\{t_{\ell,1},t_{\ell,2}\} })\uplus\qty{ \{s_{\ell,1}\wedge s_{\ell,2},t_{\ell,1} \},\{s_{\ell,1}\vee s_{\ell,2},t_{\ell,2}\} },
	\end{align*}
	and this is well-defined. Namely, if $t_{\ell,1}=s_{\ell,1}\vee s_{\ell,2}$,  we keep the edge $s_{\ell,1}s_{\ell,2}$ and do nothing: $f_{\al,\ell}(g,b)=g$. If $t_{\ell,1}\neq s_{\ell,1}\vee s_{\ell,2}$, then, in $g$, we break up the bond between $s_{\ell,1}$ and $s_{\ell,2}$ and the bond between $t_{\ell,1}$ and $t_{\ell,2}$, form new pairs $(s_{\ell,1}\wedge s_{\ell,2})t_{\ell,1}$ and $(s_{\ell,1}\vee s_{\ell2})t_{\ell2}$ and return the result. Since $g\in\mG_{\al,\ell-1}$ and $t_{\ell,1}$ is in (\ref{f al ell choice set}), $t_{\ell,2}\notin\biguplus_{m=\ell+1}^{e(H)}\{s_{m,1},s_{m,2} \}$ and $	f_{\al,\ell}(g,b)\in\mG_{\al,\ell}$.
	
	\medskip
	Starting from $g\in\mG_\al$ and using indices $(b_\ell)_{\ell=1}^{e(H)}\in\prod_{\ell=1}^{e(H)}[N-2(e(H)-\ell)-1]$, we recursively define a sequence $g_0,\dots,g_{e(H)}$ of fixed configurations (perfect matchings of the $N$ half-edges) by
	\begin{align*}
		&g_0\coloneqq g\in\mG_\al, \\
		&g_{\ell}=g_{\ell}(g,b_1,\cdots,b_{\ell})\coloneqq f_{\al,\ell}\qty(g_{\ell-1},b_{\ell})\in\mG_{\al,\ell},\quad\ell=1,\dots,e(H).
	\end{align*}
	Now $f_{\al}$ in \eqref{coupling construction f al} is defined as $f_{\al}\qty(g,b_{1},\dots,b_{e(H)})\coloneqq g_{e(H)}\in\mG$. Symbolically, $f_\al$ has the form
	\begin{align*}
		f_\al\qty(g,b_{1},\dots,b_{e(H)})=f_{\al,e(H)}\qty(f_{\al,e(H)-1}\qty(\cdots f_{\al,2}\qty(f_{\al,1}\qty(g,b_{1}),b_2)\cdots,b_{e(H)-1}),b_{e(H)}).
	\end{align*}
	
	\medskip
	Generate uniform indices $B_{\al,\ell}\sim\mathrm{Unif}[N-2(e(H)-\ell)-1]$, $1\le \ell\le e(H)$.  We assume that $\mathfrak{g},B_{\al,1},\dots,B_{\al,e(H)}$ are all defined on the same probability space and independent. Finally, $\mathfrak{g}_\al$ is defined as
	\begin{align}\label{coupling construction}
		\mathfrak{g}_\al\coloneqq\begin{cases}
			\mathfrak{g} & (I_\al=0) \\
			f_\al\qty(\mathfrak{g},B_{\al,1},\dots,B_{\al,e(H)}) & (I_\al=1)
		\end{cases},
	\end{align}
	which is a random perfect matching of the $N$ half-edges coupled with $\mathfrak{g}$ on the same probability space.
	\begin{lem}\label{f al ell f al inverse}\ 
		{\setlength{\leftmargini}{29.5pt}  	
			\begin{itemize}
				\setlength{\labelsep}{7pt}     
				\setlength{\itemsep}{3pt}      
				
				\item[\emph{a}.] For $1\le\ell\le e(H)$,	$f_{\al,\ell}:\mG_{\al,\ell-1}\times[N-2(e(H)-\ell)-1]\rightarrow\mG_{\al,\ell}$ in \eqref{coupling construction f al ell} has the inverse $f_{\al,\ell}^{-1}:\mG_{\al,\ell}\rightarrow\mG_{\al,\ell-1}\times[N-2(e(H)-\ell)-1]$. 
				
				\item[\emph{b}.] $f_{\al}:\mG_\al\times\prod_{\ell=1}^{e(H)}[N-2(e(H)-\ell)-1]\rightarrow\mG$ in \eqref{coupling construction f al} has the inverse $f_\al^{-1}:\mG\rightarrow\mG_\al\times\prod_{\ell=1}^{e(H)}[N-2(e(H)-\ell)-1]$.
				
		\end{itemize} }
	\end{lem}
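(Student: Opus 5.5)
For part (a), I will construct the inverse of $f_{\al,\ell}$ explicitly. Write $a=s_{\ell,1}\wedge s_{\ell,2}$ and $a'=s_{\ell,1}\vee s_{\ell,2}$, and let $C_\ell$ be the choice set \eqref{f al ell choice set}, so that $|C_\ell|=N-2(e(H)-\ell)-1$. Given $g'\in\mG_{\al,\ell}$, let $t_1$ be the partner of $a$ in $g'$ and $t_2$ the partner of $a'$ in $g'$.

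First I check that $t_1\in C_\ell$. Since $g'\in\mG_{\al,\ell}$, each $s_{m,1},s_{m,2}$ with $m\ge \ell+1$ is paired inside its own edge. The half-edge $a$ is not one of these, because the edges of $\al$ use disjoint half-edges, so $t_1\notin\biguplus_{m>\ell}\{s_{m,1},s_{m,2}\}$. Also $t_1\ne a$. Hence $t_1\in C_\ell$.

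Define $b$ as the rank of $t_1$ in $C_\ell$. Then set
\[
f_{\al,\ell}^{-1}(g')=\Bigl(\bigl(g'\setminus\{\{a,t_1\},\{a',t_2\}\}\bigr)\uplus\bigl\{\{a,a'\},\{t_1,t_2\}\bigr\},\,b\Bigr).
\]
This is read literally only when $t_1\ne a'$. When $t_1=a'$ (equivalently $t_2=a$), the pair $\{a,a'\}$ is already present and the first component is just $g'$.

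I then verify three things.
\begin{itemize}
\item[(i)] The first component lies in $\mG_{\al,\ell-1}$. It contains $\{a,a'\}$. The pairs of the edges $m>\ell$ are untouched, because $t_1,t_2\notin\{s_{m,i}\}$: for $t_2$ this follows since $a'$ is also not among these half-edges, so its partner is not either.
\item[(ii)] $f_{\al,\ell}\circ f_{\al,\ell}^{-1}=\mathrm{id}$. In the switched configuration, the $b$-th element of $C_\ell$ is $t_1$, and its partner is $t_2$. The forward map re-forms $\{a,t_1\}$ and $\{a',t_2\}$, recovering $g'$.
\item[(iii)] $f_{\al,\ell}^{-1}\circ f_{\al,\ell}=\mathrm{id}$. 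In $f_{\al,\ell}(g,b)$, the partner of $a$ is the chosen $t_{\ell,1}$, whose rank is $b$, and the partner of $a'$ is $t_{\ell,2}$. Undoing the switch restores $g$.
\end{itemize}
The degenerate case $t_{\ell,1}=a'$ has to be handled separately in both (ii) and (iii). There it is consistent, since $t_{\ell,2}=a$ and both maps act as the identity on the configuration.

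A cardinality check gives an independent confirmation:
\[
|\mG_{\al,\ell-1}|\cdot|C_\ell|=(N-2(e(H)-\ell+1)-1)!!\,(N-2(e(H)-\ell)-1)=(N-2(e(H)-\ell)-1)!!=|\mG_{\al,\ell}|.
\]
So injectivity alone would already suffice, and this is an alternative route if the two-sided verification becomes messy.

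Part (b) follows by composition. By (a), each $f_{\al,\ell}$ is a bijection. Then $f_\al(g,b_1,\dots,b_{e(H)})=f_{\al,e(H)}(\cdots f_{\al,1}(g,b_1)\cdots,b_{e(H)})$ is a bijection from $\mG_\al\times\prod_\ell[\cdot]$ onto $\mG_{\al,e(H)}=\mG$. Its inverse peels off indices in reverse order: apply $f_{\al,e(H)}^{-1}$ to obtain $(g_{e(H)-1},b_{e(H)})$, then apply $f_{\al,e(H)-1}^{-1}$ to $g_{e(H)-1}$, and so on. Formally, I will argue by induction on $\ell$ that $(g,b_1,\dots,b_\ell)\mapsto g_\ell$ is a bijection onto $\mG_{\al,\ell}$.

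I expect the main obstacle to be the bookkeeping in (a). The delicate points are that $t_2$ may equal $a$, which is the degenerate case, and that the choice set excludes exactly $a$ and the later edges' half-edges but not $a'$. One must confirm that the rank $b$ recovered from $g'$ is computed within the same set $C_\ell$ that the forward map uses, and that this set depends only on $\al$ and $\ell$, not on $g$.
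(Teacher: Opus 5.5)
Your proposal is correct and follows essentially the same route as the paper. The paper likewise inverts $f_{\al,\ell}$ by taking the partners of $s_{\ell,1}\wedge s_{\ell,2}$ and $s_{\ell,1}\vee s_{\ell,2}$ in $g'$, undoing the switch, and recording the rank of the first partner in \eqref{f al ell choice set}; it then obtains (b) by applying the $f_{\al,\ell}^{-1}$ in reverse order. Two things you add go beyond the paper: the separate handling of the degenerate case, and the cardinality identity $|\mG_{\al,\ell-1}|\cdot(N-2(e(H)-\ell)-1)=|\mG_{\al,\ell}|$, which gives a useful backup argument but is not used in the paper.
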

	\begin{proof}
		(a) Define $h_{\al,\ell}:\mG_{\al,\ell}\rightarrow\mG_{\al,\ell-1}\times[N-2(e(H)-\ell)-1]$ as follows. Let $g'\in\mG_{\al,\ell}$ and let $t'_{\ell,1}$ and $t'_{\ell,2}$ be the partners of $s_{\ell,1}\wedge s_{\ell,2}$ and $s_{\ell,1}\vee s_{\ell,2}$ in $g'$, respectively. Since $g'\in\mG_{\al,\ell}$, $t'_{\ell,1},t'_{\ell,2}\notin\biguplus_{m=\ell+1}^{e(H)}\{s_{m,1},s_{m,2} \}$. Define the first coordinate of $h_{\al,\ell}(g')$, $\qty(h_{\al,\ell}(g'))_1$, by
		\begin{align*}
			\qty(h_{\al,\ell}(g'))_1\coloneqq
			\qty(g'\setminus\qty{\{ s_{\ell,1}\wedge s_{\ell,2},t'_{\ell,1}\},\{s_{\ell,1}\vee s_{\ell,2},t'_{\ell,2}\}})\uplus\qty{\{s_{\ell,1},s_{\ell,2}\},\{t'_{\ell,1},t'_{\ell,2} \} }\in\mG_{\al,\ell-1}.
		\end{align*}
		Define the second coordinate of $h_{\al,\ell}(g')$, $\qty(h_{\al,\ell}(g'))_2$, as the rank (in ascending order) of $t'_{\ell,1}$ among \eqref{f al ell choice set}. Then it is easy to check that $h_{\al,\ell}\qty(f_{\al,\ell}(g,b))=(g,b)$ for $(g,b)\in\mG_{\al,\ell-1}\times[N-2(e(H)-\ell)-1]$ and $f_{\al,\ell}\qty(h_{\al,\ell}(g'))=g'$ for $g'\in\mG_{\al,\ell}$. Thus $h_{\al,\ell}$ is the inverse of $f_{\al,\ell}$.
		
		\medskip
		(b) By (a), for each $1\le\ell\le e(H)$, $f_{\al,\ell}$ has the inverse $f_{\al,\ell}^{-1}$. Define $h_\al:\mG\rightarrow\mG_\al\times\prod_{\ell=1}^{e(H)}[N-2(e(H)-\ell)-1]$ as follows. Let $g'\in\mG$ be arbitrary. Recursively define sequences of fixed configurations $g'_{e(H)},\dots,g'_0$ and indices $b'_{e(H)},\dots,b'_{1}$ by
		\begin{align*}
			g'_{e(H)}&\coloneqq g'\in\mG_{e(H)},\\
			\qty(g'_{\ell-1},b'_\ell)&\coloneqq f_{\al,\ell}^{-1}\qty(g'_\ell)\in\mG_{\al,\ell-1}\times[N-2(e(H)-\ell)-1],\quad\ell=e(H),\dots,1.
		\end{align*}
		Then define $h_\al(g')\coloneqq\qty(g'_0,\qty(b'_\ell)_{\ell=1}^{e(H)})$. It is straightforward to check that $h_{\al}\qty(f_{\al}\qty(g,\qty(b_\ell)_{\ell=1}^{e(H)}))=\qty(g,\qty(b)_{\ell=1}^{e(H)})$ for $\qty(g,\qty(b)_{\ell=1}^{e(H)})\in\mG_{\al}\times\prod_{\ell=1}^{e(H)}[N-2(e(H)-\ell)-1]$ and $f_{\al}\qty(h_{\al}(g'))=g'$ for $g'\in\mG$. Therefore, $h_{\al}$ is the inverse of $f_{\al}$.
	\end{proof}
	\begin{prp}\label{BRI coupling lemma}
		The distribution of $\mathfrak{g}_\al$ defined in \eqref{coupling construction} satisfies \eqref{Stein coupling configuration conditional dist}.
	\end{prp}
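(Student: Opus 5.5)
The proof reduces to a counting argument built on the bijection of Lemma \ref{f al ell f al inverse}(b). Given $I_\al=1$, the configuration $\mathfrak{g}$ is uniformly distributed on $\mG_\al$. Indeed, $\mathfrak{g}$ is uniform on $\mG$, and $\{I_\al=1\}=\{\mathfrak{g}\in\mG_\al\}$, so for $g\in\mG_\al$
\begin{align*}
\p\qty(\mathfrak{g}=g\mid I_\al=1)=\frac{1/(N-1)!!}{p_\al}=\frac{1}{(N-2e(H)-1)!!},
\end{align*}
using \eqref{success probability}. Since $B_{\al,1},\dots,B_{\al,e(H)}$ are independent of $\mathfrak{g}$ and uniform, the conditional law of $(\mathfrak{g},B_{\al,1},\dots,B_{\al,e(H)})$ given $I_\al=1$ is therefore uniform on the product set $\mG_\al\times\prod_{\ell=1}^{e(H)}[N-2(e(H)-\ell)-1]$.

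On $\{I_\al=1\}$ we have $\mathfrak{g}_\al=f_\al(\mathfrak{g},B_{\al,1},\dots,B_{\al,e(H)})$. By Lemma \ref{f al ell f al inverse}(b), $f_\al$ is a bijection from this product set onto $\mG$. The image of a uniform random element under a bijection between finite sets is uniform on the target, so conditionally on $I_\al=1$ the configuration $\mathfrak{g}_\al$ is uniform on $\mG$. That is exactly $\mL(\mathfrak{g})$, which gives \eqref{Stein coupling configuration conditional dist}.

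As a consistency check, the cardinalities should match. The product set has size
\begin{align*}
(N-2e(H)-1)!!\prod_{\ell=1}^{e(H)}\qty(N-2(e(H)-\ell)-1)=(N-1)!!=|\mG|,
\end{align*}
because the factors $N-2e(H)+1,\dots,N-3,N-1$ complete the double factorial. This confirms the counting is right, although bijectivity alone already suffices.

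I do not expect a genuine obstacle here; the substance lies in Lemma \ref{f al ell f al inverse}. The one point needing care is measure-theoretic. The conditional law is understood through the regular conditional distribution as in the footnote to \eqref{Stein coupling conditional dist}, and on finite spaces with $p_\al>0$ this is just elementary conditioning. Note that $p_\al>0$ holds whenever $\Ga$ contains $\al$ and $N\ge 2e(H)$.
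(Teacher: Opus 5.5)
Your proof is correct and follows essentially the same route as the paper: both arguments rest on the invertibility of $f_\al$ from Lemma \ref{f al ell f al inverse}(b) together with the independence and uniformity of $\mathfrak{g}$ and $B_{\al,1},\dots,B_{\al,e(H)}$. The paper writes it as the unconditional joint computation $\p(\mathfrak{g}_\al=g',I_\al=1)=\frac{1}{(N-1)!!}\cdot\frac{1}{((N-1))_{e(H)}}$ for each $g'\in\mG$. You instead condition on $I_\al=1$ and push forward the uniform law under the bijection, which is the same argument.
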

	\begin{proof}
		Let $g'$ be an arbitrary (fixed) configuration (perfect matching of the $N$ half-edges). It suffices to show that
		\begin{align*}
			\p(\mathfrak{g}_\al=g',I_\al=1)=\p\qty(\mathfrak{g}=g')\cdot p_\al=\frac{1}{(N-1)!!}\cdot\frac{1}{((N-1))_{e(H)}}.
		\end{align*}
		By Lemma \ref{f al ell f al inverse}b, $f_{\al}$ has the inverse $f^{-1}_{\al}$.  Let $\qty(g,\qty(b_\ell)_{\ell=1}^{e(H)})$ denote $f^{-1}_{\al}(g')\in\mG_\al\times\prod_{\ell=1}^{e(H)}[N-2(e(H)-\ell)-1]$. Since $\mathfrak{g},B_{\al,1},\dots,B_{\al,e(H)}$ are independent, we have
		\begin{align*}
			\p\qty(\mathfrak{g}_\al=g',I_\al=1)&=\p\qty(\mathfrak{g}=g,B_{\al,1}=b_{1},\dots,B_{\al,e(H)}=b_{e(H)})\\
			&=\p\qty(\mathfrak{g}=g)\prod_{\ell=1}^{e(H)}\p\qty(B_{\al,\ell}=b_{\ell})\\
			&=\frac{1}{(N-1)!!}\prod_{\ell=1}^{e(H)}\frac{1}{N-2(e(H)-\ell)-1}\\
			&=\frac{1}{(N-1)!!}\cdot\frac{1}{((N-1))_{e(H)}}.
		\end{align*}
	\end{proof}
	We do this construction of $\mathfrak{g}_\al$ (and $(J_{\be\al},\be\in\Ga)$) for different $\al$'s and generate the random numbers $B_\al=\qty(B_{\al,\ell})_{\ell=1}^{e(H)}$ for each $\al\in\Ga$, so we have made the dependence of the random numbers on $\al$ explicit. We generate those random numbers across different $\al$'s in such a way that $B_\al,\al\in\Ga$ are independent.
	
	\begin{rem}
	The idea of switching edges to analyze the configuration model and the uniform simple graph is old and has been used in the literature. The closest to the procedure described here are those in \cite{J09, BR19, J20b}; however, \cite{J09, J20b} switch only the edges in self-loops and double edges and \cite{BR19} switches only the edges in a rooted component $\mT(v)$ (which is isolated). In contrast, we are handling both (i) possible isolated edges and isolated 2-stars and (ii) possible self-loops and double edges which are not necessarily isolated in a unified manner.
	\end{rem}
	\begin{rem}[Comparison with the rewiring coupling]
		\cite{AHH19} considers Poisson approximation to the numbers of possible self-loops and double edges $(S_n,M_n)$ relying on the size-bias coupling approach (cf.\ Remark \ref{size bias remark}). They construct a new set of indicator variables satisfying the condition \eqref{size bias coupling conditional dist} coupled with $(I_\be)$ in \eqref{possible unlabeled copies indicator representation} by adapting the \emph{rewiring} procedure, as follows. Let $\al$ be a possible self-loop or a possible double edge. If $I_\al=1$, i.e., $\al$ is realized in $\mathfrak{g}$, we do nothing. If $I_\al=0$, then we break open all the pairs in $\mathfrak{g}$ containing the half-edges of $\al$, leaving the other pairs in $\mathfrak{g}$ untouched. We then \emph{rewire to create} $\al$, i.e., we pair the half-edges of $\al$ that are now unmatched
		forcing $\al$ to be realized, and pair the additional unmatched half-edges among them, uniformly at random if there are four in the case of double edge.
		
		\medskip
		The rewiring procedure may be generalized to possible isolated edges and isolated 2-stars also. However, the rewiring procedure starts if even one of the edges of $\al$ is not realized in $\mathfrak{g}$, and all the relevant half-edges are rewired at a time. This can make the analysis on how possible $\be$ would be created, destroyed or unchanged by the procedure harder. In contrast, our switching procedure starts at the simple event that all the edges of $\al$ are realized in $\mathfrak{g}$ and proceeds along the edges of $\al$ one by one. This feature makes computation of probabilities of creating or destructing possible $\be$ and the strategy to evaluate the moment terms in the bound \eqref{Stein coupling bound} more transparent. This will be clearer in the analysis of the Poisson approximation to $(S_n,M_n)$ (Proposition \ref{fifth term in stein coupling bound} below and its proof in Appendix \ref{stein coupling bound fifth term}), compared to that of \cite[Section 3]{AHH19}.
	\end{rem}
	
	\begin{rem}\label{unlabeled copy generalization}
The above construction of $\mathfrak{g}_\al$ and $(J_{\be\al})$ may be generalized to possible copies of arbitrary multigraphs once we define such a possible \emph{unlabeled} copy of a given multigraph as a possible matching of half-edges in a similar fashion to \cite[Remark 8.1]{J20}. Let $H$ be a multigraph with the number of vertices $v(H)$ and the number of edges $e(H)$. (We want to count the number of possible copies of $H$ which are actually formed in a given perfect matching $g\in\mG$ of the $N=\sum_{i=1}^{n}d_i$ half-edges, which are labeled as $1,\dots, N$.\footnote{We caution that $g\in\mG$ is a ``configuration'', a perfect matching of the half-edges which are associated with the vertices $v_1,\dots,v_n$, rather than mere numbers ``$1,\dots,N$''.}) We introduce half-edges also in $H$, which are labeled as $h_1,\dots,h_{2e(H)}$, so that each of the pairings $\{h_1,h_2\},\{h_3,h_4\},\dots,\{h_{2e(H)-1},h_{2e(H)}\}$ corresponds to each of the $e(H)$ edges in $H$ without loss of generality. (Here, we are abusing the term ``edge'' to include a self-loop in $H$ similarly.) Note that the half-edges $h_1,\dots,h_{2e(H)}$ thus introduced are associated with the vertices $v^{H}_1,\dots,v^{H}_{v(H)}$ of $H$.  We define a possible \emph{labeled} copy of $H$ as an injective map $\varphi$ which maps vertices to vertices and half-edges to half-edges such that the relations between them are preserved. Then each of the pairs $\varphi(h_1)\varphi(h_2),\varphi(h_3)\varphi(h_4),\dots, \varphi(h_{2e(H)-1)}\varphi(h_{2e(H)})$ induces a possible edge.\footnote{We need $\varphi$ to map vertices to vertices also. In particular, the condition ``half-edges incident to the same vertex in $H$ are mapped to half-edges incident to some one vertex $v_i$'' is not sufficient. For example, with only this condition, $h_{s_1}$ incident to a vertex and $h_{s_2}$ incident to another vertex which form a non-self-loop edge in $H$ can be mapped to $t_1$ and $t_2$ that are incident to only one vertex $v_i$, forming a possible self-loop, which is unreasonable as a possible copy of $H$.} The set of possible labeled copies of $H$ is denoted $\Phi(H)$. We then identify two such injections $\varphi$ and $\varphi'$ in $\Phi(H)$ if they are the same as a possible matching of half-edges in the range. Precisely, for possible labeled copies $\varphi$ and $\varphi'$ of $H$, we say that $\varphi$ and $\varphi'$ are equivalent, writing $\varphi\equiv\varphi'$, if and only if the ranges of $\varphi$ and $\varphi'$ are the same and they induce the same partition
		\begin{align*}
			&\qty{\{\varphi(h_1),\varphi(h_2)\},\{\varphi(h_3),\varphi(h_4)\},\dots, \{\varphi(h_{2e(H)-1}),\varphi(h_{2e(H)})\}  }\\
			=&\qty{\{\varphi'(h_1),\varphi'(h_2)\},\{\varphi'(h_3),\varphi'(h_4)\},\dots, \{\varphi'(h_{2e(H)-1}),\varphi'(h_{2e(H)})\}  }
		\end{align*}
		of the range for the half-edges. Then $\Phi(H)$ splits into equivalence classes $\{[\varphi]:\varphi\in\Phi(H) \}$, and we define a possible \emph{unlabeled} copy of $H$ to be such an equivalence class $[\varphi]$. One can think of each possible unlabeled copy of $H$ (an equivalence class $[\varphi]$) as a perfect matching of $2e(H)$ half-edges out of the half-edges $1,\dots, N$ which is ``isomorphic'' to $H$ as a multigraph.
		
		\medskip
		 Then one could generalize the notions introduced above to any possible unlabeled copy $\al$ of $H$ using a representative $\varphi$ of $\al$, i.e., $\al=[\varphi]$. For example, one could write $g\supset\al$ to mean that all the possible edges $\varphi(h_1)\varphi(h_2),\dots, \varphi(h_{2e(H)-1)}\varphi(h_{2e(H)})$ are actually formed in $g$, and one could define the indicator $I_\al$ that $\al$ is realized in $\mathfrak{g}$ as $I_\al\coloneqq I_{\varphi(h_1)\varphi(h_2)}\cdots I_{\varphi(h_{2e(H)-1)}\varphi(h_{2e(H)})}=\indi_{\mathfrak{g}\supset\al}$. The formula \eqref{success probability} for $p_\al=\e I_\al$ would continue to hold. 
	\end{rem}

	\subsection{Computing the bound}
	\label{subsec computing the bound}
	
	We compute and estimate each term in the bound \eqref{Stein coupling bound} for the sums of indicators in \eqref{possible unlabeled copies indicator representation}. We take $d=2$, $\Ga_{11}$ as the set of possible isolated edges, and $\Ga_{12}$ as the set of possible isolated 2-stars.  We take the normalizing constants as $s_1=s_2=\sqrt{n}$ (recall the definition \eqref{noramalized sums edge twostar} of $(W_\edge,W_\twostar)$). Let $n_1$ and $n_2$ be the numbers of vertices of degree 1 and degree 2, respectively. We take $r=2$, $\Ga_{21}$ as the set of possible self-loops and $\Ga_{22}$ as the set of possible double edges when considering the joint distribution $(W_1,W_2)=(W_\edge,W_\twostar,S_n,M_n)$, whereas $r=1$ and $\Ga_2$ as the set of possible self-loops and possible double edges when considering the conditional distribution $(W_\edge,W_\twostar)|S_n+M_n=0$. However, even in the latter case, for notational reasons it is still convenient to split $\Ga_2$ into two parts $\Ga_{21}$ and $\Ga_{22}$, corresponding to the set of possible self-loops and the set of possible double edges, respectively.

	\subsubsection{Covariance matrix $\Si_n$} We first compute the covariance matrix $\Si_n=\mqty(\si_{n,jk})=\cov(W_1)$, where $W_1=(W_\edge,W_\twostar)^\top$. For $\al,\be\in\Ga_1$, we say that $\al$ \textbf{intersects} $\be$ (or vice versa) and write $\al\cap \be\neq\varnothing$ if $\al$ and $\be$ share a vertex. We say that $\al$ and $\be$ are \textbf{disjoint} and write $\al\cap\be=\varnothing$ if they share no vertex. By $\e\qty[I_\al J_{\be\al}]=p_\al p_\be$, Lemmas \ref{intersecting different isolated trees} and \ref{disjoint multigraphs non creation} in Appendix \ref{stein coupling bound first term}, for $\al,\be\in\Ga_1$ we have
	\begin{align*}
		\cov\qty(I_\al,I_\be)=\e\qty[I_\al\qty(I_\be-J_{\be\al})]=\begin{cases}
			p_\al(1-p_\al)	&(\al=\be)\\
			-p_\al p_\be	&(\al\neq\be,\al\cap\be\neq\varnothing)\\
			\e\qty[I_\al I_\be\qty(1-J_{\be\al})]	&(\al\cap\be=\varnothing)
		\end{cases}.
	\end{align*}
	Furthermore, for $\al\in\Ga_{1j}$ and $\be\in\Ga_{1k}$ such that $\al\cap\be=\varnothing$, by \eqref{prob al be exist be destroyed} in Appendix \ref{subsec second variance},
	\begin{align*}
		\p\qty(I_\al=I_\be=1,J_{\be\al}=0)=\frac{1}{((N-1))_{e(H_j)+e(H_k)}}\qty(1-\prod_{\ell=1}^{e(H_j)}\qty(1-\frac{2e(H_k)}{N-2(e(H_j)-\ell)-1})),
	\end{align*}
where $H_j$ and $H_k$ denote the types of $\Ga_{1j}$ and $\Ga_{1k}$, respectively. (Note that $N\ge2\qty(e(H_j)+e(H_k))$ as long as a pair $\{\al,\be\}$ such that $\al\in\Ga_{1j}$, $\be\in\Ga_{1k}$ and $\al\cap\be=\varnothing$ exists.)	Thus,\footnote{To see that $\si_{n,jk}=\si_{n,kj}$, note that $\prod_{\ell=1}^{j}\qty(1-\frac{2k}{N-2(j-\ell)-1})=\frac{((N-2k-1))_{j}}{((N-1))_{j}}=\frac{((N-2j-1))_{k}}{((N-1))_{k}}=\prod_{\ell=1}^{k}\qty(1-\frac{2j}{N-2(k-\ell)-1})$ for any $N,j,k\ge1$ such that $N-2(j+k)+1\ge0$.}
	\begin{align*}
		n\si_{n,jk}=&\sum_{\al\in\Ga_{1j}}\sum_{\be\in\Ga_{1k}}\cov\qty(I_\al,I_\be)\\
		=&\indi_{j=k}\sum_{\al\in\Ga_{1j}}\frac{1}{((N-1))_{e(H_j)}}\qty(1-\frac{1}{((N-1))_{e(H_j)}})+\sum_{\al\in\Ga_{1j}}\sum_{\substack{\be\in\Ga_{1k}\setminus\{\al\}\\\be\cap\al\neq\varnothing}}\frac{-1}{((N-1))_{e(H_j)}((N-1))_{e(H_k)}}\\
		&\quad+\sum_{\al\in\Ga_{1j}}\sum_{\substack{\be\in\Ga_{1k}\\\be\cap\al=\varnothing}}\frac{1}{((N-1))_{e(H_j)+e(H_k)}}\qty(1-\prod_{\ell=1}^{e(H_j)}\qty(1-\frac{2e(H_k)}{N-2(e(H_j)-\ell)-1})).
	\end{align*}
	We can compute the cardinality of each sum as
	\begin{equation}\label{cardinality of each sum in covariance}
		\begin{split}
		\qty|\sum_{\al\in\Ga_{1j}}|=&\begin{cases}
			\frac{(n_1)_2}{2} &	(j=1)\\
			(n_1)_2n_2 &	(j=2)
		\end{cases},\\ \qty|\sum_{\al\in\Ga_{1j}}\sum_{\substack{\be\in\Ga_{1k}\\\be\cap\al=\varnothing}}|=&\begin{cases}
			\frac{(n_1)_4}{4}&(j=k=1)\\
			\frac{(n_1)_4n_2}{2}&(\text{$(j,k)=(1,2)$ or $(2,1)$})\\
			(n_1)_4(n_2)_2&(j=k=2)
		\end{cases},\\
		\qty|\sum_{\al\in\Ga_{1j}}\sum_{\substack{\be\in\Ga_{1k}\setminus\{\al\}\\\be\cap\al\neq\varnothing}}|=&\begin{cases}
			\frac{(n_1)_2}{2}\qty[\frac{(n_1)_2}{2}-1-\frac{(n_1-2)_2}{2}]&(j=k=1)\\
			\frac{(n_1)_2}{2}\qty[(n_1)_2-(n_1-2)_2]n_2&(\text{$(j,k)=(1,2)$ or $(2,1)$})\\
			(n_1)_2n_2\qty[(n_1)_2n_2-1-(n_1-2)_2(n_2-1)]	&(j=k=2)
		\end{cases}\\
		=&\begin{cases}
			(n_1)_3&(j=k=1)\\
			2(n_1)_3n_2+(n_1)_2n_2&(\text{$(j,k)=(1,2)$ or $(2,1)$})\\
		\begin{aligned}	&4(n_1)_3(n_2)_2+(n_1)_4n_2+2(n_1)_2(n_2)_2\\&\qquad+4(n_1)_3n_2+(n_1)_2n_2 \end{aligned}	&(j=k=2)
		\end{cases}.
		\end{split}
	\end{equation}
	Thus each covariance $\si_{n,jk}$ can be computed as follows:
	\begin{equation}\label{exact covariances}
		\begin{split}
		\si_{n,11}=&\frac{1}{n}\qty[\frac{(n_1)_2}{2}\frac{1}{N-1}\qty(1-\frac{1}{N-1})-\frac{(n_1)_3}{(N-1)^2}+\frac{(n_1)_4}{4}\frac{1}{((N-1))_2}\frac{2}{N-1}],\\
		\si_{n,12}=&\si_{n,21}=\frac{1}{n}\qty[-\frac{2(n_1)_3n_2+(n_1)_2n_2}{(N-1)((N-1))_2}+\frac{(n_1)_4n_2}{2}\frac{1}{((N-1))_3}\frac{4}{N-1}],\\
		\si_{n,22}=&\frac{1}{n}\bigg[(n_1)_2n_2\frac{1}{((N-1))_2}\qty(1-\frac{1}{((N-1))_2})\bigg.\\
		&\quad\bigg.-\frac{4(n_1)_3(n_2)_2+(n_1)_4n_2+2(n_1)_2(n_2)_2+4(n_1)_3n_2+(n_1)_2n_2}{\qty[((N-1))_2]^2}+\frac{(n_1)_4(n_2)_2}{((N-1))_4}\frac{8(N-4)}{((N-1))_2}\bigg].
		\end{split}
	\end{equation}
	Here, we understand that each term in the above expressions \eqref{exact covariances} is non-zero only if the cardinality of each sum in \eqref{cardinality of each sum in covariance} is positive. For example, the term
	\[\frac{(n_1)_4(n_2)_2}{((N-1))_4}\frac{8(N-4)}{((N-1))_2}\]
	in the expression of $\si_{22}$ is non-zero only when $(n_1)_4(n_2)_2>0$, whence $N\ge8$.
	
	\subsubsection{Main results}
	\label{subsubsec main results}
	
	We are ready to state our main theorem.
	\begin{thm}\label{configuration model clt}
		Suppose that $n\wedge N>15$, $n_1\ge1$ and $n_0+n_1\le n-2$. Let $c_{*}$ be any real number such that
		\begin{equation}\label{ratio constant}
			\frac{n_1\vee n_2}{n\wedge N-15}\le c_*.
		\end{equation}
		Define
		\begin{align*}
			\la^S_n=\frac{\sum_{i\in[n]}(d_i)_2}{2(N-1)},\quad \la^M_n=\frac{\sum_{1\le i<j\le n}(d_i)_2(d_j)_2}{2((N-1))_2},\quad\nu_n=\frac{\sum_{i\in[n]}(d_i)_2}{N-7},\quad\mu_n^{(3)}=\frac{\sum_{i\in[n]}(d_i)_3}{N-7},
		\end{align*}
		and
		\begin{align*}
			&\De_{\mathrm{simple}}	\coloneqq\\
			&\frac{7\nu_n^4+\qty(\mu_n^{(3)})^2+4\mu_n^{(3)}\nu_n^2+8\nu_n^3+4\mu_n^{(3)}\nu_n+2\mu_n^{(3)}+4\nu_n^2+2\la^S_n}{2(N-1)}+\frac{6\qty(\mu_n^{(3)})^2+8\mu_n^{(3)}\nu_n+\nu_n^2+4\la^M_n}{4((N-1))_2}.
		\end{align*}
		Also set
		\begin{align*}
			&C(c_*)=\frac{c_*}{2}\bigg[\sqrt{3\qty(\frac{1}{2}+\frac{3}{2}c_*^2+4c_*^3+\frac{7}{2}c_*^4+2c_*^6)}+\sqrt{2\qty(2c_*^3+5c_*^4+68c_*^5+72c_*^6+72c_*^8)}\bigg.\\
			&\qquad\qquad\quad+\sqrt{2\qty(2c_*^3+8c_*^4+4c_*^5+56c_*^6+72c_*^8)}\\
			&\qquad\qquad\quad+\bigg.\sqrt{3\qty(c_*+24c_*^4+64c_*^5+328c_*^6+256c_*^7+872c_*^8+2048c_*^{10})}\bigg]+\frac{4}{3}c_*^2+6c_*^3,\\
			&C'\qty(c_*,\la_n^S,\la_n^M)=\qty(c_*^2+6c_*^3)\la^S_n+\qty(2c_*^2+8c_*^3)\la^M_n+c_*^3.
		\end{align*}
		Let $Z_n$ be a $\mN_2(0,\Si_n)$-vector whose covariance matrix $\Si_n=\mqty(\si_{n,jk})$ equals $\cov(W_1)$, where $W_1=(W_\edge,W_\twostar)^\top$. (The entries $\si_{n,jk}$ are given in \eqref{exact covariances}.) Let $N_n$ be a $\Po(\la^S_n)\times\Po(\la^M_n)$-vector. Suppose that $Z_n$ and $N_n$ are independent.
		
				{\setlength{\leftmargini}{29.5pt}  	
			\begin{itemize}
				\setlength{\labelsep}{7pt}     
				\setlength{\itemsep}{3pt}      
				
				\item[\emph{a}.] \emph{(Joint approximation in the configuration model.)} Let $\mH_3$ be as in \eqref{H3} with $d=r=2$. We have
				
				\begin{align}
					&\sup_{h\in\mH_3}\Big|\e\qty[h\qty(W_\edge,W_\twostar,S_n,M_n)]-\e\left[h(Z_n,N_n)\right]\Big|\notag\\
					&\begin{multlined}
					\le\frac{C(c_*)}{\sqrt{n\wedge N-1}}+\qty(2\bigwedge\frac{3\pi}{2\sqrt{2}}\sqrt{\frac{1}{e\qty(\la^S_n\wedge\la^M_n)}})\frac{C'\qty(c_*,\la_n^S,\la_n^M)}{\sqrt{n}}\\+\qty(2\bigwedge\frac{4+4\log^+\qty(e\qty(\la^S_n\wedge\la^M_n))}{e\qty(\la^S_n\wedge\la^M_n)})\De_{\mathrm{simple}}.\end{multlined}\label{joint approximation in the configuration model bound}
				\end{align}
				
				\item[\emph{b}.] \emph{(Probability of simplicity of the configuration model.)}
				\begin{equation}\label{probability of simplicity bound}
					\qty|\p\qty(S_n+M_n=0)-e^{-\la^S_n-\la^M_n}|\le\qty(\frac{1}{2}\bigwedge\frac{1-e^{-\la^S_n-\la^M_n}}{\la^S_n+\la^M_n})\De_{\mathrm{simple}}.
				\end{equation}
				
				\item[\emph{c}.] \emph{(Normal approximation in uniform simple graphs.)} Let $\mC_3$ be as in \eqref{C3} with $d=2$. If
				\begin{align*}
					L_n\coloneqq &e^{-\la^S_n-\la^M_n}-\qty(\frac{1}{2}\bigwedge\frac{1-e^{-\la^S_n-\la^M_n}}{\la^S_n+\la^M_n})\De_{\mathrm{simple}}
				\end{align*}
				is positive, then we have
				\begin{align}
					&\sup_{g\in\mC_3,\|g\|\le1}\Big|\e\left[g\qty(W_\edge,W_\twostar)\middle|S_n+M_n=0\right]-\e\left[g(Z_n)\right]\Big|\notag\\
					\le&\frac{1}{L_n}\qty[\frac{C(c_*)}{\sqrt{n\wedge N-1}}+\qty(1\bigwedge\frac{3(1-e^{-\la^S_n-\la^M_n})}{2\qty(\la^S_n+\la^M_n)})\frac{C'\qty(c_*,\la_n^S,\la_n^M)}{\sqrt{n}}+\qty(1\bigwedge\frac{2(1-e^{-\la^S_n-\la^M_n})}{\la^S_n+\la^M_n})\De_{\mathrm{simple}}].\label{conditional clt bound 1}
				\end{align}
				
		\end{itemize} }
	\end{thm}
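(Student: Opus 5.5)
The plan is to apply Theorem \ref{Stein coupling theorem} with $d=r=2$, $s_1=s_2=\sqrt{n}$, and the switching coupling $J_{\be\al}=\indi_{\mathfrak{g}_\al\supset\be}$. Proposition \ref{BRI coupling lemma} guarantees that this coupling satisfies \eqref{Stein coupling conditional dist}. I then plug the smoothness estimates into the five terms of \eqref{Stein coupling bound}.

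\textbf{Part (a).} Lemma \ref{smoothness x} gives $\|\partial_{jk}f_h\|\le 1/2$ and $\|\partial_{jk\ell}f_h\|\le 1/3$. Lemma \ref{smoothness y}, \eqref{smoothness first order difference of first derivative}, controls $\|\De_k(\partial_jf_h)\|$ by $\frac23\wedge\frac{\pi}{2\sqrt2}(e\la_k)^{-1/2}$. Remark \ref{smoothness y uniform} controls $\max\|\De_{jk}f_h\|$. I then estimate the moment terms one at a time.

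\emph{First term.} The conditional variance is bounded by replacing conditioning on $(W_1,W_2)$ with conditioning on $\mathfrak{g}$. I then expand $\sum_\al I_\al\sum_\be(I_\be-J_{\be\al})$ by the relation between $\be$ and $\al$:
\begin{itemize}
\item if $\be=\al$ or $\be$ intersects $\al$, then $J_{\be\al}$ is rarely one;
\item if $\be$ is disjoint from $\al$, then $\be$ is destroyed only when a switch picks a half-edge of $\be$, with probability $O(1/N)$.
\end{itemize}
The variance of the resulting polynomial in counts of small subgraphs is computed through joint probabilities $1/((N-1))_m$. The cardinalities are $O(n_1^an_2^b)$, and with \eqref{ratio constant} this yields $O(c_*^k)/\sqrt{n\wedge N}$. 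Summing the four square roots produces $C(c_*)$.

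\emph{Second term.} The third-moment term is bounded by the same case analysis. This gives the $\frac43c_*^2+6c_*^3$ part of $C(c_*)$.

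\emph{Cross terms (third and fourth).} These compare a possible isolated tree with possible self-loops or double edges. A switch along an isolated edge or 2-star can create a self-loop or double edge only through the partner vertex of the random half-edge. Conversely, switching a self-loop or double edge destroys an isolated tree only when it hits one of its $O(1)$ half-edges. These counts give $C'(c_*,\la^S_n,\la^M_n)/\sqrt n$ with the prefactor $2\bigwedge\frac{3\pi}{2\sqrt2}(\cdots)^{-1/2}$. That prefactor comes from summing the third and fourth terms with $\|\De_k\partial_j f_h\|$ minimized over $\la^S_n\wedge\la^M_n$.

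\emph{Fifth term.} This is the Poisson term: $\sum p_\al^2$ plus $\sum_{\al\ne\be}\e[I_\al|I_\be-J_{\be\al}|]$ over self-loops and double edges. I split by whether $\be$ shares half-edges or vertices with $\al$. Creation and destruction probabilities under one or two switches are expressed through $\sum(d_i)_2$ and $\sum(d_i)_3$, which produces $\De_{\mathrm{simple}}$.

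\textbf{Part (b).} Take $r=1$ and $\Ga_2=\Ga_{21}\uplus\Ga_{22}$, with test function $\indi_{y=0}$. Example \ref{one dimensional Poisson approximation} with \eqref{one dim Poisson approximation zero probability bound} then gives (b) directly. The fifth-term quantity is the same $\De_{\mathrm{simple}}$, since merging the classes does not change it.

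\textbf{Part (c).} Take $r=1$ and $h(x,y)=g(x)\indi_{y=0}$, which lies in $\mH_3$. Then $\e[h(W_1,W_2)]-\e h(Z,N)=\e[g(W_1);S_n+M_n=0]-e^{-\la}\e g(Z_n)$, where $\la=\la^S_n+\la^M_n$. For this $h$ the improved estimates \eqref{smoothness first order difference of first derivative improved} and \eqref{smoothness second order difference diagonal improved} apply and give the constants in \eqref{conditional clt bound 1}. Dividing by $\p(S_n+M_n=0)\ge L_n$ from (b) gives
\[
|\e[g|\cdot]-\e g(Z_n)|\le \frac{|\e[g;0]-e^{-\la}\e g|+|\e g(Z_n)|\,|e^{-\la}-\p(0)|}{\p(0)}.
\]
The second numerator piece, bounded using (b) and $\|g\|\le1$, is absorbed into the $\De_{\mathrm{simple}}$ coefficient; this is how $\frac12$ becomes $1$.

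\textbf{Main obstacle.} I expect the hardest step to be the first-term conditional-variance computation. It requires exact enumeration of overlap configurations of pairs and quadruples of possible isolated trees, plus explicit constants. I would organize it by writing the conditional expectation as a linear combination of subgraph counts and bounding each variance by covariance sums over overlap patterns.
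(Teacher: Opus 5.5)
Your reduction is the paper's: Theorem~\ref{Stein coupling theorem} with the switching coupling, plus Lemma~\ref{smoothness x}, \eqref{smoothness first order difference of first derivative} and Remark~\ref{smoothness y uniform} for (a). Part (b) is Example~\ref{one dimensional Poisson approximation}. Part (c) uses $h(x,y)=g(x)\indi_{y=0}$ with \eqref{smoothness first order difference of first derivative improved}, \eqref{smoothness second order difference diagonal improved} and division by $\p(S_n+M_n=0)\ge L_n$. Your prefactor bookkeeping is also right: $3(\frac23\wedge\cdots)$ for the two cross terms, $2\max_{j,k}\|\De_{jk}f_h\|$ for the Poisson term, and the doubling of $\frac12\wedge\frac{1-e^{-\la}}{\la}$ in (c).

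The gap is in the moment estimates of (a), i.e.\ the content of Propositions~\ref{first term in stein coupling bound}--\ref{fifth term in stein coupling bound}, where essentially all the work lies. You assert the explicit $C(c_*)$, $C'$ and $\De_{\mathrm{simple}}$, and in places the method you describe would not deliver them. For the third-moment term, the constant $\frac43c_*^2+6c_*^3=\frac16(8c_*^2+36c_*^3)$ comes not from a case analysis but from a deterministic bound. On $\{I_\al=1\}$ one has $\sum_{\be\in\Ga_1}|I_\be-J_{\be\al}|\le 1+v(H)+e(H)=2v(H)$, for two reasons. First, through each vertex of $\al$ at most one intersecting $\be$ can be realized in $\mathfrak{g}_\al$, since such $\be$'s pairwise intersect and Lemma~\ref{intersecting different isolated trees} forbids two of them. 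Second, at most $e(H)$ disjoint $\be$'s can be destroyed: realized $\be$'s are pairwise disjoint, so their rank sets $R_{\al,\be,\ell}$ are disjoint; each destruction needs $B_{\al,\ell}\in R_{\al,\be,\ell}$ for some $\ell$ by Lemma~\ref{destroyed al be disjoint}; and pigeonhole applies (Lemmas~\ref{intersection limited creation} and \ref{disjoint limited destruction}). Summing $4v(H_j)^2\sum_{\al\in\Ga_{1j}}p_\al$ over $j$ gives $\frac{8(n_1)_2}{N-1}+\frac{36(n_1)_2n_2}{((N-1))_2}$. A case analysis over pairs $(\be,\ga)$ would need third-order joint probabilities and would produce a different polynomial in $c_*$.

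For the first term, conditioning on $\mathfrak{g}$ instead of $(W_1,W_2)$ is legitimate and even sharper than the paper's bound by the unconditional variance. It is a genuinely different route. It replaces $I_\al J_{\be\al}$, for intersecting $\be$, by $I_\al$ times indicators of auxiliary edges (such as $t_3t_4$, or the self-loop $t_2t_3$) with explicit weights of order $1/N$ or smaller. The paper extracts this from the surjection Lemmas~\ref{surjection edge edge}--\ref{surjection 2star 2star} by bounding $|\mH_{\al,\al',\be,\be'}|$. However, the variance of your resulting subgraph-count polynomial still rests on the $O(1/N)$ relative cancellation of $1/((N-1))_{a+b}$ against $1/(((N-1))_a((N-1))_b)$ over quadruples, with explicit cardinalities. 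Your sketch stops at $O(c_*^k)$, so it does not establish the stated $C_1(c_*)$. You should also state that disjoint $\be$ cannot be created (Lemma~\ref{disjoint multigraphs non creation}).

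In the Poisson term, sharing a half-edge does not force $I_\al I_\be=0$: two double edges sharing a common edge are compatible. In that case $\e[I_\al|I_\be-J_{\be\al}|]\approx1/((N-1))_3$, about $N$ times $p_\al p_\be$ (Lemma~\ref{two double edges share a common edge}, which also needs an adapted non-creation argument for the edge $s_3t_3$). This case is the source of the $(\mu_n^{(3)})^2/(2(N-1))$ term in $\De_{\mathrm{simple}}$, and your split does not single it out. For the fourth term your direct count can be made to work, but the paper gets exact equality with the third term from Lemma~\ref{symmetry}.
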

	\begin{rem}
	Some comments on Theorem \ref{configuration model clt} are warranted.
		{\setlength{\leftmargini}{29.5pt}  	
			\begin{enumerate}
				\setlength{\labelsep}{7pt}     
				\setlength{\itemsep}{3pt}      
				
				\item $n_1\ge1$ here (and in Proposition \ref{first term in stein coupling bound} below) is assumed to simplify the bound. $n_0+n_1\le n-2$ is to ensure that both $\la^S_n$ and $\la^M_n$ are positive.
				
				\item Since $\la^S_n\le\nu_n$ and $\la^M_n\le\nu_n^2$, $C'\qty(c_*,\la_n^S,\la_n^M)=O(\nu_n^2)$.
				
				\item Since $2\mu_n^{(3)}\nu_n^2\le\nu_n^4+(\mu_n^{(3)})^2$, $\De_{\mathrm{simple}}=O((\nu_n^4+(\mu_n^{(3)})^2)/N)$.\footnote{This is valid even if $\nu_n=O(1)$ or $\mu_n^{(3)}=O(1)$ (or both).} Therefore, (b) is consistent with \cite[Theorem 1.1]{AHH19}, in particular the bound \eqref{probability of simplicity bound} is of the same order as (1.13) ibidem. Note that in the definitions of $\nu_n$ and $\mu_n^{(3)}$, the numerators are divided by $N$ in \cite{AHH19}, which makes no difference in the order of magnitude.
				
				\item (c), as well as Theorem \ref{another conditional clt} below, are the first finite sample normal approximation results in uniform simple graphs with given vertex degrees. \cite{BR19} (in particular Theorem 2.1 ibidem) derives results only for the unconditional distribution of $\mathrm{CM}_n(\bm{d})$ and \cite{J20} (in particular Theorem 3.9 ibidem) proves only results on asymptotic normality by the method of moments.
				
		\end{enumerate} }
	\end{rem}
	
	\medskip
Before proving the theorem, we show convergence in distribution of the joint distribution $(W_\edge,W_\twostar,S_n,M_n)$ and the conditional distribution $(W_\edge,W_\twostar)|S_n+M_n=0$ under the standard assumption on the asymptotics of the degree sequence $\bm{d}$ in the literature.  Let
		$D_n$ denote the degree of a uniformly random vertex, i.e., $D_n$ is a random variable with the distribution
		\begin{align*}
			\p(D_n=k)=n_k/n,\quad k\ge0.
		\end{align*}
	 The distribution function of $D_n$	has the form
		\begin{align*}
			F_n(x)=\frac{1}{n}\sum_{i\in[n]}\indi_{d_i\le x}.
		\end{align*}
Because of this, the distribution of $D_n$ is also called the ``empirical degree distribution''.	We assume the following; (i) and (ii) are also assumed in \cite[Condition 1.3]{AHH19}. (iii) is \cite[(A6)]{J20}, which is assumed to exclude some less interesting cases in \cite[Theorems 3.2 and 3.9]{J20} (see Remark 3.5 ibidem).
		\begin{cond}[{\cite[(A1), (A2) and (A6)]{J20}}]\label{degree weak convergence positive mean uniform integrability}
			(i) $D_n$, the degree of a uniformly chosen vertex, converges in distribution to a random variable $D$ with a finite and positive mean $\mu\coloneqq\e[D]$. In other words, there exists a probability distribution $(p_k)_{k=0}^\infty$ such that
			\begin{align*}
				\frac{n_k}{n} \rightarrow p_k=\p(D=k),\quad\quad k\ge0,
			\end{align*}
			and $\mu=\sum_{k=0}^\infty kp_k\in(0,\infty)$. (ii) $\e[D_n]\rightarrow\e[D]=\mu$. Assuming (i), this is equivalent to $D_n$ being uniformly integrable. (iii) $p_1>0$ and $p_0+p_1<1$.
		\end{cond}
		Note that, under Condition \ref{degree weak convergence positive mean uniform integrability}(i)--(ii), 
		\begin{align*}
			\frac{N}{n}=\sum_{k=0}^{\infty}k\frac{n_k}{n}=\e[D_n]\rightarrow\mu\in(0,\infty).
		\end{align*}
		In particular, $N=O(n)$ and  $n=O(N)$ for sufficiently large $n$.  In order to deduce convergence of $\nu_n$ and $\Delta_{\mathrm{simple}}$, we further assume convergence of the second moment of the empirical degree distribution. This is also assumed in \cite[Theorems 1.5 and 1.7]{AHH19}.
		\begin{cond}[{\cite[(A3)]{J20}}]\label{convergence of the second moment of the empirical degree distribution}
$\e D_n^2\rightarrow \e D^2<\infty$. Assuming Condition \ref{degree weak convergence positive mean uniform integrability}(i), this is equivalent to $D_n^2$ being uniformly integrable.
		\end{cond}
	 The following corollary gives a Stein's method proof for some special cases of the weak convergence results in \cite[Theorems 3.2 and 3.9, Lemma 7.10]{J20} and partially answers the open problem mentioned in Remark 1.2 ibidem. (Note, however, that (A6) is not assumed in \cite[Lemma 7.10]{J20}.)
	
	\begin{cor}\label{weak convergence result}
		Under Conditions \ref{degree weak convergence positive mean uniform integrability} and \ref{convergence of the second moment of the empirical degree distribution}, $(W_\edge,W_\twostar,S_n,M_n)\stackrel{d}{\rightarrow}\mN_2(0,\Si)\times\Po(\nu/2)\times\Po(\nu^2/4)$ and $(W_\edge,W_\twostar)|S_n+M_n=0\stackrel{d}{\rightarrow}\mN_2(0,\Si)$, where $\nu=\e D(D-1)/\mu$ and
		\begin{equation}\label{asymptotic covariance matrix}
		\Si=\mqty(p_1^2\mu^{-1}/2-p_1^3\mu^{-2}+p_1^4\mu^{-3}/2&-2p_1^3p_2\mu^{-3}+2p_1^4p_2\mu^{-4}\\-2p_1^3p_2\mu^{-3}+2p_1^4p_2\mu^{-4}&p_1^2p_2\mu^{-2}-4p_1^3p_2^2\mu^{-4}-p_1^4p_2\mu^{-4}+8p_1^4p_2^2\mu^{-5}).
		\end{equation}
	\end{cor}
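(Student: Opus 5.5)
The corollary will be deduced from the explicit error bounds of Theorem \ref{configuration model clt}, together with the continuity statement in Theorem \ref{comparison bound joint normal-Poisson} and the characterization of convergence in distribution by $\mH_3$ (Proposition \ref{convergence in distribution in RZ}). The plan has three steps: check the hypotheses of Theorem \ref{configuration model clt} for large $n$, show that every error term goes to $0$, and show that the approximating parameters $(\Si_n,\la^S_n,\la^M_n)$ converge to $(\Si,\nu/2,\nu^2/4)$.

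\textbf{Hypotheses.} Condition \ref{degree weak convergence positive mean uniform integrability} gives $N/n\to\mu\in(0,\infty)$, $n_1/n\to p_1>0$ and $n_2/n\to p_2$. It also gives $(n_0+n_1)/n\to p_0+p_1<1$, so eventually $n\wedge N>15$, $n_1\ge1$ and $n_0+n_1\le n-2$. Moreover $(n_1\vee n_2)/(n\wedge N-15)$ converges to a finite limit, so one fixed $c_*$ works for all large $n$. Hence $C(c_*)$ is bounded and $C(c_*)/\sqrt{n\wedge N-1}\to0$.

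\textbf{Error terms.} By Condition \ref{convergence of the second moment of the empirical degree distribution}, $\sum_i(d_i)_2/n=\e D_n(D_n-1)\to\e D(D-1)$. Therefore $\nu_n\to\nu$, $\la^S_n\to\nu/2$, and $C'(c_*,\la^S_n,\la^M_n)=O(\nu_n^2)=O(1)$, so the middle terms are $O(n^{-1/2})$. For $\la^M_n$, I write $\sum_{i<j}(d_i)_2(d_j)_2=\tfrac12[(\sum_i(d_i)_2)^2-\sum_i((d_i)_2)^2]$.

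\emph{Main obstacle.} The hard point is $\mu^{(3)}_n$ and $\sum_i(d_i)_2^2/N^2$, because only second moments are controlled. Uniform integrability of $D_n^2$ gives $\max_i d_i^2/n\to0$, i.e.\ $\max_i d_i=o(\sqrt n)$. Then $\mu_n^{(3)}\le\max_i d_i\cdot\sum_i d_i^2/N=o(\sqrt n)$, and similarly $\sum_i(d_i)_2^2\le \max_i d_i^2\sum_i d_i^2=o(n^2)$. From these:
\begin{itemize}
\item $\la^M_n\to\nu^2/4$;
\item $\De_{\mathrm{simple}}=O((\nu_n^4+(\mu_n^{(3)})^2)/N)=o(1)$.
\end{itemize}
Since $\la^S_n\wedge\la^M_n$ stays bounded away from $0$ (by (iii) and $\nu>0$), all prefactors are bounded, and the right-hand sides of \eqref{joint approximation in the configuration model bound} and \eqref{conditional clt bound 1} tend to $0$. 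For (c), $L_n\to e^{-\nu/2-\nu^2/4}>0$, so $L_n$ is eventually positive and $1/L_n$ is bounded.

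\textbf{Parameter convergence.} Dividing the exact expressions \eqref{exact covariances} by the appropriate powers gives the limits; for example, $(n_1)_2/(2n(N-1))\to p_1^2/(2\mu)$. Term by term this reproduces \eqref{asymptotic covariance matrix}, so $\Si_n\to\Si$. Theorem \ref{comparison bound joint normal-Poisson} then bounds the distance between the law of $(Z_n,N_n)$ and $\mN_2(0,\Si)\times\Po(\nu/2)\times\Po(\nu^2/4)$ by $\tfrac12\sum|\si_{n,jk}-\si_{jk}|+2(|\la^S_n-\nu/2|+|\la^M_n-\nu^2/4|)\to0$. Combined with (a) and the triangle inequality, $\e h(W_\edge,W_\twostar,S_n,M_n)$ converges to the limiting expectation for every $h\in\mH_3$, and Proposition \ref{convergence in distribution in RZ} gives the joint convergence.

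For the conditional statement I do the same. Part (c) combined with Theorem \ref{comparison bound joint normal-Poisson}, applied to test functions $h(x,y)=g(x)$, gives convergence of $\e[g(W_\edge,W_\twostar)\mid S_n+M_n=0]$ for all $g\in\mC_3$ with $\|g\|\le1$. This class determines convergence in distribution on $\R^2$ (the case $r$ trivial of Proposition \ref{convergence in distribution in RZ}), which finishes the proof.
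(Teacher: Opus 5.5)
Your proposal is correct and takes essentially the same route as the paper. You check the hypotheses of Theorem \ref{configuration model clt} for all large $n$ with one fixed $c_*$, and use $d_{\max}=o(\sqrt n)$ (from uniform integrability of $D_n^2$) to get $\De_{\mathrm{simple}}\to0$ and $L_n$ bounded away from zero. You then pass to the limit $(\Si,\nu/2,\nu^2/4)$ via Theorem \ref{comparison bound joint normal-Poisson} and the triangle inequality. Your explicit bound $\sum_i((d_i)_2)^2=o(n^2)$, which gives $\la^M_n\to\nu^2/4$, fills in a step the paper only asserts.
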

	\begin{proof}
	We first show that the bounds \eqref{joint approximation in the configuration model bound}--\eqref{conditional clt bound 1} go to zero as $n\rightarrow\infty$ under Conditions \ref{degree weak convergence positive mean uniform integrability} and \ref{convergence of the second moment of the empirical degree distribution}. Condition \ref{degree weak convergence positive mean uniform integrability}(i)--(iii) imply that there exists $n_*$ depending on the sequence $(D_n)$, or the ``sequence of the degree sequences'' $(\bm{d}_n)$, such that for all $n\ge n_*$,
	\begin{gather*}
		n>15,\quad N\ge\frac{\mu}{2}n>15,\quad	n_1\ge\frac{p_1}{2}n\ge1,\\
		\frac{n_0}{n}+\frac{n_1}{n}\le1-\frac{1}{2}(1-p_0-p_1)\le1-\frac{2}{n},\\
		\qty(1\wedge\frac{\mu}{2})n-15\ge\frac{1}{2}\qty(1\wedge\frac{\mu}{2})n.
	\end{gather*}
	Thus for all $n\ge n_*$, $n\wedge N>15$, $n_1\ge1$, $n_0+n_1\le n-2$ and 
	\begin{align*}
		\frac{n_1\vee n_2}{n\wedge N-15}\le\frac{n_1\vee n_2}{\qty(1\wedge\frac{\mu}{2})n-15}\le\frac{2}{1\wedge\frac{\mu}{2}}\frac{n_1\vee n_2}{n}\le \frac{4}{2\wedge\mu},
	\end{align*}
	verifying the condition \eqref{ratio constant} in Theorem \ref{configuration model clt} with $c_*=4/(2\wedge\mu)$. Thus the bounds \eqref{joint approximation in the configuration model bound} (with $c_*=4/(2\wedge\mu)$) and \eqref{probability of simplicity bound} hold for all $n\ge n_*$.
	
	\medskip
	Define $\nu=\e D(D-1)/\mu$. Then under Condition \ref{convergence of the second moment of the empirical degree distribution}, $\nu_n\rightarrow\nu$, $\la^S_n\rightarrow\nu/2$ and $\la^M_n\rightarrow\nu^2/4$. Hence $C'\qty(c_*,\la_n^S,\la_n^M)=O(1)$. Let $d_{\mathrm{max}}\coloneqq\max_{i\in[n]}d_i$ be the maximal degree.  Then $\nu_n\le\mu_n^{(3)}\le d_{\mathrm{max}}\nu_n$ holds and thus $\Delta_{\mathrm{simple}}=O\qty(\nu_n^4/n+d_{\mathrm{max}}^2\nu_n^2/n)=O(d_{\mathrm{max}}^2/n)$.  As noted in \cite[(2.6)]{J20}, the uniform integrability of $D_n^2$ in Condition \ref{convergence of the second moment of the empirical degree distribution} implies that $d_{\mathrm{max}}=o(n^{1/2})$ and thus $\Delta_{\mathrm{simple}}=o(1)$. Hence there exists $n_{**}$ no less than $n_*$ such that for all $n\ge n_{**}$, $C'\qty(c_*,\la_n^S,\la_n^M)\le \qty(c_*^2+6c_*^3)\nu/2+\qty(c_*^2+4c_*^3)\nu^2/2+c_*^3+1$, $L_n\ge e^{-\nu/2-\nu^2/4}/2>0$ and the bound \eqref{conditional clt bound 1} also holds. Therefore, under Conditions \ref{degree weak convergence positive mean uniform integrability} and \ref{convergence of the second moment of the empirical degree distribution}, the upper bounds and the left-hand side quantities in \eqref{joint approximation in the configuration model bound}--\eqref{conditional clt bound 1} all converge to zero.
	
	\medskip
	Under Condition \ref{degree weak convergence positive mean uniform integrability}(i)--(ii), the entries in $\Si_n$, \eqref{exact covariances}, converge to those in $\Si$, \eqref{asymptotic covariance matrix}. Therefore, the triangle inequalities and Theorem \ref{comparison bound joint normal-Poisson} conclude that, for independent random vectors $Z\sim\mN_2(0,\Si)$ and $N\sim\Po(\nu/2)\times\Po(\nu^2/4)$,
	\begin{align*}
\sup_{h\in\mH_3}\Big|\e\qty[h\qty(W_\edge,W_\twostar,S_n,M_n)]-\e\left[h(Z,N)\right]\Big|&\rightarrow0,\\
\sup_{g\in\mC_3,\|g\|\le1}\Big|\e\left[g\qty(W_\edge,W_\twostar)\middle|S_n+M_n=0\right]-\e\left[g(Z)\right]\Big|&\rightarrow0,
	\end{align*}
	under Conditions \ref{degree weak convergence positive mean uniform integrability} and \ref{convergence of the second moment of the empirical degree distribution}. This completes the proof.
	\end{proof}
	
	\medskip
	Theorem \ref{configuration model clt} follows from Theorem \ref{Stein coupling theorem} and the following five propositions regarding the moment terms involving $(I_\be)$ and $(J_{\be\al})$ in the abstract bound \eqref{Stein coupling bound}: Propositions \ref{first term in stein coupling bound}, \ref{second term in stein coupling bound}, \ref{third term in stein coupling bound}, \ref{fourth term in stein coupling bound} and \ref{fifth term in stein coupling bound}. All the proofs are in Appendix \ref{main proofs}, especially, in Appendix \ref{stein coupling bound first term}, Appendix \ref{stein coupling bound second term}, Appendix \ref{stein coupling bound third term}, Appendix \ref{stein coupling bound fourth term} and Appendix \ref{stein coupling bound fifth term}, respectively. 
	\begin{prp}\label{first term in stein coupling bound}
		Suppose that $N>15$ and $n_1\ge1$. Let $c_{*}$ be any real number such that
		\begin{align*}
			\frac{n_1\vee n_2}{N-15}\le c_*.
		\end{align*}
		Then, for the set of possible isolated edges $\Ga_{11}$ and the set of possible isolated 2-stars $\Ga_{12}$, we have 
		\begin{align*}
	\sum_{j,k=1}^2\sqrt{\Var\qty(\e\qty[\sum_{\al\in\Ga_{1j}}I_\al\sum_{\be\in\Ga_{1k}}(I_{\be}-J_{\be\al})\Big|W_1,W_2])}\le& C_1(c_*)\frac{n_1\vee n_2}{\sqrt{N-1}},
		\end{align*}
			where
		\begin{align*}
			C_1(c_*)=&\sqrt{3\qty(\frac{1}{2}+\frac{3}{2}c_*^2+4c_*^3+\frac{7}{2}c_*^4+2c_*^6)}+\sqrt{2\qty(2c_*^3+5c_*^4+68c_*^5+72c_*^6+72c_*^8)}\bigg.\\
			&\quad+\sqrt{2\qty(2c_*^3+8c_*^4+4c_*^5+56c_*^6+72c_*^8)}\\
			&\quad+\bigg.\sqrt{3\qty(c_*+24c_*^4+64c_*^5+328c_*^6+256c_*^7+872c_*^8+2048c_*^{10})}.
		\end{align*}
	\end{prp}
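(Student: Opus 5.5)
The plan is to avoid computing the conditional expectation given $(W_1,W_2)$ and instead use the variance-reduction trick from the independent-indicator example. We have $\Var(\e[X\mid W_1,W_2])\le\Var(\e[X\mid\mathfrak g])$, because $(W_1,W_2)$ is a function of $\mathfrak g$. So for each pair $(j,k)$ we work with
\[
A_{jk}\coloneqq\e\Big[\sum_{\al\in\Ga_{1j}}I_\al\sum_{\be\in\Ga_{1k}}(I_\be-J_{\be\al})\,\Big|\,\mathfrak g\Big],
\]
where the expectation is only over the independent switching indices $B_\al$. This gives
\[
A_{jk}=\sum_{\al\in\Ga_{1j}}I_\al\sum_{\be\in\Ga_{1k}}\big(I_\be-\p(\mathfrak g_\al\supset\be\mid\mathfrak g)\big).
\]
The next step is to split the inner sum according to how $\be$ sits relative to $\al$:
\begin{itemize}
\item $\be=\al$;
\item $\be\neq\al$ with $\be\cap\al\neq\varnothing$;
\item $\be\cap\al=\varnothing$.
\end{itemize}

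We then analyse each class separately. When $I_\al=1$ and $\be$ intersects $\al$, the edges of $\al$ are isolated, so $I_\be=0$ for $\be\neq\al$. The term $I_\be-\p(\cdot)$ therefore reduces to $-\p(\mathfrak g_\al\supset\be\mid\mathfrak g)$, the probability that switching \emph{creates} $\be$. This requires each switched edge of $\al$ to land on a specific half-edge, so it is $O(1/N^{e(H_k)})$ times an explicit indicator of the configuration near $\al$. When $\be$ is disjoint from $\al$, Lemma \ref{disjoint multigraphs non creation} (no creation) should give $I_\be-\p(\cdot)=I_\be\,\p(\mathfrak g_\al\not\supset\be\mid\mathfrak g)$. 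This is the destruction probability, which equals
\[
1-\prod_{\ell}\Big(1-\frac{2e(H_k)}{N-2(e(H_j)-\ell)-1}\Big),
\]
a deterministic constant depending only on $(j,k)$, because each switching partner is chosen uniformly from a set of known size. Consequently the disjoint part of $A_{jk}$ is a deterministic multiple of
\[
\sum_{\al}\sum_{\be\cap\al=\varnothing}I_\al I_\be,
\]
and the diagonal part is $(1-p_\al)\sum_\al I_\al$. The creation part is a sum over $\al$ of $I_\al$ times functions of local patterns.

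After this decomposition, $\Var(A_{jk})$ is bounded by a constant (3 or 2, matching the factors in $C_1$) times the sum of the variances of the pieces, using $\Var(\sum_{i=1}^m X_i)\le m\sum_{i=1}^m\Var(X_i)$. Each piece is a polynomial of degree at most 2 in the indicators $I_\al$, so its variance can be written as sums of covariances of products of up to four indicators. Their joint probabilities are exact ratios of double factorials, as in \eqref{success probability}.

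The key cancellation is this. For $\e[I_\al I_\be I_\ga I_\de]-\e[I_\al I_\be]\e[I_\ga I_\de]$ with all four components disjoint, the difference of double-factorial ratios is $O(N^{-1})$ relative to the product. Overlapping configurations are counted by the extra powers of $n_1,n_2$ they lose. Each resulting count is bounded by monomials in $n_1\vee n_2$ divided by powers of $(N-15)$; the constant $15$ comes from the largest falling factorial $((N-1))_8$. Using $(n_1\vee n_2)/(N-15)\le c_*$ and $n_1\ge1$, every variance becomes $(n_1\vee n_2)^2/(N-1)$ times a polynomial in $c_*$, and taking square roots yields $C_1(c_*)(n_1\vee n_2)/\sqrt{N-1}$.

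The main obstacle is this variance computation for the disjoint part with $(j,k)=(2,2)$. It is a degree-4 polynomial in 2-star indicators, so one must classify all overlap patterns among four possible isolated 2-stars and verify the $1/N$ cancellation between $\e[I_\al I_\be I_\ga I_\de]$ and the product term. I would first do the fully disjoint case exactly via $((N-1))_8$ versus $[((N-1))_4]^2$, and then bound every overlapping pattern crudely by counting, without attempting to use signs.
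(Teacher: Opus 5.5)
Your overall strategy is viable and differs genuinely from the paper's. The paper never conditions on $\mathfrak g$. It bounds $\Var(\e[X\mid W_1,W_2])$ by the unconditional $\Var(X)$, splits this as in \eqref{variance decomposition}, and works with the joint law of $(\mathfrak g,B_\al,B_{\al'})$. Most of its effort goes into the third variance, where $\p(I_\al=I_{\al'}=1,J_{\be\al}=J_{\be'\al'}=1)$ is bounded by counting $\mH_{\al,\al',\be,\be'}$ through the surjections of Lemmas \ref{surjection edge edge}--\ref{surjection 2star 2star}.

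Conditioning on $\mathfrak g$ is legitimate, since $(W_1,W_2)$ is $\mathfrak g$-measurable, and it simplifies two of the three pieces:
\begin{itemize}
\item The diagonal piece is exactly $(1-p_\al)\sum_\al I_\al$, because only $B_\al=(r_{\al,\ell})_\ell$ preserves $\al$.
\item By Lemmas \ref{disjoint multigraphs non creation} and \ref{destroyed al be disjoint}, the disjoint piece is $c_{jk}\sum_\al\sum_{\be\cap\al=\varnothing}I_\al I_\be$ with deterministic $c_{jk}=1-\prod_\ell\big(1-\frac{2e(H_k)}{N-2(e(H_j)-\ell)-1}\big)$. The paper's seven-case analysis of the second variance thus becomes the variance of a single statistic, and the analogue of its Case (i) even gains a factor $c_{jk}=O(1/N)$.
\end{itemize}
For the creation piece with $\al\cap\al'=\varnothing$, you end up computing exactly the paper's joint probabilities, because $\e[J_{\be\al}J_{\be'\al'}\mid\mathfrak g]$ factorizes. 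You organize them as covariances of plain edge indicators rather than as cardinalities of $\mH_{\al,\al',\be,\be'}$.

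Two points need correcting.

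\textbf{Creation probabilities.} Your description is wrong in several subcases, and this, not the disjoint $(2,2)$ variance, is where the real work of your route lies. It is not true that every switched edge of $\al$ must hit a prescribed half-edge, and the order is not $N^{-e(H_k)}$. Write $\be=(t_1t_2)\cup(t_3t_4)$ with $t_2,t_3$ at its degree-2 vertex. Working through $f_\al$, on $\{I_\al=1\}$ one finds:
\begin{itemize}
\item For $(j,k)=(1,1)$ and $(2,1)$ the probability is exactly $1/(N-1)$. For a 2-star $\al$ only one of its two switches is pinned; the other is free or merely has to avoid two half-edges.
\item For $(1,2)$ it is $\frac{1}{N-1}I_{t_3t_4}$ if one leaf is shared, and $\frac{1}{N-1}I_{t_2t_3}$ (a self-loop at the centre of $\be$) if both are.
\item For $(2,2)$ it is $1/((N-1))_2$ in the cases of Lemma \ref{surjection 2star 2star}(b), and $\frac{N-5}{(N-1)(N-3)}I_{t_3t_4}$ in case (a).
\end{itemize}
So when the far edge of $\be$ is present, the probability is of order $1/N$, not $N^{-2}$.

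You need these formulas with the correct leading constants. Upper bounds suffice, because $\e[I_\al J_{\be\al}]=p_\al p_\be$ fixes the mean exactly. The creation piece has mean $-\sum_\al\sum_\be p_\al p_\be$, for example $-(n_1)_3/(N-1)^2$ when $j=k=1$. Its square is of order $(n_1\vee n_2)^2$ when $n_1\vee n_2\asymp N$, so only a cancellation of relative order $1/N$ brings the variance down to the required $(n_1\vee n_2)^2/N$. Deriving these formulas is the single-$\al$ content of the paper's surjection lemmas.

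\textbf{The constant.} Your computation will produce its own polynomial in $c_*$, not $C_1(c_*)$ verbatim, and it need not be dominated termwise by the paper's. For example, when $j=1$ the paper's Case (ii) contributes nothing, since one switch cannot destroy two disjoint $\be$'s. After conditioning you instead get the positive term $c_{1k}^2\e[I_\al I_\be I_{\be'}]$. To prove the proposition with this exact constant, you must either compare your final polynomial with $C_1$, or use that each of your three pieces is $\e[\,\cdot\mid\mathfrak g]$ of the corresponding piece of \eqref{variance decomposition} and so has no larger variance. The latter sends you back to the paper's estimates.
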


\begin{prp}\label{second term in stein coupling bound}
	Suppose that $N>3$ and let $c_\star$ be any real number such that
	\begin{align*}
		\frac{n_1\vee n_2}{N-3}\le c_\star.
	\end{align*}
	Then, for the set of possible isolated edges $\Ga_{11}$ and the set of possible isolated 2-stars $\Ga_{12}$, we have
	\begin{align*}
		\sum_{\al\in\Ga_{1}}\e\qty[I_\al\sum_{\be\in\Ga_{1}}\qty|I_\be-J_{\be\al}|\sum_{\ga\in\Ga_{1}}\qty|I_\ga-J_{\ga\al}|]\le&\qty(8c_\star+36c_\star^2)\qty(n_1\vee n_2).
	\end{align*}
\end{prp}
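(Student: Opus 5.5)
Conditionally on $\mathfrak{g}\supset\al$, the plan is to describe exactly which $\be\in\Ga_1$ can have $I_\be\neq J_{\be\al}$, and then count. Fix $\al\in\Ga_1$ with $I_\al=1$. The switching $f_\al$ changes at most $e(H)\le2$ pairs. At step $\ell$ it breaks the edge $s_{\ell,1}s_{\ell,2}$ of $\al$ and a second edge $t_{\ell,1}t_{\ell,2}$, where $t_{\ell,1}$ is uniform over the admissible half-edges, and it creates two new edges. So $I_\be\neq J_{\be\al}$ forces one of two things. Either $\be$ uses an edge destroyed by the switching, meaning $\be$ intersects $\al$ or $\be$ contains one of the random edges $t_{\ell,1}t_{\ell,2}$. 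Or $\be$ uses a newly created edge, which contains a half-edge of $\al$ together with some $t_{\ell,1}$ or $t_{\ell,2}$. Isolation is the key constraint here. A possible isolated edge or 2-star sits only on degree-1 and degree-2 vertices, and once $\al$ is isolated in $\mathfrak{g}$, the number of $\be\in\Ga_1$ that intersect $\al$ and are realized in either configuration is bounded by an absolute constant (2 for edges, a few for 2-stars). The same absolute bound holds for the number of realized $\be$ containing a given edge. Hence I would show deterministically, on $\{I_\al=1\}$,
\[
\sum_{\be\in\Ga_1}|I_\be-J_{\be\al}|\le K\sum_{\ell=1}^{e(H_\al)}\big(\indi_{A_\ell}+\indi_{A'_\ell}\big)+K',
\]
where $K,K'$ are small absolute constants. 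Here $A_\ell$ is the event that the edge $t_{\ell,1}t_{\ell,2}$ belongs to a realized isolated edge or 2-star of $\mathfrak{g}$, and $A'_\ell$ is the event that the created edges close up a new isolated edge or 2-star. For $A'_\ell$ this requires $t_{\ell,1}$ or $t_{\ell,2}$ to lie on a degree-1 or degree-2 vertex.

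Next I would square this and take expectations. By Cauchy--Schwarz, or simply $(a+b)^2\le2a^2+2b^2$, the sum over $\al$ splits into a constant part and a random part. The constant part is $\sum_{\al}p_\al K'^2$, which is $O(|\Ga_{11}|/N+|\Ga_{12}|/N^2)=O(n_1^2/N+n_1^2n_2/N^2)$. Using $n_1\vee n_2\le c_\star(N-3)$, this becomes $O(c_\star(n_1\vee n_2))$, which yields the linear-in-$c_\star$ term. For the random part, $B_{\al,\ell}$ is uniform and independent of $\mathfrak{g}$. So conditionally on $\mathfrak{g}\in\mG_\al$, the chance that $t_{\ell,1}$ lands on a degree-1 or degree-2 half-edge is at most $(n_1+2n_2)/(N-3)\le3c_\star$. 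Products of indicators for two steps give at most $(3c_\star)^2$, up to a similar factor when $t_{\ell,2}$ also has to be on such a vertex. These products give the $c_\star^2$ term. Multiplying by $\sum_\al p_\al=\e\overline{Z}_\edge+\e\overline{Z}_\twostar\le n_1^2/(2(N-1))+n_1^2n_2/((N-1)(N-3))$, and absorbing one factor $n_1/(N-3)\le c_\star$, should produce $(8c_\star+36c_\star^2)(n_1\vee n_2)$ once the constants are tracked.

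The main obstacle is the deterministic combinatorial step. It means enumerating, for $\al$ an edge or a 2-star and $\be$ an edge or a 2-star, every way $\be$ can be destroyed or created by one or two successive switchings. The delicate cases are those in the second switching where $t_{2,1}$ falls on a half-edge already moved in step 1, or on the other half-edges of $\al$. I would handle these by conditioning on $B_{\al,1}$ and bounding crudely: any $\be$ touched at step 2 must contain one of at most four specific half-edges, so it contributes at most a constant times the indicator that those half-edges have degree 1 or 2. I would rather accept slightly loose constants than chase exact ones, and check at the end that they fit within $8$ and $36$.
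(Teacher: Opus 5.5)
\textbf{Gap: the proposed finish cannot produce the constants $8$ and $36$.} Bounding $X_\alpha:=\sum_{\beta\in\Gamma_1}|I_\beta-J_{\beta\alpha}|$ pathwise on $\{I_\alpha=1\}$ is the right start. The way you then finish does not work: you split with $(a+b)^2\le 2a^2+2b^2$ and bound the random part by probabilities of order $3c_\star$.

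The stated constants come from $\mathbb{E}[I_\alpha X_\alpha^2]\le(2v(H))^2p_\alpha$:
\begin{itemize}
\item isolated edges give $16\cdot\frac{(n_1)_2}{2(N-1)}\le 8c_\star n_1$, the linear term;
\item isolated 2-stars give $36\cdot\frac{(n_1)_2n_2}{(N-1)(N-3)}\le 36c_\star^2n_1$, the quadratic term.
\end{itemize}
Neither comes from a constant-versus-random split. Now take all degrees equal to $1$ or $2$ with $n_1=n_2=N/3$. There the right-hand side agrees to leading order with $16\sum_{\Gamma_{11}}p_\alpha+36\sum_{\Gamma_{12}}p_\alpha$. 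Your estimate $(n_1+2n_2)/(N-3)\le 3c_\star$ is about $1$ there, so averaging gains nothing. Consequently each of the following pushes your bound above the stated one in that regime:
\begin{itemize}
\item the loss $\mathbb{E}[(a-b)^2]$ incurred by $(a+b)^2\le2a^2+2b^2$;
\item any pathwise constant larger than $2v(H)$;
\item the extra $c_\star^3$ and $c_\star^4$ terms you create for $\alpha\in\Gamma_{12}$, from $\sum_{\Gamma_{12}}p_\alpha\le c_\star^2n_1$ times further factors of order $c_\star$.
\end{itemize}
So the final check that everything fits within $8$ and $36$ would fail.

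\textbf{Missing idea: a sharp deterministic count, with no averaging.} On $\{I_\alpha=1\}$ one has $X_\alpha\le 1+v(H)+e(H)=2v(H)$ deterministically. Your own inequality is already deterministic, since the indicators are at most $1$, but its constants are worse. The three contributions are:
\begin{itemize}
\item $\alpha$ itself, contributing at most $1$.
\item The $\beta\neq\alpha$ intersecting $\alpha$. Here $I_\beta=0$, because two distinct intersecting isolated trees are incompatible. For the same reason at most one such $\beta$ per vertex of $\alpha$ can be realized in $\mathfrak{g}_\alpha$, giving at most $v(H)$.
\item The $\beta$ disjoint from $\alpha$. These can never be created, since every edge created by a switching step contains a half-edge of $\alpha$. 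When realized in $\mathfrak{g}$, such a $\beta$ is destroyed if and only if some $B_{\alpha,\ell}$ falls into the ranks of $\beta$'s half-edges within the fixed set available at stage $\ell$. Distinct realized $\beta$ are vertex-disjoint, so for each $\ell$ these rank sets are disjoint. Hence at most one such $\beta$ is destroyed per stage, at most $e(H)$ in total.
\end{itemize}
This rank-set description removes the second-stage cases you were worried about. It also shows your events $A'_\ell$ are superfluous, since any created $\beta$ intersects $\alpha$. Squaring gives $\mathbb{E}[I_\alpha X_\alpha^2]\le 4v(H)^2p_\alpha$. Summing over $\Gamma_{11}$ (where $v=2$) and $\Gamma_{12}$ (where $v=3$) yields the claim exactly.
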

	\begin{prp}\label{third term in stein coupling bound}
		Suppose that $N>7$ and let $c_\sharp$ be any real number such that
		\begin{align*}
			\frac{n_1\vee n_2}{N-7}\le c_\sharp.
		\end{align*}
		Then, for the set of possible isolated edges and isolated 2-stars $\Ga_1$ and the set of possible self-loops and double edges $\Ga_2$, we have 
		\begin{align*}
			\sum_{\al\in\Ga_1}\sum_{\be\in\Ga_2}\e\qty[I_\al\qty|I_\be-J_{\be\al}|]\le \qty(c_\sharp^2+6c_\sharp^3)\la^S_n+\qty(2c_\sharp^2+8c_\sharp^3)\la^M_n+c_\sharp^3.
		\end{align*}
	\end{prp}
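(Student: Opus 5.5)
We need to bound $\sum_{\al\in\Ga_1}\sum_{\be\in\Ga_2}\e[I_\al|I_\be-J_{\be\al}|]$. Since $I_\be-J_{\be\al}\neq0$ forces $I_\al=1$ (otherwise $\mathfrak{g}_\al=\mathfrak{g}$), we split
\begin{align*}
\e\qty[I_\al|I_\be-J_{\be\al}|]=\p\qty(I_\al=1,I_\be=1,J_{\be\al}=0)+\p\qty(I_\al=1,I_\be=0,J_{\be\al}=1),
\end{align*}
the ``destruction'' and ``creation'' probabilities.

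\textbf{Destruction.} For $\al$ an isolated edge or 2-star, $I_\al I_\be=1$ with $\be$ a self-loop or double edge forces $\al$ and $\be$ to be vertex-disjoint. Indeed, $\al$'s vertices have degree $1$ or $2$ and all their half-edges are used by $\al$, so no self-loop or double edge can touch them. Then $\p(I_\al=I_\be=1)=1/((N-1))_{e(H_\al)+e(H_\be)}$. Given this event, the switching at step $\ell$ destroys an edge of $\be$ only if the uniform index $B_{\al,\ell}$ selects one of the $2e(H_\be)$ half-edges of $\be$. By a union bound, this has conditional probability at most $\sum_\ell 2e(H_\be)/(N-2(e(H_\al)-\ell)-1)$, which I expect matches the exact formula quoted from \eqref{prob al be exist be destroyed}.

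We then sum over pairs. The number of $\al$ is at most $(n_1)_2/2$ or $(n_1)_2n_2$, and the number of $\be$ is at most $\sum(d_i)_2/2$ or $\sum_{i<j}(d_i)_2(d_j)_2/2$. Dividing by $((N-1))_{e}$ and absorbing one factor $(N-1)$ per edge of $\be$ produces $\la^S_n$ or $\la^M_n$. The remaining powers $n_1^2/N^2$ and $n_1^2n_2/N^3$ become $c_\sharp^2$ and $c_\sharp^3$ after using $(n_1\vee n_2)/(N-7)\le c_\sharp$ to control descending double factorials such as $((N-1))_4\ge(N-7)^4$. The extra factor $2e(H_\be)e(H_\al)/(N-7)$ from the switch cancels one power of $N$ against the count. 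This explains the coefficients $c_\sharp^2\la^S$, $2c_\sharp^2\la^M$, and so on.

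\textbf{Creation.} Here we need the new edges formed by the switch, $\{s_{\ell,1}\wedge s_{\ell,2},t_{\ell,1}\}$ and $\{s_{\ell,1}\vee s_{\ell,2},t_{\ell,2}\}$, to complete a self-loop or double edge $\be$ that was absent. Since the $s$ half-edges lie on degree-1 or degree-2 vertices of $\al$, a new self-loop can arise only at the degree-2 center of a 2-star. There the two half-edges $u,v$ get re-paired, or more precisely one new edge joins a half-edge at the center to the partner chosen. A new double edge requires one created edge to be parallel to an existing edge, or both created edges to be parallel to each other; such a $\be$ must involve the center vertex or the vertex of $t_{\ell,1}$.

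I would enumerate these finitely many configurations (case analysis on which step $\ell$ creates which edge of $\be$, and whether $\be$'s other edge pre-exists or is also created). For each case, I would bound the probability by $p_\al\cdot(\text{probability }B_{\al,\ell}\text{ hits a specific half-edge})\cdot(\text{probability the remaining edge of }\be\text{ is present})$, which is $O(N^{-e(H_\al)-1}\cdot N^{-k})$. Summing over choices, the degree-2 center contributes $(2)_2=2$ to the degree sums. The dominant term should give the $6c_\sharp^3\la^S$ and $8c_\sharp^3\la^M$ contributions, plus a pure $c_\sharp^3$ from the self-loop created at the 2-star center: counting $(n_1)_2n_2\cdot O(1/N^3)$ gives $c_\sharp^3$.

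The main obstacle is the creation case analysis. It needs a complete and non-overlapping list of ways the two-step switch for a 2-star can create a double edge, for example an edge created at step 1 that is then paired in parallel at step 2. Each way needs the exact counting with descending factorials so that the stated constants come out, particularly with the shifts $N-7$ versus $N-1$.
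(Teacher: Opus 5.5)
Your destruction half is correct and matches the paper. For vertex-disjoint $\al,\be$, Lemma~\ref{destroyed al be disjoint} together with the union (Bernoulli) bound gives \eqref{prob al be exist be destroyed final}, and the counts of Lemma~\ref{cardinality estimates}(a) then produce $c_\sharp^2\la^S_n+2c_\sharp^2\la^M_n+4c_\sharp^3\la^S_n+8c_\sharp^3\la^M_n$.

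The gap is the creation half. You leave it as an unperformed case analysis of the switch and call it the main obstacle, but the missing idea is that no such analysis is needed. Every edge formed by $f_\al$ contains a half-edge of $\al$, and members of $\Ga_2$ cannot touch degree-1 vertices. So by Lemma~\ref{disjoint multigraphs non creation}, a created $\be\in\Ga_2$ must pass through the degree-2 centre of a 2-star $\al$. For such intersecting $\be$ you already know $I_\al I_\be=0$, hence $I_\al|I_\be-J_{\be\al}|=I_\al J_{\be\al}$. The coupling property \eqref{Stein coupling conditional dist} then gives the exact value $\e[I_\al J_{\be\al}]=p_\al\e[J_{\be\al}\mid I_\al=1]=p_\al p_\be$. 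What remains is counting, via Lemma~\ref{cardinality estimates}(e): no $\be\in\Ga_2$ meets an isolated edge, and a 2-star meets exactly one possible self-loop and at most $\sum_i(d_i)_2$ possible double edges. This gives
\[
\frac{(n_1)_2n_2}{((N-1))_2(N-1)}+\frac{(n_1)_2n_2\sum_{i}(d_i)_2}{((N-1))_2((N-1))_2}\le c_\sharp^3+2c_\sharp^3\la^S_n ,
\]
which, added to the destruction terms, is exactly the claimed bound.

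Your anticipated bookkeeping would also mislead you. You expect creation to produce the $6c_\sharp^3\la^S_n$ and $8c_\sharp^3\la^M_n$ terms. In fact $8c_\sharp^3\la^M_n$ and $4c_\sharp^3\la^S_n$ come from destruction by a 2-star's switch. A created double edge through the centre has only one free endpoint, so it contributes a $\la^S_n$ term ($2c_\sharp^3\la^S_n$), not a $\la^M_n$ term. Your case ``one created edge parallel to an existing edge'' also cannot occur. A $\be\in\Ga_2$ through the centre uses both centre half-edges, which are matched to degree-1 half-edges whenever $I_\al=1$. So no edge of $\be$ is present beforehand, and both must be created. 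A direct switching computation could at best recover $p_\al p_\be$ case by case. For example, for the self-loop at the centre, the second step must select one specific half-edge, giving $p_\al/(N-1)$.
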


	\begin{prp}\label{fourth term in stein coupling bound}
		Suppose that $N>7$ and let $c_\sharp$ be any real number such that
		$c_\sharp\ge\frac{n_1\vee n_2}{N-7}$. Then, for the set of possible isolated edges and isolated 2-stars $\Ga_1$ and the set of possible self-loops and double edges $\Ga_2$, we have 
		\begin{align*}
			\sum_{\al\in\Ga_2}\sum_{\be\in\Ga_1}\e\qty[I_\al\qty|I_\be-J_{\be\al}|]=\sum_{\al\in\Ga_1}\sum_{\be\in\Ga_2}\e\qty[I_\al\qty|I_\be-J_{\be\al}|]	\le& \qty(c_\sharp^2+6c_\sharp^3)\la^S_n+\qty(2c_\sharp^2+8c_\sharp^3)\la^M_n+c_\sharp^3.
		\end{align*}
	\end{prp}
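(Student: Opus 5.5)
The plan is to prove the identity
\[
\sum_{\al\in\Ga_2}\sum_{\be\in\Ga_1}\e\qty[I_\al\qty|I_\be-J_{\be\al}|]=\sum_{\al\in\Ga_1}\sum_{\be\in\Ga_2}\e\qty[I_\al\qty|I_\be-J_{\be\al}|]
\]
and then read off the bound directly from Proposition \ref{third term in stein coupling bound}. The identity should follow from a term-by-term symmetry, $\e[I_\al|I_\be-J_{\be\al}|]=\e[I_\be|I_\al-J_{\al\be}|]$ for every pair $\al\in\Ga_2$, $\be\in\Ga_1$. Summing this over all such pairs and relabeling gives the identity.

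To prove the pairwise symmetry, I would write
\[
\e[I_\al|I_\be-J_{\be\al}|]=\p(I_\al=I_\be=1,J_{\be\al}=0)+\p(I_\al=1,I_\be=0,J_{\be\al}=1)
\]
and treat the two events separately. The first is ``$\al$ and $\be$ both realized, and the switching along $\al$ destroys $\be$.'' The second is ``$\al$ realized, $\be$ absent, and the switching along $\al$ creates $\be$.'' By Lemma \ref{f al ell f al inverse}, $f_\al$ is a bijection. Since $\mathfrak g$ and $B_\al$ are independent and uniform, each probability is the number of pairs $(g,b)\in\mG_\al\times\prod_\ell[\cdot]$ (or their images $g'=f_\al(g,b)$) satisfying the event, divided by $(N-1)!!\,((N-1))_{e(H_\al)}$. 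Proposition \ref{BRI coupling lemma} gives the uniformity of the image.

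\textbf{Disjoint pairs.} When $\al$ and $\be$ share no half-edge, the destruction probability is symmetric by the product formula \eqref{prob al be exist be destroyed} and the footnote identity $\frac{((N-2k-1))_j}{((N-1))_j}=\frac{((N-2j-1))_k}{((N-1))_k}$. For the creation probability, I would show it equals the destruction probability with the roles of the two configurations exchanged. The bijection $f_\al^{-1}$ maps configurations $g'\supset\be$ with $g\not\supset\be$ to configurations in which $\be$ is ``destroyed'' in reverse. Counting gives a symmetric closed form in $e(H_\al),e(H_\be),N$ of the same type as the first.

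\textbf{Overlapping pairs.} When $\al$ and $\be$ share half-edges, $I_\al I_\be=0$ except in degenerate cases. A possible isolated edge or 2-star cannot coexist with a self-loop or double edge on a shared half-edge, since degree-1 and degree-2 vertices in an isolated tree carry no extra half-edges. So only the creation term survives. I would compute it explicitly by the same one-step-at-a-time counting of which uniform index $b_\ell$ pairs the needed half-edges. The aim is to check that it is again a symmetric function, for example of the form $\frac{c}{((N-1))_{e(H_\al)+e(H_\be)-\text{overlap}}}$ with a combinatorial coefficient depending only on the overlap pattern.

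The main obstacle is this overlapping case: verifying the symmetry requires a case analysis of how a self-loop or double edge can share vertices with an isolated edge or 2-star. Degree constraints ($d=1,2$ versus $d\ge2$ for loops and doubles) kill most cases. If the symmetry argument becomes unwieldy, the fallback is to skip the equality and redo the estimate of Proposition \ref{third term in stein coupling bound} directly with $\al\in\Ga_2$, $\be\in\Ga_1$, using the same counting of created and destroyed copies. That estimate should produce the same expression $(c_\sharp^2+6c_\sharp^3)\la^S_n+(2c_\sharp^2+8c_\sharp^3)\la^M_n+c_\sharp^3$.
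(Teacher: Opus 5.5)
Your skeleton is the paper's: prove the termwise identity $\e[I_\al|I_\be-J_{\be\al}|]=\e[I_\be|I_\al-J_{\al\be}|]$ for $\al\in\Ga_2$, $\be\in\Ga_1$, sum over pairs, and invoke Proposition \ref{third term in stein coupling bound}. The gap is that you never actually establish the termwise identity. The overlapping case, which you call the main obstacle, is left as a prospective case-by-case count with a guessed answer of the wrong form. In the disjoint case, your claim that the creation probability ``equals the destruction probability with the roles exchanged'' and has a nonzero symmetric closed form is false.

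The missing idea is that the coupling condition \eqref{Stein coupling conditional dist} alone gives $\e[I_\al J_{\be\al}]=p_\al\e[J_{\be\al}\mid I_\al=1]=p_\al p_\be$, which is symmetric in $(\al,\be)$. Hence $\e[I_\al(I_\be-J_{\be\al})]=\cov(I_\al,I_\be)=\e[I_\be(I_\al-J_{\al\be})]$ for every pair, with no counting at all; this is Lemma \ref{symmetry}.

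What remains is only to remove the absolute value, i.e.\ to see that $I_\be-J_{\be\al}$ has a fixed sign on $\{I_\al=1\}$.
\begin{itemize}
\item If $\al\cap\be\neq\varnothing$, Lemma \ref{intersecting isolated tree and selfloop or double edge} gives $I_\al I_\be=0$ with no degenerate exceptions. So $\e[I_\al|I_\be-J_{\be\al}|]=\e[I_\al J_{\be\al}]=p_\al p_\be=-\cov(I_\al,I_\be)$. This equals $1/\bigl(((N-1))_{e(H_\al)}((N-1))_{e(H_\be)}\bigr)$, not your guessed form $c/((N-1))_{e(H_\al)+e(H_\be)-\mathrm{overlap}}$.
\item If $\al\cap\be=\varnothing$, they share no half-edge, and Lemma \ref{disjoint multigraphs non creation} shows the creation event $\{I_\al=1,I_\be=0,J_{\be\al}=1\}$ is empty. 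So $\e[I_\al|I_\be-J_{\be\al}|]=\e[I_\al(I_\be-J_{\be\al})]=\cov(I_\al,I_\be)$.
\end{itemize}
In general, creation minus destruction equals $-\cov(I_\al,I_\be)$. In the disjoint case creation is $0$ while destruction equals $\cov(I_\al,I_\be)>0$, so your proposed identification of the two fails.

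Either way the summand is $|\cov(I_\al,I_\be)|$, which is manifestly symmetric. Your destruction-term check via \eqref{prob al be exist be destroyed} and the footnote identity is correct but becomes superfluous.

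Your fallback of redoing the estimate with roles reversed is also unverified as stated. With the Bernoulli bound the reversed sums differ: for a double edge against an isolated edge you get $\frac{2}{N-3}+\frac{2}{N-1}$ instead of $\frac{4}{N-1}$. So ``the same expression'' would need separate checking.
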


		\begin{prp}\label{fifth term in stein coupling bound}
		Suppose that $N>7$. Define
		\begin{align*}
			\nu_n=\frac{\sum_{i\in[n]}(d_i)_2}{N-7},\qquad\mu_n^{(3)}=\frac{\sum_{i\in[n]}(d_i)_3}{N-7}.
		\end{align*}
		Then, for the set of possible self-loops and double edges $\Ga_2$, we have 
		\begin{align*}
			&\sum_{\al\in\Ga_2} p_{\al}^2+\sum_{\al\in\Ga_2}\sum_{\be\in\Ga_2\setminus\{\al\}}\e\qty[I_\al|I_\beta-J_{\beta\al}|]\\
			\le&\frac{7\nu_n^4+\qty(\mu_n^{(3)})^2+4\mu_n^{(3)}\nu_n^2+8\nu_n^3+4\mu_n^{(3)}\nu_n+2\mu_n^{(3)}+4\nu_n^2+2\la^S_n}{2(N-1)}+\frac{6\qty(\mu_n^{(3)})^2+8\mu_n^{(3)}\nu_n+\nu_n^2+4\la^M_n}{4((N-1))_2}.
		\end{align*}
	\end{prp}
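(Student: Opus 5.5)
The plan is to split the left-hand side into the ``diagonal'' part $\sum_{\al\in\Ga_2}p_\al^2$ and the ``off-diagonal'' part, and to handle the latter by decomposing
\[
I_\al|I_\be-J_{\be\al}|=I_\al I_\be(1-J_{\be\al})+I_\al(1-I_\be)J_{\be\al},
\]
that is, into \emph{destruction} and \emph{creation} events of $\be$ under the switching map $f_\al$.

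\textbf{Diagonal part.} By \eqref{success probability}, $p_\al=1/(N-1)$ on $\Ga_{21}$ and $p_\al=1/((N-1))_2$ on $\Ga_{22}$. Using $|\Ga_{21}|=\sum_i(d_i)_2/2$ and $|\Ga_{22}|=\sum_{i<j}(d_i)_2(d_j)_2/2$, this gives exactly
\[
\sum_{\al\in\Ga_2}p_\al^2=\frac{\la^S_n}{N-1}+\frac{\la^M_n}{((N-1))_2}.
\]
These are the $2\la^S_n/(2(N-1))$ and $4\la^M_n/(4((N-1))_2)$ terms of the claimed bound.

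\textbf{Off-diagonal part.} Since $(\mathfrak g,B_\al)$ are independent and $f_\al$ is a bijection (Lemma \ref{f al ell f al inverse}), each probability is a count of configurations. I would use this to compute two exact formulas analogous to \eqref{prob al be exist be destroyed}.

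For \emph{destruction} ($I_\al=I_\be=1$, $J_{\be\al}=0$), the case $\be\ne\al$ is only possible when $\al\cup\be$ is a consistent partial matching. If $\be$ shares an edge with $\al$, destruction occurs with probability at most one, so we simply bound by $p_{\al\cup\be}$. Otherwise, at step $\ell$ the edge $\be$ is broken only if $t_{\ell,1}$ is one of the $2e(\be)$ half-edges of $\be$. This gives
\[
\p(\cdot)\le p_{\al\cup\be}\sum_\ell \frac{2e(\be)}{N-2(e(H)-\ell)-1},
\]
in the spirit of the covariance computation above.

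For \emph{creation} ($I_\al=1$, $I_\be=0$, $J_{\be\al}=1$), any edge of $\be$ absent from $\mathfrak g$ but present in $\mathfrak g_\al$ must be one of the new pairs $\{s_{\ell,1}\wedge s_{\ell,2},t_{\ell,1}\}$ or $\{s_{\ell,1}\vee s_{\ell,2},t_{\ell,2}\}$. So each created edge of $\be$ either contains a half-edge of $\al$ and is obtained by a specific choice of $B_{\al,\ell}$ (cost $1/N$), or requires the chosen partner $t_{\ell,2}$ to be a specified half-edge (an additional pairing in $\mathfrak g$, cost $1/N$). Edges of $\be$ not created must already be present in $\mathfrak g$ (cost $1/N$ each). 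The denominators are reduced to $N-7$ by using crude bounds $N-k\ge N-7$ for $k\le7$, the largest shift that arises when $\al,\be$ are both double edges.

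\textbf{Counting.} I then sum over the overlap patterns of $(\al,\be)\in\Ga_2\times\Ga_2$: self-loop/self-loop, self-loop/double edge (both orders), and double edge/double edge. These are further split by whether they share half-edges, share only vertices, or are vertex-disjoint. For each pattern, the number of pairs is bounded by products of $\sum_i(d_i)_2$ (when two half-edges at a vertex are used) and $\sum_i(d_i)_3$ (when three half-edges at a common vertex are used), while the number of forced pairings yields the power of $N$. Vertex-disjoint pairs contribute via destruction only, giving terms like $\nu_n^4/N$ and $\nu_n^3/N$. Pairs sharing a vertex contribute the $\mu_n^{(3)}$ and $(\mu_n^{(3)})^2$ terms, e.g.\ two self-loops at the same vertex using three or four half-edges there. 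Collecting the pieces with the normalizations of $\nu_n$ and $\mu_n^{(3)}$ should yield the stated coefficients.

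\textbf{Main obstacle.} I expect the hardest part to be the exhaustive but non-redundant enumeration of overlap patterns in the double-edge/double-edge creation case. There, two switching steps act sequentially, both may create edges of $\be$, and the intermediate configuration $g_1$ must be tracked. I would first treat creation of a single edge of $\be$ per step and bound simultaneous creation of both edges by a separate, smaller term. Only the $(\mu_n^{(3)})^2$ and $\mu_n^{(3)}\nu_n$ contributions to the $((N-1))_2$ denominator term seem to need the finer bookkeeping.
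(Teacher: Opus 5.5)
\textbf{Assessment: there is a genuine gap in the creation term.} Your diagonal computation and your destruction bounds match the paper, including the trivial bound $p_{\al\cup\be}=1/((N-1))_3$ for two double edges sharing a common edge. The gap is the creation probability $\p(I_\al=1,I_\be=0,J_{\be\al}=1)$. You propose to estimate it by tracking which switching step produces which edge of $\be$, charging a heuristic ``cost $1/N$'' per pairing and lowering shifted denominators to $N-7$. You then assert that the stated coefficients ``should'' emerge, and you leave the double-edge/double-edge case open as the main obstacle. As written this step is not proved, and the crude version would not give the claim, for two reasons:
\begin{itemize}
\item Each term of the stated bound keeps one exact factor $N-1$ in its denominator. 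For example, the term $2\mu_n^{(3)}/(2(N-1))$ needs weight exactly $1/(N-1)^2$ for the $\sum_i(d_i)_3$ pairs of self-loops sharing a half-edge. Lowering both factors to $N-7$ gives $\mu_n^{(3)}/(N-7)$, which exceeds the claim.
\item Union bounds over creation scenarios, and over the unspecified partner of $t_{\ell,1}$, can overcount.
\end{itemize}

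\textbf{The missing idea: no creation analysis is needed at all.} Your own observation, that every new pair contains a half-edge of $\al$, already shows creation requires $\al$ and $\be$ to share a half-edge; this is Lemma \ref{disjoint multigraphs non creation}. Note it covers every pair sharing no half-edge, not just vertex-disjoint pairs.

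Among pairs $\al\ne\be$ in $\Ga_2$ that do share a half-edge, the only compatible ones are two double edges with exactly one common edge. There the other edge of $\be$ contains no half-edge of $\al$, so it cannot be created either (Lemma \ref{two double edges share a common edge}).

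All remaining pairs are incompatible, so $I_\al I_\be=0$ and $I_\al|I_\be-J_{\be\al}|=I_\al J_{\be\al}$. By \eqref{Stein coupling conditional dist} you then get exactly $\e[I_\al J_{\be\al}]=p_\al\,\e[J_{\be\al}\mid I_\al=1]=p_\al p_\be$. Equivalently, this follows from the bijectivity of $f_\al$ that you already invoke: $\p(I_\al=1,\mathfrak g_\al\in\mG_\be)=|\mG_\be|/\bigl((N-1)!!\,((N-1))_{e(H)}\bigr)$.

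Your ``main obstacle'' therefore disappears. What remains is exact counting of each overlap class, each with one of three weights:
\begin{itemize}
\item $p_\al p_\be$ for incompatible pairs;
\item the destruction bound from \eqref{prob al be exist be destroyed} for pairs sharing no half-edge;
\item $1/((N-1))_3$ for double edges sharing a common edge.
\end{itemize}
To finish, absorb fourth factorial moments via $\sum_i(d_i)_4\le (N-7)^2\nu_n^2$. The double-edge/self-loop order can be reduced to the reverse order via Lemma \ref{symmetry}.
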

	Given these five propositions, the proof of Theorem \ref{configuration model clt} is immediate.
	\begin{proof}[Proof of Theorem \ref{configuration model clt}]
		(a) follows from Theorem \ref{Stein coupling theorem} and Propositions \ref{first term in stein coupling bound}, \ref{second term in stein coupling bound}, \ref{third term in stein coupling bound}, \ref{fourth term in stein coupling bound} and \ref{fifth term in stein coupling bound}, together with the smoothness estimates, Lemma \ref{smoothness x}, \eqref{smoothness first order difference of first derivative} in Lemma \ref{smoothness y} and \eqref{smoothness second order difference uniform} in Remark \ref{smoothness y uniform}.	(b) is an application of the one-dimensional Poisson approximation bound \eqref{one dim Poisson approximation zero probability bound} in Example \ref{one dimensional Poisson approximation}, combined with Proposition \ref{fifth term in stein coupling bound}.
		
		\medskip
			As for (c), we may assume that $Z_n$ is independent of all else. Let $g\in\mC_3$ with $\|g\|\le1$. Write $W_1=\qty(W_\edge,W_\twostar)$ and $W_2=S_n+M_n$. By the independence of $Z_n$ from $(W_2,N_n)$, as long as $\p\qty(W_2=0)>0$ we have
		\begin{align*}
			&\Big|\e\left[g(W_1)\middle|W_2=0\right]-\e\left[g(Z_n)\right]\Big|\\
			=&\frac{1}{\p(W_2=0)}\Big|\e\qty[g(W_1)\indi_{\{W_2=0\}}]-\e\qty[g(Z_n)\indi_{\{W_2=0\}}]\Big|\\
			\le&\frac{1}{\p(W_2=0)}\qty(\Big|\e\qty[g(W_1)\indi_{\{W_2=0\}}]-\e\qty[g(Z_n)\indi_{\{N_n=0\}}]\Big|+\Big|\e\qty[g(Z_n)\indi_{\{W_2=0\}}]-\e\qty[g(Z_n)\indi_{\{N_n=0\}}]\Big|)\\
			=&\frac{1}{\p(W_2=0)}\qty(\Big|\e\qty[g(W_1)\indi_{\{W_2=0\}}]-\e\qty[g(Z_n)\indi_{\{N_n=0\}}]\Big|+\Big|\e\qty[g(Z_n)]\Big(\p(W_2=0)-\p\qty(N_n=0)\Big)\Big|)\\
			\le&\frac{1}{\p(W_2=0)}\qty(\Big|\e\qty[g(W_1)\indi_{\{W_2=0\}}]-\e\qty[g(Z_n)\indi_{\{N_n=0\}}]\Big|+\Big|\p(W_2=0)-\p\qty(N_n=0)\Big|).
		\end{align*}
		 By taking $h(x,y)=g(x)\indi_{y=0}$ in Theorem \ref{Stein coupling theorem} together with the improved smoothness estimates \eqref{smoothness second order difference diagonal improved} and \eqref{smoothness first order difference of first derivative improved} in Lemma \ref{smoothness y}, we can bound the first term in the numerator. We may use part (b) to lower bound the probability $\p(W_2=0)$ and upper bound the second term in the numerator, from which (c) follows.
	\end{proof}
	
	\medskip
The quantitative error bound for the normal approximation in uniform simple graphs, Theorem \ref{configuration model clt}(c), \eqref{conditional clt bound 1}, is somewhat unsatisfactory in the sense that the Poisson approximation error $\Delta_{\mathrm{simple}}$ appears also in the numerator. As discussed above, even under the assumption that the second moment of the empirical degree distribution converges, $\Delta_{\mathrm{simple}}=O(d_{\mathrm{max}}^2/n)$ and this rate may be slower than $n^{-1/2}$ unless $d_{\rm{max}}=O(n^{1/4})$. (This condition is yet assumed in \cite[Theorem 2.1]{BR19}, which is about the unconditional $\mathrm{CM}_n(\bm{d})$.) By reconsidering the proof, we notice that (after assuming that $Z_n$ is independent of all else) it is enough to bound the quantity
\begin{equation}\label{direct approach to conditional clt}
\Big|\e\qty[g(W_1)\indi_{\{W_2=0\}}]-\e\qty[g(Z_n)\indi_{\{W_2=0\}}]\Big|,
\end{equation}
where $W_1=\qty(W_\edge,W_\twostar)$ and $W_2=S_n+M_n$. To do this directly, we take another approach; we employ the idea of parametrized Stein equation and solution in a similar spirit to \cite{BP14, Pim24,Tudor2025, TudorZurcher2025}. For $g\in\mC_3$ with $\|g\|\le1$, define a function $\tilde{f}_g:\R^2\rightarrow\R$ by
\begin{equation}
	\tilde{f}_g(x)=-\int_{0}^{1}\frac{1}{2(1-s)}\Big[S_sg(x)-\e[ g(Z_n)]\Big]\dd{s},
	\label{eq y fixed solution}
\end{equation}
where $S_s g(x)=\e[g\qty(\sqrt{1-s}x+\sqrt{s}Z_n)]$ is the Slepian interpolation of $g$. The integral in \eqref{eq y fixed solution} is absolutely convergent, $\tilde{f}_g$ is of class $C^3$ with estimates
\begin{align*}
	&|\tilde{f}_g|_{\mathrm{Lip}}\le1;\\
	&|\tilde{f}_g|_k\le\frac{1}{k},\quad k=2,3,
\end{align*}
and $\tilde{f}_g$ satisfies the equations
\begin{equation}\label{eq only x Stein equation}
	g(x)-\e\qty[g(Z_n)]=\sum_{j,k=1}^2\si_{n,jk}(\partial_{jk}\tilde{f}_g)(x)-\sum_{j=1}^2x_j (\partial_j\tilde{f}_g)(x),\quad\quad x\in\R^2.
\end{equation}
By multiplying both sides of \eqref{eq only x Stein equation} by $\indi_{y=0}$, $y\in\Z_+$, we obtain
\begin{equation}\label{eq y fixed Stein equation}
	g(x)\indi_{y=0}-\e\qty[g(Z_n)\indi_{y=0}]=\sum_{j,k=1}^2\si_{n,jk}(\partial_{jk}\tilde{f}_g)(x)\indi_{y=0}-\sum_{j=1}^2x_j (\partial_j\tilde{f}_g)(x)\indi_{y=0},\quad (x,y)\in\R^2\times\Z_+.
\end{equation}
Note that
\begin{equation*}
	\e\qty[g(Z_n)\indi_{W_2=0}]=\e\qty[\Big(\e\qty[g(Z_n)\indi_{y=0}]\Big)_{y=W_2(\omega)}]
\end{equation*}
by the independence of $Z_n$ and $W_2$. Thus, substituting $(x,y)=(W_1,W_2)$ in the parametrized Stein equation \eqref{eq y fixed Stein equation} and taking the expectations of both sides we have
 \begin{equation}\label{conditional clt Stein expectation expression}
 	\e\qty[g(W_1)\indi_{W_2=0}]-\e\qty[g(Z_n)\indi_{W_2=0}]=\e\qty[\sum_{j,k=1}^2\si_{n,jk}(\partial_{jk}\tilde{f}_g )(W_1)\indi_{W_2=0}-\sum_{j=1}^2W_{1j} (\partial_j\tilde{f}_g)(W_1)\indi_{W_2=0}],
 \end{equation}
 which can be used in order to bound \eqref{direct approach to conditional clt}. In particular, replacing $f_h(x,y)$ by $\tilde{f}_g(x)\indi_{y=0}$ we can follow the first half of the proof of Theorem \ref{Stein coupling theorem} (bounding \eqref{OU part}) to bound the right-hand side of \eqref{conditional clt Stein expectation expression} in magnitude. Since $	|\indi_{W_2=0}-\indi_{W_{2\al}=0}|\le\indi_{W_2\neq W_{2\al}}\le|W_{2}-W_{2\al}|$, we bound the counterpart of $R_3$ in \eqref{OU part decomposition} as
 \begin{align*}
 	\qty|\frac{1}{\sqrt{n}}\sum_{\al\in\Ga_{1}}\e\Big[I_\al(\partial_j\tilde{f}_g)(W_{1\al})\qty(\indi_{W_{2}=0}-\indi_{W_{2\al}=0})\Big]|\le&\frac{1}{\sqrt{n}}\sum_{\al\in\Ga_{1}}\e\qty[I_\al\|\partial_j\tilde{f}_g\|\qty|W_2-W_{2\al}|]\\
 	\le&\frac{1}{\sqrt{n}}\sum_{\al\in\Ga_{1}}\sum_{\be\in\Ga_{2}}\e\qty[I_\al\qty|I_\be-J_{\be\al}|].
 \end{align*}
  Therefore, arguing similarly, we obtain another quantitative bound for the conditional CLT:
	\begin{thm}[Normal approximation in uniform simple graphs, 2]\label{another conditional clt}
	Keep the same assumptions and notation as in Theorem \ref{configuration model clt}.	If $L_n$ is positive, then we have
		\begin{equation}\label{conditional clt bound 2}
			\sup_{g\in\mC_3,\|g\|\le1}\Big|\e\left[g\qty(W_\edge,W_\twostar)\middle|S_n+M_n=0\right]-\e\left[g(Z_n)\right]\Big|\le\frac{1}{L_n}\qty[\frac{C(c_*)}{\sqrt{n\wedge N-1}}+\frac{C'\qty(c_*,\la_n^S,\la_n^M)}{\sqrt{n}}].
		\end{equation}
	\end{thm}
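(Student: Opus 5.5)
The plan is to start from the reduction already noted after the proof of Theorem \ref{configuration model clt}. Assume $Z_n$ is independent of everything else, write $W_1=(W_\edge,W_\twostar)$ and $W_2=S_n+M_n$, and fix $g\in\mC_3$ with $\|g\|\le1$. Then
\begin{align*}
\Big|\e\left[g(W_1)\middle|W_2=0\right]-\e[g(Z_n)]\Big|=\frac{1}{\p(W_2=0)}\Big|\e\qty[g(W_1)\indi_{\{W_2=0\}}]-\e\qty[g(Z_n)\indi_{\{W_2=0\}}]\Big|.
\end{align*}
Unlike in part (c), no term $|\p(W_2=0)-\p(N_n=0)|$ appears here, because the comparison is made against $\indi_{\{W_2=0\}}$ itself. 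This is why $\De_{\mathrm{simple}}$ drops out of the numerator. By Theorem \ref{configuration model clt}(b), $\p(W_2=0)\ge L_n>0$, which supplies the prefactor $1/L_n$. It remains to bound the numerator \eqref{direct approach to conditional clt} by $C(c_*)/\sqrt{n\wedge N-1}+C'(c_*,\la^S_n,\la^M_n)/\sqrt n$.

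For the numerator, use the parametrized Stein solution $\tilde f_g$ of \eqref{eq y fixed solution}, with the properties $|\tilde f_g|_{\mathrm{Lip}}\le1$ and $|\tilde f_g|_k\le 1/k$ for $k=2,3$. These follow as in Lemma \ref{smoothness x}, applied with $r$ trivial, or equivalently with $h(x,y)=g(x)$. Then \eqref{conditional clt Stein expectation expression} expresses the numerator as $\e[\sum\si_{n,jk}\partial_{jk}\tilde f_g(W_1)\indi_{W_2=0}-\sum W_{1j}\partial_j\tilde f_g(W_1)\indi_{W_2=0}]$. We repeat the treatment of \eqref{OU part} in the proof of Theorem \ref{Stein coupling theorem} with $F(x,y)=\tilde f_g(x)\indi_{y=0}$ in place of $f$. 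The identities $\e[I_\al F_j(W_{1\al},W_{2\al})]=p_\al\e[F_j(W_1,W_2)]$ and the covariance representation of $\si_{n,jk}$ use only the coupling property \eqref{Stein coupling conditional dist}, so they carry over verbatim. The resulting decomposition is $R_1+R_2+R_3$, and the two smooth terms are handled as follows.
\begin{itemize}
\item $R_1$ is bounded by Cauchy--Schwarz, using $\|\partial_{jk}\tilde f_g\indi\|\le1/2$.
\item $R_2$ is bounded by Taylor expansion in $x$ only, which is legitimate because $W_2$ is held fixed there, using $\|\partial_{jk\ell}\tilde f_g\|\le1/3$.
\end{itemize}
By Propositions \ref{first term in stein coupling bound} and \ref{second term in stein coupling bound}, the bounds on $R_1$ and $R_2$ reproduce exactly the same contributions as in part (a), namely $C(c_*)/\sqrt{n\wedge N-1}$. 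The constants already absorb $1/2$ and $1/6$, and $s_j=\sqrt n$. This step is justified because $c_*$ dominates both $c_\star$ and $c_\sharp$, since $n\wedge N-15\le N-7\le N-3$.

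The cross term $R_3$ is the only place where the indicator in $y$ matters, and it is the step I would check most carefully. Here $\partial_j\tilde f_g(W_{1\al})$ multiplies $\indi_{W_2=0}-\indi_{W_{2\al}=0}$. Its modulus is at most $\|\partial_j\tilde f_g\|\,|W_2-W_{2\al}|\le\sum_{\be\in\Ga_2}|I_\be-J_{\be\al}|$, since $W_2$ and $W_{2\al}$ are nonnegative integers and the Lipschitz bound gives $\|\partial_j\tilde f_g\|\le1$. This yields $|R_3|\le n^{-1/2}\sum_{\al\in\Ga_1}\sum_{\be\in\Ga_2}\e[I_\al|I_\be-J_{\be\al}|]$. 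Proposition \ref{third term in stein coupling bound} with $c_\sharp=c_*$ then bounds this by $C'(c_*,\la^S_n,\la^M_n)/\sqrt n$.

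Because no Poisson Stein equation is solved, no analogue of $R_4$ or $R_5$ arises. The only point needing care is confirming that summing over $j\in\{1,2\}$ in $R_3$ does not double the constant. Since $\Ga_1=\Ga_{11}\uplus\Ga_{12}$ and each $\al$ carries a single index $j=Y_\al$, the sum over $j$ and $\al\in\Ga_{1j}$ is just a sum over $\al\in\Ga_1$, so the constant is not doubled. Taking the supremum over $g$ gives \eqref{conditional clt bound 2}.
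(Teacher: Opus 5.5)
Your proposal is correct and follows the paper's own route. You reduce to \eqref{direct approach to conditional clt} using $\p(S_n+M_n=0)\ge L_n$ from Theorem \ref{configuration model clt}(b), then run the OU half of the proof of Theorem \ref{Stein coupling theorem} with $\tilde f_g(x)\indi_{y=0}$; $R_1,R_2$ are handled by Propositions \ref{first term in stein coupling bound} and \ref{second term in stein coupling bound}, and $R_3$ by $|\indi_{W_2=0}-\indi_{W_{2\al}=0}|\le|W_2-W_{2\al}|$ together with Proposition \ref{third term in stein coupling bound}. Your extra remarks (obtaining $\tilde f_g$ as $f_h$ with $h(x,y)=g(x)$, and $c_*$ dominating $c_\star$ and $c_\sharp$) fill in details the paper leaves implicit.
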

Comparing this with Theorem \ref{configuration model clt}(c), \eqref{conditional clt bound 1}, we note that there is a trade-off: If we rely on the Stein equation and solution for joint normal-Poisson approximation, we obtain the magic factor $(\la_n^S+\la_n^M)^{-1}$ also for the cross term at the cost of the additional Poisson approximation error term in the bound, whereas if we use the parametrized Stein equation \eqref{eq y fixed Stein equation} and solution $\tilde{f}_g(x)\indi_{y=0}$, we can avoid including the Poisson approximation error term in the bound but we also lose the magic factor in the cross term.
\begin{cor}\label{pseudo rate of convergence smooth metric}
Let $\mC_3$ be as in \eqref{C3} with $d=2$. Let $Z_n$ be a $\mN_2(0,\Si_n)$-vector whose covariance matrix $\Si_n$ equals that of $(W_\edge,W_\twostar)^\top$. Under Conditions \ref{degree weak convergence positive mean uniform integrability} and \ref{convergence of the second moment of the empirical degree distribution}, there exists a constant $C$ depending on the sequence $(D_n)$ (or the sequence of the degree sequences $(\bm{d}_n)$) such that
	\begin{equation}\label{pseudo rate of convergence smooth metric equation}
\sup_{g\in\mC_3,\|g\|\le1}\Big|\e\left[g\qty(W_\edge,W_\twostar)\middle|S_n+M_n=0\right]-\e\left[g(Z_n)\right]\Big|\le Cn^{-1/2}.
	\end{equation}
\end{cor}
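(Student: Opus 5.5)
The corollary follows from Theorem \ref{another conditional clt}. The plan is to check its hypotheses for all large $n$ and show that the right-hand side of \eqref{conditional clt bound 2} is $O(n^{-1/2})$. This route avoids the $\De_{\mathrm{simple}}$ term in the numerator of \eqref{conditional clt bound 1}, which may decay more slowly than $n^{-1/2}$. The key steps are as follows.

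First, as in the proof of Corollary \ref{weak convergence result}, I will use Condition \ref{degree weak convergence positive mean uniform integrability}(i)--(iii) to produce $n_*$ such that, for all $n\ge n_*$, the hypotheses of Theorem \ref{configuration model clt} hold: $n\wedge N>15$, $n_1\ge1$ and $n_0+n_1\le n-2$. Moreover, $c_*=4/(2\wedge\mu)$ satisfies \eqref{ratio constant}. Hence $C(c_*)$ is a fixed constant. Since $N\ge(\mu/2)n$, we get $n\wedge N-1\ge c n$ for some $c>0$, so
\[
\frac{C(c_*)}{\sqrt{n\wedge N-1}}=O(n^{-1/2}).
\]

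Second, under Condition \ref{convergence of the second moment of the empirical degree distribution}, I will use $\nu_n\to\nu$, $\la^S_n\to\nu/2$ and $\la^M_n\to\nu^2/4$. These give $C'(c_*,\la^S_n,\la^M_n)=O(1)$, so the second term in the bracket is also $O(n^{-1/2})$.

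Third, I need $L_n$ bounded away from zero. This uses $\De_{\mathrm{simple}}=O(d_{\max}^2/n)=o(1)$, which follows from $d_{\max}=o(n^{1/2})$, a consequence of the uniform integrability of $D_n^2$. Combined with $e^{-\la^S_n-\la^M_n}\to e^{-\nu/2-\nu^2/4}>0$, it gives $n_{**}\ge n_*$ with $L_n\ge e^{-\nu/2-\nu^2/4}/2$ for all $n\ge n_{**}$. Hence for $n\ge n_{**}$ the left-hand side of \eqref{pseudo rate of convergence smooth metric equation} is at most $C_0n^{-1/2}$ with $C_0$ depending only on the sequence.

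Finally, for the finitely many $n<n_{**}$ I will use the trivial bound: the left-hand side is at most $2$ because $\|g\|\le1$, which is at most $2\sqrt{n_{**}}\,n^{-1/2}$. This requires the conditional expectation to be defined, i.e. $\p(S_n+M_n=0)>0$; if a simple graph with degree sequence $\bm{d}$ fails to exist for some small $n$, I will read the statement as concerning those $n$ for which it does exist. Taking $C=\max\{C_0,2\sqrt{n_{**}}\}$ finishes the proof.

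The main obstacle is the third step, keeping $L_n$ positive uniformly. It rests on $\De_{\mathrm{simple}}\to0$, which is only qualitative ($o(1)$) but suffices here because $\De_{\mathrm{simple}}$ enters only through $L_n$ in the denominator.
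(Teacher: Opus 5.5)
Your proposal is correct and follows the paper's proof essentially step for step. You verify the hypotheses of Theorem \ref{another conditional clt} for $n\ge n_*$ with $c_*=4/(2\wedge\mu)$, bound $C'$ and $L_n$ uniformly via $\nu_n\to\nu$ and $\De_{\mathrm{simple}}=o(1)$, and absorb the finitely many small $n$ with the trivial bound $2\le 2\sqrt{n_*/n}$. Your remark that $\p(S_n+M_n=0)>0$ is needed for small $n$ is a harmless refinement that the paper leaves implicit.
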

\begin{proof}
	As shown in the proof of Corollary \ref{weak convergence result}, Conditions \ref{degree weak convergence positive mean uniform integrability} and \ref{convergence of the second moment of the empirical degree distribution} imply that there exists $n_*$ depending on the sequence $(D_n)$ (or the sequence of the degree sequences $(\bm{d}_n)$) such that for all $n\ge n_*$:
			{\setlength{\leftmargini}{29.5pt}  	
		\begin{itemize}
			\setlength{\labelsep}{7pt}     
			\setlength{\itemsep}{3pt}      
			
			\item $n\wedge N>15$, $n_1\ge1$, $n_0+n_1\le n-2$ and
			\begin{align*}
				\frac{1}{n\wedge N-15}\le \frac{4}{2\wedge\mu}\frac{1}{n},
			\end{align*}
			which verifies the condition \eqref{ratio constant} with $c_*=4/(2\wedge\mu)$, and
			
			\item $C'\qty(c_*,\la_n^S,\la_n^M)\le \qty(c_*^2+6c_*^3)\nu/2+\qty(c_*^2+4c_*^3)\nu^2/2+c_*^3+1$ and $L_n\ge e^{-\nu/2-\nu^2/4}/2>0$, where $\nu=\e D(D-1)/\mu$ and $c_*=4/(2\wedge\mu)$.
			
	\end{itemize} }
Thus the inequality \eqref{conditional clt bound 2} holds with $c_*=4/(2\wedge\mu)$ for all $n\ge n_*$, and since $1/\sqrt{n\wedge N-1}\le \sqrt{c_*/n}$ with $c_*=4/(2\wedge\mu)$ for all $n\ge n_*$, there exists a constant $C$ depending on $\nu=\e D(D-1)/\mu$ and $c_*=4/(2\wedge\mu)$ such that \eqref{pseudo rate of convergence smooth metric equation} holds for $n\ge n_*$. For $n<n_*$, we trivially estimate
\begin{align*}
	\sup_{g\in\mC_3,\|g\|\le1}\Big|\e\left[g\qty(W_\edge,W_\twostar)\middle|S_n+M_n=0\right]-\e\left[g(Z_n)\right]\Big|\le 2\le2\sqrt{\frac{n_*}{n}}.
\end{align*}
Thus \eqref{pseudo rate of convergence smooth metric equation} holds for all $n$ if $C$ is replaced by $C\vee2\sqrt{n_*}$.
\end{proof}
\begin{rem}
We caution that the constant $C$ in \eqref{pseudo rate of convergence smooth metric equation} is not a universal constant. It depends not only on $\nu=\e D(D-1)/\mu$ and $c_*=4/(2\wedge\mu)$ but also on $n_*$ (and in fact grows at a rate $n_*^{1/2}$), which depends on the sequence $(D_n)$ (the sequence of the degree sequences $(\bm{d}_n)$), or the \emph{model} itself. This is because convergence of the distribution and the moments of $D_n$ assumed in Conditions \ref{degree weak convergence positive mean uniform integrability} and \ref{convergence of the second moment of the empirical degree distribution} is itself asymptotic. If we want to make it universal, we have to impose a more stringent condition on the convergence of $n_k/n\rightarrow p_k$, $\e D_n\rightarrow\mu$ and $\e D_n^2\rightarrow\e D^2$, e.g., $\mu/2<\e D_n=N/n<3\mu/2$ for $n\ge1000$.
\end{rem}
	
	\subsubsection{On moment calculation}
	
	Finally, we will sketch an instance of moment calculation required in the proof of Proposition \ref{first term in stein coupling bound} (Appendix \ref{stein coupling bound first term}), because the proof of Proposition \ref{first term in stein coupling bound} is technically involved compared to those of Propositions \ref{second term in stein coupling bound}--\ref{fifth term in stein coupling bound}. In the proof, especially in Appendix \ref{subsec third variance}, we are required to estimate the variance of the type
	\begin{equation}
		\label{variance for al be intersection case in the main text}
		\Var\qty(\sum_{\al\in\Ga_{1j}}\sum_{\substack{\be\in\Ga_{1k}\setminus\{\al\}\\ \be\cap\al\neq\varnothing }}I_\al J_{\be\al})=\sum_{\al,\al'\in\Ga_{1j}}\sum_{\substack{\be\in\Ga_{1k}\setminus\{\al\}\\ \be\cap\al\neq\varnothing }}\sum_{\substack{\be'\in\Ga_{1k}\setminus\{\al'\}\\ \be'\cap\al'\neq\varnothing }}\Big(\e\qty[I_\al I_{\al'} J_{\be\al}J_{\be'\al'}]-p_\al p_{\al'}p_\be p_{\be'}\Big),
	\end{equation}
	and the probability $\p\qty(I_\al=I_{\al'}=1,J_{\be\al}=J_{\be'\al'}=1)$ accordingly. Here we consider the following case (this is a strict subcase of Case (d1i) in Appendix \ref{subsec third variance}): $\al=(s_1s_2)\cup(s_3s_4)$, $\be=(s_4t_2)\cup(t_3t_4)$, $\al'=(s_1's_2')\cup(s_3's_4')$ and $\be'=(s_4't_2')\cup(t_3't_4')$, where all the half-edges are distinct, $s_2$ and $s_3$ are incident to $\al$'s degree 2 vertex, $t_2$ and $t_3$ are incident to $\be$'s degree 2 vertex, $s_2'$ and $s_3'$ are incident to $\al'$'s degree 2 vertex, and $t_2'$ and $t_3'$ are incident to $\be'$'s degree 2 vertex.
	
	\medskip
	 In order for $\be$ and $\be'$ to be created after switching $\al$ and $\al'$, respectively, we note that the edges $t_3t_4$ and $t_3't_4'$ must be present in the original configuration and the edge $s_3s_4$ (resp.\ $s_3's_4'$) must be switched so that the edge $s_4t_2$ (resp.\ $s_4't_2'$) is certainly created in the switching procedure with respect to $\al$ (resp.\ $\al'$). Such a switching occurs with probability at most $1/(N-3)$, corresponding to the case $s_3\wedge s_4<s_1\wedge s_2$ (resp.\  $s_3'\wedge s_4'<s_1'\wedge s_2'$), i.e., when we switch the edge $s_3s_4$ (resp.\ $s_3's_4'$) first in the switching procedure with respect to $\al$ (resp.\ $\al'$). Therefore,
	 \begin{align}
\p\qty(I_\al=I_{\al'}=1,J_{\be\al}=J_{\be'\al'}=1)\le&\p\qty(I_\al=I_{\al'}=I_{t_3t_4}=I_{t_3't_4'}=1)\times\frac{1}{N-3}\times\frac{1}{N-3}\notag\\
=&\frac{1}{((N-1))_6(N-3)^2}.\label{probability created example main text}
	 \end{align} 
For a quadruplet $\al,\al',\be,\be'$ with such a relationship, we note that there are ten free vertices, namely, six distinct degree 1 vertices and four distinct degree 2 vertices.  Thus, under Condition \ref{degree weak convergence positive mean uniform integrability}(i), the possibilities of such a quadruplet $\al,\al',\be,\be'$ is $O(n^{10})$. Under Condition \ref{degree weak convergence positive mean uniform integrability}(i)--(ii), for sufficiently large $n$, $n=O(N)$ and
\begin{align*}
	&\sum_{\al,\al',\be,\be'}\e\qty[I_\al I_{\al'} J_{\be\al}J_{\be'\al'}]-\sum_{\al,\al',\be,\be'}p_\al p_{\al'} p_\be p_{\be'}\\
	=&O(n^{10})\times\qty(\frac{1}{((N-1))_6(N-3)^2}-\frac{1}{(N-1)^4(N-3)^4})\\
	=&O(n^{10})\times O(n^{-9})\\
	=&O(n).
\end{align*}
In Appendix \ref{subsec third variance}, we show the estimate of the type \eqref{probability created example main text} more formally by reducing to estimating the cardinality of a set defined by the deterministic functions $f_\al$ and $f_{\al'}$ (cf.\  \eqref{probability to cardinality} and \eqref{subset of the product of Gbeta and Gbetaprime} ibidem). We also estimate the possibilities of such a quadruplet $\al,\al',\be,\be'$ more precisely without assuming Condition \ref{degree weak convergence positive mean uniform integrability}, leading to the finite sample bound in Proposition \ref{first term in stein coupling bound}. 
	
	\section{Concluding remarks}
	\label{concluding remarks}
	
	In this paper, we studied normal approximation to the numbers of isolated edges and 2-stars and Poisson approximation to the numbers of self-loops and double edges in the configuration model, both in a joint manner and conditioning on the multigraph being simple, the latter implying the first finite sample normal approximation results for uniform simple graphs with given vertex degrees (Theorem \ref{configuration model clt}(c) and Theorem \ref{another conditional clt}). As a preparation for this, we developed a new Stein's method framework for joint normal-Poisson approximation and a new coupling approach to sums of indicator random variables, which may be of independent interest and could have other potential applications.
	
	\medskip
Although this work confirms the prediction made in \cite[Remark 1.2]{J20} for the asymptotic normality of the numbers of isolated edges and isolated 2-stars (see Corollary \ref{weak convergence result}), we have not yet fully reproduced \cite[Theorems 3.2 and 3.9, Lemma 7.10]{J20}, which claim joint weak convergence of the numbers of general isolated trees and general isolated connected unicyclic multigraphs in any dimension. As noted in Remark \ref{unlabeled copy generalization}, the coupling construction via switching could possibly be generalized to general isolated trees. However, the current proof of Proposition \ref{first term in stein coupling bound} relies on a brute force approach to evaluate the variance of the type \eqref{variance for al be intersection case in the main text}, where we essentially perform the case analysis about how $\be,\be'$ are created by the switchings when $\al,\be,\al',\be'$ intersect each other (see Appendix \ref{subsec third variance}). We leave the problem of finding a fully generic proof for general isolated trees as an open problem. On the other hand, our new Stein's method developed in Sections \ref{new method} and \ref{indicator framework} can readily incorporate additional Poisson approximation coordinates for isolated connected unicyclic multigraphs, such as isolated triangles and isolated squares.
	
	\medskip
	With a stronger moment assumption on the degrees (which implies convergence of all moments of the empirical degree distribution), \cite{J20} also showed asymptotic normality of the number of copies (not necessarily isolated) of a given tree in the configuration model and the uniform simple graph (Theorem 3.10 ibidem). Although our proof heavily relies on explicit cardinality estimates about various sets of possible isolated copies (e.g., Lemma \ref{cardinality estimates}), the extension of the normal approximation results in this paper to the subgraph counts could be another research direction.
		\section*{Acknowledgements}
		The author would like to thank Adrian R\"{o}llin for introducing the author to the problem of the configuration model and Stein's method and for insightful comments, valuable suggestions and discussions. In particular, the idea of working on a kind of generator like \eqref{new generator} for joint normal-Poisson approximation is his, and Lemmas \ref{intersection limited creation} and \ref{disjoint limited destruction} are inspired by his observation that only a limited number of the indicators should be changed by the switchings. The author is grateful to Yuta Koike for valuable discussions, and in particular, for allowing the author to cite his \cite[Lemma 9.9 and Proposition 9.4]{Koike25}.

	\bibliographystyle{abbrv}
	\bibliography{reference}
	
	\newpage

	\appendix
	
	\section{Proof of \eqref{configuration model uniform over simple}}
\label{configuration model uniform over simple proof}

We adopt the following conventions: $1/(N-1)!!=1$ and $[N]=\varnothing$ if $N=0$, and there is only one bijection from $\varnothing$ onto $\varnothing$. Since each configuration is assigned probability $1/(N-1)!!$, it suffices to show that the number of possible configurations which give rise to a given simple graph $G$ with degree sequence $\bm{d}$ is $\prod_{i=1}^nd_i!$. We identify a matching of the $N$ half-edges (a partition of $[N]$ into $N/2$ pairs) with a \emph{matching function}, i.e., a bijection $\mu:[N]\rightarrow[N]$ such that for all $s\in[N]$, $\mu(s)\neq s$, and for all $s\neq t\in[N]$, $\mu(s)=t$ iff $\mu(t)=s$. In particular, we have $\mu(\mu(s))=s$ for all $s\in[N]$ for such a matching function $\mu$. Given a matching function $\mu$ and half-edges $s$ and $t$, $1_{\mu(s)=t}$ then denotes the indicator that $s$ and $t$ are matched in $\mu$. Write $G=\qty(x_{ij})_{1\le i,j\le n}$, where $x_{ii}$ is the number of self-loops in $v_i$ and $x_{ij}=x_{ji}$ is the number of edges between $v_i$ and $v_j$ for $i\neq j$. Since $G$ is simple, $x_{ii}=0$ and $x_{ij}$ is at most $1$ for $i\neq j$. Then the set of all matchings which are compatible with the given simple graph $G$ can be described as the following set:
\begin{align*}
	M_1\coloneqq\qty{\mu:\begin{aligned}
			&\text{$\mu$ is a matching function on $[N]$},\\
			&\sum_{s<t\in v_i}1_{\mu(s)=t}=0\ (1\le i\le n),\quad \sum_{s\in v_i,t\in v_j}1_{\mu(s)=t}=x_{ij}\ (1\le i<j\le n)
				 \end{aligned}
				 }.
\end{align*}
	For $1\le i\le n$, let $\Upsilon_i\coloneqq\qty{s\in[N]:s\in v_i}$ (the set of all half-edges incident to $v_i$) and $\mN_i\coloneqq\qty{j\in[n]\setminus\{i\}:x_{ij}=1}$. Note that $\qty|\Upsilon_i|=\qty|\mN_i|=d_i$ (and that $\Upsilon_i=\mN_i=\varnothing$ if $d_i=0$). Define
	\begin{align*}
		M_2\coloneqq\qty{\qty(f_1,\dots,f_n):\text{$f_i:\Upsilon_i\rightarrow\mN_i$ is a bijection, $1\le i\le n$}}.
	\end{align*}
Each element $\qty(f_1,\dots, f_n)\in M_2$ specifies which half-edge incident to $v_i$ is toward which vertex adjacent to $v_i$ in $G$. Then there is a natural bijection $F$ from $M_2$ onto $M_1$, showing that $\qty|M_1|=\qty|M_2|=\prod_{i=1}^nd_i!$. Precisely, for each $\qty(f_1,\dots,f_n)\in M_2$, a function $F\qty(f_1,\dots,f_n):[N]\rightarrow[N]$ is defined by matching $s\in v_i$ with the half-edge incident to $v_{f_i(s)}$ which is toward $v_i$, i.e., $F\qty(f_1,\dots,f_n)(s)\coloneqq\qty(f_{f_i(s)})^{-1}(i)$ for each $s\in v_i$ for each $1\le i\le n$. (Note that $i\in\mN_{f_i(s)}$ for $s\in v_i$, since $f_i(s)\in\mN_i$ and $x_{if_i(s)}=x_{f_i(s)i}=1$.) Formally checking that $F$ is into $M_1$ and that $F$ is injective and surjective is a routine matter and thus omitted. \qed 
	
	\section{Proofs for Section \ref{new method}}
	
	\subsection{Convergence in distribution in $\R^d\times\Z_+^r$}
	\begin{prp}\label{convergence in distribution in RZ}
		Let $(X_n,Y_n),n=1,2,\dots$ and $(X,Y)$ be random elements in $\R^d\times\Z_+^r$. Then $(X_n,Y_n)$ converges in distribution to $(X,Y)$ if and only if $\lim_n\e[h(X_n,Y_n)]=\e[h(X,Y)]$ for all $h\in\mH_3$.
	\end{prp}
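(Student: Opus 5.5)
The forward direction is immediate. Every $h\in\mH_3$ is bounded, and it is continuous on $\R^d\times\Z_+^r$ because $\Z_+^r$ carries the discrete topology and each section $h(\cdot,y)$ is Lipschitz. Hence convergence in distribution gives $\e h(X_n,Y_n)\to\e h(X,Y)$ for every $h\in\mH_3$.

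For the converse, assume $\e h(X_n,Y_n)\to\e h(X,Y)$ for all $h\in\mH_3$. I would argue in three steps: establish tightness, identify every subsequential limit, and conclude by Prokhorov.

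\emph{Step 1 (tightness of the $y$-marginal).} For finite $A\subset\Z_+^r$ take $h(x,y)=\indi_A(y)$. This $h$ lies in $\mH_3$, since each section is constant in $x$. So $\p(Y_n\in A)\to\p(Y\in A)$ for every finite $A$. Given $\epsilon>0$, choose a finite $A$ with $\p(Y\in A)>1-\epsilon$. Then $\p(Y_n\in A)>1-2\epsilon$ for all large $n$, which gives tightness of $(Y_n)$.

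\emph{Step 2 (tightness of the $x$-marginal).} Take $h(x,y)=c\,g(x)$, where $g\in C_b^3(\R^d)$, $0\le g\le1$, $g=1$ on the ball $B_R$, $g=0$ outside $B_{R+L}$, and the scale $L$ is large enough that the Lipschitz, second and third derivative norms fit under a constant $c\le1$. For instance, take $g(x)=\phi(|x|/L-R/L)$ with a fixed smooth $\phi$; then the derivatives of $g$ scale like $L^{-1}$, so $c=1$ works once $L$ is large. This yields $\limsup_n\p(|X_n|>R+L)\le 1-\e g(X)$. The right-hand side is small for $R$ large, so $(X_n)$ is tight, and hence so is $(X_n,Y_n)$.

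\emph{Step 3 (identification of limits).} Let $(X',Y')$ be any subsequential weak limit. By the forward direction, $\e h(X',Y')=\e h(X,Y)$ for all $h\in\mH_3$. Take $h(x,y)=c\,g(x)\indi_{\{y=y_0\}}$, where $g$ ranges over a measure-determining class in $C_b^\infty$, such as $x\mapsto\cos(t\cdot x)$ and $x\mapsto\sin(t\cdot x)$ rescaled by $c=c(t)>0$ so that the norms defining $\mC_3$ are at most $1$. Since $c\ne0$, scaling is harmless. This gives equality of $\e[e^{it\cdot X}\indi_{\{Y=y_0\}}]$ for all $t$ and all $y_0$, so the sub-probability measures $\p(X\in\cdot,Y=y_0)$ agree for every $y_0$. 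Hence $(X',Y')\stackrel{d}{=}(X,Y)$, and by Prokhorov the whole sequence converges in distribution.

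The main obstacle is only the bookkeeping that the chosen test functions satisfy the constraints in \eqref{C3}. The Lipschitz constant and the second and third derivative bounds must be at most $1$, and $\|h\|\le1$ must hold. Rescaling by a small positive constant (or dilating the argument, for the cutoff) resolves this in every case.
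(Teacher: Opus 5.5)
Your proof is correct, but it takes a different route from the paper. The paper never uses Prokhorov's theorem. Instead it enlarges the class of test functions step by step:
\begin{itemize}
\item first to products $f(x)g(y)$ with $f$ continuous and compactly supported, by approximating $f$ uniformly by a $C_c^\infty$ function and rescaling it into $\mH_3$;
\item then to bounded continuous $f$, via a cutoff $\phi$;
\item then to all bounded continuous functions on $\R^d\times\R^r$, via characteristic functions and L\'evy's continuity theorem in $\R^{d+r}$;
\item finally to bounded continuous functions on $\R^d\times\Z_+^r$, via the Tietze extension theorem.
\end{itemize}

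You instead use the classical scheme of tightness plus a determining class. Tightness comes from $\indi_A(y)$ and a dilated smooth cutoff in $x$. Subsequential limits are identified by Fourier transforms on each slice $\{Y=y_0\}$, using the admissible test functions $c\cos(t\cdot x)\indi_{\{y=y_0\}}$ and $c\sin(t\cdot x)\indi_{\{y=y_0\}}$.

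Your argument uses the discreteness of $\Z_+^r$ more directly, since point indicators in $y$ are themselves in $\mH_3$. As a result you need neither the uniform smooth approximation, nor the embedding into $\R^{d+r}$, nor Tietze. The cost is Prokhorov's theorem and the uniqueness theorem for Fourier transforms of finite measures. The paper's chain of approximations, by contrast, gives convergence of $\e h(X_n,Y_n)$ directly, with no subsequence argument.

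One small imprecision in your Step 2 concerns $g(x)=\phi((|x|-R)/L)$. Its second and third derivatives also contain terms of order $1/(LR)$ and $1/(LR^2)$, coming from the curvature of $|x|$. So ``derivatives scale like $L^{-1}$'' holds only once $R$ is bounded away from $0$. This is harmless, since you take $R$ large. Alternatively, $g(x)=\psi(x/L)$ with a fixed $\psi\in C_c^\infty$ equal to $1$ on the unit ball has $k$-th derivatives exactly of order $L^{-k}$.
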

	\begin{proof}
		We first show the ``only if'' implication. Since any $h\in\mH_3$ is bounded, it suffices to show that any $h\in\mH_3$ is continuous on $\R^d\times\Z_+^r$. Let $h\in\mH_3$, and let $(x_n,y_n)$ be a convergent sequence in $\R^d\times\Z_+^r$ converging to $(x,y)\in\R^d\times\Z_+^r$. Since $x_n\rightarrow x$ in $\R^d$, $y_n\rightarrow y$ in $\Z_+^r$ and $h(\cdot,y)$ is continuous on $\R^d$, for any $\epsilon>0$, there exists $n_\epsilon$ such that $y_n=y$ and $\qty|h(x_n,y)-h(x,y)|<\epsilon$ for $n\ge n_\epsilon$, thus $\qty|h(x_n,y_n)-h(x,y)|<\epsilon$ for $n\ge n_\epsilon$. This shows that $h$ is continuous on $\R^d\times\Z_+^r$.
		
		\medskip
		For the converse, suppose that $\lim_n\e[h(X_n,Y_n)]=\e[h(X,Y)]$ for all $h\in\mH_3$. We first show that $\lim_n\e[f(X_n)g(Y_n)]=\e[f(X)g(Y)]$ for every compactly supported, continuous real-valued function $f$ on $\R^d$ and every bounded, continuous real-valued function $g$ on $\Z_+^r$. Let $f$ be a compactly supported, continuous real-valued function on $\R^d$, and let $g$ be a bounded, continuous real-valued function on $\Z_+^r$. We may assume that $\|g\|\le1$. Let $\epsilon>0$. There exists a compactly supported, real-valued $C^\infty$-function $f_\epsilon$ on $\R^d$ such that $\|f-f_\epsilon\|\le\epsilon$. Let $M\coloneqq\qty(\|f_\epsilon\|\vee|f_\epsilon|_{\mathrm{Lip}}\vee|f_\epsilon|_2\vee|f_\epsilon|_3)+1$. Then $1\le M<\infty$. Note that
		\begin{align*}
			\qty|\e[f(X_n)g(Y_n)]-\e[f(X)g(Y)]|\le&\qty|\e[f_\epsilon(X_n)g(Y_n)]-\e[f_\epsilon(X)g(Y)]|+2\epsilon\\
			=&M\qty|\e\qty[M^{-1}f_\epsilon(X_n)g(Y_n)]-\e\qty[M^{-1}f_\epsilon(X)g(Y)]|+2\epsilon.
		\end{align*}
		Since $(x,y)\mapsto M^{-1}f_\epsilon(x) g(y)$ is in $\mH_3$, we have $\limsup_n\qty|\e[f(X_n)g(Y_n)]-\e[f(X)g(Y)]|\le2\epsilon$. Since $\epsilon>0$ is arbitrary, $\lim_n\e[f(X_n)g(Y_n)]=\e[f(X)g(Y)]$, and we are done.
		
		\medskip
		We next show that $\lim_n\e[f(X_n)g(Y_n)]=\e[f(X)g(Y)]$ for all bounded, continuous real-valued functions $f$ on $\R^d$ and $g$ on $\Z_+^r$. Let $f$ and $g$ be bounded, continuous real-valued functions on $\R^d$ and on $\Z_+^r$, respectively. We may assume that $\|f\|\le1$ and $\|g\|\le1$. Let $\epsilon>0$, and take $A>0$ so that $\p(|X|\le A)\ge1-\epsilon$. Take a continuous function $\phi:\R^d\rightarrow[0,1]$ which equals $1$ on $\{x\in\R^d:|x|\le A \}$ and $0$ on $\{x\in\R^d:|x|\ge A+1\}$. Since $\phi$ is a compactly supported, continuous real-valued function on $\R^d$, we have $\lim_n\e[\phi(X_n)]=\e[\phi(X)]\ge\p(|X|\le A)\ge1-\epsilon$. Note that
		\begin{align*}
			\qty|\e[f(X_n)g(Y_n)]-\e[f(X)g(Y)]|\le&\qty|\e[f(X_n)\phi(X_n)g(Y_n)]-\e[f(X)\phi(X)g(Y)]|\\
			&\quad\quad+\e[1-\phi(X_n)]+\e[1-\phi(X)]\\
			\le&\qty|\e[f(X_n)\phi(X_n)g(Y_n)]-\e[f(X)\phi(X)g(Y)]|\\
			&\quad\quad+\e[\phi(X)]-\e[\phi(X_n)]+2\epsilon.
		\end{align*}
		Since $f\phi$ is a compactly supported, continuous real-valued function on $\R^d$, we have $\limsup_n\linebreak\qty|\e[f(X_n)g(Y_n)]-\e[f(X)g(Y)]|\le2\epsilon$.
		Since $\epsilon>0$ is arbitrary, we have $\lim_n\e[f(X_n)g(Y_n)]=\e[f(X)g(Y)]$.
		
		\medskip
		We have shown that $\lim_n\e[f(X_n)g(Y_n)]=\e[f(X)g(Y)]$ for all bounded, continuous real-valued functions $f$ on $\R^d$ and $g$ on $\Z_+^r$. Then we see that $\lim_n\e[f(X_n)g_*(Y_n)]=\e[f(X)g_*(Y)]$ for all bounded, continuous real-valued functions $f$ on $\R^d$ and $g_*$ on $\R^r$ by restricting $g_*$ to $\Z_+^r$. Then it follows that $\lim_n\e[h_*(X_n,Y_n)]=\e[h_*(X,Y)]$ for all bounded, continuous real-valued function $h_*$ on $\R^d\times\R^r$ by regarding $(X_n,Y_n),n=1,2,\dots$ and $(X,Y)$ as random elements in $\R^d\times\R^r$ and using a characteristic function argument (the L\'{e}vy continuity theorem in $\R^{d+r}$). Finally, we show that $\lim_n\e[h(X_n,Y_n)]=\e[h(X,Y)]$ for all bounded, continuous real-valued function $h$ on $\R^d\times\Z_+^r$. Let $h$ be a bounded, continuous real-valued function on $\R^d\times\Z_+^r$. Note that $\qty(\R^d\times\R^r)\setminus\qty(\R^d\times\Z_+^r)=\R^d\times\qty(\R^r\setminus\Z_+^r)$ is an open subset of $\R^d\times\R^r$, thus $\R^d\times\Z_+^r$ is a closed subset of $\R^d\times\R^r$. By the Tietze extension theorem, there is a bounded, continuous real-valued function $h_*$ on $\R^d\times\R^r$ such that $h_*=h$ on $\R^d\times\Z_+^r$, and we have $\e[h(X_n,Y_n)]=\e[h_*(X_n,Y_n)]\rightarrow\e[h_*(X,Y)]=\e[h(X,Y)]$. This completes the proof.
	\end{proof}
	
	\subsection{The Slepian interpolation}
	\label{sec: slepian}
	
	We use the following form of the multivariate Stein identity (\cite[Lemma 9.9]{Koike25}), also known as the Gaussian integration by parts formula. Almost the same statements appear as \cite[Lemma 2]{CCK15} and \cite[Exercise 3.1.5]{NP12}. The point is that the formula holds even for a singular covariance matrix $\Si$. \cite[Lemma 2]{CCK15} refers the proof to \cite[Section A.6]{Tala03}, \cite{CGS11} and \cite{Stein81}. We provide the proof for the reader's convenience. See also \cite[Sections A.3 and A.4]{Tala11}.
	\begin{lem}[{\cite[Lemma 9.9]{Koike25}}]\label{multivariate Stein identity}
		Let $Z=(Z_1,\dots,Z_d)^\top$ be a centered Gaussian random vector in $\R^d$ (whose covariance matrix may be singular). Let $F:\R^d\rightarrow\R$ be a $C^1$-function such that $\e[|\partial_jF(Z)|]<\infty$ and $\e[|Z_jF(Z)|]<\infty$ for all $1\le j\le d$.\footnote{\cite[Lemma 2]{CCK15} is stated without the assumption ``$\e[|Z_jF(Z)|]<\infty$ for all $1\le j\le d$'', but as opposed to the one-dimensional case, it seems difficult to renounce this assumption in the multivariate case.} Then for every  $1\le j\le d$, 
		\begin{align*}
			\e[Z_jF(Z)]=\sum_{k=1}^d\e[Z_jZ_k]\e[\partial_kF(Z)].
		\end{align*}
	\end{lem}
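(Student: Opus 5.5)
The plan is to reduce to the case of a nonsingular covariance matrix by a smoothing or decomposition argument, and to handle the nonsingular case by the classical one-dimensional Stein identity after a linear change of variables.

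\emph{Step 1: a square-root representation.} Write $\Si=\cov(Z)=AA^\top$ with $A$ a $d\times m$ matrix of full column rank $m=\operatorname{rank}\Si$. Then $Z\stackrel{d}{=}AG$ with $G\sim\mN_m(0,I_m)$. If $m=0$, then $Z=0$ almost surely and both sides vanish, so assume $m\ge1$. Since the identity only involves the law of $Z$, we may assume $Z=AG$.

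\emph{Step 2: reduction to the standard Gaussian.} Set $\tilde F(g)=F(Ag)$, which is a $C^1$ function on $\R^m$ with
\begin{align*}
\partial_\ell\tilde F(g)=\sum_{k=1}^dA_{k\ell}(\partial_kF)(Ag).
\end{align*}
Then $Z_j=\sum_\ell A_{j\ell}G_\ell$. If we can show
\begin{align*}
\e[G_\ell\tilde F(G)]=\e[\partial_\ell\tilde F(G)]\quad\text{for each }\ell,
\end{align*}
we obtain
\begin{align*}
\e[Z_jF(Z)]=\sum_\ell A_{j\ell}\sum_kA_{k\ell}\,\e[\partial_kF(Z)]=\sum_k\si_{jk}\,\e[\partial_kF(Z)],
\end{align*}
as required. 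There is a subtlety here. The hypotheses give integrability of $Z_jF(Z)$, not of each $G_\ell F(Z)$. However, because $A$ has full column rank, there is a left inverse $A^+$ with $G=A^+Z$. Hence $G_\ell=\sum_jA^+_{\ell j}Z_j$, so $\e|G_\ell\tilde F(G)|<\infty$ follows from the hypothesis $\e|Z_jF(Z)|<\infty$. Likewise $\e|\partial_\ell\tilde F(G)|<\infty$ follows from the integrability of the $\partial_kF(Z)$. This is exactly where the extra assumption in the footnote is needed.

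\emph{Step 3: the one-dimensional identity coordinatewise.} Fix $\ell$ and condition on $G_{-\ell}=(G_i)_{i\ne\ell}$. By Fubini, which is justified by the integrability from Step 2, it suffices to show that for almost every fixed value of the other coordinates, the function $\phi(t)=\tilde F(\dots,t,\dots)$ satisfies
\begin{align*}
\int t\phi(t)\varphi(t)\,dt=\int\phi'(t)\varphi(t)\,dt,
\end{align*}
where $\varphi$ is the standard normal density and both integrands are integrable. Integrability for almost every section again follows from Fubini.

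The univariate identity, under the sole assumption that $t\phi(t)$ and $\phi'(t)$ are integrable against $\varphi$, is the standard argument. Write $\varphi(t)=\int_t^\infty u\varphi(u)\,du$ for $t\ge0$, and $\varphi(t)=-\int_{-\infty}^tu\varphi(u)\,du$ for $t<0$. Then
\begin{align*}
\int\phi'\varphi=\int_0^\infty\phi'(t)\int_t^\infty u\varphi(u)\,du\,dt-\int_{-\infty}^0\phi'(t)\int_{-\infty}^tu\varphi(u)\,du\,dt.
\end{align*}
Swap the integrals by Fubini–Tonelli, which is valid since $\int|\phi'|\varphi<\infty$ makes the double integral of $|\phi'(t)||u|\varphi(u)$ finite. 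This gives $\int u\varphi(u)(\phi(u)-\phi(0))\,du=\e[G\phi(G)]$, using $\e G=0$ and $\phi(u)-\phi(0)=\int_0^u\phi'$. This step avoids any boundary-term analysis.

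\emph{Main obstacle.} I expect the main obstacle to be the integrability bookkeeping in Steps 2–3 for singular $\Si$. Specifically, one must make sure that Fubini applies to the sections and that the reduction from $Z$ to $G$ preserves the hypotheses. The left-inverse trick is my first choice for this. An alternative would be smoothing with $Z+\epsilon G'$, but that would require dominated convergence for $F$ evaluated at perturbed points, which the hypotheses do not readily give. For that reason I would avoid the smoothing route.
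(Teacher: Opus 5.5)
Your proof is correct, but it takes a different route from the paper's. The paper never factorizes $\Si$. It fixes $j$, disposes of the trivial case $\e[Z_j^2]=0$, and regresses the other coordinates on $Z_j$, setting $Z'_k=Z_k-Z_j\e[Z_jZ_k]/\e[Z_j^2]$ for $k\ne j$. These residuals are independent of $Z_j$. Writing $F(Z)=\tilde F(Z_j,(Z'_k)_{k\ne j})$, the paper applies the scalar Stein identity in the single variable $z_j$ (variance $\e[Z_j^2]$) for almost every fixed value of the residuals, and integrates back using independence. The chain rule $\partial_j\tilde F=\sum_{\ell}\bigl(\e[Z_jZ_\ell]/\e[Z_j^2]\bigr)\partial_\ell F$ then gives the right-hand side directly. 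This avoids the square root $A$, the left inverse $A^+$, and the recombination through $AA^\top=\Si$. It also uses the hypothesis $\e|Z_iF(Z)|<\infty$ only for $i=j$, to justify the final Fubini step, which your argument needs too. Your whitening requires the hypothesis for every $i$, because $G_\ell=\sum_iA^+_{\ell i}Z_i$. The statement permits this, but it comes from your choice of $A$, not from the point where the footnote's assumption becomes necessary. If you pick $A$ with $j$-th row $(\sqrt{\si_{jj}},0,\dots,0)$, so that $Z_j=\sqrt{\si_{jj}}\,G_1$, only the $\ell=1$ identity is needed and your proof essentially becomes the paper's. In exchange, your version treats all coordinates symmetrically and reduces everything to independent standard normals. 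It is also self-contained, since you prove the one-dimensional identity (via the tail representation of the Gaussian density and Fubini--Tonelli) instead of citing it.
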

	\begin{proof}
		The $d=1$ case is merely the statement of the one-dimensional Stein identity and assume that $d\ge2$. Fix $1\le j\le d$. We may assume that $\e [Z_j^2]>0$. Consider the random variables
		\begin{align*}
			Z'_k=Z_k-Z_j\frac{\e[Z_jZ_k]}{\e [Z_j^2]},\quad k\neq j.
		\end{align*}
		They satisfy $\e[Z_jZ_k']=0$, $k\neq j$; in particular, since $(Z_j,(Z_k')_{k\neq j})$ are jointly Gaussian, $Z_j$ is independent of the family $(Z_k')_{k\neq j}$. Somewhat imprecisely, we write a function of $d$ variables $f(z_1,\dots,z_d)$ as $f(z_j,(z_k)_{k\neq j})$. For $z_j\in\R$ and $(z_k')_{k\neq j}\in\R^{d-1}$, define $\tilde{F}(z_j,(z_k')_{k\neq j})$ by
		\begin{align*}
			\tilde{F}(z_j,(z_k')_{k\neq j})=F(z_j,(z_k'+z_j\e[Z_jZ_k]/E[Z_j^2])_{k\neq j}).
		\end{align*}
		$ \tilde{F}$ is a $C^1$-function and we have
		\begin{align*}
			\partial_j\tilde{F}(z_j,(z_k')_{k\neq j})=\sum_{\ell=1}^{d}\frac{\e[Z_jZ_\ell]}{\e [Z_j^2]}\partial_\ell F(z_j,(z_k'+z_j\e[Z_jZ_k]/E[Z_j^2])_{k\neq j});
		\end{align*}
		in particular $\e[|\partial_j\tilde{F}(Z_j,(Z_k')_{k\neq j})|]<\infty$ by the assumption. Then by the independence  
		\begin{align*}
			\e[|\partial_j\tilde{F}(Z_j,(Z_k')_{k\neq j})|]=\e[g((Z_k')_{k\neq j})],
		\end{align*}
		where 
		\begin{align*}
			g((z_k')_{k\neq j})=\e[|\partial_j\tilde{F}(Z_j,(z_k')_{k\neq j})|],\quad (z_k')_{k\neq j}\in\R^{d-1}.
		\end{align*}
		Moreover, $g((z_k')_{k\neq j})<\infty$ for $\mL_{(Z_k')_{k\neq j}}$-almost every $(z_k')_{k\neq j}$. Thus, for $\mL_{(Z_k')_{k\neq j}}$-almost every $(z_k')_{k\neq j}$, by invoking the one-dimensional Stein identity we have ($\e[|Z_j\tilde{F}(Z_j,(z_k')_{k\neq j})|]<\infty$ and)
		\begin{align*}
			\e[Z_j\tilde{F}(Z_j,(z_k')_{k\neq j})]=\e[Z_j^2]\e[\partial_j\tilde{F}(Z_j,(z_k')_{k\neq j})].
		\end{align*}
		Since $\e[|Z_jF(Z)|]=\e[|Z_j\tilde{F}(Z_j,(Z_k')_{k\neq j})|]<\infty$ and $\e[|\partial_j\tilde{F}(Z_j,(Z_k')_{k\neq j})|]<\infty$ by the assumption, using the independence again concludes
		\begin{align*}
			\e[Z_jF(Z)]=\e[Z_j\tilde{F}(Z_j,(Z_k')_{k\neq j})]=\e[Z_j^2]\e[\partial_j\tilde{F}(Z_j,(Z_k')_{k\neq j})]=\sum_{k=1}^d\e[Z_jZ_k]\e[\partial_kF(Z)].
		\end{align*}
	\end{proof}
	For $Z\sim\mN_d(0,\Si)$ with a possibly singular covariance matrix $\Si=\mqty(\si_{jk})$ and a function $h:\R^d\rightarrow\R$, define the Slepian interpolation of $h$ as
	\begin{align*}
		S_sh(x)=\e\qty[h(\sqrt{1-s}x+\sqrt{s}Z)] \quad\quad(x\in\R^d,\;s\in[0,1]),
	\end{align*}
	whenever the integral is defined. The next Lemma \ref{Slepian backward equation} shows that the Slepian interpolation of a $C^2$-test function having bounded first and second partial derivatives satisfies the ``backward equations'' (\eqref{Slepian backward equation equation} below). \eqref{Slepian backward equation equation} appears in \cite[Proposition 9.4]{Koike25}. (In the notation there, $s$ and $1-s$ are reversed.) This is implicit in the proof of \cite[Lemma 1, part 3]{Meckes09}, but  \cite[Lemma 1, part 3]{Meckes09} is stated with a $C^\infty$-test function. For $\Si$ being the identity matrix, it also appears with weaker smoothness assumptions on a test function in the literature, for example, in \cite[equation (2.6)]{G91}, and also in \cite{Raic2019arXiv, Raic19} with a different parameterization.
	\begin{lem}\label{Slepian backward equation}
		If $h:\R^d\rightarrow\R$ is a $C^2$-function having bounded first and second partial derivatives, then $S_s|h|(x)<\infty$ everywhere, $x\mapsto S_sh(x)$ is of class $C^2$, and $s\mapsto S_sh(x)$ is continuous on $[0,1]$ and continuously differentiable on $(0,1)$ with the derivative satisfying
		\begin{equation}\label{Slepian backward equation equation}
			\pdv{s}S_s h(x)=\frac{1}{2(1-s)}\qty(\sum_{j,k=1}^d\si_{jk}(\partial_{jk}S_s h)(x)-\sum_{j=1}^dx_j (\partial_jS_s h)(x)),\quad\quad s\in(0,1).
		\end{equation}
	\end{lem}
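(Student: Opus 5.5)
We prove Lemma \ref{Slepian backward equation} directly from the definition $S_sh(x)=\e[h(\sqrt{1-s}x+\sqrt{s}Z)]$. We differentiate under the expectation and then apply the multivariate Stein identity, Lemma \ref{multivariate Stein identity}, to turn the term involving $Z$ into second derivatives.

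First, finiteness and regularity. Since $\nabla h$ is bounded, $h$ is Lipschitz, so $|h(y)|\le|h(0)|+|h|_{\mathrm{Lip}}|y|$. Hence $S_s|h|(x)\le|h(0)|+|h|_{\mathrm{Lip}}(|x|+\e|Z|)<\infty$.

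For the $x$-derivatives, the integrands $\sqrt{1-s}\,(\partial_jh)(\cdots)$ and $(1-s)(\partial_{jk}h)(\cdots)$ are bounded. So differentiation under $\e$ is justified by dominated convergence, giving formulas like \eqref{x derivative of slepian y fixed}. Continuity of these derivatives in $x$ follows by bounded convergence, so $x\mapsto S_sh(x)$ is $C^2$.

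Continuity of $s\mapsto S_sh(x)$ on $[0,1]$ follows from dominated convergence with the dominating function $|h(0)|+|h|_{\mathrm{Lip}}(|x|+|Z|)$.

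Next, the $s$-derivative on $(0,1)$. We have
\[
\pdv{s}h(\sqrt{1-s}x+\sqrt{s}Z)=\sum_j(\partial_jh)(\sqrt{1-s}x+\sqrt{s}Z)\Bigl(-\tfrac{x_j}{2\sqrt{1-s}}+\tfrac{Z_j}{2\sqrt{s}}\Bigr).
\]
On any compact $[s_1,s_2]\subset(0,1)$ this is dominated by $C(|x|+|Z|)$, which is integrable. Hence
\[
\pdv{s}S_sh(x)=-\frac{1}{2\sqrt{1-s}}\sum_jx_j\e[(\partial_jh)(\cdot)]+\frac{1}{2\sqrt{s}}\sum_j\e[Z_j(\partial_jh)(\sqrt{1-s}x+\sqrt{s}Z)],
\]
and this is continuous in $s$ by the same domination.

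The key step is the Stein identity. Fix $x,s$ and set $F_j(z)=(\partial_jh)(\sqrt{1-s}x+\sqrt{s}z)$. This function is $C^1$ and bounded, with bounded gradient $\partial_kF_j=\sqrt{s}(\partial_{jk}h)(\cdot)$. The integrability hypotheses of Lemma \ref{multivariate Stein identity} therefore hold, and the lemma gives
\[
\e[Z_jF_j(Z)]=\sqrt{s}\sum_k\si_{jk}\e[(\partial_{jk}h)(\cdot)].
\]
Comparing with \eqref{x derivative of slepian y fixed}:
\begin{itemize}
\item the second term equals $\frac12\sum_{j,k}\si_{jk}\e[\partial_{jk}h(\cdot)]=\frac{1}{2(1-s)}\sum_{j,k}\si_{jk}(\partial_{jk}S_sh)(x)$;
\item the first term equals $-\frac{1}{2(1-s)}\sum_jx_j(\partial_jS_sh)(x)$.
\end{itemize}
Adding the two gives \eqref{Slepian backward equation equation}.

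The main care point is that $\Si$ may be singular. This is exactly why we use Lemma \ref{multivariate Stein identity} rather than a density-based integration by parts. The only other point needing care is the domination near the endpoints, which we avoid by working on compact subintervals of $(0,1)$.
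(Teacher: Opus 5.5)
Your proof is correct and follows essentially the same route as the paper: a Lipschitz bound for integrability, differentiation under the expectation in $x$ and in $s$ (the latter with domination by a multiple of $|x|+|Z|$ on compact subintervals of $(0,1)$), and then Lemma~\ref{multivariate Stein identity} applied to $z\mapsto(\partial_jh)(\sqrt{1-s}x+\sqrt{s}z)$ to turn $\e[Z_j(\partial_jh)(\cdot)]$ into $\sqrt{s}\sum_k\si_{jk}\e[(\partial_{jk}h)(\cdot)]$. The only cosmetic difference is how continuity of the $s$-derivative is obtained: you get it directly by dominated convergence, whereas the paper in effect reads it off the final identity, using that $\partial_jS_sh$ and $\partial_{jk}S_sh$ are continuous in $s$ by bounded convergence.
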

	\begin{proof}
		Note that $\|\grad{h}\|$ is finite by the assumption. We have
		\begin{align*}
			\qty|h(\sqrt{1-s}x+\sqrt{s}Z)|\le |h(0)|+\|\grad{h}\|(\sqrt{1-s}|x|+\sqrt{s}|Z|)\le|h(0)|+\|\grad{h}\|(|x|+|Z|)
		\end{align*}
		(cf.\ \cite[Remark 2.2]{Raic2019arXiv}).	Thus
		\begin{align*}
			\e	\qty[\qty|h(\sqrt{1-s}x+\sqrt{s}Z)|]	\le |h(0)|+\|\grad{h}\|(|x|+\sqrt{\e |Z|^2})=|h(0)|+\|\grad{h}\|(|x|+\sqrt{\tr\Si})<\infty.
		\end{align*}
		By taking a sequence $s_n\rightarrow s$ and dominated convergence,	one can check that $s\mapsto S_sh(x)$ is continuous on $[0,1]$.  Next, note that, for any $s\in(0,1)$ there are $0<s_1<s<s_2<1$ and
		\begin{equation*}\label{time derivative bound}
			\forall s'\in(s_1,s_2),\quad\;\qty|\qty(\frac{-x}{2\sqrt{1-s'}}+\frac{Z}{2\sqrt{s'}})^\top\grad{h}(\sqrt{1-s'}x+\sqrt{s'}Z)|\le\frac{\|\grad{h}\|}{2}\qty(\frac{|x|}{\sqrt{1-s_2}}+\frac{|Z|}{\sqrt{s_1}}),
		\end{equation*}
		and the dominating function is $\p$-integrable. Thus differentiating under the integral sign is justified and 
		\begin{equation}\label{time derivative}
			\begin{split}
				\forall s\in(0,1),\quad                 \pdv{s}S_s h(x)=& \frac{1}{2}\e\qty[\qty(\frac{-x}{\sqrt{1-s}}+\frac{Z}{\sqrt{s}})^\top\grad{h}(\sqrt{1-s}x+\sqrt{s}Z)]\\
				=&\frac{1}{2\sqrt{s}}\sum_{j=1}^{d}\e\qty[Z_j(\partial_j h)(\sqrt{1-s}x+\sqrt{s}Z) ]\\
				&\quad\quad-\frac{1}{2\sqrt{1-s}}\sum_{j=1}^{d}x_j\e\qty[(\partial_j h)(\sqrt{1-s}x+\sqrt{s}Z) ].
			\end{split}
		\end{equation}
		Under the assumption, differentiating with respect to $x$ under the integral sign is more easily justified and we have
		\begin{equation}\label{x derivative}
			\begin{split}
				(\partial_j S_s h)(x)&=\sqrt{1-s}\e\qty[(\partial_j h)(\sqrt{1-s}x+\sqrt{s}Z)],\\
				(\partial_{jk} S_s h)(x) &=(1-s)\e\qty[(\partial_{jk} h)(\sqrt{1-s}x+\sqrt{s}Z)].
			\end{split}
		\end{equation}
		One can see that $\qty(\partial_{j}S_sh)(x) $ and $\qty(\partial_{jk}S_sh)(x)  $ are continuous in $s$ on $[0,1]$ and continuous in $x$ on $\R^d$ by bounded convergence.	 Also, by Lemma \ref{multivariate Stein identity} we have
		\begin{equation}\label{stein identity applied}
			\e\qty[Z_j(\partial_j h)(\sqrt{1-s}x+\sqrt{s}Z)]=\sqrt{s}\sum_{k=1}^d\si_{jk}\e\qty[(\partial_{jk}h)(\sqrt{1-s}x+\sqrt{s}Z)].
		\end{equation}
		Combining \eqref{time derivative}, \eqref{x derivative} and \eqref{stein identity applied} gives \eqref{Slepian backward equation equation}.
	\end{proof}
	
	\subsection{Proof of the ``backward equations'' \eqref{h immigration death backward}}
	\label{sec: immigration death backward}
	
	Recall the formula \eqref{transition probability}
	\begin{align*}
		p_s(y,z)=\prod_{j=1}^{r}	\sum_{k=0}^{y_j\wedge z_j}\binom{y_j}{k}(1-s)^{k}s^{y_j-k}[\la_js]^{z_j-k}\frac{e^{-\la_js}}{(z_j-k)!}.
	\end{align*}
	For each $j$, define
	\begin{align*}
		p_s^{(j)}(y_j,z_j)\coloneqq\sum_{k=0}^{y_j\wedge z_j}\binom{y_j}{k}(1-s)^{k}s^{y_j-k}[\la_js]^{z_j-k}\frac{e^{-\la_js}}{(z_j-k)!},
	\end{align*}
	so that $p_s(y,z)=\prod_{j=1}^{r}p_s^{(j)}(y_j,z_j)$.
	\begin{lem}\label{immigration death backward equation}
		$s\mapsto p_s(y,z)$ is continuous on $[0,1]$ and continuously differentiable on $(0,1)$ with the derivative satisfying
		\begin{align*}
			(1-s)	\pdv{s}p_s(y,z)=\sum_{j=1}^{r}\qty[\la_j\Big(p_s(y+e_j,z)-p_s(y,z)\Big)+y_j\Big(p_s(y-e_j,z)-p_s(y,z)\Big) ],\quad s\in(0,1).
		\end{align*}
	\end{lem}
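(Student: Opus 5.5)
Since $p_s(y,z)=\prod_{j=1}^r p^{(j)}_s(y_j,z_j)$ and each factor is a finite sum of polynomials in $s$ times $e^{-\la_js}$, continuity on $[0,1]$ and continuous differentiability on $(0,1)$ (indeed on all of $\R$) are immediate. By the product rule,
\[
\pdv{s}p_s(y,z)=\sum_{j=1}^r\Big(\pdv{s}p^{(j)}_s(y_j,z_j)\Big)\prod_{i\ne j}p^{(i)}_s(y_i,z_i).
\]
The right-hand side of the claimed identity also splits coordinatewise, because $p_s(y\pm e_j,z)-p_s(y,z)$ differs from $p_s(y,z)$ only in its $j$-th factor. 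Hence the lemma reduces to the one-dimensional identity
\[
(1-s)\pdv{s}p^{(j)}_s(m,z)=\la_j\big(p^{(j)}_s(m+1,z)-p^{(j)}_s(m,z)\big)+m\big(p^{(j)}_s(m-1,z)-p^{(j)}_s(m,z)\big)
\]
for $m,z\in\Z_+$, where the term with $m=0$ is read as $0$.

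To prove the one-dimensional identity, I would use generating functions rather than differentiate the finite sum term by term. Set $G_m(s,w)=\sum_{z\ge0}p^{(j)}_s(m,z)w^z$ for $|w|\le1$. From the probabilistic representation (or directly from the convolution form of \eqref{transition probability}), $p^{(j)}_s(m,\cdot)$ is the law of $\mathrm{Bin}(m,1-s)$ convolved with $\Po(\la_js)$. Therefore
\[
G_m(s,w)=(s+(1-s)w)^m e^{-\la_js(1-w)}.
\]
Differentiating in $s$ gives
\[
\pdv{s}G_m=\Big[\frac{m(1-w)}{s+(1-s)w}-\la_j(1-w)\Big]G_m.
\]
On the other side, $G_{m+1}-G_m=G_m\big(s+(1-s)w-1\big)=-(1-s)(1-w)G_m$. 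Similarly, $G_{m-1}-G_m=G_{m-1}(1-s)(1-w)$, and $G_{m-1}=G_m/(s+(1-s)w)$ when that denominator is nonzero. Multiplying the derivative by $(1-s)$ then matches
\[
\la_j(G_{m+1}-G_m)+m(G_{m-1}-G_m)
\]
term by term. This holds as polynomial-times-exponential identities in $w$, so the division step is harmless; for $m=0$ the second term is simply absent. Finally, I compare coefficients of $w^z$. This is legitimate because everything is entire in $w$, and term-by-term differentiation of the power series in $s$ is justified since each coefficient is a finite sum, so the $z$-th coefficient of $\pdv{s}G_m$ is $\pdv{s}p^{(j)}_s(m,z)$.

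The only mild obstacle is the bookkeeping in the coefficient comparison and the boundary convention $y_j=0$; both are handled by the generating-function identities above. One could instead differentiate \eqref{transition probability} directly and regroup the binomial terms, but that route is messier.
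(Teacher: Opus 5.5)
Your proof is correct. It shares the paper's first step, the coordinatewise reduction via the product rule, but proves the one-dimensional identity differently.

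The paper stays with the finite sum. It computes $(1-s)\partial_s P^{(j)}_s(y_j,z_j,k)$ termwise and then uses two recursions:
$P^{(j)}_s(y_j+1,z_j,k)=\lambda_j^{-1}\frac{1-s}{s}(z_j-k+1)P^{(j)}_s(y_j,z_j,k-1)+sP^{(j)}_s(y_j,z_j,k)$ and
$P^{(j)}_s(y_j-1,z_j,k)=y_j^{-1}s^{-1}(y_j-k)P^{(j)}_s(y_j,z_j,k)$.
These rewrite $p^{(j)}_s(y_j\pm1,z_j)$ as reweighted sums over the same index set. The weights then add up, with the factors $(z_j-k)$ and $(y_j-k)$ absorbing the boundary terms.

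Your generating function $G_m(s,w)=(s+(1-s)w)^m e^{-\lambda_j s(1-w)}$ encodes the Binomial--Poisson convolution structure. With it the identity becomes a one-line check. It also makes clear that this is just the backward equation of an immigration--death process, with no index bookkeeping at $k=y_j$ or $k=z_j$. The price is that you must extract coefficients, whereas the paper's computation is entirely finite and needs no interchange of limits.

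Your stated justification for extracting coefficients misses the real issue. That each coefficient is a finite sum shows $s\mapsto p^{(j)}_s(m,z)$ is smooth. The point that needs an argument is exchanging $\partial_s$ with the infinite sum over $z$. A clean fix:
\begin{itemize}
\item $G_m$ is jointly $C^\infty$ (indeed entire in $w$), and $p^{(j)}_s(m,z)=\frac{1}{z!}\partial_w^z G_m(s,0)$.
\item Mixed partials commute, so the $w^z$-coefficient of $(1-s)\partial_s G_m$ is $(1-s)\partial_s p^{(j)}_s(m,z)$.
\item Alternatively, note that $\sum_z \partial_s p^{(j)}_s(m,z)w^z$ converges uniformly for $s$ in compacts and $|w|\le1$.
\end{itemize}
Also, writing $\partial_s G_m=m(1-w)G_{m-1}-\lambda_j(1-w)G_m$ directly avoids dividing by $s+(1-s)w$, which can vanish for $|w|\le1$.
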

	\begin{proof}
		Note that
		\begin{align*}
			\pdv{s}p_s(y,z)=\sum_{j=1}^{r}\qty(\pdv{s}p_s^{(j)}(y_j,z_j)\prod_{k\neq j}p_s^{(k)}(y_k,z_k)).
		\end{align*}
		Let
		\begin{align*}
			P_s^{(j)}(y_j,z_j,k)=\binom{y_j}{k}(1-s)^{k}s^{y_j-k}[\la_js]^{z_j-k}\frac{e^{-\la_js}}{(z_j-k)!}
		\end{align*}
so that	$p_s^{(j)}(y_j,z_j)=\sum_{k=0}^{y_j\wedge z_j}P_s^{(j)}(y_j,z_j,k)$. A direct computation yields
		\begin{align}\label{q derivative}
			(1-s)\pdv{s}p_s^{(j)}(y_j,z_j)=\sum_{k=0}^{y_j\wedge z_j}P_s^{(j)}(y_j,z_j,k)\qty[-k-(1-s)\la_j+\frac{1-s}{s}(y_j+z_j-2k)].
		\end{align}
		Using $\binom{y_j+1}{k}=\binom{y_j}{k-1}+\binom{y_j}{k}$, one can see that 
		\begin{align*}
			P_s^{(j)}(y_j+1,z_j,k)=\la_j^{-1}\frac{1-s}{s}(z_j-k+1)P_s^{(j)}(y_j,z_j,k-1)+sP_s^{(j)}(y_j,z_j,k),
		\end{align*}
		so that
		\begin{align*}
			p_s^{(j)}(y_j+1,z_j)&=\sum_{k=0}^{(y_j+1)\wedge z_j}P_s^{(j)}(y_j+1,z_j,k)\\
			&=\sum_{k=1}^{(y_j+1)\wedge z_j}P_s^{(j)}(y_j,z_j,k-1)\la_j^{-1}\frac{1-s}{s}(z_j-k+1)+\sum_{k=0}^{y_j\wedge z_j}P_s^{(j)}(y_j,z_j,k)s\\
			&=\sum_{k=0}^{y_j\wedge z_j}P_s^{(j)}(y_j,z_j,k)\la_j^{-1}\frac{1-s}{s}(z_j-k)+\sum_{k=0}^{y_j\wedge z_j}P_s^{(j)}(y_j,z_j,k)s\\
			&=\sum_{k=0}^{y_j\wedge z_j}P_s^{(j)}(y_j,z_j,k)\qty[\la_j^{-1}\frac{1-s}{s}(z_j-k)+s].
		\end{align*}
		Note that it is innocuous to include the $k=z_j$ term in the first sum in the third line thanks to the factor $(z_j-k)$. Thus
		\begin{align}\label{q plus one}
			\la_j\Big(p_s^{(j)}(y_j+1,z_j)-p_s^{(j)}(y_j,z_j))\Big)=\sum_{k=0}^{y_j\wedge z_j}P_s^{(j)}(y_j,z_j,k)\qty[\frac{1-s}{s}(z_j-k)-\la_j (1-s)].
		\end{align}
		On the other hand, since $P_s^{(j)}(y_j-1,z_j,k)=y_j^{-1}s^{-1}(y_j-k)P_s^{(j)}(y_j,z_j,k)$, we have
		\begin{equation}\label{q minus one}
			\begin{split}
				y_j\Big(p_s^{(j)}(y_j-1,z_j)-p_s^{(j)}(y_j,z_j)\Big)&=\sum_{k=0}^{y_j\wedge z_j}P_s^{(j)}(y_j,z_j,k)\qty[\frac{1}{s}(y_j-k)-y_j]\\
				&=\sum_{k=0}^{y_j\wedge z_j}P_s^{(j)}(y_j,z_j,k)\qty[-k+\frac{1-s}{s}(y_j-k)].
			\end{split}
		\end{equation}
		Again the factor $(y_j-k)$ compensates the $k=y_j$ term. \eqref{q plus one} and \eqref{q minus one} sum to \eqref{q derivative}, namely,
		\begin{align*}
			(1-s)\pdv{s}p_s^{(j)}(y_j,z_j)=	\la_j\Big(p_s^{(j)}(y_j+1,z_j)-p_s^{(j)}(y_j,z_j))\Big) +		y_j\Big(p_s^{(j)}(y_j-1,z_j)-p_s^{(j)}(y_j,z_j)\Big).
		\end{align*}
		Therefore, noting that
		\begin{align*}
			p_s(y+e_j,z)&=p_s^{(j)}(y_j+1,z_j)\prod_{k\neq j}p_s^{(k)}(y_k,z_k),\\
			p_s(y,z)&=p_s^{(j)}(y_j,z_j)\prod_{k\neq j}p_s^{(k)}(y_k,z_k),\\
			p_s(y-e_j,z)&=p_s^{(j)}(y_j-1,z_j)\prod_{k\neq j}p_s^{(k)}(y_k,z_k),
		\end{align*}
		we have
		\begin{align*}
			&(1-s)\pdv{s}p_s(y,z)\\
			=&\sum_{j=1}^{r}\qty((1-s)\pdv{s}p_s^{(j)}(y_j,z_j)\prod_{k\neq j}p_s^{(k)}(y_k,z_k))\\
			=&\sum_{j=1}^{r}\qty[\la_j\Big(p_s(y+e_j,z)-p_s(y,z)\Big)+y_j\Big(p_s(y-e_j,z)-p_s(y,z)\Big) ].
		\end{align*}
	\end{proof}
	
	\subsection{Proofs for the smoothness of the solution}
	\label{sec: smoothness of the solution}

	\subsubsection{Proof of Lemma \ref{smoothness x}}
		Since $|h(\cdot,D_y(s)+Z_0(s))|_{\mathrm{Lip}}\le1$, for any $x,x'\in\R^d$,
		\begin{align*}
			\qty|f_h(x,y)-f_h(x',y)|&\le\int_{0}^{1}\frac{1}{2(1-s)}\e\qty[|h(\cdot,D_y(s)+Z_0(s))|_{\mathrm{Lip}}\sqrt{1-s}|x-x'|]\dd{s}\\
			&\le|x-x'|\int_{0}^{1}\frac{1}{2\sqrt{1-s}}\dd{s}=|x-x'|.
		\end{align*}
		This shows 	$|f_h(\cdot,y)|_{\mathrm{Lip}}\le1$. By differentiating with respect to $x$ under the integral sign, for any $x\in\R^d$, $k\in\{1,2,3\}$ and $j_1,\dots,j_k\in\{1,\dots,d\}$, we have
		\begin{gather*}
			(\partial_{j_1\cdots j_k} T_s h)(x,y) =(1-s)^{k/2}\e\qty[(\partial_{j_1\cdots j_k} h)\qty(\sqrt{1-s}x+\sqrt{s}Z,D_y(s)+Z_0(s))], \label{Ts x derivative kth} \\
			\qty|(\partial_{j_1\cdots j_k} T_s h)(x,y)|\le (1-s)^{k/2}. \label{Ts x derivative bound kth}
		\end{gather*} 
		$(\partial_{j_1\cdots j_k} T_s h)(x,y) $ is continuous in $x$ on $\R^d$ by bounded convergence. Since
		\begin{align*}
			\int_{0}^{1}\frac{1}{2(1-s)}\qty|(\partial_{j_1\cdots j_k}T_sh)(x,y)|\dd{s}\le\frac{1}{2}\int_{0}^{1}(1-s)^{k/2-1}\dd{s}=\frac{1}{k},
		\end{align*}
		we may differentiate under the integral sign
		\begin{align}
			\qty(\partial_{j_1\cdots j_k}f_h)(x,y)=&-\int_{0}^{1}\frac{1}{2(1-s)}\qty(\partial_{j_1\cdots j_k}T_sh)(x,y)\dd{s} \notag \\
			=&-\int_{0}^{1}\frac{(1-s)^{k/2-1}}{2}\e\qty[(\partial_{j_1\cdots j_k} h)\qty(\sqrt{1-s}x+\sqrt{s}Z,D_y(s)+Z_0(s))]\dd{s}.
			\label{solution kth derivative}
		\end{align}
		By taking a sequence $x_n\rightarrow x$ and dominated convergence, we see that $\qty(\partial_{j_1\cdots j_k}f_h)(\cdot,y):\R^d \rightarrow\R$ is continuous. $|f_h(\cdot,y)|_k\le\frac{1}{k}$ also follows.  The last part of the lemma is immediate from the proof. \qed

	\subsubsection{Proof of Lemma \ref{smoothness y}}
	We have shown \eqref{smoothness first order difference of first derivative} and \eqref{smoothness first order difference of first derivative improved} in the main text. We first prove \eqref{smoothness second order difference diagonal}, \eqref{smoothness second order difference off diagonal}, \eqref{smoothness second order difference diagonal improved} and \eqref{smoothness second order difference off diagonal improved} in a similar fashion. The proof is done by a suitable adaptation of the proof of \cite[Lemma 10.2.10]{BHJ92}. If $j\neq k$, for $s\in[0,1)$,
		\begin{align*}
			&D_{y+e_j}(s)=D_y(s)+e_j\indi_{\{\zeta_{j,y_j+1}>-\log(1-s) \}},\\
			&D_{y+e_k}(s)=D_y(s)+e_k\indi_{\{\zeta_{k,y_k+1}>-\log(1-s) \}},\\
			&D_{y+e_k+e_j}(s)=D_y(s)+e_k\indi_{\{\zeta_{k,y_k+1}>-\log(1-s) \}}+e_j\indi_{\{\zeta_{j,y_j+1}>-\log(1-s) \}}.
		\end{align*}
		So,	if $j\neq k$, for $s\in[0,1)$,
		\begin{align*}
			&\left.h\qty(\sqrt{1-s}x+\sqrt{s}Z,D_{y+e_k+e_j}(s)+Z_0(s))-h\qty(\sqrt{1-s}x+\sqrt{s}Z,D_{y+e_j}(s)+Z_0(s)) \right.\\
			&\quad\quad	\quad	\left.   -h\qty(\sqrt{1-s}x+\sqrt{s}Z,D_{y+e_k}(s)+Z_0(s))+h\qty(\sqrt{1-s}x+\sqrt{s}Z,D_{y}(s)+Z_0(s))   \right.\\
			=&\indi_{\{\zeta_{j,y_j+1}>-\log(1-s) \}}\indi_{\{\zeta_{k,y_k+1}>-\log(1-s)\}}\\
			&\times\bigg[ h\qty(\sqrt{1-s}x+\sqrt{s}Z,D_{y}(s)+Z_0(s)+e_k+e_j)-h\qty(\sqrt{1-s}x+\sqrt{s}Z,D_{y}(s)+Z_0(s)+e_j) \bigg.\\
			&\quad\quad\quad\quad\quad	\bigg.   -h\qty(\sqrt{1-s}x+\sqrt{s}Z,D_{y}(s)+Z_0(s)+e_k)+h\qty(\sqrt{1-s}x+\sqrt{s}Z,D_{y}(s)+Z_0(s))   \bigg]\\
			=&\indi_{\{\zeta_{j,y_j+1}>-\log(1-s) \}}\indi_{\{\zeta_{k,y_k+1}>-\log(1-s)\}}\De_{jk}h\qty(\sqrt{1-s}x+\sqrt{s}Z,D_y(s)+Z_0(s)),
		\end{align*}
		and hence
		\begin{align*}
			&(\De_{jk}T_sh)(x,y) \\
			=&\e\bigg[h\qty(\sqrt{1-s}x+\sqrt{s}Z,D_{y+e_k+e_j}(s)+Z_0(s))-h\qty(\sqrt{1-s}x+\sqrt{s}Z,D_{y+e_j}(s)+Z_0(s)) \bigg.\\
			&\quad\quad\quad	\quad	\bigg.   -h\qty(\sqrt{1-s}x+\sqrt{s}Z,D_{y+e_k}(s)+Z_0(s))+h\qty(\sqrt{1-s}x+\sqrt{s}Z,D_{y}(s)+Z_0(s))   \bigg]\\
			=&\p\qty(\zeta_{j,y_j+1}>-\log(1-s) )\p\qty(\zeta_{k,y_k+1}>-\log(1-s))\e\qty[\De_{jk}h\qty(\sqrt{1-s}x+\sqrt{s}Z,D_y(s)+Z_0(s))]\\
			=&(1-s)^2\e\qty[\De_{jk}h\qty(\sqrt{1-s}x+\sqrt{s}Z,D_y(s)+Z_0(s))].
		\end{align*}
		If $j=k$, note that, for $s\in[0,1)$,
		\begin{align*}
			&D_{y+e_j}(s)=D_y(s)+e_j\indi_{\{\zeta_{j,y_j+1}>-\log(1-s) \}}\stackrel{d}{=}D_y(s)+e_j\indi_{\{\zeta_{j,y_j+2}>-\log(1-s) \}},\\
			&D_{y+2e_j}(s)=D_y(s)+e_j\qty(\indi_{\{\zeta_{j,y_j+2}>-\log(1-s) \}}+\indi_{\{\zeta_{j,y_j+1}>-\log(1-s) \}}),
		\end{align*}
		and
		\begin{align*}
			&\e\bigg[h\qty(\sqrt{1-s}x+\sqrt{s}Z,D_{y+2e_j}(s)+Z_0(s))-2h\qty(\sqrt{1-s}x+\sqrt{s}Z,D_{y+e_j}(s)+Z_0(s)) \bigg.\\
			&\quad\quad\quad\quad\quad\quad\quad\quad\quad	\bigg.   +h\qty(\sqrt{1-s}x+\sqrt{s}Z,D_{y}(s)+Z_0(s))   \bigg]\\
			=&\e\bigg[h\qty(\sqrt{1-s}x+\sqrt{s}Z,D_{y+2e_j}(s)+Z_0(s))-h\qty(\sqrt{1-s}x+\sqrt{s}Z,D_{y+e_j}(s)+Z_0(s)) \bigg.\\
			&	\bigg.  -h\qty(\sqrt{1-s}x+\sqrt{s}Z,D_y(s)+e_j\indi_{\{\zeta_{j,y_j+2}>-\log(1-s) \}}+Z_0(s)) +h\qty(\sqrt{1-s}x+\sqrt{s}Z,D_{y}(s)+Z_0(s))   \bigg].
		\end{align*}
		Hence
		\begin{align*}
			(\De_{jj}T_sh)(x,y)=(1-s)^2\e\qty[\De_{jj}h\qty(\sqrt{1-s}x+\sqrt{s}Z,D_y(s)+Z_0(s))]
		\end{align*}
		for $s\in[0,1)$ can be shown similarly. For any pair $\{j,k\}$, the identity
		\begin{align*}
			(\De_{jk}T_sh)(x,y)=(1-s)^2\e\qty[\De_{jk}h\qty(\sqrt{1-s}x+\sqrt{s}Z,D_y(s)+Z_0(s))]
		\end{align*}
		extends to hold	for $s=1$. It thus follows that, for any pair $\{j,k\}$, 
		\begin{align*}
			\De_{jk}f_h(x,y)=&-\int_{0}^{1}\frac{1}{2(1-s)} (\De_{jk}T_sh)(x,y)\dd{s}\\
			=&-\int_{0}^{1}\frac{1-s}{2}\e\qty[\De_{jk}h\qty(\sqrt{1-s}x+\sqrt{s}Z,D_y(s)+Z_0(s))]\dd{s}.
		\end{align*}
		First consider the $j\neq k$ case. Let $Z_{0,-\{j,k\}}(s)\coloneqq Z_0(s)-Z_{0,j}(s)e_j-Z_{0,k}(s)e_k$. Defining $\bar{h}:\R^d\times\Z^r_+\times\Z_+\times\Z_+\rightarrow[-1,1]$ by
		\begin{align*}
			\bar{h}(x,y,m,n)=h(x,y+me_j+ne_k),
		\end{align*}
		we have 
		\begin{align*}
			&\De_{jk}f_h(x,y)\\
			=&-\int_{0}^{1}\frac{1-s}{2}\e\bigg[\bar{h}\qty(\sqrt{1-s}x+\sqrt{s}Z,D_y(s)+Z_{0,-\{j,k\}}(s),Z_{0,j}(s)+1,Z_{0,k}(s)+1)\bigg. \\
			&	\quad\quad\quad\quad\quad\quad\quad\left.	-\bar{h}\qty(\sqrt{1-s}x+\sqrt{s}Z,D_y(s)+Z_{0,-\{j,k\}}(s),Z_{0,j}(s)+1,Z_{0,k}(s))\right.\\
			&\quad\quad\quad\quad\quad\quad\quad\quad\left.-\bar{h}\qty(\sqrt{1-s}x+\sqrt{s}Z,D_y(s)+Z_{0,-\{j,k\}}(s),Z_{0,j}(s),Z_{0,k}(s)+1) \right.  \\
			&\quad\quad\quad\quad\quad\quad\quad\quad\quad\quad	\bigg.	+\bar{h}\qty(\sqrt{1-s}x+\sqrt{s}Z,D_y(s)+Z_{0,-\{j,k\}}(s),Z_{0,j}(s),Z_{0,k}(s)) \bigg]\dd{s}.
		\end{align*}
		For $s\in(0,1]$, given that $(Z,D_y(s)+Z_{0,-\{j,k\}}(s))=(z,y')$, the expectation is (with the convention that $\Po\qty(s\la_j)\{-1\}=\Po\qty(s\la_k)\{-1\}=0$)
		\begin{align}\label{second difference bound j neq k conditional expectation}
			\sum_{m,n\ge0}	\bar{h}(\sqrt{1-s}x+\sqrt{s}z,y',m,n)\Big[\Po\qty(s\la_j)\{m\}-\Po\qty(s\la_j)\{m-1\}\Big]\Big[\Po\qty(s\la_k)\{n\}-\Po\qty(s\la_k)\{n-1\}\Big],
		\end{align}
		since $Z_{0,j}(s)\sim\Po(s\la_j)$ and $Z_{0,k}(s)\sim\Po(s\la_k)$ for $s\in(0,1]$, $(Z_{0,j}(s),Z_{0,k}(s))$ are independent of $(Z,D_y(s)+Z_{0,-\{j,k\}}(s))$, and $Z_{0,j}(s)$ and $Z_{0,k}(s)$ are independent. Hence it is bounded in magnitude by
		\begin{align*}
			4\max_{m\ge0}\Po\qty(s\la_j)\{m\}\max_{n\ge0}\Po\qty(s\la_k)\{n\}	\le4\min\{1,(2es\la_j)^{-1/2}\}\min\{1,(2es\la_k)^{-1/2}\},
		\end{align*}
		where where the first bound is due to Lemma \ref{Poisson smoothness} below and the last inequality is \cite[Proposition A.2.7]{BHJ92}. Thus 
		\begin{align*}
			\qty|\De_{jk}f_h(x,y)|\le\int_{0}^{1}2(1-s)\min\{1,(2es\la_j)^{-1/2}\}\min\{1,(2es\la_k)^{-1/2}\}\dd{s}
		\end{align*}
		and \eqref{smoothness second order difference off diagonal} follows by elementary computations. To see
		\begin{align*}
			\qty|\De_{jk}f_h(x,y)|\le\frac{1+\log^+(2e(\la_j\wedge\la_k))}{e(\la_j\wedge\la_k)}
		\end{align*}
		for any $j\neq k$,	note that
		\begin{align*}
			&\int_{0}^{1}2(1-s)\min\{1,(2es\la_j)^{-1/2}\}\min\{1,(2es\la_k)^{-1/2}\}\dd{s}\\
			\le&\int_{0}^{\frac{1}{2e(\la_j\wedge\la_k)}\wedge1}2(1-s)\dd{s}+\int_{\frac{1}{2e(\la_j\wedge\la_k)}\wedge1}^1\frac{1-s}{s}\frac{1}{e\sqrt{\la_j\la_k}}\dd{s}\\
			\le&2\qty(\frac{1}{2e(\la_j\wedge\la_k)}\wedge1)+\frac{\log(2e(\la_j\wedge\la_k))}{e(\la_j\wedge\la_k)}\indi_{\frac{1}{2e(\la_j\wedge\la_k)}<1}\\
			=&2\times\indi_{\frac{1}{2e(\la_j\wedge\la_k)}\ge1}+\frac{1+\log(2e(\la_j\wedge\la_k))}{e(\la_j\wedge\la_k)}\indi_{\frac{1}{2e(\la_j\wedge\la_k)}<1}\\
			\le&\frac{1+\log^+(2e(\la_j\wedge\la_k))}{e(\la_j\wedge\la_k)}.
		\end{align*}
		If $h$ is such that $h(x',y')=0$ for $y'\neq0$, $\bar{h}(x',y',m,n)=0$ unless $y'=m=n=0$ and \eqref{second difference bound j neq k conditional expectation} is bounded in magnitude by $\Po\qty(s\la_j)\{0\}\Po\qty(s\la_k)\{0\}=e^{-(\la_j+\la_k)s}$. This gives the improved bound in \eqref{smoothness second order difference off diagonal improved}:
		\begin{align*}
			\qty|\De_{jk}f_h(x,y)|\le\int_{0}^{1}\frac{1-s}{2}e^{-(\la_j+\la_k)s}\dd{s}\le\min\qty{\frac{1}{4},\frac{1-e^{-(\la_j+\la_k)}}{2(\la_j+\la_k)}}.
		\end{align*}
		
		\smallskip
		For the $j=k$ case, let $Z_{0,-j}(s)\coloneqq Z_0(s)-Z_{0,j}(s)e_j$. We may assume that, on the same probability space $(\Omega,\mF,\p)$, the following additional independent random elements are defined, independently of everything else:
		{\setlength{\leftmargini}{29.5pt}  	
			\begin{itemize}
				\setlength{\labelsep}{7pt}    
				\setlength{\itemsep}{3pt}     	
				
				\item Independent stochastic processes $\{Y_1(s),s\in[0,1]\}$ and $\{Y_2(s),s\in[0,1]\}$ on $Z_+$ such that $Y_1(0)=Y_2(0)=0$ and $Y_1(s),Y_2(s)\sim\Po(s\la_j/2)$ for $s\in(0,1]$.
				
		\end{itemize} }
		Then $Z_0(s)\stackrel{d}{=}(Y_1(s)+Y_2(s))e_j+Z_{0,-j}(s)$ for all $s\in[0,1]$. By replacing $Z_{0,-\{j,k\}}(s),Z_{0,j}(s),Z_{0,k}(s)$ with $Z_{0,-j}(s),Y_1(s),Y_2(s)$, respectively, the same proof as above with $k=j$ shows
		\begin{align*}
			\qty|\De_{jj}f_h(x,y)|\le\int_{0}^{1}2(1-s)\qty(\min\{1,(es\la_j)^{-1/2}\})^2\dd{s},
		\end{align*}
		and	\eqref{smoothness second order difference diagonal} follows by elementary computations. To see
		\begin{align*}
			\qty|\De_{jj}f_h(x,y)|\le\frac{2+2\log^+(e\la_j)}{e\la_j},
		\end{align*}
		note that
		\begin{align*}
			\int_{0}^{1}2(1-s)\qty(\min\{1,(es\la_j)^{-1/2}\})^2\dd{s}=&\int_{0}^{\frac{1}{e\la_j}\wedge1}2(1-s)\dd{s}+\int_{\frac{1}{e\la_j}\wedge1}^{1}\frac{2}{e\la_j}\frac{1-s}{s}\dd{s} \\
			\le&2\qty[\qty(\frac{1}{e\la_j}\wedge1)+\frac{\log(e\la_j)}{e\la_j}\indi_{\frac{1}{e\la_j}<1}]\\
			=&2\qty[\indi_{\frac{1}{e\la_j}\ge1}+\frac{1+\log(e\la_j)}{e\la_j}\indi_{\frac{1}{e\la_j}<1}]\\
			\le&2\times\frac{1+\log^+(e\la_j)}{e\la_j}.
		\end{align*}
		If $h$ is such that $h(x',y')=0$ for $y'\neq0$, we obtain the improved bound in \eqref{smoothness second order difference diagonal improved}:
		\begin{align*}
			\qty|\De_{jj}f_h(x,y)|\le\int_{0}^{1}\frac{1-s}{2}e^{-\la_js}\dd{s}\le\min\qty{\frac{1}{4},\frac{1-e^{-\la_j}}{2\la_j}}.
		\end{align*}
		
		\medskip
		The proofs of \eqref{smoothness first order difference}, \eqref{smoothness first order difference of second derivative}, \eqref{smoothness first order difference improved} and \eqref{smoothness first order difference of second derivative improved} can be done similarly. For \eqref{smoothness first order difference of second derivative} and \eqref{smoothness first order difference of second derivative improved}, note that
		\begin{align*}
			(\partial_{jk} f_h)(x,y)&=-\int_{0}^{1}\frac{1}{2}\e\qty[(\partial_{jk}h)\qty(\sqrt{1-s}x+\sqrt{s}Z,D_y(s)+Z_0(s))]\dd{s}
		\end{align*}
		(cf.\ \eqref{solution kth derivative}) and $\|\partial_{jk}h\|\le 1$ holds. We obtain
		\begin{align*}
			\qty|\De_jf_h(x,y)|&\le \int_{0}^{1}\min\{ 1, (2es\la_j)^{-1/2} \}\dd{s}\le\min\qty{1,\sqrt{\frac{2}{e\la_j}}},   \\
			\qty|\De_\ell(\partial_{jk}f_h)(x,y)|&\le\int_{0}^{1}(1-s)\min\{ 1, (2es\la_\ell)^{-1/2} \}\dd{s}\le\min\qty{\frac{1}{2},\frac{4}{3\sqrt{2}}\sqrt{\frac{1}{e\la_\ell}}}.
		\end{align*}
		If furthermore $h$ is such that $h(x',y')=0$ for $y'\neq0$, $\partial_{jk}h(x',y')=0$ for $y'\neq0$. These bounds improve as
		\begin{align*}
			\qty|\De_jf_h(x,y)|&\le \int_{0}^{1}\frac{1}{2}e^{-\la_js}\dd{s}\le\min\qty{\frac{1}{2},\frac{1-e^{-\la_j}}{2\la_j}},   \\
			\qty|\De_\ell(\partial_{jk}f_h)(x,y)|&\le\int_{0}^{1}\frac{1-s}{2}e^{-\la_\ell s}\dd{s}\le\min\qty{\frac{1}{4},\frac{1-e^{-\la_\ell}}{2\la_\ell}}.
		\end{align*}

\medskip
Finally, the last part of the lemma is immediate from the definition \eqref{eq new solution} of $f_h$ (and the expression \eqref{new interpolation} of $T_sh$) and the representations \eqref{solution kth derivative} of the derivatives, once we note that under the assumption on $h$, $\partial_jh$ and $\partial_{jk}h$ also have the same property. \qed
	
	\begin{lem}[Smoothness of a Poisson distribution]\label{Poisson smoothness}
		Let $\Po(\la)$ be a one-dimensional Poisson measure on $\Z_+$ with mean $\la>0$ and $ \flr{\la}$ be the integer part of $\la$. Then, with the convention that $\Po(\la)\{-1\}=0$, 
		\begin{align*}
			\sum_{k=0}^{\infty}\qty|\Po(\la)\{k\}-\Po(\la)\{k-1\}|=2\Po(\la)\{\flr{\la} \}.
		\end{align*}
	\end{lem}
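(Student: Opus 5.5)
The identity is about the Poisson probability mass function $p_k\coloneqq\Po(\la)\{k\}$, $k\ge0$, with $p_{-1}=0$. The plan is to exploit unimodality: determine exactly where the consecutive differences $p_k-p_{k-1}$ are positive and where they are negative, then telescope.

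\textbf{Sign of the differences.} For $k\ge1$, the ratio is $p_k/p_{k-1}=\la/k$. Hence
\begin{align*}
p_k\ge p_{k-1}\quad\text{iff}\quad k\le\la,\qquad\text{i.e. iff}\quad k\le\flr{\la}.
\end{align*}
For $k\ge\flr{\la}+1$ we have $k>\la$, so $p_k<p_{k-1}$. For $k=0$ the difference is $p_0-0>0$. So the sequence $k\mapsto p_k-p_{k-1}$ is nonnegative for $0\le k\le\flr{\la}$ and negative for $k\ge\flr{\la}+1$.

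\textbf{Telescoping.} With the signs fixed, split the sum into these two ranges:
\begin{align*}
\sum_{k=0}^{\flr{\la}}(p_k-p_{k-1})+\sum_{k=\flr{\la}+1}^{\infty}(p_{k-1}-p_k).
\end{align*}
The first sum telescopes to $p_{\flr{\la}}-p_{-1}=p_{\flr{\la}}$. The partial sums of the second are $p_{\flr{\la}}-p_K$. Since $p_K\to0$ as $K\to\infty$, the second sum converges to $p_{\flr{\la}}$; its terms are nonnegative, so convergence is unproblematic. Adding the two pieces gives $2\Po(\la)\{\flr{\la}\}$, as claimed.

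\textbf{Integer $\la$.} When $\la$ is an integer, $p_\la=p_{\la-1}$, so the $k=\flr{\la}$ difference is zero. This is consistent with placing that index in the nonnegative range, and the formula still holds. No genuine obstacle arises; the only point needing care is this boundary convention at $k=\flr{\la}$.
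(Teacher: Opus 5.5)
Your proof is correct and follows essentially the same route as the paper. Both arguments determine the sign of $\Po(\la)\{k\}-\Po(\la)\{k-1\}$ (the paper via the identity $\frac{\la-k}{\la}\Po(\la)\{k\}$, you via the ratio $\la/k$), split the sum at $\flr{\la}$, telescope, and use $\Po(\la)\{m\}\to0$ to finish.
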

	\begin{proof}
		 Note that
		\begin{align*}
			\Po(\la)\{k\}-\Po(\la)\{k-1\}=\frac{\la^ke^{-\la}}{k!}-\frac{\la^{k-1}e^{-\la}}{(k-1)!} =\frac{\la-k}{\la}\frac{\la^{k}e^{-\la}}{k!},
		\end{align*}
		which is nonnegative if $k\le\flr{\la}$ and negative if $k>\flr{\la}$. Therefore, for any $m>\flr{\la}$,
		\begin{align*}
			&\sum_{k=0}^{m}\qty|\Po(\la)\{k\}-\Po(\la)\{k-1\}|\\
			=&\sum_{k=0}^{\flr{\la}}\qty(\Po(\la)\{k\}-\Po(\la)\{k-1\})+\sum_{k=\flr{\la}+1}^{m}\qty(\Po(\la)\{k-1\}-\Po(\la)\{k\})\\
			=&\Po(\la)\{\flr{\la}\}+\Po(\la)\{\flr{\la}\}-\Po(\la)\{m\}\\
			=&2\Po(\la)\{\flr{\la}\}-\Po(\la)\{m\}.
		\end{align*}
		Since $\Po(\la)\{m\}\rightarrow0$ as $m\rightarrow\infty$, letting $m\rightarrow\infty$ in both sides completes the proof.
	\end{proof}
	
	\section{Proofs for Section \ref{configuration model}}
	\label{main proofs}
	
		Let $\al$ and $\be$ be either a member of $\Ga$ or a possible edge. We say that $\al$ \textbf{intersects} $\be$ (or vice versa) and write $\al\cap \be\neq\varnothing$ if $\al$ and $\be$ share a vertex. We say that $\al$ and $\be$ are \textbf{disjoint} and write $\al\cap\be=\varnothing$ if they share no vertex. Note that if $\al$ and $\be$ share a half-edge, then $\al$ and $\be$ also share the vertex to which that half-edge is incident. If either $\al$ or $\be$, say $\al$, is in $\Ga_{1}$ (i.e., an isolated one), then the converse of this is also true: If $\al$ and $\be$ share a vertex, then $\al$ shares all the half-edges of $\be$ which are incident to that vertex.
		
		\medskip
	 In the following lemma, we collect the cardinality estimates about various sets of $\be\in\Ga$ intersecting given $\al\in\Ga_1$ which are useful in the subsequent proofs (other than Appendix \ref{stein coupling bound fifth term}; the proof of Proposition \ref{fifth term in stein coupling bound} is mainly done by exactly counting the number of possibilities of a pair of $\al,\be\in\Ga_2$ sharing a or no half-edge and we will do it there). The proof of this lemma is deferred to Appendix \ref{proof cardinality estimates}. Recall that $\Ga_{11}$, $\Ga_{12}$, $\Ga_{21}$ and $\Ga_{22}$ are the sets of possible isolated edges, possible isolated 2-stars, possible self-loops and possible double edges, respectively.
			\begin{lem}[Cardinality estimates]\label{cardinality estimates}\ 
						{\setlength{\leftmargini}{29.5pt}  	
					\begin{itemize}
						\setlength{\labelsep}{7pt}     
						\setlength{\itemsep}{3pt}      
						
						\item[\emph{a}.] \begin{align*}
							\qty|\Ga_{11}|=\frac{(n_1)_2}{2};\quad\qty|\Ga_{12}|=(n_1)_2n_2;\quad\qty|\Ga_{21}|=\frac{\sum_{i\in[n]}(d_i)_2}{2};\quad\qty|\Ga_{22}|=\frac{\sum_{1\le i<j\le n}(d_i)_2(d_j)_2}{2}.
						\end{align*}
						
						\item[\emph{b}.] For $\al\in\Ga_{1j}$,						
						\begin{align*}
							\Big|\qty{\be\in\Ga_{1k}\setminus\{\al\}:\be\cap\al\neq\varnothing}\Big|\le\begin{cases}
								2n_1&(j=k=1)\\
								4n_1n_2&(j=1,k=2)\\
								2n_1&(j=2,k=1)\\
								4n_1n_2+(n_1)_2&(j=k=2)
							\end{cases}.
						\end{align*}
						
						\item[\emph{c}.] For $\al\in\Ga_{11}$,
						\begin{align*}
							&\Big|\qty{\be\in\Ga_{12}\setminus\{\al\}:\text{\emph{$\al$ and $\be$ share two degree 1 vertices}}}\Big|\le2n_2.
						\end{align*}
						
						\item[\emph{d}.] For $\al\in\Ga_{12}$, let
						\begin{align*}
							\La^\al=\qty{\be\in\Ga_{12}\setminus\{\al\}:\text{\emph{$\al$ and $\be$ share one degree 1 vertex only}}}.
						\end{align*}
						Then $\qty|\La^\al|\le4n_1n_2$ and
							$\qty|\qty{\be\in\Ga_{12}\setminus\{\al\}:\be\cap\al\neq\varnothing}\setminus\La^\al|\le(n_1)_2+2n_2$.
						
						\item[\emph{e}.] For $\al\in\Ga_{11}$, $\qty|\qty{\be\in\Ga_{2}:\be\cap\al\neq\varnothing}|=0$. For $\al\in\Ga_{12}$,
						\begin{align*}
							\Big|\qty{\be\in\Ga_{21}:\be\cap\al\neq\varnothing}\Big|=1,\quad\quad
							\Big|\qty{\be\in\Ga_{22}:\be\cap\al\neq\varnothing}\Big|\le\sum_{i\in[n]}(d_i)_2.
						\end{align*}
						
				\end{itemize} }
		\end{lem}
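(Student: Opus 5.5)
The plan is to prove the five parts by direct enumeration, using the combinatorial description of each index set. Each $\al\in\Ga$ is identified with a partition of $2e(H)$ half-edges into pairs, and each $\be$ is counted by choosing its vertices first and then the labelled half-edges at those vertices.

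For part (a), the counts come straight from the one-to-one correspondences listed above.
\begin{itemize}
\item A possible isolated edge is an unordered pair of distinct degree-1 vertices, each with one half-edge. This gives $(n_1)_2/2$.
\item A possible isolated 2-star is an ordered pair of distinct degree-1 vertices (the ordering decides which of them attaches to which of $u\neq v$), together with a degree-2 vertex $k$ whose two half-edges $u,v$ are assigned in order. The unordered choice $i<j$ times the $2$ orderings of $(u,v)$ gives $(n_1)_2 n_2$.
\item Possible self-loops are unordered pairs $s<t$ within a single vertex, giving $\sum_i (d_i)_2/2$.
\item Possible double edges choose $i<j$, an unordered pair $s<t$ at $v_i$, and an ordered pair $u\neq v$ at $v_j$. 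This gives $\sum_{i<j}\frac{(d_i)_2}{2}(d_j)_2$.
\end{itemize}

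For parts (b)--(d), I will fix $\al$ and classify $\be$ by the shared vertex. The key observation, noted just before the lemma, is that $\al\in\Ga_1$ consists of isolated components. So if $\be$ shares a vertex $w$ with $\al$, then the degree of $w$ is $1$ or $2$, and $\be$ must use all of the half-edges at $w$.
\begin{itemize}
\item For $j=k=1$ with $\al=\{s,t\}$: $\be$ contains exactly one of the two degree-1 vertices of $\al$. There are $2$ choices for which one, and at most $n_1$ choices for the other endpoint of $\be$. This gives $\le 2n_1$.
\item For $j=1,k=2$: the shared vertex of $\al$ is a degree-1 vertex of $\be$. There are $2$ choices for it, then at most $n_1$ choices for the other degree-1 vertex of $\be$, $n_2$ choices for its centre, and $2$ orientations. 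This gives $\le 4n_1n_2$, and this case already absorbs the case where $\be$ uses both leaves of $\al$.
\item For part (c), when $\be$ uses both leaves of $\al$, there are $n_2$ choices of centre times $2$ orientations, giving $2n_2$.
\item For $j=2,k=1$: $\be$ must be an edge containing exactly one leaf of $\al$. It cannot use the centre, since the centre has degree 2. This gives $2\cdot n_1$.
\item For $j=k=2$, I split by what is shared.
 \begin{itemize}
 \item Exactly one leaf: $2$ choices of which leaf, $\le n_1$ for the other leaf, $n_2$ for the centre, and $2$ orientations. This gives $\le 4n_1n_2$, which is the bound on $|\La^\al|$.
 \item Both leaves but a different centre: at most $2n_2$.
 \item The same centre: the two leaves of $\be$ then range over at most $(n_1)_2$ ordered choices.
 \end{itemize}
 Summing the last two cases gives the second bound in (d). The bound stated in (b) for $j=k=2$ then follows by merging these cases, which may need slightly more careful counting to land exactly on $4n_1n_2+(n_1)_2$. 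For example, when $\be$ shares the centre, it can also share a leaf, and I will count $\be$ by its leaves as ordered pairs.
\end{itemize}

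For part (e), a possible self-loop or double edge lives on a vertex of degree at least 2, so an isolated edge (two degree-1 vertices) meets none of them. For $\al\in\Ga_{12}$, the only vertex of degree at least 2 is the centre $k$, and $d_k=2$.
\begin{itemize}
\item The self-loop at $k$ is the unique pair $\{u,v\}$, so the count is $1$.
\item A double edge meeting $\al$ has $k$ as one of its endpoints $v_i$, with both half-edges at $k$ used. The count is then $\sum_{i\neq k}\frac{(d_k)_2}{2}(d_i)_2\cdot$ (accounting for the ordered side), which is at most $\sum_i (d_i)_2$ because $(d_k)_2=2$.
\end{itemize}

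The main obstacle is bookkeeping rather than any single hard step. The risk is getting the orientation factors (ordered versus unordered pairs) right consistently with the conventions in the definition of $\Ga_{12}$ and $\Ga_{22}$, and making sure that the overlap cases in (b) and (d) add up to exactly the stated constants.
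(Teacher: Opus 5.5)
Your enumeration matches the paper's: choose the vertex set of $\be$, then a factor $2$ for how the two leaves attach to the two half-edges at the degree-2 vertex. Parts (a), (c), (d), (e) and the other cases of (b) go through as you describe, up to the two small corrections at the end.

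The step you leave open is (b) with $j=k=2$. Your three cases add up to $4n_1n_2+2n_2+(n_1)_2$, and the repair you hint at (the overlap between ``one shared leaf'' and ``same centre'') is not where the surplus lies. The extra $2n_2$ comes from the case ``both leaves, different centre''. With your bound $n_1$ for the unshared leaf, removing the centre/leaf overlap saves only $O(n_1)$, which does not cover $2n_2$ when $n_2$ is large compared to $n_1$.

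The fix is the absorption you already used for $j=1,k=2$. Bound the $\be$ having a leaf at one of $\al$'s leaves by $2\cdot n_1\cdot n_2\cdot 2$, letting the second leaf range over all degree-1 vertices (including $\al$'s other leaf) and the centre over all degree-2 vertices (including $\al$'s). This already covers every $\be$ sharing both leaves. The only intersecting $\be$ left are those meeting $\al$ only at its centre, and there are at most $(n_1)_2$ of them. The total is $4n_1n_2+(n_1)_2$, which is the paper's count $2n_1n_2+(n_1)_2/2$ of vertex sets, doubled. Alternatively, keep your split but use $n_1-2$ for the unshared leaf in the first case, since $4(n_1-2)n_2+2n_2\le 4n_1n_2$.

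Two small corrections.
\begin{itemize}
\item For $j=2,k=1$, $\be$ need not contain exactly one leaf of $\al$: the edge joining $\al$'s two leaves lies in $\Ga_{11}\setminus\{\al\}$ and meets $\al$. Your bound $2n_1$ is unaffected, since it counts this $\be$ once the other endpoint may be $\al$'s other leaf. The paper notes this explicitly.
\item In (e), $\frac{(d_k)_2}{2}(d_i)_2=(d_i)_2$ is already the full number of possible double edges on $\{v_k,v_i\}$, whichever of $k,i$ is smaller. No further ``ordered side'' factor should be applied.
\end{itemize}
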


		\subsection{Proof of Proposition \ref{first term in stein coupling bound}}
		\label{stein coupling bound first term}
		
		Note that
		\begin{align*}
			\Var\qty(\e\qty[\sum_{\al\in\Ga_{1j}}I_\al\sum_{\be\in\Ga_{1k}}(I_{\be}-J_{\be\al})\Big|W_1,W_2])\le\Var\qty(\sum_{\al\in\Ga_{1j}}I_\al\sum_{\be\in\Ga_{1k}}(I_{\be}-J_{\be\al})).
		\end{align*} 
		We estimate
		\begin{equation}\label{first term unconditional variance}
			\Var\qty(\sum_{\al\in\Ga_{1j}}I_\al\sum_{\be\in\Ga_{1k}}(I_{\be}-J_{\be\al}))
		\end{equation}
		as follows. We will rely on the following simple observations. The proofs are deferred to Appendix \ref{proofs}.
		\begin{lem}\label{intersecting different isolated trees}
			Suppose that $\al$ and $\be$ are either a possible isolated edge or a possible isolated 2-star, i.e., $\al,\be\in\Ga_1$. If $\al\cap\be\neq\varnothing$ and $\al\neq\be$, then there exists a half-edge $s$ shared by $\al$ and $\be$ which is paired to different half-edges in $\al$ and in $\be$, respectively. In particular, if $\al\cap\be\neq\varnothing$ and $\al\neq\be$, then $I_\al I_\be=0$.
		\end{lem}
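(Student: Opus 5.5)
The approach is to exploit the rigid local structure of possible isolated copies. An element of $\Ga_1$ occupies entire vertices: a possible isolated edge uses both half-edges of two degree-1 vertices. A possible isolated 2-star uses both half-edges of its degree-2 vertex and the single half-edges of its two degree-1 vertices. As noted before the statement, if $\al\in\Ga_1$ and $\be$ share a vertex $v$, then every half-edge of $\be$ at $v$ is also a half-edge of $\al$, and conversely by symmetry. So I will fix a shared vertex $v$ and argue that all of $v$'s half-edges belong to both $\al$ and $\be$. If each of them had the same partner in $\al$ as in $\be$, I will show that this forces $\al=\be$. That is a contradiction, so some half-edge $s$ at $v$ has different partners. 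The final claim then follows at once: $I_\al I_\be=1$ would require $s$ to be paired in $\mathfrak{g}$ with two distinct half-edges, which is impossible. Hence $I_\al I_\be=0$.

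The main step is a propagation argument showing that agreement of partners at one shared vertex spreads through the whole copy. Suppose every half-edge at $v$ has the same partner in $\al$ and in $\be$. I will split into cases according to the degree of $v$.

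If $d_v=1$, the unique half-edge $s$ at $v$ is paired with some $t$ in both copies. The vertex $w$ of $t$ is then shared, so all of $w$'s half-edges lie in both $\al$ and $\be$. If $d_w=1$, then $\al$ and $\be$ each contain the edge $st$. A possible isolated edge consists only of $st$, and a possible isolated 2-star has no edge joining two degree-1 vertices. So both are the isolated edge $st$, and $\al=\be$. If $d_w=2$, both copies are 2-stars with centre $w$ and with the edge $st$. The other half-edge $t'$ at $w$ lies in both. I will then repeat the argument at $w$: either $t'$ has different partners, which gives the desired $s$, or it has the same partner $s'$. In the latter case both 2-stars equal $(st)\cup(t's')$, so $\al=\be$.

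If $d_v=2$, then $v$ is the centre of both copies, which are 2-stars. Agreement of partners of both half-edges at $v$ determines both edges, so again $\al=\be$.

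In every case, failure to find such an $s$ yields $\al=\be$. This contradicts the hypothesis $\al\neq\be$.

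I expect the only delicate point to be bookkeeping. A 2-star must be distinguished by which half-edge at the centre is paired with which leaf; that is exactly what the partner comparison detects, and it is consistent with the convention that $(st)\cup(uv)$ and $(sv)\cup(tu)$ are distinct. I also need to note that degree-0 vertices never occur in $\Ga_1$, and that no element of $\Ga_1$ contains a self-loop, so partners always lie at other vertices.
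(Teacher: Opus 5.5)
Your proof is correct and follows essentially the same route as the paper: assume every shared half-edge has the same partner in $\al$ and $\be$, handle a shared degree-2 vertex directly, and reduce a shared degree-1 vertex to that case by propagating to the shared centre, concluding $\al=\be$. The only difference is that you also treat the mixed edge/2-star cases explicitly (via $d_w=1$ versus $d_w=2$), which the paper dismisses as easier.
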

		\begin{lem}\label{disjoint multigraphs non creation}
			Suppose that $\al,\be\in\Ga$ and let $H$ be the type of $\al$. If $\al$ and $\be$ share no half-edge, then for every $g\in\mG_\al\setminus\mG_\be$ and $b=\qty(b_\ell)_{\ell=1}^{e(H)}\in\prod_{\ell=1}^{e(H)}\qty[N-2(e(H)-\ell)-1]$, $f_\al(g,b)\not\supset\be$. In particular, if $\al$ and $\be$ share no half-edge and $I_\be=0$, then $J_{\be\al}=0$.
		\end{lem}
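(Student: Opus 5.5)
The statement to prove is Lemma \ref{disjoint multigraphs non creation}. My plan is to prove the deterministic claim by following the switching chain $g_0=g,g_1,\dots,g_{e(H)}=f_\al(g,b)$ one step at a time. The probabilistic consequence then follows at once. If $I_\al=0$, then $\mathfrak{g}_\al=\mathfrak{g}\not\supset\be$. If $I_\al=1$ and $I_\be=0$, then $\mathfrak{g}\in\mG_\al\setminus\mG_\be$, and the deterministic claim gives $J_{\be\al}=0$.

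The key invariant is: if $g_{\ell-1}\not\supset\be$, then $g_\ell\not\supset\be$. Since $g_0=g\notin\mG_\be$, induction on $\ell$ then gives the claim.

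Fix $\ell$ and write $a=s_{\ell,1}\wedge s_{\ell,2}$ and $a'=s_{\ell,1}\vee s_{\ell,2}$.
\begin{itemize}
\item If $t_{\ell,1}=a'$, then $g_\ell=g_{\ell-1}$ and there is nothing to prove.
\item Otherwise, $g_\ell$ is obtained from $g_{\ell-1}$ by removing the pairs $\{a,a'\}$ and $\{t_{\ell,1},t_{\ell,2}\}$ and adding the pairs $\{a,t_{\ell,1}\}$ and $\{a',t_{\ell,2}\}$.
\end{itemize}
Every pair of $g_\ell$ other than these two new pairs is already a pair of $g_{\ell-1}$. Each of the two new pairs contains a half-edge of $\al$, namely $a$ or $a'$. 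Since $\al$ and $\be$ share no half-edge, neither new pair can be one of the $e(\be)$ pairs specifying $\be$.

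Hence, if $g_\ell\supset\be$, every pair of $\be$ is a pair of $g_\ell$ that is not newly created, so it is also a pair of $g_{\ell-1}$. That gives $g_{\ell-1}\supset\be$, contradicting the hypothesis. This proves the invariant.

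I do not expect a real obstacle here. The only point needing care is that each switching creates only pairs containing a half-edge of $\al$. That holds because the switched pair at step $\ell$ is always the original pair $s_{\ell,1}s_{\ell,2}$ of $\al$, which is still present in $g_{\ell-1}\in\mG_{\al,\ell-1}$. That membership is guaranteed by the construction of $f_{\al,\ell}$ on $\mG_{\al,\ell-1}$.
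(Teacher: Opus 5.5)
Your proposal is correct and follows essentially the same route as the paper: induction along the switching chain $g_0,\dots,g_{e(H)}$, using that the only pairs created at step $\ell$ contain $s_{\ell,1}\wedge s_{\ell,2}$ or $s_{\ell,1}\vee s_{\ell,2}$ and so cannot be pairs of $\be$. The consequence for $J_{\be\al}$ is then derived by the same split on $I_\al$ as in the paper.
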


		\medskip
		Let $H_j$ and $H_k$ denote the types of $\Ga_{1j}$ and $\Ga_{1k}$, respectively. First, using the Cauchy--Schwarz inequality for $\R^2$ or $\R^3$ inside the expectation yields
		\begin{align}
			&\Var\qty(\sum_{\al\in\Ga_{1j}}I_\al\sum_{\be\in\Ga_{1k}}(I_{\be}-J_{\be\al}))\notag \\
			\le&\qty(2+\indi_{j=k})\left[\indi_{j=k}\Var\qty(\sum_{\al\in\Ga_{1j}}I_\al(1-J_{\al\al}))+\Var\qty(\sum_{\al\in\Ga_{1j}}\sum_{\substack{\be\in\Ga_{1k} \\ \be\cap\al=\varnothing }}I_\al I_\be(1-J_{\be\al})) \right. \notag  \\
			&\qquad\qquad\qquad\qquad\qquad\left.+\Var\qty(\sum_{\al\in\Ga_{1j}}\sum_{\substack{\be\in\Ga_{1k}\setminus\{\al\}\\\be\cap\al\neq\varnothing}}I_\al J_{\be\al})\right]. 
			\label{variance decomposition}
		\end{align}
		
		\subsubsection{The first variance in \eqref{variance decomposition}} Write
		\begin{equation}\label{first variance}
			\Var\qty(\sum_{\al\in\Ga_{1j}}I_\al(1-J_{\al\al}))=\sum_{\al,\al'\in\Ga_{1j}}\e\qty[I_\al I_{\al'}(1-J_{\al\al})(1-J_{\al'\al'})]-\qty(\sum_{\al\in\Ga_{1j}}p_\al(1-p_\al))^2.
		\end{equation}
		The following observation, which is almost obvious from the definition of $f_\al$, is useful.
		\begin{lem}\label{destroyed al al}
			Suppose that $\al\in\Ga$ and let $H$ be the type of $\al$.  Let $s_{\ell,1}s_{\ell,2}$, $1\le\ell\le e(H)$ denote the $e(H)$ edges of $\al$ arranged in ascending order such that $s_{1,1}\wedge s_{1,2}<\cdots<s_{e(H),1}\wedge s_{e(H),2}$. For each $1\le\ell\le e(H)$, let $r_{\al,\ell}\in[N-2(e(H)-\ell)-1]$ denote the rank (in ascending order) of $s_{\ell,1}\vee s_{\ell,2}$ among
			\begin{align*}
				\{1,\dots,N \}\setminus\qty(\{s_{\ell,1}\wedge s_{\ell,2} \}\uplus\biguplus_{m=\ell+1}^{e(H)}\{s_{m,1},s_{m,2} \}).
			\end{align*}
			If $g\in\mG_\al$, then $f_{\al}\qty(g,r_{\al,1},\dots,r_{\al,e(H)})=g\supset\al$.
		\end{lem}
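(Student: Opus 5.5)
The plan is to track the configuration through the $e(H)$ successive switchings $g_0=g,\ g_1=f_{\al,1}(g_0,b_1),\dots,g_{e(H)}=f_{\al,e(H)}(g_{e(H)-1},b_{e(H)})$ in the definition of $f_\al$. We show by induction on $\ell$ that, when $b_m=r_{\al,m}$ for all $m$, every intermediate configuration equals $g$, that is, $g_\ell=g$ for $0\le\ell\le e(H)$. The conclusion $f_\al(g,r_{\al,1},\dots,r_{\al,e(H)})=g_{e(H)}=g\supset\al$ then follows, since $g\in\mG_\al$ by assumption.

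For the inductive step, suppose $g_{\ell-1}=g$. Then $g_{\ell-1}\in\mG_\al\subset\mG_{\al,\ell-1}$, so $f_{\al,\ell}(g_{\ell-1},\cdot)$ is defined. By the definition of $r_{\al,\ell}$, the half-edge $t_{\ell,1}$ selected by the index $b_\ell=r_{\al,\ell}$ from the set \eqref{f al ell choice set} is exactly $s_{\ell,1}\vee s_{\ell,2}$. This is well defined: $s_{\ell,1}\vee s_{\ell,2}$ belongs to that set because it differs from $s_{\ell,1}\wedge s_{\ell,2}$ and, the half-edges of $\al$ being distinct, it is not among $s_{m,1},s_{m,2}$ for $m>\ell$. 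Because $g\supset s_{\ell,1}s_{\ell,2}$, the partner of $t_{\ell,1}$ in $g$ is $t_{\ell,2}=s_{\ell,1}\wedge s_{\ell,2}$. In this case the construction gives $f_{\al,\ell}(g,b_\ell)=g$; explicitly, the pairs removed and the pairs added are the same, namely $\{s_{\ell,1},s_{\ell,2}\}$ in both cases. Hence $g_\ell=g$, which completes the induction.

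There is no real obstacle here. The only point needing care is to confirm that $r_{\al,\ell}$ lies in $[N-2(e(H)-\ell)-1]$ and that the choice set is unaffected by the earlier (trivial) switchings. The latter holds because the choice set depends only on $\al$ and $\ell$, not on the current configuration.
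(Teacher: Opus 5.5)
Your proof is correct and is the argument the paper has in mind. The paper gives no separate proof and calls the lemma almost obvious from the definition of $f_\al$: when $t_{\ell,1}=s_{\ell,1}\vee s_{\ell,2}$, the map $f_{\al,\ell}$ keeps the edge $s_{\ell,1}s_{\ell,2}$ and returns $g$ unchanged, and your induction on $\ell$ showing $g_\ell=g$ spells this out stage by stage.
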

		For $\al,\al'\in\Ga_{1j}$ with $\al\cap\al'=\varnothing$, let $\qty(r_{\al,\ell})_{\ell=1}^{e(H_j)},\qty(r_{\al',\ell})_{\ell=1}^{e(H_j)}\in\prod_{\ell=1}^{e(H_j)}[N-2(e(H_j)-\ell)-1]$ be as in Lemma \ref{destroyed al al}. By the independence, we have
		\begin{align*}
			&\p\qty(I_\al=I_{\al'}=1,J_{\al\al}=J_{\al'\al'}=0)\\
			\le&\p\qty(I_\al=I_{\al'}=1,B_\al\neq (r_{\al,\ell})_{\ell=1}^{e(H_j)},B_{\al'}\neq \qty(r_{\al',\ell})_{\ell=1}^{e(H_j)})\\
			=&\p\qty(I_\al=I_{\al'}=1)\qty(1-\p\qty(B_\al= (r_{\al,\ell})_{\ell=1}^{e(H_j)}))\qty(1-\p\qty(B_{\al'}= (r_{\al',\ell})_{\ell=1}^{e(H_j)}))\\
			=&\frac{1}{((N-1))_{2e(H_j)}}\qty(1-p_\al)^2.
		\end{align*}
		By Lemma \ref{intersecting different isolated trees} and decomposing the sum into two parts, we upper bound \eqref{first variance} as
		\begin{align*}
			\Var\qty(\sum_{\al\in\Ga_{1j}}I_\al(1-J_{\al\al}))\le&\sum_{\al\in\Ga_{1j}}\Big(\e\qty[I_\al(1-J_{\al\al})]-p_\al^2(1-p_\al)^2\Big)\\
			&\quad+\sum_{\substack{\al,\al'\in\Ga_{1j}\\\al\cap\al'=\varnothing}}\Big(\e\qty[I_\al I_{\al'}(1-J_{\al\al})(1-J_{\al'\al'})]-p_\al p_{\al'}(1-p_\al)(1-p_{\al'})\Big)\\
			=&\sum_{\al\in\Ga_{1j}}p_\al(1-p_\al)\Big(1-p_\al(1-p_\al)\Big)\\
			&\quad+\sum_{\substack{\al,\al'\in\Ga_{1j}\\\al\cap\al'=\varnothing}}\Big(\p\qty(I_\al=I_{\al'}=1,J_{\al\al}=J_{\al'\al'}=0)-p_\al^2(1-p_\al)^2\Big)\\
			\le&\sum_{\al\in\Ga_{1j}}\frac{1}{((N-1))_{e(H_j)}}\qty(1-p_\al)\Big(1-p_\al(1-p_\al)\Big)\\
			&\qquad+\sum_{\substack{\al,\al'\in\Ga_{1j}\\\al\cap\al'=\varnothing}}\qty(\frac{1}{((N-1))_{2e(H_j)}}-p_\al^2)\qty(1-p_\al)^2\\
			=&\sum_{\al\in\Ga_{1j}}\frac{1}{((N-1))_{e(H_j)}}\qty(1-p_\al)\Big(1-p_\al(1-p_\al)\Big)\\
			&\qquad+\sum_{\substack{\al,\al'\in\Ga_{1j}\\\al\cap\al'=\varnothing}}\frac{1}{((N-1))_{2e(H_j)}}\qty(1-\frac{((N-1))_{2e(H_j)}}{[((N-1))_{e(H_j)}]^2})\qty(1-p_\al)^2.
		\end{align*}
		By Bernoulli's inequality,\footnote{$(1+x_1)\cdots(1+x_r)\ge 1+x_1+\dots+x_r$ for all real numbers $x_1,\dots,x_r>-1$ with the same sign.} we have
		\begin{align*}
			1-\frac{((N-1))_{2e(H_j)}}{[((N-1))_{e(H_j)}]^2}=&1-\frac{((N-2e(H_j)-1))_{e(H_j)}}{((N-1))_{e(H_j)}}\\
			\le& \sum_{\ell=1}^{e(H_j)}\qty(1-\frac{N-2(e(H_j)+\ell-1)-1}{N-2(\ell-1)-1})\\
			=&\sum_{\ell=1}^{e(H_j)}\frac{2e(H_j)}{N-2(\ell-1)-1}.
		\end{align*}
		Thus, we upper bound the first variance in \eqref{variance decomposition} as
		\begin{align}
			\Var\qty(\sum_{\al\in\Ga_{1j}}I_\al(1-J_{\al\al}))\le&\sum_{\al\in\Ga_{1j}}\frac{1}{((N-1))_{e(H_j)}}+\sum_{\substack{\al,\al'\in\Ga_{1j}\\\al\cap\al'=\varnothing}}\frac{1}{((N-1))_{2e(H_j)}}\sum_{\ell=1}^{e(H_j)}\frac{2e(H_j)}{N-2(\ell-1)-1}\notag\\
			\le&\begin{cases}
				\frac{(n_1)_2}{2(N-1)}+\frac{\qty((n_1)_2)^2}{2((N-1))_2(N-1)}&(j=1)\\
				\frac{(n_1)_2n_2}{((N-1))_2}+\frac{8\qty((n_1)_2n_2)^2}{((N-1))_4(N-3)}&(j=2)
			\end{cases}.\label{first variance bound}
		\end{align}

		\medskip
		\subsubsection{The second variance in \eqref{variance decomposition}} 
				\label{subsec second variance}
		
		Write
		\begin{equation}\label{second variance}
			\begin{split}
				&\Var\qty(\sum_{\al\in\Ga_{1j}}\sum_{\substack{\be\in\Ga_{1k}\\ \be\cap\al=\varnothing }}I_\al I_\be(1-J_{\be\al}))\\
				=&\sum_{\al\in\Ga_{1j}}\sum_{\substack{\be\in\Ga_{1k}\\ \be\cap\al=\varnothing }}\sum_{\al'\in\Ga_{1j}}\sum_{\substack{\be'\in\Ga_{1k}\\ \be'\cap\al'=\varnothing }}\e\qty[I_\al I_\be I_{\al'} I_{\be'}(1-J_{\be\al})(1-J_{\be'\al'})]\\
				&\quad\quad\quad\quad\quad\quad\quad\quad\quad\quad\quad\quad\quad\quad\quad\quad-\qty(\sum_{\al\in\Ga_{1j}}\sum_{\substack{\be\in\Ga_{1k}\\ \be\cap\al=\varnothing }}\e\qty[I_\al I_\be(1-J_{\be\al})])^2.
			\end{split}
		\end{equation}
		The following observation is useful. The formal proof will be given in Appendix \ref{proofs}.
		\begin{lem}\label{destroyed al be disjoint}
			Suppose that $\al,\be\in\Ga$ and that $\al$ and $\be$ share no half-edge. Let $H$ and $K$ be the types of $\al$ and $\be$, respectively. Let $s_{\ell,1}s_{\ell,2}$, $1\le\ell\le e(H)$ denote the $e(H)$ edges of $\al$ arranged in ascending order such that $s_{1,1}\wedge s_{1,2}<\cdots<s_{e(H),1}\wedge s_{e(H),2}$. For each $1\le\ell\le e(H)$, let $R_{\al,\be,\ell}\subset[N-2(e(H)-\ell)-1]$ with $|R_{\al,\be,\ell}|=2e(K)$ denote the set of the ranks (in ascending order) of the $2e(K)$ half-edges of $\be$ among
			\begin{align*}
				\{1,\dots,N \}\setminus\qty(\{s_{\ell,1}\wedge s_{\ell,2} \}\uplus\biguplus_{m=\ell+1}^{e(H)}\{s_{m,1},s_{m,2} \}).
			\end{align*}
			Let $g\in\mG_\al\cap\mG_\be$ and $b=\qty(b_{\ell})_{\ell=1}^{e(H)}\in\prod_{\ell=1}^{e(H)}[N-2(e(H)-\ell)-1]$. If $b_{\ell}\notin R_{\al,\be,\ell}$ for all $1\le\ell\le e(H)$, then $f_{\al}\qty(g,b)\supset\be$. Conversely, if $b_{\ell}\in R_{\al,\be,\ell}$ for some $1\le\ell\le e(H)$, then $f_{\al}\qty(g,b)\not\supset\be$.
		\end{lem}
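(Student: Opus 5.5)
The plan is to trace the effect of each elementary switching $f_{\al,\ell}$ on the edges of $\be$, proceeding inductively along $\ell=1,\dots,e(H)$. Write $g_0=g$ and $g_\ell=f_{\al,\ell}(g_{\ell-1},b_\ell)$. The key invariant I would establish is the following: if $b_1,\dots,b_\ell$ avoid the respective rank sets $R_{\al,\be,1},\dots,R_{\al,\be,\ell}$, then every edge of $\be$ is still present in $g_\ell$.

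For the forward direction, suppose $\be\subset g_{\ell-1}$. Step $\ell$ removes at most the two pairs $\{s_{\ell,1},s_{\ell,2}\}$ and $\{t_{\ell,1},t_{\ell,2}\}$. It inserts $\{s_{\ell,1}\wedge s_{\ell,2},t_{\ell,1}\}$ and $\{s_{\ell,1}\vee s_{\ell,2},t_{\ell,2}\}$. The pair $\{s_{\ell,1},s_{\ell,2}\}$ is not an edge of $\be$, because $\al$ and $\be$ share no half-edge. The pair $\{t_{\ell,1},t_{\ell,2}\}$ is an edge of $\be$ exactly when $t_{\ell,1}$ is a half-edge of $\be$, since $\be\subset g_{\ell-1}$ and $t_{\ell,2}$ is the partner of $t_{\ell,1}$. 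The condition $b_\ell\notin R_{\al,\be,\ell}$ says precisely that $t_{\ell,1}$ is not one of $\be$'s half-edges. Here one must check that all $2e(K)$ half-edges of $\be$ lie in the choice set \eqref{f al ell choice set}, which is true because they are disjoint from $\al$'s half-edges. This also gives $|R_{\al,\be,\ell}|=2e(K)$. Consequently no edge of $\be$ is broken, and $\be\subset g_\ell$. Induction from $g_0=g\in\mG_\be$ then gives $f_\al(g,b)\supset\be$.

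For the converse, let $\ell$ be the first index with $b_\ell\in R_{\al,\be,\ell}$. By the forward argument, $\be\subset g_{\ell-1}$, so $t_{\ell,1}$ is a half-edge of $\be$ and its partner $t_{\ell,2}$ is its $\be$-partner. At step $\ell$ the half-edge $t_{\ell,1}$ is re-paired with $s_{\ell,1}\wedge s_{\ell,2}$, which is a half-edge of $\al$. Hence after step $\ell$ the half-edge $t_{\ell,1}$ is matched to a half-edge of $\al$.

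The main obstacle is showing that this destruction persists through the later steps $m>\ell$. I would prove by induction on $m\ge\ell$ that in $g_m$ the pair $\{t_{\ell,1},t_{\ell,2}\}$ is absent, and more precisely that at least one of $t_{\ell,1},t_{\ell,2}$ is paired with a half-edge of $\al$ or is otherwise not matched to its $\be$-partner.

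First, at step $m$ the pairs $\{s_{m,1},s_{m,2}\}$ and $\{t_{m,1},t_{m,2}\}$ are broken, and the new pairs each contain a half-edge of $\al$, namely $s_{m,1}\wedge s_{m,2}$ or $s_{m,1}\vee s_{m,2}$. Since $\be$ shares no half-edge with $\al$, neither new pair can equal the $\be$-edge $\{t_{\ell,1},t_{\ell,2}\}$. Second, the pair $\{t_{\ell,1},t_{\ell,2}\}$ can reappear only if step $m$ forms it, and step $m$ only forms pairs containing $\al$-half-edges. So once this edge of $\be$ is absent, it remains absent, and therefore $f_\al(g,b)\not\supset\be$.

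I would write this as a short lemma: any pair of half-edges that avoids $\al$ and is absent from $g_{m-1}$ is also absent from $g_m$. This lemma settles both the persistence in the converse and, dually, the preservation invariant used in the forward direction.
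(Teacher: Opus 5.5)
Your proposal is correct and follows essentially the same argument as the paper. In the forward direction you show by induction that $\be$ survives each step when $b_\ell\notin R_{\al,\be,\ell}$; in the converse you show that once a $\be$-edge is broken it is never re-formed, because every newly created pair contains a half-edge of $\al$. The only difference is that you take the first bad index and use $\be\subset g_{\ell-1}$, which the paper does not need: for any $\ell_0$ with $b_{\ell_0}\in R_{\al,\be,\ell_0}$, the $\be$-half-edge $t_{\ell_0,1}$ is re-paired to $s_{\ell_0,1}\wedge s_{\ell_0,2}$, so the $\be$-edge through it is already absent.
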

		Let $\al,\al'\in\Ga_{1j}$ and $\be,\be'\in\Ga_{1k}$ be such that $\al\cap\be=\al'\cap\be'=\varnothing$. For each of $1\le\ell\le e(H_j)$, let $R_{\al,\be,\ell},R_{\al',\be',\ell}\subset[N-2(e(H_j)-\ell)-1]$ be as in Lemma \ref{destroyed al be disjoint}. By the independence, we have
		\begin{align}
			\p\qty(I_\al=I_\be=1,J_{\be\al}=0)=&\p\qty(\qty{I_\al=I_\be=1}\cap\bigcup_{\ell=1}^{e(H_j)}\qty{ B_{\al,\ell}\in R_{\al,\be,\ell}})\notag\\
			=&\p\qty(I_\al=I_\be=1)\qty(1-\p\qty(\bigcap_{\ell=1}^{e(H_j)}\qty{B_{\al,\ell}\notin R_{\al,\be,\ell}}))\notag\\
			=&\frac{1}{((N-1))_{e(H_j)+e(H_k)}}\qty(1-\prod_{\ell=1}^{e(H_j)}\qty(1-\frac{2e(H_k)}{N-2(e(H_j)-\ell)-1})).
			\label{prob al be exist be destroyed}
		\end{align}
		If $\al\cap\be'=\be\cap\be'=\varnothing$, then $R_{\al,\be,\ell}\cap R_{\al,\be',\ell}=\varnothing$. By the independence, we have
		\begin{align*}
			&\p\qty(I_\al=I_\be=I_{\be'}=1,J_{\be\al}=J_{\be'\al}=0)\\
			=&\p\qty(\qty{I_\al=I_\be=I_{\be'}=1}\cap\bigcup_{\ell=1}^{e(H_j)}\qty{ B_{\al,\ell}\in R_{\al,\be,\ell}}\cap \bigcup_{\ell=1}^{e(H_j)}\qty{ B_{\al,\ell}\in R_{\al,\be',\ell}})\\
			=&\p\qty(I_\al=I_\be=I_{\be'}=1)\\
			&\times\qty(1-\p\qty(\bigcap_{\ell=1}^{e(H_j)}\qty{B_{\al,\ell}\notin R_{\al,\be,\ell}})-\p\qty(\bigcap_{\ell=1}^{e(H_j)}\qty{B_{\al,\ell}\notin R_{\al,\be',\ell}})+\p\qty(\bigcap_{\ell=1}^{e(H_j)}\qty{B_{\al,\ell}\notin R_{\al,\be,\ell}\uplus R_{\al,\be',\ell}}))\\
			=&\frac{1}{((N-1))_{e(H_j)+2e(H_k)}}\qty(1-2\prod_{\ell=1}^{e(H_j)}\qty(1-\frac{2e(H_k)}{N-2(e(H_j)-\ell)-1})+\prod_{\ell=1}^{e(H_j)}\qty(1-\frac{4e(H_k)}{N-2(e(H_j)-\ell)-1})).
		\end{align*}
		If $\al\cap\al'=\al'\cap\be=\varnothing$, by the independence, we have
		\begin{align*}
			&\p\qty(I_\al=I_{\al'}=I_\be=1,J_{\be\al}=J_{\be\al'}=0)\\
			=&\p\qty(\qty{I_\al=I_{\al'}=I_\be=1}\cap\bigcup_{\ell=1}^{e(H_j)}\qty{ B_{\al,\ell}\in R_{\al,\be,\ell}}\cap \bigcup_{\ell=1}^{e(H_j)}\qty{ B_{\al',\ell}\in R_{\al',\be,\ell}})\\
			=&\p\qty(I_\al=I_{\al'}=I_{\be}=1)\qty(1-\p\qty(\bigcap_{\ell=1}^{e(H_j)}\qty{B_{\al,\ell}\notin R_{\al,\be,\ell}}))\qty(1-\p\qty(\bigcap_{\ell=1}^{e(H_j)}\qty{B_{\al',\ell}\notin R_{\al',\be,\ell}}))\\
			=&\frac{1}{((N-1))_{2e(H_j)+e(H_k)}}\qty(1-\prod_{\ell=1}^{e(H_j)}\qty(1-\frac{2e(H_k)}{N-2(e(H_j)-\ell)-1}))^2.
		\end{align*}
		If $\al,\al',\be,\be'$ are all disjoint, by the independence, we have
		\begin{align*}
			&\p\qty(I_\al=I_\be=I_{\al'}=I_{\be'}=1,J_{\be\al}=J_{\be'\al'}=0)\\
			=&\p\qty(\qty{I_\al=I_\be=I_{\al'}=I_{\be'}=1}\cap\bigcup_{\ell=1}^{e(H_j)}\qty{ B_{\al,\ell}\in R_{\al,\be,\ell}}\cap \bigcup_{\ell=1}^{e(H_j)}\qty{ B_{\al',\ell}\in R_{\al',\be',\ell}})\\
			=&\p\qty(I_\al=I_\be=I_{\al'}=I_{\be'}=1)\qty(1-\p\qty(\bigcap_{\ell=1}^{e(H_j)}\qty{B_{\al,\ell}\notin R_{\al,\be,\ell}}))\qty(1-\p\qty(\bigcap_{\ell=1}^{e(H_j)}\qty{B_{\al',\ell}\notin R_{\al',\be',\ell}}))\\
			=&\frac{1}{((N-1))_{2e(H_j)+2e(H_k)}}\qty(1-\prod_{\ell=1}^{e(H_j)}\qty(1-\frac{2e(H_k)}{N-2(e(H_j)-\ell)-1}))^2.
		\end{align*}
		Furthermore, by Bernoulli's inequality, we have
		\begin{align}\label{Bernoulli inequality}
			1-\prod_{\ell=1}^{e(H_j)}\qty(1-\frac{2e(H_k)}{N-2(e(H_j)-\ell)-1})\le\sum_{\ell=1}^{e(H_j)}\frac{2e(H_k)}{N-2(e(H_j)-\ell)-1}.
		\end{align}
		Combined with, e.g., \cite[p.17, Eq.\ (1.17)]{Stanley12}, it gives
		\begin{align*}
			&1-2\prod_{\ell=1}^{e(H_j)}\qty(1-\frac{2e(H_k)}{N-2(e(H_j)-\ell)-1})+\prod_{\ell=1}^{e(H_j)}\qty(1-\frac{4e(H_k)}{N-2(e(H_j)-\ell)-1})\\
			\le&\sum_{\ell=1}^{e(H_j)}\frac{4e(H_k)}{N-2(e(H_j)-\ell)-1}-1+\prod_{\ell=1}^{e(H_j)}\qty(1-\frac{4e(H_k)}{N-2(e(H_j)-\ell)-1})\\
			=&\sum_{\substack{A\subset[e(H_j)]\\|A|\ge2}}\qty(-4e(H_k))^{|A|}\prod_{\ell\in A}\frac{1}{N-2(e(H_j)-\ell)-1}.
		\end{align*}
		In view of Lemma \ref{intersecting different isolated trees}, together with the restriction $\al\cap\be=\al'\cap\be'=\varnothing$, it suffices to consider the following mutually exclusive seven cases in order to upper bound \eqref{second variance}:
		{\setlength{\leftmargini}{29.5pt}  	
			\begin{enumerate}
					\renewcommand{\labelenumi}{(\roman{enumi})}
				
				\setlength{\labelsep}{7pt}     
				\setlength{\itemsep}{3pt}      
				
				\item $\al=\al'$, $\be=\be'$,  $\al\cap\be=\varnothing$.
				
				\item $\al=\al'$, $\be\cap\be'=\al\cap\be=\al\cap\be'=\varnothing$.
				
				\item $j=k$, $\al\cap\al'=\varnothing$, $\be=\al'$, $\be'=\al$.
				
				\item $j=k$, $\al\cap\al'=\varnothing$, $\be'=\al$, $\be\cap\al'=\al\cap\be=\varnothing$.
				
				\item $j=k$, $\al\cap\al'=\varnothing$, $\be=\al'$, $\be'\cap\al=\al'\cap\be'=\varnothing$.
				
				\item $\al\cap\al'=\varnothing$, $\be=\be'$, $\al\cap\be=\al'\cap\be=\varnothing$.
				
				\item $\al,\al',\be,\be'$ are all disjoint.
				
		\end{enumerate} }
		
		\medskip
		For Case (i), we have
		\begin{align*}
			&\sum_{\al\in\Ga_{1j}}\sum_{\substack{\be\in\Ga_{1k}\\\be\cap\al=\varnothing}}\qty(\e\qty[I_\al I_\be (1-J_{\be\al})]-\qty(\e\qty[I_\al I_\be(1-J_{\be\al})])^2)\\
			=&\sum_{\al\in\Ga_{1j}}\sum_{\substack{\be\in\Ga_{1k}\\\be\cap\al=\varnothing}}\frac{1}{((N-1))_{e(H_j)+e(H_k)}}\qty(1-\prod_{\ell=1}^{e(H_j)}\qty(1-\frac{2e(H_k)}{N-2(e(H_j)-\ell)-1}))\\
			&\quad\quad\quad\quad\times\qty(1-	\p\qty(I_\al=I_\be=1,J_{\be\al}=0))\\
			\le&\sum_{\al\in\Ga_{1j}}\sum_{\substack{\be\in\Ga_{1k}\\\be\cap\al=\varnothing}}\frac{1}{((N-1))_{e(H_j)+e(H_k)}}\sum_{\ell=1}^{e(H_j)}\frac{2e(H_k)}{N-2(e(H_j)-\ell)-1}.
		\end{align*}
		For Case (ii), we have
		\begin{align*}
			&\mathop{\sum_{\al\in\Ga_{1j}}\sum_{\be,\be'\in\Ga_{1k}}}_{\text{$\al,\be,\be'$ are all disjoint}}\e\qty[I_\al I_\be  I_{\be'}(1-J_{\be\al})(1-J_{\be'\al})]\\
			\le&\mathop{\sum_{\al\in\Ga_{1j}}\sum_{\be,\be'\in\Ga_{1k}}}_{\text{$\al,\be,\be'$ are all disjoint}}\frac{1}{((N-1))_{e(H_j)+2e(H_k)}}\sum_{\substack{A\subset[e(H_j)]\\|A|\ge2}}\qty(-4e(H_k))^{|A|}\prod_{\ell\in A}\frac{1}{N-2(e(H_j)-\ell)-1}.
		\end{align*}
		For Case (iii), similarly to Case (i),
		\begin{align*}
			&\sum_{\substack{\al,\al'\in\Ga_{1j}\\ \al\cap\al'=\varnothing}}\qty(\e\qty[I_\al I_\be (1-J_{\be\al})]-\qty(\e\qty[I_\al I_\be(1-J_{\be\al})])^2)\\
			\le&\sum_{\substack{\al,\al'\in\Ga_{1j}\\ \al\cap\al'=\varnothing}}\frac{1}{((N-1))_{2e(H_j)}}\sum_{\ell=1}^{e(H_j)}\frac{2e(H_j)}{N-2(e(H_j)-\ell)-1}.
		\end{align*}
		For Case (vi), we have
		\begin{align*}
			&\mathop{\sum_{\al,\al'\in\Ga_{1j}}\sum_{\be\in\Ga_{1k}}}_{\text{$\al,\al',\be$ are all disjoint}}\e\qty[I_\al I_{\al'}  I_{\be}(1-J_{\be\al})(1-J_{\be\al'})]\\
			=&\mathop{\sum_{\al,\al'\in\Ga_{1j}}\sum_{\be\in\Ga_{1k}}}_{\text{$\al,\al',\be$ are all disjoint}}\frac{1}{((N-1))_{2e(H_j)+e(H_k)}}\qty(1-\prod_{\ell=1}^{e(H_j)}\qty(1-\frac{2e(H_k)}{N-2(e(H_j)-\ell)-1}))^2\\
			\le&\mathop{\sum_{\al,\al'\in\Ga_{1j}}\sum_{\be\in\Ga_{1k}}}_{\text{$\al,\al',\be$ are all disjoint}}\frac{1}{((N-1))_{2e(H_j)+e(H_k)}}\qty(\sum_{\ell=1}^{e(H_j)}\frac{2e(H_k)}{N-2(e(H_j)-\ell)-1})^2.
		\end{align*}
		For Case (iv), similarly to Case (vi),
		\begin{align*}
			&\sum_{\substack{\al,\al',\be\in\Ga_{1j}\\ \text{$\al,\al',\be$ are all disjoint} }}\e\qty[I_\al I_{\al'}  I_{\be}(1-J_{\be\al})(1-J_{\be\al'})]\\
			=&\sum_{\substack{\al,\al',\be\in\Ga_{1j}\\ \text{$\al,\al',\be$ are all disjoint} }}\frac{1}{((N-1))_{3e(H_j)}}\qty(1-\prod_{\ell=1}^{e(H_j)}\qty(1-\frac{2e(H_j)}{N-2(e(H_j)-\ell)-1}))^2\\
			\le&\sum_{\substack{\al,\al',\be\in\Ga_{1j}\\ \text{$\al,\al',\be$ are all disjoint} }}\frac{1}{((N-1))_{3e(H_j)}}\qty(\sum_{\ell=1}^{e(H_j)}\frac{2e(H_j)}{N-2(e(H_j)-\ell)-1})^2.
		\end{align*}
		For Case (v), similarly to Case (vi) and Case (iv),
		\begin{align*}
			&\sum_{\substack{\al,\al',\be'\in\Ga_{1j}\\ \text{$\al,\al',\be'$ are all disjoint} }}\e\qty[I_\al I_{\al'}  I_{\be'}(1-J_{\be'\al})(1-J_{\be'\al'})]\\
			=&\sum_{\substack{\al,\al',\be'\in\Ga_{1j}\\ \text{$\al,\al',\be'$ are all disjoint} }}\frac{1}{((N-1))_{3e(H_j)}}\qty(1-\prod_{\ell=1}^{e(H_j)}\qty(1-\frac{2e(H_j)}{N-2(e(H_j)-\ell)-1}))^2\\
			\le&\sum_{\substack{\al,\al',\be'\in\Ga_{1j}\\ \text{$\al,\al',\be'$ are all disjoint} }}\frac{1}{((N-1))_{3e(H_j)}}\qty(\sum_{\ell=1}^{e(H_j)}\frac{2e(H_j)}{N-2(e(H_j)-\ell)-1})^2.
		\end{align*}
		Finally, for Case (vii), we have
		\begin{align*}
			&\mathop{\sum_{\al,\al'\in\Ga_{1j}}\sum_{\be,\be'\in\Ga_{1k}}}_{\text{$\al,\al',\be,\be'$ are all disjoint}}\Big(\e\qty[I_\al I_\be I_{\al'} I_{\be'}(1-J_{\be\al})(1-J_{\be'\al'})]-\e\qty[I_\al I_\be(1-J_{\be\al})]\e\qty[ I_{\al'} I_{\be'}(1-J_{\be'\al'})]\Big)\\
			=&\mathop{\sum_{\al,\al'\in\Ga_{1j}}\sum_{\be,\be'\in\Ga_{1k}}}_{\text{$\al,\al',\be,\be'$ are all disjoint}}\qty(\frac{1}{((N-1))_{2e(H_j)+2e(H_k)}}-\frac{1}{[((N-1))_{e(H_j)+e(H_k)}]^2})\\
			&\quad\quad\quad\quad\times\qty(1-\prod_{\ell=1}^{e(H_j)}\qty(1-\frac{2e(H_k)}{N-2(e(H_j)-\ell)-1}))^2\\
			\le&\mathop{\sum_{\al,\al'\in\Ga_{1j}}\sum_{\be,\be'\in\Ga_{1k}}}_{\text{$\al,\al',\be,\be'$ are all disjoint}}\frac{1}{((N-1))_{2e(H_j)+2e(H_k)}}\qty(1-\frac{((N-1))_{2e(H_j)+2e(H_k)}}{[((N-1))_{e(H_j)+e(H_k)}]^2})\qty(\sum_{\ell=1}^{e(H_j)}\frac{2e(H_k)}{N-2(e(H_j)-\ell)-1})^2\\
			\le&\mathop{\sum_{\al,\al'\in\Ga_{1j}}\sum_{\be,\be'\in\Ga_{1k}}}_{\text{$\al,\al',\be,\be'$ are all disjoint}}\frac{1}{((N-1))_{2e(H_j)+2e(H_k)}}\qty(\sum_{\ell=1}^{e(H_j)+e(H_k)}\frac{2\qty(e(H_j)+e(H_k))}{N-2(\ell-1)-1})\\
			&\quad\quad\quad\quad\times\qty(\sum_{\ell=1}^{e(H_j)}\frac{2e(H_k)}{N-2(e(H_j)-\ell)-1})^2.
		\end{align*}
		If $j=k$, \eqref{second variance} is bounded above by the sum over all of Cases (i)--(vii). If $j\neq k$, it is bounded by the sum over Cases (i), (ii), (vi) and (vii). Thus, we upper bound the second variance in \eqref{variance decomposition} as
		\begin{align}
			&\Var\qty(\sum_{\al\in\Ga_{1j}}\sum_{\substack{\be\in\Ga_{1k}\\ \be\cap\al=\varnothing }}I_\al I_\be(1-J_{\be\al}))\notag\\
			\le&\begin{cases}
				\frac{\qty((n_1)_2)^2}{((N-1))_2(N-1)}+\frac{\frac{3}{2}\qty((n_1)_2)^3}{((N-1))_3(N-1)^2}+\frac{2\qty((n_1)_2)^4}{((N-1))_4(N-3)(N-1)^2}&(j=k=1)\\
			\frac{2\qty((n_1)_2)^2n_2}{((N-1))_3(N-1)}+\frac{4\qty((n_1)_2)^3n_2}{((N-1))_4(N-1)^2}+\frac{72\qty((n_1)_2)^4{n_2}^2}{((N-1))_6(N-5)(N-1)^2}	&(j=1,k=2)\\
			\frac{2\qty((n_1)_2)^2n_2}{((N-1))_3(N-3)}+\frac{4\qty((n_1)_2)^3n_2}{((N-1))_4((N-1))_2}+\frac{8\qty((n_1)_2)^3{n_2}^2}{((N-1))_5(N-3)^2}+\frac{72\qty((n_1)_2)^4{n_2}^2}{((N-1))_6(N-5)(N-3)^2}	&(j=2,k=1)\\
		\frac{16\qty((n_1)_2n_2)^2}{((N-1))_4(N-3)}+\frac{64\qty((n_1)_2n_2)^3}{((N-1))_6((N-1))_2}+\frac{192\qty((n_1)_2n_2)^3}{((N-1))_6(N-3)^2}+\frac{2048\qty((n_1)_2n_2)^4}{((N-1))_8(N-7)(N-3)^2}	&(j=k=2)
			\end{cases}.\label{second variance bound}
		\end{align}

		\subsubsection{The third variance in \eqref{variance decomposition}}
		\label{subsec third variance}
		
		 By Lemma \ref{intersecting different isolated trees} and decomposing the sum into two parts, we upper bound
		\begin{align}
			&\Var\qty(\sum_{\al\in\Ga_{1j}}\sum_{\substack{\be\in\Ga_{1k}\setminus\{\al\}\\ \be\cap\al\neq\varnothing }}I_\al J_{\be\al}) \notag  \\
			=&\sum_{\al\in\Ga_{1j}}\sum_{\substack{\be\in\Ga_{1k}\setminus\{\al\}\\ \be\cap\al\neq\varnothing }}\sum_{\al'\in\Ga_{1j}}\sum_{\substack{\be'\in\Ga_{1k}\setminus\{\al'\}\\ \be'\cap\al'\neq\varnothing }}\e\qty[I_\al I_{\al'} J_{\be\al}J_{\be'\al'}]-\qty(\sum_{\al\in\Ga_{1j}}\sum_{\substack{\be\in\Ga_{1k}\setminus\{\al\}\\ \be\cap\al\neq\varnothing}}p_\al p_\be)^2 \notag \\
			\le&\sum_{\al\in\Ga_{1j}}\sum_{\substack{\be,\be'\in\Ga_{1k}\setminus\{\al\}\\ \be\cap\al\neq\varnothing,\be'\cap\al\neq\varnothing }}\e\qty[I_\al J_{\be\al}J_{\be'\al} ]-\sum_{\al\in\Ga_{1j}}\sum_{\substack{\be,\be'\in\Ga_{1k}\setminus\{\al\}\\ \be\cap\al\neq\varnothing,\be'\cap\al\neq\varnothing }}p_\al p_\be p_{\be'} \notag \\
			&+\sum_{\substack{\al,\al'\in\Ga_{1j}\\ \al\cap\al'=\varnothing}}\sum_{\substack{\be\in\Ga_{1k}\setminus\{\al\}\\ \be\cap\al\neq\varnothing }}\sum_{\substack{\be'\in\Ga_{1k}\setminus\{\al'\}\\ \be'\cap\al'\neq\varnothing }}\e\qty[I_\al I_{\al'} J_{\be\al}J_{\be'\al'}]-\sum_{\substack{\al,\al'\in\Ga_{1j}\\ \al\cap\al'=\varnothing}}\sum_{\substack{\be\in\Ga_{1k}\setminus\{\al\}\\ \be\cap\al\neq\varnothing }}\sum_{\substack{\be'\in\Ga_{1k}\setminus\{\al'\}\\ \be'\cap\al'\neq\varnothing }}p_\al p_{\al'}p_\be p_{\be'}.   \label{third variance upper bound}
		\end{align}
		Note that $\e\qty[I_\al J_{\be\al}J_{\be'\al} ]=p_\al\e\qty[J_{\be\al}J_{\be'\al}|I_\al=1 ]=p_\al\e\qty[I_{\be}I_{\be'}]$. Thus, by Lemma \ref{intersecting different isolated trees}, we upper bound the first line of \eqref{third variance upper bound} as
		\begin{align}
			&\sum_{\al\in\Ga_{1j}}\sum_{\substack{\be,\be'\in\Ga_{1k}\setminus\{\al\}\\ \be\cap\al\neq\varnothing,\be'\cap\al\neq\varnothing }}\e\qty[I_\al J_{\be\al}J_{\be'\al} ]-\sum_{\al\in\Ga_{1j}}\sum_{\substack{\be,\be'\in\Ga_{1k}\setminus\{\al\}\\ \be\cap\al\neq\varnothing,\be'\cap\al\neq\varnothing }}p_\al p_\be p_{\be'}\label{third variance first line}\\
			\le&\sum_{\al\in\Ga_{1j}}\sum_{\substack{\be\in\Ga_{1k}\setminus\{\al\}\\ \be\cap\al\neq\varnothing }}\qty(p_\al\e\qty[I_\be]-p_\al p_\be)+\sum_{\al\in\Ga_{1j}}\sum_{\substack{\be,\be'\in\Ga_{1k}\setminus\{\al\}\\ \be\cap\al\neq\varnothing,\be'\cap\al\neq\varnothing,\be\cap\be'=\varnothing }}\qty(p_\al\e\qty[I_{\be}I_{\be'}]-p_\al p_\be p_{\be'})\notag\\
			=&\sum_{\al\in\Ga_{1j}}\sum_{\substack{\be,\be'\in\Ga_{1k}\setminus\{\al\}\\ \be\cap\al\neq\varnothing,\be'\cap\al\neq\varnothing,\be\cap\be'=\varnothing }}\frac{1}{((N-1))_{e(H_j)}((N-1))_{2e(H_k)}}\qty(1-\frac{((N-1))_{2e(H_k)}}{\qty[((N-1))_{e(H_k)}]^2})\notag\\
			\le&\sum_{\al\in\Ga_{1j}}\sum_{\substack{\be,\be'\in\Ga_{1k}\setminus\{\al\}\\ \be\cap\al\neq\varnothing,\be'\cap\al\neq\varnothing,\be\cap\be'=\varnothing }}\frac{1}{((N-1))_{e(H_j)}((N-1))_{2e(H_k)}}\sum_{\ell=1}^{e(H_k)}\frac{2e(H_k)}{N-2(\ell-1)-1}.\notag
		\end{align}
		By Lemma \ref{cardinality estimates}(b), we have
		\begin{align*}
			\qty|\sum_{\al\in\Ga_{1j}}\sum_{\substack{\be,\be'\in\Ga_{1k}\setminus\{\al\}\\ \be\cap\al\neq\varnothing,\be'\cap\al\neq\varnothing,\be\cap\be'=\varnothing }}|\le\begin{cases}
			2(n_1)_2{n_1}^2	&(j=k=1)\\
			8(n_1)_2{n_1}^2{n_2}^2	&(j=1,k=2)\\
			4(n_1)_2n_2{n_1}^2	&(j=2,k=1)\\
			(n_1)_2n_2\qty(4n_1n_2+(n_1)_2)^2	&(j=k=2)
			\end{cases}.
		\end{align*}
	Thus we obtain upper bounds on \eqref{third variance first line} as follows:
		\begin{align}
			&\frac{4(n_1)_2{n_1}^2}{(N-1)^2((N-1))_2}&&(j=k=1)\label{third variance first line j1k1}\\ &\frac{64(n_1)_2{n_1}^2{n_2}^2}{((N-1))_2((N-1))_4}&&(j=1,k=2)\label{third variance first line j1k2}\\
			&\frac{8(n_1)_2n_2{n_1}^2}{\qty[((N-1))_2]^2(N-1)}&&(j=2,k=1)\label{third variance first line j2k1}\\
			&\frac{8(n_1)_2n_2\qty(4n_1n_2+(n_1)_2)^2}{((N-1))_2((N-1))_4(N-3)}&&(j=k=2).\label{third variance first line j2k2}
		\end{align}

			\medskip
		We now turn to the second line of \eqref{third variance upper bound}. By Lemma \ref{f al ell f al inverse}, for $\al\in\Ga$ of type $H$, the function $f_\al$ in \eqref{coupling construction f al} has the inverse $f_\al^{-1}$. For a perfect matching $g$ of the $N$ half-edges, we write $f_{\al}^{-1}(g)=\qty(\qty(f_\al^{-1}(g))_1,\qty(f_\al^{-1}(g))_2)$ with $\qty(f_\al^{-1}(g))_1\in\mG_\al$ and
		$\qty(f_\al^{-1}(g))_2\in\prod_{\ell=1}^{e(H)}\{1,\dots, N-2(e(H)-\ell)-1 \}$. Then, for $\al,\al'\in\Ga_{1j}$, $\al\cap\al'=\varnothing$, $\be\in\Ga_{1k}\setminus\{\al\}$, $\be\cap\al\neq\varnothing$ and $\be'\in\Ga_{1k}\setminus\{\al'\}$, $\be'\cap\al'\neq\varnothing$, we have
		\begin{equation}\label{probability to cardinality}
			\begin{split}
			&\p(I_\al=I_{\al'}=1,J_{\be\al}=J_{\be'\al'}=1)\\
			=&\p\qty(\bigcup_{(g,g')\in\mG_\be\times\mG_{\be'}}\qty{I_\al=I_{\al'}=1,\mathfrak{g}_\al=g,\mathfrak{g}_{\al'}=g'})\\
			\le&\sum_{\substack{(g,g')\in\mG_\be\times\mG_{\be'}\\ \mathrm{s.t.\ }\qty(f_\al^{-1}(g))_1=\qty(f_{\al'}^{-1}(g'))_1}}\p\qty(\mathfrak{g}=\qty(f_\al^{-1}(g))_1=\qty(f_{\al'}^{-1}(g'))_1,B_{\al}=\qty(f_\al^{-1}(g))_2,B_{\al'}=\qty(f_{\al'}^{-1}(g'))_2)\\
			=&\sum_{\substack{(g,g')\in\mG_\be\times\mG_{\be'}\\ \mathrm{s.t.\ }\qty(f_\al^{-1}(g))_1=\qty(f_{\al'}^{-1}(g'))_1}}\frac{1}{(N-1)!!}\frac{1}{\qty[((N-1))_{e(H_j)}]^2}.
			\end{split}
		\end{equation}
		Let
		\begin{equation}\label{subset of the product of Gbeta and Gbetaprime}
			\mH_{\al,\al',\be,\be'}\coloneqq\qty{(g,g')\in\mG_\be\times\mG_{\be'}:\qty(f_\al^{-1}(g))_1=\qty(f_{\al'}^{-1}(g'))_1}.
		\end{equation}
		So we have to carefully estimate the cardinality $|\mH_{\al,\al',\be,\be'}|$. Since $\Ga_{1j}$ and $\Ga_{1k}$ can be of two distinct types, corresponding to isolated edges and isolated 2-stars, this gives rise to four
		different cases. The results we need to estimate the cardinality of the set $\mH_{\al,\al',\be,\be'}$ in \eqref{subset of the product of Gbeta and Gbetaprime} are summarized into the following lemmas. The proofs of these lemmas are based on careful and lengthy analyses about the corresponding restrictions of $f_\al$ in \eqref{coupling construction f al} and will be deferred to Appendix \ref{proofs}. Also, in those proofs of the lemmas, when we talk about the ``rank'' of a half-edge it is always the rank in ascending order among a set of half-edges.
		\begin{lem}\label{surjection edge edge}
			Let $\al$ and $\be$ be possible isolated edges such that $\al\cap\be\neq\varnothing$ and $\al\neq\be$. Then there exists a surjection $h_{\al,\be}$ from $\mG_\al$ onto $\mG_\be$ such that $h_{\al,\be}\qty(\qty(f^{-1}_\al(g_\be))_1)=g_\be$ for $g_\be\in\mG_\be$.
		\end{lem}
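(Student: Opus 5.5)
The plan is to write down $\bigl(f_\al^{-1}(g)\bigr)_1$ explicitly for an isolated edge, and then define $h_{\al,\be}$ as the switching that undoes it and recreates $\be$.

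\emph{Step 1: reduce to sharing exactly one half-edge.} Both $\al$ and $\be$ consist of a single edge joining two half-edges incident to degree 1 vertices. Since $\al\cap\be\neq\varnothing$, they share a vertex of degree 1, hence they share that vertex's unique half-edge. They cannot share both half-edges, because then $\al=\be$. So I will write $\al=\{a,b\}$ and $\be=\{a,c\}$ with $a,b,c$ distinct half-edges. (This is also the content of Lemma \ref{intersecting different isolated trees} in this special case.)

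\emph{Step 2: compute $\bigl(f_\al^{-1}(g)\bigr)_1$.} Here $e(H)=1$, so $f_\al=f_{\al,1}$ and its inverse is the map $h_{\al,1}$ from the proof of Lemma \ref{f al ell f al inverse}(a). For any $g\in\mG$, let $x$ and $y$ be the partners in $g$ of $a\wedge b$ and $a\vee b$, respectively. If $x=a\vee b$ (so $g\supset\al$), then $\bigl(f_\al^{-1}(g)\bigr)_1=g$. Otherwise
\[
\bigl(f_\al^{-1}(g)\bigr)_1=\bigl(g\setminus\{\{a\wedge b,x\},\{a\vee b,y\}\}\bigr)\uplus\{\{a,b\},\{x,y\}\}.
\]
In words, it re-pairs $a$ with $b$ and joins their former partners.

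Now take $g_\be\in\mG_\be$. In $g_\be$ the half-edge $a$ is paired with $c\neq b$. Let $y$ be the partner of $b$ in $g_\be$; then $y\notin\{a,c\}$. The formula gives
\[
\bigl(f_\al^{-1}(g_\be)\bigr)_1=\bigl(g_\be\setminus\{\{a,c\},\{b,y\}\}\bigr)\uplus\{\{a,b\},\{c,y\}\}.
\]
This holds whichever of $a,b$ is smaller, since the construction is symmetric in $a$ and $b$ once we consider the unordered partners.

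\emph{Step 3: define $h_{\al,\be}$.} For $g\in\mG_\al$, let $z$ be the partner of $c$ in $g$. Since $a$ and $b$ are paired with each other in $g$, we have $z\notin\{a,b\}$. Define
\[
h_{\al,\be}(g)\coloneqq\bigl(g\setminus\{\{a,b\},\{c,z\}\}\bigr)\uplus\{\{a,c\},\{b,z\}\}.
\]
This is a perfect matching containing $\{a,c\}$, so $h_{\al,\be}(g)\in\mG_\be$.

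\emph{Step 4: verify the identity, which gives surjectivity.} Apply $h_{\al,\be}$ to $\bigl(f_\al^{-1}(g_\be)\bigr)_1$ from Step 2. In that matching $c$ is paired with $y$, so $z=y$. The map $h_{\al,\be}$ then removes $\{a,b\},\{c,y\}$ and adds back $\{a,c\},\{b,y\}$, which returns exactly $g_\be$. Hence $h_{\al,\be}\bigl((f^{-1}_\al(g_\be))_1\bigr)=g_\be$ for every $g_\be\in\mG_\be$, and surjectivity onto $\mG_\be$ follows immediately.

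The only delicate point is Step 2. There I must check that the generic case of $h_{\al,1}$ applies, namely that $a$'s partner is not $b$; this holds because $a$ is paired with $c\neq b$ in $g_\be$. I must also check that the ordering $a\wedge b$ versus $a\vee b$ does not change the resulting matching, only the index $b'$, which is irrelevant here since only the first coordinate enters the statement.
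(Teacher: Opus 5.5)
Your proposal is correct and is essentially the paper's argument: the paper defines $h_{\al,\be}(g)\coloneqq f_\al(g,b_1(g))$, with the index $b_1(g)$ chosen so that the switching breaks $\al$ and creates $\be$, and this is exactly your explicit switching $\{a,b\},\{c,z\}\mapsto\{a,c\},\{b,z\}$. The only difference is in the verification: the paper checks that $\bigl(f_\al^{-1}(g_\be)\bigr)_2=b_1\bigl((f_\al^{-1}(g_\be))_1\bigr)$ and then uses $f_\al\circ f_\al^{-1}=\mathrm{id}$, whereas you compute $\bigl(f_\al^{-1}(g_\be)\bigr)_1$ explicitly and undo the switching by hand.
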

		\begin{lem}\label{surjection al edge be 2star}
			Let $\al=s_1s_2$ be a possible isolated edge and let $\be=(t_1t_2)\cup( t_3t_4)$ be a possible isolated 2-star such that $t_2$ and $t_3$ are incident to its degree 2 vertex. Suppose that $\al\cap\be\neq\varnothing$ and $\al\neq\be$.
			{\setlength{\leftmargini}{29.5pt}  	
				\begin{itemize}
					\setlength{\labelsep}{7pt}     
					\setlength{\itemsep}{3pt}      
					
					\item[\emph{a}.] Suppose that $\al$ and $\be$ share one degree 1 vertex only and $s_2=t_1$. Then the edge $t_3t_4$ exists in $\qty(f^{-1}_\al(g_\be))_1$ for $g_\be\in\mG_\be$ and there exists a surjection $h_{\al,\be}$ from $\mG_\al\cap\mG_{t_3t_4}$ onto $\mG_\be$ such that $h_{\al,\be}\qty(\qty(f^{-1}_\al(g_\be))_1)=g_\be$ for $g_\be\in\mG_\be$.
					
					\item[\emph{b}.] Suppose that $\al$ and $\be$ share two degree 1 vertices and assume that $s_1=t_1$ and $s_2=t_4$. Then the self-loop $t_2t_3$ exists in $\qty(f^{-1}_\al(g_\be))_1$ for $g_\be\in\mG_\be$ and there exists a surjection $h_{\al,\be}$ from $\mG_\al\cap\mG_{t_2t_3}$ onto $\mG_\be$ such that $h_{\al,\be}\qty(\qty(f^{-1}_\al(g_\be))_1)=g_\be$ for $g_\be\in\mG_\be$.

			\end{itemize} }
		\end{lem}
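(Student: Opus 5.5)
The plan is to compute the first coordinate $\qty(f^{-1}_\al(g_\be))_1$ explicitly from the inverse $h_{\al,1}$ built in the proof of Lemma \ref{f al ell f al inverse}(a), and then define $h_{\al,\be}$ as a switching that undoes it. Since $\al=s_1s_2$ has $e(H)=1$, we have $f_\al^{-1}=h_{\al,1}$. Given $g'\in\mG$, let $t'_{1,1}$ and $t'_{1,2}$ be the partners in $g'$ of $s_1\wedge s_2$ and $s_1\vee s_2$, respectively. Then
\[
\qty(f_\al^{-1}(g'))_1=\qty(g'\setminus\qty{\{s_1\wedge s_2,t'_{1,1}\},\{s_1\vee s_2,t'_{1,2}\}})\uplus\qty{\{s_1,s_2\},\{t'_{1,1},t'_{1,2}\}}
\]
whenever $t'_{1,1}\neq s_1\vee s_2$, and $\qty(f_\al^{-1}(g'))_1=g'$ otherwise. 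In words, we delete the two edges through $s_1,s_2$, pair $s_1$ with $s_2$, and pair their former partners with each other. The whole proof is to read off what this does for $g'=g_\be\in\mG_\be$ in each configuration.

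For (a), take $g_\be\in\mG_\be$. There $s_2=t_1$ is paired with $t_2$, $t_3$ is paired with $t_4$, and $s_1$ is paired with some half-edge $x$. Since $s_1\notin\be$ and $s_1$ sits at a degree 1 vertex, $x\notin\{t_1,t_2,t_3,t_4\}$. The first coordinate is therefore $g_\be$ with $\{s_2,t_2\},\{s_1,x\}$ replaced by $\{s_1,s_2\},\{t_2,x\}$. The pair $\{t_3,t_4\}$ is untouched, so this configuration lies in $\mG_\al\cap\mG_{t_3t_4}$, which gives the first claim. Conversely, for $g\in\mG_\al\cap\mG_{t_3t_4}$ I let $x$ denote the partner of $t_2$ in $g$. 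This $x$ is not $s_1$ or $s_2$, because those are paired with each other, and it is not $t_3$ or $t_4$, because those are paired with each other; in particular $x\neq t_2$'s sibling $t_3$, so no self-loop can arise. I then define
\[
h_{\al,\be}(g)=\qty(g\setminus\qty{\{s_1,s_2\},\{t_2,x\}})\uplus\qty{\{s_2,t_2\},\{s_1,x\}}.
\]
This configuration contains $t_1t_2=s_2t_2$ and still contains $t_3t_4$, so it lies in $\mG_\be$. Applied to $\qty(f^{-1}_\al(g_\be))_1$, the partner of $t_2$ is exactly the $x$ above, so $h_{\al,\be}$ returns $g_\be$. Surjectivity is then immediate.

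For (b), we have $s_1=t_1$ and $s_2=t_4$, so every $g_\be\in\mG_\be$ contains $\{s_1,t_2\}$ and $\{s_2,t_3\}$. Whichever of $s_1,s_2$ is smaller, the two partners are $t_2$ and $t_3$, so the inverse produces $g_\be$ with $\{s_1,t_2\},\{s_2,t_3\}$ replaced by $\{s_1,s_2\},\{t_2,t_3\}$. This contains the self-loop $t_2t_3$ at the degree 2 vertex. I define $h_{\al,\be}$ on $\mG_\al\cap\mG_{t_2t_3}$ by the reverse replacement $\{s_1,s_2\},\{t_2,t_3\}\mapsto\{s_1,t_2\},\{s_2,t_3\}$. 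This lands in $\mG_\be$, inverts the computation above, and is therefore surjective (indeed bijective).

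The main obstacle is only bookkeeping. In (a) one must check that the partner $x$ is well defined and avoids $\be$'s half-edges, using isolation, the degree constraints, and the presence of $t_3t_4$. One must also check that the ordering convention $s_1\wedge s_2$ versus $s_1\vee s_2$ in $h_{\al,1}$ does not change the resulting unordered pairs. I would verify both explicitly, case by case on the ordering.
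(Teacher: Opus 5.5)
Your proof is correct and follows the paper's argument. In (a), your explicit switch $\{s_1,s_2\},\{t_2,x\}\mapsto\{s_2,t_2\},\{s_1,x\}$ is exactly the paper's $h_{\al,\be}(g)=f_\al(g,b_1(g))$, with $b_1(g)$ chosen to force the edge $s_2t_2$; in (b) it is $f_\al(g,b_1)$ with the constant index forcing $s_1t_2$ (equivalently $t_3s_2$). You replace the paper's check $\bigl(f^{-1}_\al(g_\be)\bigr)_2=b_1\bigl((f^{-1}_\al(g_\be))_1\bigr)$ by a direct computation of $f_\al^{-1}=f_{\al,1}^{-1}$, and you also verify that $h_{\al,\be}$ lands in $\mG_\be$, which the paper leaves implicit.

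One small correction: $x\notin\{t_1,\dots,t_4\}$ holds because these four half-edges are matched among themselves in $g_\be$ and $s_1$ is not one of them. It does not follow from $s_1$ sitting at a degree 1 vertex.
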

		\begin{lem}\label{surjection al 2star be edge}
			Let $\al$ be a possible isolated 2-star and let $\be$ be a possible isolated edge such that $\al\cap\be\neq\varnothing$ and $\al\neq\be$. Then there exists a surjection $h_{\al,\be}$ from $\mG_{\al}\times[N-1]$ onto $\mG_\be$ such that for any $g_\be\in\mG_\be$, $h_{\al,\be}\qty(\qty(f^{-1}_\al(g_\be))_1,c_\al(g_\be))=g_\be$ for some $c_\al(g_\be)\in[N-1]$.
		\end{lem}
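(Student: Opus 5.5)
The plan is to build $h_{\al,\be}$ directly out of the map $f_\al$ from \eqref{coupling construction f al}. We use the index $c$ to record the one switching index that is not forced by $\be$. Write $\al=(s_1s_2)\cup(s_3s_4)$ with $s_2,s_3$ incident to the degree 2 vertex, and $s_1,s_4$ the half-edges of the two degree 1 vertices. Since $\be$ is a possible isolated edge, both of its vertices have degree 1. So $\al\cap\be\neq\varnothing$ and $\al\neq\be$ leave only two possibilities: either $\be=s_1t$ (or symmetrically $s_4t$) with $t$ a degree 1 half-edge outside $\al$, or $\be=s_1s_4$. Order the two edges of $\al$ as in the construction, as edge $1$ and edge $2$, with switching indices $b_1\in[N-3]$ and $b_2\in[N-1]$. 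Recall that $f_\al=f_{\al,2}\circ f_{\al,1}$, that each $f_{\al,\ell}$ is invertible (Lemma \ref{f al ell f al inverse}), and that step $\ell$ re-pairs $s_{\ell,1}\wedge s_{\ell,2}$ with the chosen half-edge $t_{\ell,1}$, and $s_{\ell,1}\vee s_{\ell,2}$ with its former partner $t_{\ell,2}$.

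The key step is to show the following. For fixed $g_0\in\mG_\al$ and one of the two indices fixed, there is \emph{at most one} value of the other index for which $f_\al(g_0,b_1,b_2)\supset\be$, and it can be computed deterministically from $g_0$ and the fixed index. Take $\be=s_1t$, and let $\ell_0$ be the step whose edge contains $s_1$.
\begin{itemize}
\item If $s_1$ is the smaller half-edge of that edge, step $\ell_0$ pairs $s_1$ with $t_{\ell_0,1}$. Hence $t_{\ell_0,1}=t$, which fixes $b_{\ell_0}$.
\item If $s_1$ is the larger one, $s_1$ is paired with the old partner $t_{\ell_0,2}$ of $t_{\ell_0,1}$. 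The requirement $t_{\ell_0,2}=t$ forces $t_{\ell_0,1}$ to be the current partner of $t$, and again fixes $b_{\ell_0}$.
\end{itemize}
If $\ell_0=1$, step $2$ is performed afterwards and could detach $s_1$. In that case I would instead treat $b_2$ as the forced index, using that the only way step $2$ creates or keeps $s_1t$ is through the explicit pairing rule. This again gives at most one admissible $b_2$ given $g_0$ and $b_1$. The case $\be=s_1s_4$ is handled in the same way: the final pairing $s_1s_4$ must arise at a specific step, from the explicit rule, and this determines that step's index. I therefore let $c$ encode the free index; when the free index lies in $[N-3]$, values of $c$ above $N-3$ are mapped to an arbitrary fixed element of $\mG_\be$.

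Define $h_{\al,\be}(g_0,c)$ as follows. Put $c$ in the free slot and the forced value in the other slot; if the result contains $\be$, return $f_\al(g_0,b_1,b_2)$. Otherwise, i.e.\ if no forced value exists or the result misses $\be$, return a fixed $g^*\in\mG_\be$. This map takes values in $\mG_\be$ by construction.

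For $g_\be\in\mG_\be$, write $f_\al^{-1}(g_\be)=(g_0,(b_1,b_2))$, so that $f_\al(g_0,b_1,b_2)=g_\be\supset\be$, and set $c_\al(g_\be)$ to be the free coordinate of $(b_1,b_2)$. By the uniqueness step, the forced coordinate computed from $(g_0,c_\al(g_\be))$ must coincide with the true one. Hence $h_{\al,\be}\bigl((f^{-1}_\al(g_\be))_1,c_\al(g_\be)\bigr)=g_\be$, and surjectivity follows at once. The main obstacle is the uniqueness step in the subcase where $s_1$ sits in the first-processed edge, or where $t$ or the partner of $t$ is touched by the other switching. This requires a short case analysis over the relative order of $s_1,s_2,s_3,s_4$ and over whether $t_{\ell,1}$ coincides with $t$ or with its partner. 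I would organise it by tracking the partner of $s_1$ through the two steps, exactly as in the proof of Lemma \ref{f al ell f al inverse}(a).
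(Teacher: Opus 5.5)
\textbf{The forced/free assignment fails in the subcase $\ell_0=1$.} This is the subcase you flagged: the edge of $\al$ containing the shared degree 1 half-edge $s_1$ is switched first, and switching $b_2$ to the forced role there is the wrong choice.

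\begin{itemize}
\item Step $2$ only breaks the other edge of $\al$. Both pairs it creates contain one of that edge's half-edges. Since $s_1$ is not among them and $t\notin\al$, step $2$ can never create $s_1t$.
\item So $s_1t$ must be created at step $1$. This fixes $b_1$ as a function of $g_0$ alone.
\item Once created, $s_1t$ survives step $2$ for every $b_2$ except the two with $t_{2,1}\in\{s_1,t\}$.
\end{itemize}

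Your claim ``at most one admissible $b_2$ given $g_0$ and $b_1$'' is therefore false. For that value of $b_1$ there are $N-3\ge 3$ distinct configurations $f_\al(g_0,b_1,b_2)\in\mG_\be$; they are distinct because $f_\al$ is a bijection. All of them have first coordinate $g_0$ under $f_\al^{-1}$ and the same free coordinate $c=b_1$. But $h_{\al,\be}(g_0,b_1)$ can equal only one of them. With $b_2$ forced and $b_1$ free, the identity $h_{\al,\be}((f_\al^{-1}(g_\be))_1,c_\al(g_\be))=g_\be$ fails, and so does surjectivity.

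\textbf{The fix is already in your bullet analysis.} The forced index should always be the one for the step at which the pairing of $\be$ is created.
\begin{itemize}
\item For $\be=s_1t$ this is $b_{\ell_0}$, for either order of the two edges.
\item For $\be=s_1s_4$ it is the later step. The earlier step's choice set excludes both half-edges of the later edge, and these are paired to each other in $g_0$, so neither can become $s_1$'s new partner.
\end{itemize}
So for $\ell_0=1$, take $b_1$ forced and $c=b_2\in[N-1]$ free.

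That some values of $b_2$ destroy $s_1t$ is harmless. Such pairs $(g_0,c)$ are sent to your $g^*$. They never occur as $((f_\al^{-1}(g_\be))_1,c_\al(g_\be))$, because for $g_\be\in\mG_\be$ the true $b_2$ preserves $\be$ and the true $b_1$ is the forced one.

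With this assignment your uniqueness argument is sound in every case, and it reproduces the paper's construction:
\begin{itemize}
\item When the edge containing the shared degree 1 half-edge is switched first, the paper fixes $b_1$ as a function of $g$ and leaves $b_2\in[N-1]$ free.
\item Otherwise, and for $\be=s_1s_4$ (with the edges ordered so that the pairing arises at step $2$), it leaves $b_1\in[N-3]$ free, fixes $b_2$ as a function of $(g,b_1)$, and pads the domain to $[N-1]$.
\end{itemize}
The paper checks the identity by tracking partners through $f_{\al,2}^{-1}$ and $f_{\al,1}^{-1}$ rather than by a uniqueness argument. Your sending of bad inputs to a fixed $g^*\in\mG_\be$ is a harmless tidying: the paper's $h_{\al,\be}$ takes values in $\mG$ and merely covers $\mG_\be$.
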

		\begin{lem}\label{surjection 2star 2star}
			Let $\al$ and $\be$ be possible isolated 2-stars such that $\al\cap\be\neq\varnothing$ and $\al\neq\be$. Write $\al=(s_1s_2)\cup (s_3s_4)$ and $\be=(t_1t_2)\cup(t_3t_4)$, where $s_2$ and $s_3$ are incident to $\al$'s degree 2 vertex and $t_2$ and $t_3$ are incident to $\be$'s degree 2 vertex.
			{\setlength{\leftmargini}{29.5pt}  	
				\begin{itemize}
					\setlength{\labelsep}{7pt}     
					\setlength{\itemsep}{3pt}      
					
					\item[\emph{a}.] Suppose that $\al$ and $\be$ share one degree 1 vertex only and assume that $s_4=t_1$ without loss of generality. Then the edge $t_3t_4$ exists in $\qty(f^{-1}_\al(g_\be))_1$ for $g_\be\in\mG_\be$ and there exists a surjection $h_{\al,\be}$ from $\qty(\mG_\al\cap\mG_{t_3t_4})\times[N-1]$ onto $\mG_\be$ such that for any $g_\be\in\mG_\be$, $h_{\al,\be}\qty(\qty(f^{-1}_\al(g_\be))_1,c_\al(g_\be))=g_\be$ for some $c_\al(g_\be)\in[N-1]$.
					
					\item[\emph{b}.] For all the other cases about $\al\cap\be\neq\varnothing,\al\neq\be$ than (a), there exists a surjection $h_{\al,\be}$ from $\mG_{\al}$ onto $\mG_\be$ such that $h_{\al,\be}\qty(\qty(f^{-1}_\al(g_\be))_1)=g_\be$ for $g_\be\in\mG_\be$.
					
			\end{itemize} }
		\end{lem}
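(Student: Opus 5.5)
The plan is to build $h_{\al,\be}$ by re-running the switching $f_\al$ on its own preimage, with indices that can be read off from the configuration. For $g_\be\in\mG_\be$ let $(g_0,(b_1,b_2))=f_\al^{-1}(g_\be)$. By Lemma \ref{f al ell f al inverse}, $g_\be=f_\al(g_0,b_1,b_2)$. So it suffices to show that each index $b_\ell$ is a function of $g_0$ alone, or of $g_0$ together with one extra index $c\in[N-1]$. Then $h_{\al,\be}(g_0)\coloneqq f_\al(g_0,b_1(g_0),b_2(g_0))$ (resp.\ the same with $c$ as a second argument) is well defined on all of $\mG_\al$ (resp.\ $\mG_\al\times[N-1]$), after fixing the indices arbitrarily where the recipe fails. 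It is automatically a surjection onto $\mG_\be$ with the required identity. One caveat: if the paper intends $h_{\al,\be}$ to map into $\mG_\be$ and not merely onto it, the arbitrary fix-up on the complement would need an extra argument; I would settle this when writing the details.

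Recall what the inverse step $h_{\al,\ell}$ in Lemma \ref{f al ell f al inverse} does. Take the partners $t'_{\ell,1},t'_{\ell,2}$ in the current configuration of $s_{\ell,1}\wedge s_{\ell,2}$ and $s_{\ell,1}\vee s_{\ell,2}$. Re-pair them as $\{s_{\ell,1},s_{\ell,2}\},\{t'_{\ell,1},t'_{\ell,2}\}$. Set $b_\ell$ to be the rank of $t'_{\ell,1}$. Hence $b_\ell$ can be recovered from the earlier configuration exactly when $t'_{\ell,1}$ is either a half-edge determined by $(\al,\be)$, or the partner (in the configuration obtained after undoing the steps $\ell+1,\dots,e(H)$) of such a half-edge. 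I first undo the edge of $\al$ with the larger minimal half-edge (step $\ell=2$), then the other one (step $\ell=1$). I track the partners of the four half-edges $s_1,\dots,s_4$ through the two intermediate configurations $g_\be\to g_1\to g_0$.

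For part (b) I split according to the intersection pattern. The cases are: $\al,\be$ share the degree 2 vertex (so $\{s_2,s_3\}=\{t_2,t_3\}$, with zero, one or two shared leaves); they share both degree 1 vertices but not the centre; or they share one leaf which is an endpoint $s_1$ or $s_4$ matched to a leaf of $\be$ in a way other than (a). (By the symmetry of the labelling I would check precisely which single-leaf configurations fall into (a).) In each case every half-edge of $\al$ lies in $\be$, or is paired in $g_\be$ with a half-edge of $\be$, because $\be$ is isolated and realized in $g_\be$. So all partners $t'_{\ell,i}$ are explicit half-edges of $\al\cup\be$, and undoing produces $g_0$ containing explicit pairs among them. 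The forward indices are then deterministic ranks of known half-edges, and $h_{\al,\be}$ needs no extra argument.

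For (a) with $s_4=t_1$, the edge $t_3t_4$ touches no half-edge of $\al$. Neither inverse step removes it, so it survives into $g_0$, which gives $g_0\in\mG_\al\cap\mG_{t_3t_4}$. However $s_3$ is then a degree 2 half-edge of $\al$ that is not in $\be$. Its partner $u$ in $g_\be$ is an arbitrary half-edge outside $\be$. After undoing, $u$ appears in $g_0$ paired with some other unknown half-edge, and it cannot be located from $g_0$ alone. I therefore supply the rank $c_\al(g_\be)\in[N-1]$ of $u$ among the half-edges other than one fixed half-edge of $\al$; this is where the $[N-1]$ factor comes from. With $u$ known, the remaining index is again determined.

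The main obstacle is the bookkeeping of the order of the two switchings. Which edge of $\al$ is switched first depends on the labels ($s_1\wedge s_2$ versus $s_3\wedge s_4$). Moreover, the partner of the second minimal half-edge must be read in the intermediate configuration $g_1$, not in $g_0$. I would handle this by treating both orderings explicitly in each case and verifying the identity $h_{\al,\be}((f_\al^{-1}(g_\be))_1)=g_\be$ by applying one step at a time, using $h_{\al,\ell}\circ f_{\al,\ell}=\mathrm{id}$.
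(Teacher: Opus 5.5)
Your framework is the paper's: read the switching indices of $f_\alpha^{-1}(g_\beta)$ off $g=(f_\alpha^{-1}(g_\beta))_1$ (the second one off $f_{\alpha,1}(g,b_1)$), leave one index free in (a), and run $f_\alpha$ forward. Your caveat is moot, since the paper only needs the image to contain $\mathcal{G}_\beta$ and even pads with $h_{\alpha,\beta}(g,b_1)=g$. But the two concrete claims that drive your case analysis are false.

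In (b) you say every half-edge of $\alpha$ lies in $\beta$ or is paired in $g_\beta$ with a half-edge of $\beta$, so all $t'_{\ell,i}$ are explicit and the indices are fixed ranks. Since $\beta$ is realized in $g_\beta$, its half-edges are paired among themselves. The second alternative therefore never occurs, and the claim reduces to $\alpha$ and $\beta$ having the same half-edges, i.e.\ sharing all three vertices. In the other cases $\alpha$ has half-edges outside $\beta$ with unconstrained partners: the centre half-edges if only the leaves are shared, the leaf half-edges if only the centre is shared, the half-edge at the unshared leaf if one leaf and the centre are shared. Because $f_\alpha$ is a bijection, the identity $h_{\alpha,\beta}((f_\alpha^{-1}(g_\beta))_1)=g_\beta$ forces the true indices, and these depend on $g$. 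For example, for $\beta=(s_1t_2)\cup(t_3s_4)$ with $s_2<s_1$ and $s_1\wedge s_2<s_3\wedge s_4$, the first index must be the rank of the partner of $t_2$ in $g$.

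The correct criterion is that, for each switch, at least one of $t'_{\ell,1},t'_{\ell,2}$ is a fixed half-edge. Then $b_\ell$ is its rank, or the rank of its partner in the configuration to which switch $\ell$ is applied. In (b) this holds because each \emph{edge} of $\alpha$ contains a half-edge of $\beta$ whose partner after the switch is known. When $\beta$ joins half-edges of different edges of $\alpha$, as for $\beta=(s_1s_3)\cup(s_2t_4)$, this requires tracking through the other switch; the paper checks it case by case. (Your third category in (b) is empty: sharing exactly one leaf is always (a) after relabeling.)

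In (a) you locate the missing information in the partner $u$ of $s_3$, but that is recoverable. Undoing the $s_3s_4$-switch re-pairs the partner of $s_3$ with $t_2$, the partner of $s_4=t_1$. So the $s_3s_4$-index is the rank of $t_2$ (if $s_4<s_3$), or of the partner of $t_2$ in the configuration to which that switch is applied ($g$ or $f_{\alpha,1}(g,b_1)$, according to the order).

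What is genuinely lost is the $s_1s_2$-index. The edge $s_1s_2$ is the only edge of $\alpha$ with no half-edge in $\beta$; this is exactly what separates (a) from (b). The partners of $s_1$ and $s_2$ in $g_\beta$ are unconstrained, and after undoing they are just some pair of $g$, so for fixed $g$ many $g_\beta$ differ only in this index. Hence supplying $u$ does not determine it, the step ``with $u$ known, the remaining index is again determined'' fails, and your map cannot reproduce every $g_\beta$.

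The fix, as in the paper, is to make the $s_1s_2$-index itself the extra argument:
\begin{itemize}
\item when $s_3\wedge s_4<s_1\wedge s_2$, take $b_2\in[N-1]$ free, with $b_1=b_1(g)$;
\item when $s_1\wedge s_2<s_3\wedge s_4$, take $b_1\in[N-3]$ free, padded to $[N-1]$, with $b_2=b_2(g,b_1)$ read off $f_{\alpha,1}(g,b_1)$.
\end{itemize}
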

		
		\medskip
		Now we are ready to estimate
		\begin{equation}\label{fouth moment term}
			\sum_{\substack{\al,\al'\in\Ga_{1j}\\ \al\cap\al'=\varnothing}}\sum_{\substack{\be\in\Ga_{1k}\setminus\{\al\}\\ \be\cap\al\neq\varnothing }}\sum_{\substack{\be'\in\Ga_{1k}\setminus\{\al'\}\\ \be'\cap\al'\neq\varnothing }}\e\qty[I_\al I_{\al'} J_{\be\al}J_{\be'\al'}]-\sum_{\substack{\al,\al'\in\Ga_{1j}\\ \al\cap\al'=\varnothing}}\sum_{\substack{\be\in\Ga_{1k}\setminus\{\al\}\\ \be\cap\al\neq\varnothing }}\sum_{\substack{\be'\in\Ga_{1k}\setminus\{\al'\}\\ \be'\cap\al'\neq\varnothing }}p_\al p_{\al'}p_\be p_{\be'}
		\end{equation}
		for each of the four cases: (a) $\Ga_{1j}$ is possible isolated edges, $\Ga_{1k}$ is possible isolated edges ($j=k=1$). (b) $\Ga_{1j}$ is possible isolated edges, $\Ga_{1k}$ is possible isolated 2-stars ($j=1,k=2$). (c) $\Ga_{1j}$ is possible isolated 2-stars, $\Ga_{1k}$ is possible isolated edges ($j=2,k=1$). (d) $\Ga_{1j}$ is possible isolated 2-stars, $\Ga_{1k}$ is possible isolated 2-stars ($j=k=2$). 
		
		\medskip
		\textbf{(a) $\Ga_{1j}$ is possible isolated edges, $\Ga_{1k}$ is possible isolated edges ($j=k=1$).}
		
		\medskip
		Let $h_{\al,\be}:\mG_\al\rightarrow\mG_\be$ and $h_{\al',\be'}:\mG_{\al'}\rightarrow\mG_{\be'}$ be the surjections in Lemma \ref{surjection edge edge} such that $h_{\al,\be}\qty(\qty(f^{-1}_\al(g_\be))_1)=g_\be$ for $g_\be\in\mG_\be$ and $h_{\al',\be'}\qty(\qty(f^{-1}_{\al'}(g_{\be'}))_1)=g_{\be'}$ for $g_{\be'}\in\mG_{\be'}$. Define a map $h_{\al,\al',\be,\be'}:\mG_{\al}\cap\mG_{\al'}\rightarrow\mG_{\be}\times\mG_{\be'}$ by $h_{\al,\al',\be,\be'}(g)\coloneqq\qty(h_{\al,\be}(g), h_{\al',\be'}(g))$.  For any $(g_\be,g_{\be'})\in\mH_{\al,\al',\be,\be'}$, letting $g\coloneqq\qty(f_\al^{-1}(g_\be))_1=\qty(f_{\al'}^{-1}(g_{\be'}))_1\in\mG_{\al}\cap\mG_{\al'}$ yields $h_{\al,\al',\be,\be'}(g)=(g_\be,g_{\be'})$. Therefore, $h_{\al,\al',\be,\be'}$ is a surjection onto $\mH_{\al,\al',\be,\be'}$ and we have
		\begin{align*}
			\qty|\mH_{\al,\al',\be,\be'}|\le\qty|\mG_{\al}\cap\mG_{\al'}|=(N-5)!!.
		\end{align*}
		Consequently, we have
		\begin{align*}
			\p(I_\al=I_{\al'}=1,J_{\be\al}=J_{\be'\al'}=1)\le\frac{1}{(N-1)(N-3)}\frac{1}{(N-1)^2}
		\end{align*}
		in this case. This, combined with
		\begin{align*}
\qty|\sum_{\substack{\al,\al'\in\Ga_{11}\\ \al\cap\al'=\varnothing}}\sum_{\substack{\be\in\Ga_{11}\setminus\{\al\}\\ \be\cap\al\neq\varnothing }}\sum_{\substack{\be'\in\Ga_{11}\setminus\{\al'\}\\ \be'\cap\al'\neq\varnothing }}|\le\underbrace{\qty(\frac{(n_1)_2}{2})^2}_{\al,\al'}\times\underbrace{\qty(2n_1)^2}_{\be,\be'}=\qty((n_1)_2)^2{n_1}^2
		\end{align*}
		(cf.\ Lemma \ref{cardinality estimates}(b)), leads to the estimate
		\begin{align}
			&\sum_{\substack{\al,\al'\in\Ga_{11}\\ \al\cap\al'=\varnothing}}\sum_{\substack{\be\in\Ga_{11}\setminus\{\al\}\\ \be\cap\al\neq\varnothing }}\sum_{\substack{\be'\in\Ga_{11}\setminus\{\al'\}\\ \be'\cap\al'\neq\varnothing }}\e\qty[I_\al I_{\al'} J_{\be\al}J_{\be'\al'}]-\sum_{\substack{\al,\al'\in\Ga_{11}\\ \al\cap\al'=\varnothing}}\sum_{\substack{\be\in\Ga_{11}\setminus\{\al\}\\ \be\cap\al\neq\varnothing }}\sum_{\substack{\be'\in\Ga_{11}\setminus\{\al'\}\notag\\ \be'\cap\al'\neq\varnothing }}p_\al p_{\al'} p_\be p_{\be'}\\
			\le&\sum_{\substack{\al,\al'\in\Ga_{11}\\ \al\cap\al'=\varnothing}}\sum_{\substack{\be\in\Ga_{11}\setminus\{\al\}\\ \be\cap\al\neq\varnothing }}\sum_{\substack{\be'\in\Ga_{11}\setminus\{\al'\}\\ \be'\cap\al'\neq\varnothing }}\qty(\frac{1}{(N-1)^3(N-3)}-\frac{1}{(N-1)^4})\notag\\
			=&\sum_{\substack{\al,\al'\in\Ga_{11}\\ \al\cap\al'=\varnothing}}\sum_{\substack{\be\in\Ga_{11}\setminus\{\al\}\\ \be\cap\al\neq\varnothing }}\sum_{\substack{\be'\in\Ga_{11}\setminus\{\al'\}\\ \be'\cap\al'\neq\varnothing }}\frac{1}{(N-1)^3(N-3)}\qty(1-\frac{(N-1)^3(N-3)}{(N-1)^4})\notag\\
			\le& \sum_{\substack{\al,\al'\in\Ga_{11}\\ \al\cap\al'=\varnothing}}\sum_{\substack{\be\in\Ga_{11}\setminus\{\al\}\\ \be\cap\al\neq\varnothing }}\sum_{\substack{\be'\in\Ga_{11}\setminus\{\al'\}\\ \be'\cap\al'\neq\varnothing }}\frac{1}{(N-1)^3(N-3)}\frac{2}{N-1} \notag\\
			\le&\frac{2\qty((n_1)_2)^2{n_1}^2}{((N-1))_2(N-1)^3}.\label{third variance second line case a}
		\end{align}
		
		\medskip
		\textbf{(b) $\Ga_{1j}$ is possible isolated edges, $\Ga_{1k}$ is possible isolated 2-stars ($j=1,k=2$).}

		\medskip
		Let $\al=s_1s_2$, $\be=(t_1t_2)\cup (t_3t_4)$, $\al'=s_1's_2'$ and $\be'=(t_1't_2')\cup (t_3't_4')$, where $t_2$ and $t_3$ are incident to the degree 2 vertex of $\be$, and $t_2'$ and $t_3'$ are incident to the degree 2 vertex of $\be'$. There are two cases to consider about $\be\cap\al\neq\varnothing,\be\neq\al$. One is that $\al$ and $\be$ share one degree 1 vertex only and the other is that $\al$ and $\be$ share two degree 1 vertices. The same about $\be'\cap\al',\be'\neq\al'$, so there are four cases to consider in total. 
		
		\medskip
		\emph{Case} (b1): \emph{$\al$ and $\be$ share one degree 1 vertex only, $\al'$ and $\be'$ share one degree 1 vertex only.}
		
		\medskip 
		Suppose that $s_2=t_1$ and $s_2'=t_1'$ without loss of generality. Let $h_{\al,\be}:\mG_{\al}\cap\mG_{t_3t_4}\rightarrow\mG_\be$ and $h_{\al',\be'}:\mG_{\al'}\cap\mG_{t_3't_4'}\rightarrow\mG_{\be'}$ be the surjections in Lemma \ref{surjection al edge be 2star}(a) such that $h_{\al,\be}\qty(\qty(f^{-1}_\al(g_\be))_1)=g_\be$ for $g_\be\in\mG_\be$ and $h_{\al',\be'}\qty(\qty(f^{-1}_{\al'}(g_{\be'}))_1)=g_{\be'}$ for $g_{\be'}\in\mG_{\be'}$. Define a map $h_{\al,\al',\be,\be'}:\mG_{\al}\cap\mG_{\al'}\cap\mG_{t_3t_4}\cap\mG_{t_3't_4'}\rightarrow\mG_{\be}\times\mG_{\be'}$ by $h_{\al,\al',\be,\be'}(g)\coloneqq\qty(h_{\al,\be}(g), h_{\al',\be'}(g))$.  
		
		\medskip
		For $(g_\be,g_{\be'})\in\mH_{\al,\al',\be,\be'}$, $\qty(f_\al^{-1}(g_\be))_1=\qty(f_{\al'}^{-1}(g_{\be'}))_1$ must contain all of $\al,\al',t_3t_4,t_3't_4'$. First note that $\al\cap\al'=\al\cap (t_3t_4)=\al'\cap(t_3't_4')=\varnothing$. If $(t_3t_4)\cap\al'\neq\varnothing$, then $t_4$ must be equal to either $s_1'$ or $s_2'$ since $t_4,s_1',s_2'$ are incident to their degree 1 vertices and $t_3$ is incident to the degree 2 vertex, but then $\al'$ and $t_3t_4$ cannot coexist. Similarly,  If $(t_3't_4')\cap\al\neq\varnothing$, then $t_4'$ must be equal to either $s_1$ or $s_2$ since $t_4',s_1,s_2$ are incident to their degree 1 vertices and $t_3'$ is incident to the degree 2 vertex, but then $\al$ and $t_3't_4'$ cannot coexist. Thus we may assume that $(t_3t_4)\cap\al'=(t_3't_4')\cap\al=\varnothing$, since otherwise $\mH_{\al,\al',\be,\be'}$ will be empty. In order for $t_3t_4$ and $t_3't_4' $ to be able to coexist so that $\mH_{\al,\al',\be,\be'}$ is nonempty, either Case (b1i): $\{t_3,t_4 \}\cap\{t_3',t_4' \}=\varnothing$ (as the subsets of half-edges\footnote{If $t_3$ and $t_3'$ are incident to the same degree 2 vertex, $t_3\neq t_3'$ and $t_4\neq t_4'$, then $\{t_3,t_4 \}\cap\{t_3',t_4' \}=\varnothing$ but $(t_3t_4)\cap(t_3't_4')\neq\varnothing$ because those possible edges share that degree 2 vertex.}) or Case (b1ii): $\{t_3,t_4\}=\{t_3',t_4' \}$ (and thus $t_3t_4=t_3't_4'$) must hold.
		
		\medskip
		First consider Case (b1i): $\{t_3,t_4 \}\cap\{t_3',t_4' \}=\varnothing$. Then $h_{\al,\al',\be,\be'}$ is a surjection from $\mG_{\al}\cap\mG_{\al'}\cap\mG_{t_3t_4}\cap\mG_{t_3't_4'}$
		onto $\mH_{\al,\al',\be,\be'}$. Indeed, for any $(g_\be,g_{\be'})\in\mH_{\al,\al',\be,\be'}$, letting $g\coloneqq\qty(f_\al^{-1}(g_\be))_1=\qty(f_{\al'}^{-1}(g_{\be'}))_1\in\mG_{\al}\cap\mG_{\al'}\cap\mG_{t_3t_4}\cap\mG_{t_3't_4'}$ yields $h_{\al,\al',\be,\be'}(g)=(g_\be,g_{\be'})$. Therefore,
		\begin{align*}
			\qty|\mH_{\al,\al',\be,\be'}|\le\qty|\mG_{\al}\cap\mG_{\al'}\cap\mG_{t_3t_4}\cap\mG_{t_3't_4'}|=(N-9)!!.
		\end{align*}
		Consequently,
		\begin{align*}
			\p(I_\al=I_{\al'}=1,J_{\be\al}=J_{\be'\al'}=1)\le\frac{1}{((N-1))_4}\frac{1}{(N-1)^2}
		\end{align*}
		in this case. Since we may estimate the possibilities of a quadruplet $\al,\al',\be,\be'$ in this class as (using Lemma \ref{cardinality estimates}(b) for the possibilities of $\be,\be'$)
		\begin{align*}
				\qty|\sum_{\substack{\al,\al'\in\Ga_{11}\\ \al\cap\al'=\varnothing}}\sum_{\substack{\be,\be'\in\Ga_{12}\\ \text{Case (b1i)} }}|\le\underbrace{\qty(\frac{(n_1)_2}{2})^2}_{\al,\al'}\times\underbrace{\qty(4n_1 n_2)^2}_{\be,\be'}=4\qty((n_1)_2)^2{n_1}^2{n_2}^2,
		\end{align*}
		this leads to the estimate
		\begin{align*}
			&\sum_{\substack{\al,\al'\in\Ga_{11}\\ \al\cap\al'=\varnothing}}\sum_{\substack{\be,\be'\in\Ga_{12}\\ \text{Case (b1i)} }}\e\qty[I_\al I_{\al'} J_{\be\al}J_{\be'\al'}]-\sum_{\substack{\al,\al'\in\Ga_{11}\\ \al\cap\al'=\varnothing}}\sum_{\substack{\be,\be'\in\Ga_{12}\\ \text{Case (b1i)} }}p_\al p_{\al'} p_\be p_{\be'}\\
			\le&\sum_{\substack{\al,\al'\in\Ga_{11}\\ \al\cap\al'=\varnothing}}\sum_{\substack{\be,\be'\in\Ga_{12}\\ \text{Case (b1i)} }}\qty(\frac{1}{((N-1))_4(N-1)^2}-\frac{1}{(N-1)^4(N-3)^2})\\
			=&\sum_{\substack{\al,\al'\in\Ga_{11}\\ \al\cap\al'=\varnothing}}\sum_{\substack{\be,\be'\in\Ga_{12}\\ \text{Case (b1i)} }}\frac{1}{((N-1))_4(N-1)^2}\qty(1-\frac{(N-5)(N-7)}{(N-1)(N-3)})\\
			\le&\sum_{\substack{\al,\al'\in\Ga_{11}\\ \al\cap\al'=\varnothing}}\sum_{\substack{\be,\be'\in\Ga_{12}\\ \text{Case (b1i)} }}\frac{1}{((N-1))_4(N-1)^2}\frac{8}{N-3}\\
			\le&\frac{32\qty((n_1)_2)^2{n_1}^2{n_2}^2}{((N-1))_4(N-1)^2(N-3)}.
		\end{align*}
		
		\medskip
		For Case (b1ii): $\{t_3,t_4\}=\{t_3',t_4' \}$, $h_{\al,\al',\be,\be'}$ similarly defines a surjection from $\mG_{\al}\cap\mG_{\al'}\cap\mG_{t_3t_4}$ onto $\mH_{\al,\al',\be,\be'}$. Thus
		\begin{align*}
			\qty|\mH_{\al,\al',\be,\be'}|\le\qty|\mG_{\al}\cap\mG_{\al'}\cap\mG_{t_3t_4}|=(N-7)!!,
		\end{align*}
		and we have
		\begin{align*}
			\p(I_\al=I_{\al'}=1,J_{\be\al}=J_{\be'\al'}=1)\le\frac{1}{((N-1))_3}\frac{1}{(N-1)^2}
		\end{align*}
		in this case. One of the degree 1 vertices of $\be'$ is one of the degree 1 vertices of $\al'$. Since $\{t_3,t_4\}=\{t_3',t_4' \}$, the other degree 1 vertex of $\be'$ is the degree 1 vertex of $\be$ not shared by $\al$, and the degree 2 vertex of $\be'$ is that of $\be$. Therefore, if $\al,\al',\be$ are given, there are only two possibilities about the vertices of $\be'$ in this case; namely, two possibilities about the degree 1 vertex from $\al'$. Keeping this in mind, we estimate the possibilities of a quadruplet $\al,\al',\be,\be'$ in this class as (also using Lemma \ref{cardinality estimates}(b) for the possibilities of $\be$)
		\begin{align*}
				\qty|\sum_{\substack{\al,\al'\in\Ga_{11}\\ \al\cap\al'=\varnothing}}\sum_{\substack{\be,\be'\in\Ga_{12}\\ \text{Case (b1ii)} }}|\le&\underbrace{\qty(\frac{(n_1)_2}{2})^2}_{\al,\al'}\times\underbrace{4 n_1 n_2}_{\be}\times\underbrace{2\times2}_{\be'}=4\qty((n_1)_2)^2n_1n_2.
		\end{align*}
	Thus we estimate
		\begin{align*}
			\sum_{\substack{\al,\al'\in\Ga_{11}\\ \al\cap\al'=\varnothing}}\sum_{\substack{\be,\be'\in\Ga_{12}\\ \text{Case (b1ii)} }}\e\qty[I_\al I_{\al'} J_{\be\al}J_{\be'\al'}]\le&\sum_{\substack{\al,\al'\in\Ga_{11}\\ \al\cap\al'=\varnothing}}\sum_{\substack{\be,\be'\in\Ga_{12}\\ \text{Case (b1ii)} }}\frac{1}{((N-1))_3(N-1)^2}\\
			\le&  \frac{4\qty((n_1)_2)^2n_1n_2}{((N-1))_3(N-1)^2}.
		\end{align*}
		
		\medskip
		Therefore, we obtain an estimate for Case (b1) as
		\begin{align*}
			&\sum_{\substack{\al,\al'\in\Ga_{11}\\ \al\cap\al'=\varnothing}}\sum_{\substack{\be,\be'\in\Ga_{12}\\ \text{Case (b1)} }}\e\qty[I_\al I_{\al'} J_{\be\al}J_{\be'\al'}]-\sum_{\substack{\al,\al'\in\Ga_{11}\\ \al\cap\al'=\varnothing}}\sum_{\substack{\be,\be'\in\Ga_{12}\\ \text{Case (b1)} }}p_\al p_{\al'} p_\be p_{\be'}\\
			\le&\underbrace{\frac{32\qty((n_1)_2)^2{n_1}^2{n_2}^2}{((N-1))_4(N-1)^2(N-3)}}_{\text{Case (b1i)}}+\underbrace{\frac{4\qty((n_1)_2)^2n_1n_2}{((N-1))_3(N-1)^2}}_{\text{Case (b1ii)}}.
		\end{align*}
		
		\medskip
		\emph{Case} (b2): \emph{$\al$ and $\be$ share one degree 1 vertex only, $\al'$ and $\be'$ share two degree 1 vertices.}
		
		\smallskip 
		Suppose that $s_2=t_1$, $s_1'=t_1'$ and $s_2'=t_4'$ without loss of generality. Let $h_{\al,\be}:\mG_{\al}\cap\mG_{t_3t_4}\rightarrow\mG_\be$ be the surjection in Lemma \ref{surjection al edge be 2star}(a) such that $h_{\al,\be}\qty(\qty(f^{-1}_\al(g_\be))_1)=g_\be$ for $g_\be\in\mG_\be$. Let $h_{\al',\be'}:\mG_{\al'}\cap\mG_{t_2't_3'}\rightarrow\mG_{\be'}$ be the surjection in Lemma \ref{surjection al edge be 2star}(b) such that $h_{\al',\be'}\qty(\qty(f^{-1}_{\al'}(g_{\be'}))_1)=g_{\be'}$ for $g_{\be'}\in\mG_{\be'}$. Define a map $h_{\al,\al',\be,\be'}:\mG_{\al}\cap\mG_{\al'}\cap\mG_{t_3t_4}\cap\mG_{t_2't_3'}\rightarrow\mG_{\be}\times\mG_{\be'}$ by $h_{\al,\al',\be,\be'}(g)\coloneqq\qty(h_{\al,\be}(g), h_{\al',\be'}(g))$.  
		
		\medskip
		For $(g_\be,g_{\be'})\in\mH_{\al,\al',\be,\be'}$, $\qty(f_\al^{-1}(g_\be))_1=\qty(f_{\al'}^{-1}(g_{\be'}))_1$ must contain all of $\al,\al',t_3t_4,t_2't_3'$. First note that $\al\cap\al'=\al\cap(t_3t_4)=\al'\cap(t_2't_3') =\al\cap(t_2't_3')=\varnothing$. If $(t_3t_4)\cap\al'\neq\varnothing$, $t_4$ must be equal to either $s_1'$ or $s_2'$ since $t_4,s_1',s_2'$ are incident to their degree 1 vertices and $t_3$ is incident to the degree 2 vertex, but then $t_3t_4$ and $\al'$ cannot coexist. Similarly, if $(t_3t_4)\cap (t_2't_3' )\neq\varnothing$, then $t_3$ must be equal to either $t_2'$ or $t_3'$ because $t_3$ is incident to $\be$'s degree 2 vertex, $t_2'\neq t_3'$ are incident to $\be'$'s degree 2 vertex, and $t_4$ is incident to a degree 1 vertex. But then $t_3t_4$ and $t_2't_3'$ cannot coexist. Thus we may assume that $(t_3t_4)\cap\al'=(t_3t_4)\cap(t_2't_3')=\varnothing$, since otherwise $\mH_{\al,\al',\be,\be'}$ will be empty. Since $\al,\al',t_3t_4,t_2't_3'$ are now all disjoint, $h_{\al,\al',\be,\be'}$ is a surjection from $\mG_{\al}\cap\mG_{\al'}\cap\mG_{t_3t_4}\cap\mG_{t_2't_3'}$ onto $\mH_{\al,\al',\be,\be'}$. Indeed, for any $(g_\be,g_{\be'})\in\mH_{\al,\al',\be,\be'}$, letting $g\coloneqq\qty(f_\al^{-1}(g_\be))_1=\qty(f_{\al'}^{-1}(g_{\be'}))_1\in\mG_{\al}\cap\mG_{\al'}\cap\mG_{t_3t_4}\cap\mG_{t_2't_3'}$ yields $h_{\al,\al',\be,\be'}(g)=(g_\be,g_{\be'})$.  Therefore, we have
		\begin{align*}
			\qty|\mH_{\al,\al',\be,\be'}|\le\qty|\mG_{\al}\cap\mG_{\al'}\cap\mG_{t_3t_4}\cap\mG_{t_2't_3'}|=(N-9)!!
		\end{align*}
		and
		\begin{align*}
			\p(I_\al=I_{\al'}=1,J_{\be\al}=J_{\be'\al'}=1)\le\frac{1}{((N-1))_4}\frac{1}{(N-1)^2}
		\end{align*}
		in this case. Since we may estimate the possibilities of a quadruplet $\al,\al',\be,\be'$ in this class as (using Lemma \ref{cardinality estimates}(b) for the possibilities of $\be$ and Lemma \ref{cardinality estimates}(c) for the possibilities of $\be'$)
			\begin{align*}
				\qty|\sum_{\substack{\al,\al'\in\Ga_{11}\\ \al\cap\al'=\varnothing}}\sum_{\substack{\be,\be'\in\Ga_{12}\\ \text{Case (b2)} }}|\le\underbrace{\qty(\frac{(n_1)_2}{2})^2}_{\al,\al'}\times\underbrace{4n_1n_2}_{\be}\times\underbrace{2 n_2}_{\be'}=2\qty((n_1)_2)^2n_1{n_2}^2,
		\end{align*}
		 this leads to the estimate
		\begin{align*}
			&\sum_{\substack{\al,\al'\in\Ga_{11}\\ \al\cap\al'=\varnothing}}\sum_{\substack{\be,\be'\in\Ga_{12}\\ \text{Case (b2)} }}\e\qty[I_\al I_{\al'} J_{\be\al}J_{\be'\al'}]-\sum_{\substack{\al,\al'\in\Ga_{11}\\ \al\cap\al'=\varnothing}}\sum_{\substack{\be,\be'\in\Ga_{12}\\ \text{Case (b2)} }}p_\al p_{\al'} p_\be p_{\be'}\\
			\le&\sum_{\substack{\al,\al'\in\Ga_{11}\\ \al\cap\al'=\varnothing}}\sum_{\substack{\be,\be'\in\Ga_{12}\\ \text{Case (b2)} }}\qty(\frac{1}{((N-1))_4(N-1)^2}-\frac{1}{(N-1)^4(N-3)^2})\\
			=&\sum_{\substack{\al,\al'\in\Ga_{11}\\ \al\cap\al'=\varnothing}}\sum_{\substack{\be,\be'\in\Ga_{12}\\ \text{Case (b2)} }}\frac{1}{((N-1))_4(N-1)^2}\qty(1-\frac{(N-5)(N-7)}{(N-1)(N-3)})\\
			\le& \sum_{\substack{\al,\al'\in\Ga_{11}\\ \al\cap\al'=\varnothing}}\sum_{\substack{\be,\be'\in\Ga_{12}\\ \text{Case (b2)} }}\frac{1}{((N-1))_4(N-1)^2}\frac{8}{N-3}\\
			\le&\frac{16\qty((n_1)_2)^2n_1{n_2}^2}{((N-1))_4(N-1)^2(N-3)}.
		\end{align*}

		\medskip
		\emph{Case} (b3): \emph{$\al$ and $\be$ share two degree 1 vertices, $\al'$ and $\be'$ share one degree 1 vertex only.}
		
		\medskip
		By symmetry, we obtain the same estimate as in Case (b2):
		\begin{align*}
			\sum_{\substack{\al,\al'\in\Ga_{11}\\ \al\cap\al'=\varnothing}}\sum_{\substack{\be,\be'\in\Ga_{12}\\ \text{Case (b3)} }}\e\qty[I_\al I_{\al'} J_{\be\al}J_{\be'\al'}]-\sum_{\substack{\al,\al'\in\Ga_{11}\\ \al\cap\al'=\varnothing}}\sum_{\substack{\be,\be'\in\Ga_{12}\\ \text{Case (b3)} }}p_\al p_{\al'} p_\be p_{\be'}\le\frac{16\qty((n_1)_2)^2n_1{n_2}^2}{((N-1))_4(N-1)^2(N-3)}.
		\end{align*}
		
		\medskip
		\emph{Case} (b4):	\emph{$\al$ and $\be$ share two degree 1 vertices, $\al'$ and $\be'$ share two degree 1 vertices.}
		
		\smallskip
		Suppose that $s_1=t_1$, $s_2=t_4$, $s_1'=t_1'$ and $s_2'=t_4'$ without loss of generality. Let $h_{\al,\be}:\mG_{\al}\cap\mG_{t_2t_3}\rightarrow\mG_\be$ and $h_{\al',\be'}:\mG_{\al'}\cap\mG_{t_2't_3'}\rightarrow\mG_{\be'}$ be the surjections in Lemma \ref{surjection al edge be 2star}(b) such that $h_{\al,\be}\qty(\qty(f^{-1}_\al(g_\be))_1)=g_\be$ for $g_\be\in\mG_\be$ and $h_{\al',\be'}\qty(\qty(f^{-1}_{\al'}(g_{\be'}))_1)=g_{\be'}$ for $g_{\be'}\in\mG_{\be'}$. Define a map $h_{\al,\al',\be,\be'}:\mG_{\al}\cap\mG_{\al'}\cap\mG_{t_2t_3}\cap\mG_{t_2't_3'}\rightarrow\mG_{\be}\times\mG_{\be'}$ by $h_{\al,\al',\be,\be'}(g)\coloneqq\qty(h_{\al,\be}(g), h_{\al',\be'}(g))$. 
		
		\medskip
		For $(g_\be,g_{\be'})\in\mH_{\al,\al',\be,\be'}$, $\qty(f_\al^{-1}(g))_1=\qty(f_{\al'}^{-1}(g'))_1$ must contain all of $\al,\al',t_2t_3,t_2't_3'$. First note that $\al\cap\al'=\al\cap(t_2t_3) =\al'\cap(t_2't_3')=\al\cap(t_2't_3')=\al'\cap(t_2t_3)=\varnothing$. Either Case (b4i): $\{t_2,t_3\}\cap\{t_2',t_3'\}=\varnothing$ or Case (b4ii):  $\{t_2,t_3\}\cap\{t_2',t_3'\}\neq\varnothing$ happens.  
		
		\medskip
		First consider Case (b4i): $\{t_2,t_3 \}\cap\{t_2',t_3' \}=\varnothing$. Then $(t_2t_3)\cap(t_2't_3')=\varnothing$ and $h_{\al,\al',\be,\be'}$ is a surjection from $\mG_{\al}\cap\mG_{\al'}\cap\mG_{t_2t_3}\cap\mG_{t_2't_3'}$ onto $\mH_{\al,\al',\be,\be'}$. Indeed, for any $(g_\be,g_{\be'})\in\mH_{\al,\al',\be,\be'}$, letting $g\coloneqq\qty(f_\al^{-1}(g_\be))_1=\qty(f_{\al'}^{-1}(g_{\be'}))_1\in\mG_{\al}\cap\mG_{\al'}\cap\mG_{t_2t_3}\cap\mG_{t_2't_3'}$ yields $h_{\al,\al',\be,\be'}(g)=(g_\be,g_{\be'})$. Therefore, we have
		\begin{align*}
			\qty|\mH_{\al,\al',\be,\be'}|\le\qty|\mG_{\al}\cap\mG_{\al'}\cap\mG_{t_2t_3}\cap\mG_{t_2't_3'}|=(N-9)!!
		\end{align*}
		and
		\begin{align*}
			\p(I_\al=I_{\al'}=1,J_{\be\al}=J_{\be'\al'}=1)\le\frac{1}{((N-1))_4}\frac{1}{(N-1)^2}
		\end{align*}
		in this case. Since we may estimate the possibilities of a quadruplet $\al,\al',\be,\be'$ in this class as (using Lemma \ref{cardinality estimates}(c) for the possibilities of $\be,\be'$)
			\begin{align*}
				\qty|\sum_{\substack{\al,\al'\in\Ga_{11}\\ \al\cap\al'=\varnothing}}\sum_{\substack{\be,\be'\in\Ga_{12}\\ \text{Case (b4i)} }}|\le\underbrace{\qty(\frac{(n_1)_2}{2})^2}_{\al,\al'}\times\underbrace{ \qty(2n_2)^2}_{\be,\be'}=\qty((n_1)_2)^2{n_2}^2,
		\end{align*}
	this leads to the estimate
		\begin{align*}
			&\sum_{\substack{\al,\al'\in\Ga_{11}\\ \al\cap\al'=\varnothing}}\sum_{\substack{\be,\be'\in\Ga_{12}\\ \text{Case (b4i)} }}\e\qty[I_\al I_{\al'} J_{\be\al}J_{\be'\al'}]-\sum_{\substack{\al,\al'\in\Ga_{11}\\ \al\cap\al'=\varnothing}}\sum_{\substack{\be,\be'\in\Ga_{12}\\ \text{Case (b4i)} }}p_\al p_{\al'} p_\be p_{\be'}\\
			\le&\sum_{\substack{\al,\al'\in\Ga_{11}\\ \al\cap\al'=\varnothing}}\sum_{\substack{\be,\be'\in\Ga_{12}\\ \text{Case (b4i)} }}\qty(\frac{1}{((N-1))_4(N-1)^2}-\frac{1}{(N-1)^4(N-3)^2})\\
			=&\sum_{\substack{\al,\al'\in\Ga_{11}\\ \al\cap\al'=\varnothing}}\sum_{\substack{\be,\be'\in\Ga_{12}\\ \text{Case (b4i)} }}\frac{1}{((N-1))_4(N-1)^2}\qty(1-\frac{(N-5)(N-7)}{(N-1)(N-3)})\\
			\le&\sum_{\substack{\al,\al'\in\Ga_{11}\\ \al\cap\al'=\varnothing}}\sum_{\substack{\be,\be'\in\Ga_{12}\\ \text{Case (b4i)} }}\frac{1}{((N-1))_4}\frac{1}{(N-1)^2}\frac{8}{N-3}\\
			\le&\frac{8\qty((n_1)_2)^2{n_2}^2}{((N-1))_4(N-1)^2(N-3)}.
		\end{align*}
		For Case (b4ii): $\{t_2,t_3\}\cap\{t_2',t_3' \}\neq\varnothing$, $\be$'s degree 2 vertex and $\be'$'s degree 2 vertex are equal to each other and $\{t_2,t_3\}=\{t_2',t_3'\}$ (equivalent to $t_2t_3=t_2't_3'$\footnote{$(t_2,t_3)=(t_3',t_2')$ can happen.}) holds. $h_{\al,\al',\be,\be'}$ similarly defines a surjection from $\mG_{\al}\cap\mG_{\al'}\cap\mG_{t_2t_3}$ onto $\mH_{\al,\al',\be,\be'}$. We have
		\begin{align*}
			\qty|\mH_{\al,\al',\be,\be'}|\le\qty|\mG_{\al}\cap\mG_{\al'}\cap\mG_{t_2t_3}|=(N-7)!!
		\end{align*}
		and
		\begin{align*}
			\p(I_\al=I_{\al'}=1,J_{\be\al}=J_{\be'\al'}=1)\le\frac{1}{((N-1))_3}\frac{1}{(N-1)^2}
		\end{align*}
		in this case. The two degree 1 vertices of $\be'$ are those of $\al'$. Since $\{t_2,t_3\}=\{t_2',t_3' \}$, the degree 2 vertex of $\be'$ is that of $\be$ in this case. Therefore, if $\al,\al',\be$ are given, there is no degree of freedom about the vertices of $\be'$ in this case. Keeping this in mind, we estimate the possibilities of a quadruplet $\al,\al',\be,\be'$ in this class as (also using Lemma \ref{cardinality estimates}(c) for the possibilities of $\be$)
		\begin{align*}
				\qty|\sum_{\substack{\al,\al'\in\Ga_{11}\\ \al\cap\al'=\varnothing}}\sum_{\substack{\be,\be'\in\Ga_{12}\\ \text{Case (b4ii)} }}|\le&\underbrace{\qty(\frac{(n_1)_2}{2})^2}_{\al,\al'}\times\underbrace{ 2n_2}_{\be}\times\underbrace{2}_{\be'}=\qty((n_1)_2)^2n_2.
		\end{align*}
	Thus we estimate
		\begin{align*}
			\sum_{\substack{\al,\al'\in\Ga_{11}\\ \al\cap\al'=\varnothing}}\sum_{\substack{\be,\be'\in\Ga_{12}\\ \text{Case (b4ii)} }}\e\qty[I_\al I_{\al'} J_{\be\al}J_{\be'\al'}]\le&\sum_{\substack{\al,\al'\in\Ga_{11}\\ \al\cap\al'=\varnothing}}\sum_{\substack{\be,\be'\in\Ga_{12}\\ \text{Case (b4ii)} }}\frac{1}{((N-1))_3(N-1)^2}\\
			\le&  \frac{\qty((n_1)_2)^2n_2}{((N-1))_3(N-1)^2}.
		\end{align*}
		
		\medskip
		Therefore, we obtain an estimate for Case (b4) as
		\begin{align*}
			&	\sum_{\substack{\al,\al'\in\Ga_{11}\\ \al\cap\al'=\varnothing}}\sum_{\substack{\be,\be'\in\Ga_{12}\\ \text{Case (b4)} }}\e\qty[I_\al I_{\al'} J_{\be\al}J_{\be'\al'}]-\sum_{\substack{\al,\al'\in\Ga_{11}\\ \al\cap\al'=\varnothing}}\sum_{\substack{\be,\be'\in\Ga_{12}\\ \text{Case (b4)} }}p_\al p_{\al'} p_\be p_{\be'}\\
			\le& \underbrace{\frac{8\qty((n_1)_2)^2{n_2}^2}{((N-1))_4(N-1)^2(N-3)}}_{\text{Case (b4i)}}+ \underbrace{\frac{\qty((n_1)_2)^2n_2}{((N-1))_3(N-1)^2}}_{\text{Case (b4ii)}}.
		\end{align*}
		
		\medskip
		From Cases (b1)--(b4), we conclude that
		\begin{equation}\label{third variance second line case b}
			\begin{split}
			&\sum_{\substack{\al,\al'\in\Ga_{11}\\ \al\cap\al'=\varnothing}}\sum_{\substack{\be\in\Ga_{12}\setminus\{\al\}\\ \be\cap\al\neq\varnothing }}\sum_{\substack{\be'\in\Ga_{12}\setminus\{\al'\}\\ \be'\cap\al'\neq\varnothing }}\e\qty[I_\al I_{\al'} J_{\be\al}J_{\be'\al'}]-\sum_{\substack{\al,\al'\in\Ga_{11}\\ \al\cap\al'=\varnothing}}\sum_{\substack{\be\in\Ga_{12}\setminus\{\al\}\\ \be\cap\al\neq\varnothing }}\sum_{\substack{\be'\in\Ga_{12}\setminus\{\al'\}\\ \be'\cap\al'\neq\varnothing }}p_\al p_{\al'} p_\be p_{\be'}\\
			\le&\underbrace{\frac{32\qty((n_1)_2)^2{n_1}^2{n_2}^2}{((N-1))_4(N-1)^2(N-3)}+\frac{4\qty((n_1)_2)^2n_1n_2}{((N-1))_3(N-1)^2}}_{\text{Case (b1)}}+\underbrace{\frac{32\qty((n_1)_2)^2n_1{n_2}^2}{((N-1))_4(N-1)^2(N-3)}}_{\text{Case (b2)+Case(b3)}}\\
			&\quad\quad\quad+\underbrace{\frac{8\qty((n_1)_2)^2{n_2}^2}{((N-1))_4(N-1)^2(N-3)}+ \frac{\qty((n_1)_2)^2n_2}{((N-1))_3(N-1)^2}}_{\text{Case (b4)}}.
			\end{split}
		\end{equation}
		
		\medskip
		\textbf{(c) $\Ga_{1j}$ is possible isolated 2-stars, $\Ga_{1k}$ is possible isolated edges ($j=2,k=1$).}
		
		\medskip
		Let $h_{\al,\be}:\mG_{\al}\times[N-1]\rightarrow\mG_\be$ be the surjection in Lemma \ref{surjection al 2star be edge} with the property: For any $g_\be\in\mG_\be$, there exists $c_\al(g_\be)\in[N-1]$ such that $h_{\al,\be}\qty(\qty(f^{-1}_\al(g_\be))_1,c_\al(g_\be))=g_\be$. Similarly, let $h_{\al',\be'}:\mG_{\al'}\times[N-1]\rightarrow\mG_{\be'}$ be the surjection in Lemma \ref{surjection al 2star be edge} with the property: For any $g_{\be'}\in\mG_{\be'}$, there exists $c_{\al'}(g_{\be'})\in[N-1]$ such that $h_{\al',\be'}\qty(\qty(f^{-1}_{\al'}(g_{\be'}))_1,c_{\al'}(g_{\be'}))=g_{\be'}$. Define a map $h_{\al,\al',\be,\be'}:\qty(\mG_{\al}\cap\mG_{\al'})\times[N-1]\times[N-1]\rightarrow\mG_{\be}\times\mG_{\be'}$ by $h_{\al,\al',\be,\be'}(g,c,c')\coloneqq\qty(h_{\al,\be}(g,c), h_{\al',\be'}(g,c'))$.  For any $(g_\be,g_{\be'})\in\mH_{\al,\al',\be,\be'}$, letting $g\coloneqq\qty(f_\al^{-1}(g_\be))_1=\qty(f_{\al'}^{-1}(g_{\be'}))_1\in\mG_{\al}\cap\mG_{\al'}$ yields $h_{\al,\al',\be,\be'}(g,c_\al(g_\be),c_{\al'}(g_{\be'}))=\qty(h_{\al,\be}(g,c_\al(g_\be)), h_{\al',\be'}(g,c_{\al'}(g_{\be'})))=(g_\be,g_{\be'})$. Therefore, $h_{\al,\al',\be,\be'}$ is a surjection onto $\mH_{\al,\al',\be,\be'}$ and we have
		\begin{align*}
			\qty|\mH_{\al,\al',\be,\be'}|\le\qty|\qty(\mG_{\al}\cap\mG_{\al'})\times[N-1]\times[N-1]|=(N-9)!!(N-1)^2.
		\end{align*}
		Consequently, we have
		\begin{align*}
			\p(I_\al=I_{\al'}=1,J_{\be\al}=J_{\be'\al'}=1)\le\frac{(N-1)^2}{((N-1))_4}\frac{1}{(N-1)^2(N-3)^2}
		\end{align*}
		in this case. This, combined with
	\begin{align*}
				\qty|\sum_{\substack{\al,\al'\in\Ga_{12}\\ \al\cap\al'=\varnothing}}\sum_{\substack{\be\in\Ga_{11}\setminus\{\al\}\\ \be\cap\al\neq\varnothing }}\sum_{\substack{\be'\in\Ga_{11}\setminus\{\al'\}\\ \be'\cap\al'\neq\varnothing }}|\le\underbrace{\qty((n_1)_2n_2)^2}_{\al,\al'}\times\underbrace{\qty(2 n_1)^2}_{\be,\be'}=4\qty((n_1)_2)^2{n_1}^2{n_2}^2
		\end{align*}
	(cf.\ Lemma \ref{cardinality estimates}(b)), leads to the estimate
		\begin{align}
			&\sum_{\substack{\al,\al'\in\Ga_{12}\\ \al\cap\al'=\varnothing}}\sum_{\substack{\be\in\Ga_{11}\setminus\{\al\}\\ \be\cap\al\neq\varnothing }}\sum_{\substack{\be'\in\Ga_{11}\setminus\{\al'\}\\ \be'\cap\al'\neq\varnothing }}\e\qty[I_\al I_{\al'} J_{\be\al}J_{\be'\al'}]-\sum_{\substack{\al,\al'\in\Ga_{12}\\ \al\cap\al'=\varnothing}}\sum_{\substack{\be\in\Ga_{11}\setminus\{\al\}\\ \be\cap\al\neq\varnothing }}\sum_{\substack{\be'\in\Ga_{11}\setminus\{\al'\}\\ \be'\cap\al'\neq\varnothing }}p_\al p_{\al'} p_\be p_{\be'}\notag\\
			\le&\sum_{\substack{\al,\al'\in\Ga_{12}\\ \al\cap\al'=\varnothing}}\sum_{\substack{\be\in\Ga_{11}\setminus\{\al\}\\ \be\cap\al\neq\varnothing }}\sum_{\substack{\be'\in\Ga_{11}\setminus\{\al'\}\\ \be'\cap\al'\neq\varnothing }}\qty(\frac{1}{((N-1))_4(N-3)^2}-\frac{1}{(N-1)^4(N-3)^2})\notag\\
			=&\sum_{\substack{\al,\al'\in\Ga_{12}\\ \al\cap\al'=\varnothing}}\sum_{\substack{\be\in\Ga_{11}\setminus\{\al\}\\ \be\cap\al\neq\varnothing }}\sum_{\substack{\be'\in\Ga_{11}\setminus\{\al'\}\\ \be'\cap\al'\neq\varnothing }}\frac{1}{((N-1))_4(N-3)^2}\qty(1-\frac{((N-1))_4(N-3)^2}{(N-1)^4(N-3)^2})\notag\\
			=&\sum_{\substack{\al,\al'\in\Ga_{12}\\ \al\cap\al'=\varnothing}}\sum_{\substack{\be\in\Ga_{11}\setminus\{\al\}\\ \be\cap\al\neq\varnothing }}\sum_{\substack{\be'\in\Ga_{11}\setminus\{\al'\}\\ \be'\cap\al'\neq\varnothing }}\frac{1}{((N-1))_4(N-3)^2}\qty(1-\frac{(N-3)(N-5)(N-7)}{(N-1)^3})\notag\\
			\le&  \frac{4\qty((n_1)_2)^2{n_1}^2{n_2}^2}{((N-1))_4(N-3)^2}\frac{12}{N-1}\notag\\
			=&  \frac{48\qty((n_1)_2)^2{n_1}^2{n_2}^2}{((N-1))_4(N-3)^2(N-1)}.\label{third variance second line case c}
		\end{align}

		\medskip
		\textbf{(d) $\Ga_{1j}$ is possible isolated 2-stars, $\Ga_{1k}$ is possible isolated 2-stars ($j=k=2$).}
		
		\smallskip
		Let $\al=(s_1s_2)\cup(s_3s_4)$, $\be=(t_1t_2)\cup(t_3t_4)$, $\al'=(s_1's_2')\cup(s_3's_4')$ and $\be'=(t_1't_2')\cup(t_3't_4')$, where $s_2$ and $s_3$ are incident to $\al$'s degree 2 vertex, $t_2$ and $t_3$ are incident to $\be$'s degree 2 vertex, $s_2'$ and $s_3'$ are incident to $\al'$'s degree 2 vertex, and $t_2'$ and $t_3'$ are incident to $\be'$'s degree 2 vertex. In view of Lemma \ref{surjection 2star 2star}, there are four possible cases in total about $\al\cap\be\neq\varnothing,\al\neq\be$ and $\al'\cap\be'\neq\varnothing,\al'\neq\be'$. We begin with the most complicated case.
		
		\medskip
		\emph{Case} (d1): \emph{$\al$ and $\be$ share one degree 1 vertex only, $\al'$ and $\be'$ share one degree 1 vertex only.}

		\medskip
		Suppose that $s_4=t_1$ and $s_4'=t_1'$ without loss of generality.  Let $h_{\al,\be}:\qty(\mG_{\al}\cap\mG_{t_3t_4})\times[N-1]\rightarrow\mG_\be$ be the surjection in Lemma \ref{surjection 2star 2star}(a) with the property: For any $g_\be\in\mG_\be$, there exists $c_\al(g_\be)\in[N-1]$ such that $h_{\al,\be}\qty(\qty(f^{-1}_\al(g_\be))_1,c_\al(g_\be))=g_\be$. Similarly, let $h_{\al',\be'}:\qty(\mG_{\al'}\cap\mG_{t_3't_4'})\times[N-1]\rightarrow\mG_{\be'}$ be the surjection in Lemma \ref{surjection 2star 2star}(a) with the property: For any $g_{\be'}\in\mG_{\be'}$, there exists $c_{\al'}(g_{\be'})\in[N-1]$ such that $h_{\al',\be'}\qty(\qty(f^{-1}_{\al'}(g_{\be'}))_1,c_{\al'}(g_{\be'}))=g_{\be'}$. Define a map $h_{\al,\al',\be,\be'}:\qty(\mG_{\al}\cap\mG_{\al'}\cap\mG_{t_3t_4}\cap\mG_{t_3't_4'})\times[N-1]\times[N-1]\rightarrow\mG_{\be}\times\mG_{\be'}$ by $h_{\al,\al',\be,\be'}(g,c,c')\coloneqq\qty(h_{\al,\be}(g,c), h_{\al',\be'}(g,c'))$.
		
		\medskip
		For $(g_\be,g_{\be'})\in\mH_{\al,\al',\be,\be'}$, $\qty(f_\al^{-1}(g))_1=\qty(f_{\al'}^{-1}(g'))_1$ must contain all of $\al,\al',t_3t_4,t_3't_4'$. Note that $\al\cap\al'=\al\cap(t_3t_4)=\al'\cap(t_3't_4')=\varnothing$.
		
		\medskip
		If $(t_3t_4)\cap\al'\neq\varnothing$, $t_3t_4$ and $\al'$ must share a vertex. When $t_3t_4$ and $\al'$ share a degree 1 vertex, $t_4$ must be equal to either $s_1'$ or $s_4'$. In the case when $t_4=s_1'$ (resp.\ $t_4=s_4'$), we may assume that $t_3=s_2'$ and $t_3t_4=s_1's_2'$ (resp.\ $t_3=s_3'$ and $t_3t_4=s_3's_4'$), because otherwise $t_3t_4$ and $\al'$ cannot coexist and $\mH_{\al,\al',\be,\be'}$ will be empty. When $t_3t_4$ and $\al'$ share a degree 2 vertex, $t_3$ must be equal to either $s_2'$ or $s_3'$. In the case when $t_3=s_2'$ (resp.\ $t_3=s_3'$), we may assume that $t_4=s_1'$ and $t_3t_4=s_1's_2'$ (resp.\ $t_4=s_4'$ and $t_3t_4=s_3's_4'$), because otherwise $t_3t_4$ and $\al'$ cannot coexist and $\mH_{\al,\al',\be,\be'}$ will be empty. To sum up, if $(t_3t_4)\cap\al'\neq\varnothing$, we may assume that $t_3t_4$ coincides with an edge of $\al'$, and then $(t_3t_4)\cap(t_3't_4')=\varnothing$ follows by $\al'\cap(t_3't_4')=\varnothing$. Similarly, if $(t_3't_4')\cap\al\neq\varnothing$, we may assume that $t_3't_4'$ coincides with an edge of $\al$, and then $(t_3t_4)\cap(t_3't_4')=\varnothing$ follows by $\al\cap(t_3t_4)=\varnothing$.
		
		\medskip
		If $\{t_3,t_4\}\cap\{t_3',t_4' \}\neq\varnothing$, we may assume that $\{t_3,t_4\}=\{t_3',t_4'\}$ (and thus $t_3t_4=t_3't_4'$), because otherwise $t_3t_4$ and $t_3't_4'$ cannot coexist and $\mH_{\al,\al',\be,\be'}$ will be empty.
		
		\medskip
		From these observations, we realize that it suffices to consider the following five subcases:
		{\setlength{\leftmargini}{29.5pt}  	
			\begin{itemize}
				\setlength{\labelsep}{7pt}     
				\setlength{\itemsep}{3pt}     
				
				\item[] Case (d1i): $\{t_3,t_4\}\cap\{t_3',t_4'\}=\varnothing$ (as the subsets of half-edges\footnote{If $t_3$ and $t_3'$ are incident to the same degree 2 vertex, $t_3\neq t_3'$ and $t_4\neq t_4'$, then $\{t_3,t_4 \}\cap\{t_3',t_4' \}=\varnothing$ but $(t_3t_4)\cap(t_3't_4')\neq\varnothing$ because those possible edges share that degree 2 vertex.}), $(t_3t_4)\cap\al'=(t_3't_4')\cap\al=\varnothing$.
				
				\item[] Case (d1ii): $\{t_3,t_4\}=\{t_3',t_4'\}$, $(t_3t_4)\cap\al'=(t_3't_4')\cap\al=\varnothing$.
				
				\item[] Case (d1iii):  $t_3t_4$ is an edge of $\al'$, $(t_3't_4')\cap\al=\varnothing$.
				
				\item[] Case (d1iv): $t_3't_4'$ is an edge of $\al$, $(t_3t_4)\cap\al'=\varnothing$.
				
				\item[] Case (d1v):  $t_3t_4$ is an edge of $\al'$, $t_3't_4'$ is an edge of $\al$.

		\end{itemize} }
		
		\medskip
		For Case (d1i), $h_{\al,\al',\be,\be'}$ defines a surjection from $\qty(\mG_{\al}\cap\mG_{\al'}\cap\mG_{t_3t_4}\cap\mG_{t_3't_4'})\times[N-1]\times[N-1]$ onto $\mH_{\al,\al',\be,\be'}$. Indeed, for any $(g_\be,g_{\be'})\in\mH_{\al,\al',\be,\be'}$, letting $g\coloneqq\qty(f_\al^{-1}(g_\be))_1=\qty(f_{\al'}^{-1}(g_{\be'}))_1\in\mG_{\al}\cap\mG_{\al'}\cap\mG_{t_3t_4}\cap\mG_{t_3't_4'}$ yields $h_{\al,\al',\be,\be'}(g,c_\al(g_\be),c_{\al'}(g_{\be'}))=\qty(h_{\al,\be}(g,c_\al(g_\be)), h_{\al',\be'}(g,c_{\al'}(g_{\be'})))=(g_\be,g_{\be'})$. Therefore, we have
		\begin{align*}
			\qty|\mH_{\al,\al',\be,\be'}|\le&\left|\qty(\mG_{\al}\cap\mG_{\al'}\cap\mG_{t_3t_4}\cap\mG_{t_3't_4'})\times[N-1]\times[N-1] \right|\\
			=&(N-13)!!(N-1)^2
		\end{align*}
		and
		\begin{align*}
			\p(I_\al=I_{\al'}=1,J_{\be\al}=J_{\be'\al'}=1)\le\frac{(N-1)^2}{((N-1))_6}\frac{1}{(N-1)^2(N-3)^2}
		\end{align*}
		in this case. Since we may estimate the possibilities of a quadruplet $\al,\al',\be,\be'$ in this class as (using Lemma \ref{cardinality estimates}(d) for the possibilities of $\be,\be'$)
		\begin{align*}
				\qty|\sum_{\substack{\al,\al'\in\Ga_{12}\\ \al\cap\al'=\varnothing}}\sum_{\substack{\be,\be'\in\Ga_{12}\\ \text{Case (d1i)} }}|\le&\underbrace{\qty((n_1)_2n_2)^2}_{\al,\al'}\times\underbrace{\qty(4n_1 n_2)^2}_{\be,\be'}=16\qty((n_1)_2)^2{n_1}^2{n_2}^4,
		\end{align*}
		 this leads to the estimate
		\begin{align*}
			&\sum_{\substack{\al,\al'\in\Ga_{12}\\ \al\cap\al'=\varnothing}}\sum_{\substack{\be,\be'\in\Ga_{12}\\ \text{Case (d1i)} }}\e\qty[I_\al I_{\al'} J_{\be\al}J_{\be'\al'}]-\sum_{\substack{\al,\al'\in\Ga_{12}\\ \al\cap\al'=\varnothing}}\sum_{\substack{\be,\be'\in\Ga_{12}\\ \text{Case (d1i)} }}p_\al p_{\al'} p_\be p_{\be'}\\
			\le&\sum_{\substack{\al,\al'\in\Ga_{12}\\ \al\cap\al'=\varnothing}}\sum_{\substack{\be,\be'\in\Ga_{12}\\ \text{Case (d1i)} }}\qty(\frac{1}{((N-1))_6(N-3)^2}-\frac{1}{(N-1)^4(N-3)^4})\\
			=&\sum_{\substack{\al,\al'\in\Ga_{12}\\ \al\cap\al'=\varnothing}}\sum_{\substack{\be,\be'\in\Ga_{12}\\ \text{Case (d1i)} }}\frac{1}{((N-1))_6(N-3)^2}\qty(1-\frac{((N-1))_6(N-3)^2}{(N-1)^4(N-3)^4})\\
			=&\sum_{\substack{\al,\al'\in\Ga_{12}\\ \al\cap\al'=\varnothing}}\sum_{\substack{\be,\be'\in\Ga_{12}\\ \text{Case (d1i)} }}\frac{1}{((N-1))_6(N-3)^2}\qty(1-\frac{(N-5)(N-7)(N-9)(N-11)}{(N-1)^3(N-3)})\\
			\le&  \frac{16\qty((n_1)_2)^2{n_1}^2{n_2}^4}{((N-1))_6(N-3)^2}\frac{26}{N-3}\\
			\le&  \frac{416\qty((n_1)_2)^2{n_1}^2{n_2}^4}{((N-1))_6(N-3)^3}.
		\end{align*}
		
		\medskip
		For Case (d1ii), $h_{\al,\al',\be,\be'}$ defines a surjection from $\qty(\mG_{\al}\cap\mG_{\al'}\cap\mG_{t_3t_4})\times[N-1]\times[N-1]$ onto $\mH_{\al,\al',\be,\be'}$. Indeed, for any $(g_\be,g_{\be'})\in\mH_{\al,\al',\be,\be'}$, letting $g\coloneqq\qty(f_\al^{-1}(g_\be))_1=\qty(f_{\al'}^{-1}(g_{\be'}))_1\in\mG_{\al}\cap\mG_{\al'}\cap\mG_{t_3t_4}$ yields $h_{\al,\al',\be,\be'}(g,c_\al(g_\be),c_{\al'}(g_{\be'}))=\qty(h_{\al,\be}(g,c_\al(g_\be)), h_{\al',\be'}(g,c_{\al'}(g_{\be'})))=(g_\be,g_{\be'})$. Therefore, we have
		\begin{align*}
			\qty|\mH_{\al,\al',\be,\be'}|\le&\qty|\qty(\mG_{\al}\cap\mG_{\al'}\cap\mG_{t_3t_4})\times[N-1]\times[N-1]|\\
			=&(N-11)!!(N-1)^2
		\end{align*}
		and
		\begin{align*}
			\p(I_\al=I_{\al'}=1,J_{\be\al}=J_{\be'\al'}=1)\le\frac{(N-1)^2}{((N-1))_5}\frac{1}{(N-1)^2(N-3)^2}
		\end{align*}
		in this case. One of the degree 1 vertices of $\be'$ is one of the degree 1 vertices of $\al'$. Since $\{t_3,t_4 \}=\{t_3',t_4' \}$, the other degree 1 vertex of $\be'$ is the degree 1 vertex of $\be$ not shared by $\al$, and the degree 2 vertex of $\be'$ is that of $\be$. Therefore, if $\al,\al',\be$ are given, there are only two possibilities about the vertices of $\be'$ in this case; namely, two possibilities about the degree 1 vertex from $\al'$. Keeping this in mind, we estimate the possibilities of a quadruplet $\al,\al',\be,\be'$ in this class as (also using Lemma \ref{cardinality estimates}(d) for the possibilities of $\be$)
		\begin{align*}
				\qty|\sum_{\substack{\al,\al'\in\Ga_{12}\\ \al\cap\al'=\varnothing}}\sum_{\substack{\be,\be'\in\Ga_{12}\\ \text{Case (d1ii)} }}|\le&\underbrace{\qty((n_1)_2n_2)^2}_{\al,\al'}\times\underbrace{4 n_1 n_2}_{\be}\times\underbrace{2\times2}_{\be'}=16\qty((n_1)_2)^2n_1{n_2}^3.
		\end{align*}
	Thus we estimate
		\begin{align*}
			\sum_{\substack{\al,\al'\in\Ga_{12}\\ \al\cap\al'=\varnothing}}\sum_{\substack{\be,\be'\in\Ga_{12}\\ \text{Case (d1ii)} }}\e\qty[I_\al I_{\al'} J_{\be\al}J_{\be'\al'}]\le&\sum_{\substack{\al,\al'\in\Ga_{12}\\ \al\cap\al'=\varnothing}}\sum_{\substack{\be,\be'\in\Ga_{12}\\ \text{Case (d1ii)} }}\frac{1}{((N-1))_5(N-3)^2}\\
			\le&  \frac{16\qty((n_1)_2)^2n_1{n_2}^3}{((N-1))_5(N-3)^2}.
		\end{align*}
		
		\medskip
		For Case (d1iii), $h_{\al,\al',\be,\be'}$ defines a surjection from $\qty(\mG_{\al}\cap\mG_{\al'}\cap\mG_{t_3't_4'})\times[N-1]\times[N-1]$ onto $\mH_{\al,\al',\be,\be'}$. Indeed, for any $(g_\be,g_{\be'})\in\mH_{\al,\al',\be,\be'}$, letting $g\coloneqq\qty(f_\al^{-1}(g_\be))_1=\qty(f_{\al'}^{-1}(g_{\be'}))_1\in\mG_{\al}\cap\mG_{\al'}\cap\mG_{t_3't_4'}$ yields $h_{\al,\al',\be,\be'}(g,c_\al(g_\be),c_{\al'}(g_{\be'}))=\qty(h_{\al,\be}(g,c_\al(g_\be)), h_{\al',\be'}(g,c_{\al'}(g_{\be'})))=(g_\be,g_{\be'})$. Therefore, we have
		\begin{align*}
			\qty|\mH_{\al,\al',\be,\be'}|\le&\qty|\qty(\mG_{\al}\cap\mG_{\al'}\cap\mG_{t_3't_4'})\times[N-1]\times[N-1]|\\
			=&(N-11)!!(N-1)^2
		\end{align*}
		and
		\begin{align*}
			\p(I_\al=I_{\al'}=1,J_{\be\al}=J_{\be'\al'}=1)\le\frac{(N-1)^2}{((N-1))_5}\frac{1}{(N-1)^2(N-3)^2}
		\end{align*}
		in this case. One of the degree 1 vertices of $\be$ is one of the degree 1 vertices of $\al$. Since $t_3t_4$ is an edge of $\al'$, the other degree 1 vertex of $\be$ is one of the degree 1 vertices of $\al'$, and the degree 2 vertex of $\be$ is that of $\al'$. Thus, if $\al,\al'$ are given, there are only four possibilities about the vertices of $\be$ in this case; namely, there are two possibilities about the degree 1 vertex from $\al$ and two possibilities about the degree 1 vertex from $\al'$. Keeping this in mind, we estimate the possibilities of a quadruplet $\al,\al',\be,\be'$ in this class as (also using Lemma \ref{cardinality estimates}(d) for the possibilities of $\be'$)
		\begin{align*}
				\qty|\sum_{\substack{\al,\al'\in\Ga_{12}\\ \al\cap\al'=\varnothing}}\sum_{\substack{\be,\be'\in\Ga_{12}\\ \text{Case (d1iii)} }}|\le&\underbrace{\qty((n_1)_2n_2)^2}_{\al,\al'}\times\underbrace{2\times2\times2}_{\be}\times\underbrace{4 n_1 n_2}_{\be'}=32\qty((n_1)_2)^2n_1{n_2}^3.
		\end{align*}
		Thus we estimate
		\begin{align*}
			\sum_{\substack{\al,\al'\in\Ga_{12}\\ \al\cap\al'=\varnothing}}\sum_{\substack{\be,\be'\in\Ga_{12}\\ \text{Case (d1iii)} }}\e\qty[I_\al I_{\al'} J_{\be\al}J_{\be'\al'}]\le&\sum_{\substack{\al,\al'\in\Ga_{12}\\ \al\cap\al'=\varnothing}}\sum_{\substack{\be,\be'\in\Ga_{12}\\ \text{Case (d1iii)} }}\frac{1}{((N-1))_5(N-3)^2}\\
			\le&  \frac{32\qty((n_1)_2)^2n_1{n_2}^3}{((N-1))_5(N-3)^2}.
		\end{align*}

		\medskip
		For Case (d1iv), we have the same estimate as in Case (d1iii). The argument is completely parallel to that of Case (d1iii) and thus omitted.
		\begin{align*}
			\sum_{\substack{\al,\al'\in\Ga_{12}\\ \al\cap\al'=\varnothing}}\sum_{\substack{\be,\be'\in\Ga_{12}\\ \text{Case (d1iv)} }}\e\qty[I_\al I_{\al'} J_{\be\al}J_{\be'\al'}]
			\le&   \frac{32\qty((n_1)_2)^2n_1{n_2}^3}{((N-1))_5(N-3)^2}.
		\end{align*}
		
		\medskip
		For Case (d1v), $h_{\al,\al',\be,\be'}$ defines a surjection from $\qty(\mG_{\al}\cap\mG_{\al'})\times[N-1]\times[N-1]$ onto $\mH_{\al,\al',\be,\be'}$. Indeed, for any $(g_\be,g_{\be'})\in\mH_{\al,\al',\be,\be'}$, letting $g\coloneqq\qty(f_\al^{-1}(g_\be))_1=\qty(f_{\al'}^{-1}(g_{\be'}))_1\in\mG_{\al}\cap\mG_{\al'}$ yields $h_{\al,\al',\be,\be'}(g,c_\al(g_\be),c_{\al'}(g_{\be'}))=\qty(h_{\al,\be}(g,c_\al(g_\be)), h_{\al',\be'}(g,c_{\al'}(g_{\be'})))=(g_\be,g_{\be'})$. Therefore, we have
		\begin{align*}
			&\qty|\mH_{\al,\al',\be,\be'} |	\le\qty|\qty(\mG_{\al}\cap\mG_{\al'})\times[N-1]\times[N-1]|=(N-9)!!(N-1)^2
		\end{align*}
		and
		\begin{align*}
			\p(I_\al=I_{\al'}=1,J_{\be\al}=J_{\be'\al'}=1)\le\frac{(N-1)^2}{((N-1))_4}\frac{1}{(N-1)^2(N-3)^2}
		\end{align*}
		in this case.  Since $t_3t_4$ is an edge of $\al'$, if $\al,\al'$ are given, there are only four possibilities about the vertices of $\be$; namely, there are two possibilities about the degree 1 vertex from $\al$ and two possibilities about the degree 1 vertex from $\al'$. Similarly, since $t_3't_4'$ is an edge of $\al$, if $\al,\al'$ are given, there are only four possibilities about the vertices of $\be'$; namely, there are two possibilities about the degree 1 vertex from $\al'$ and two possibilities about the degree 1 vertex from $\al$. Keeping this in mind, we estimate the possibilities of a quadruplet $\al,\al',\be,\be'$ in this class as
		\begin{align*}
				\qty|\sum_{\substack{\al,\al'\in\Ga_{12}\\ \al\cap\al'=\varnothing}}\sum_{\substack{\be,\be'\in\Ga_{12}\\ \text{Case (d1v)} }}|\le&\underbrace{\qty((n_1)_2n_2)^2}_{\al,\al'}\times\underbrace{2\times2\times2}_{\be}\times\underbrace{2\times2\times2}_{\be'}=64\qty((n_1)_2)^2{n_2}^2.
		\end{align*}
		Thus we estimate
		\begin{align*}
			\sum_{\substack{\al,\al'\in\Ga_{12}\\ \al\cap\al'=\varnothing}}\sum_{\substack{\be,\be'\in\Ga_{12}\\ \text{Case (d1v)} }}\e\qty[I_\al I_{\al'} J_{\be\al}J_{\be'\al'}]\le&\sum_{\substack{\al,\al'\in\Ga_{12}\\ \al\cap\al'=\varnothing}}\sum_{\substack{\be,\be'\in\Ga_{12}\\ \text{Case (d1v)} }}\frac{1}{((N-1))_4(N-3)^2}\\
			\le& \frac{64\qty((n_1)_2)^2{n_2}^2}{((N-1))_4(N-3)^2}.
		\end{align*}

		\medskip
		Therefore, we obtain an estimate for Case (d1) as
		\begin{align*}
			&\sum_{\substack{\al,\al'\in\Ga_{12}\\ \al\cap\al'=\varnothing}}\sum_{\substack{\be,\be'\in\Ga_{12}\\ \text{Case (d1)} }}\e\qty[I_\al I_{\al'} J_{\be\al}J_{\be'\al'}]-\sum_{\substack{\al,\al'\in\Ga_{12}\\ \al\cap\al'=\varnothing}}\sum_{\substack{\be,\be'\in\Ga_{12}\\ \text{Case (d1)} }}p_\al p_{\al'} p_\be p_{\be'}\\
			\le& \underbrace{\frac{416\qty((n_1)_2)^2{n_1}^2{n_2}^4}{((N-1))_6(N-3)^3}}_{\text{Case (d1i)}}+\underbrace{ \frac{80\qty((n_1)_2)^2n_1{n_2}^3}{((N-1))_5(N-3)^2}}_{\text{Case (d1ii)+Case (d1iii)+Case (d1iv)}}+\underbrace{\frac{64\qty((n_1)_2)^2{n_2}^2}{((N-1))_4(N-3)^2}}_{\text{Case (d1v)}}.
		\end{align*}
		
		\medskip
		\emph{Case} (d2): \emph{$\al$ and $\be$ share one degree 1 vertex only, $\al'$ and $\be'$ are as in Lemma \ref{surjection 2star 2star}(b).}
		
		\medskip
		Suppose that $s_4=t_1$ without loss of generality.  Let $h_{\al,\be}:\qty(\mG_{\al}\cap\mG_{t_3t_4})\times[N-1]\rightarrow\mG_\be$ be the surjection in Lemma \ref{surjection 2star 2star}(a) with the property: For any $g_\be\in\mG_\be$, there exists $c_\al(g_\be)\in[N-1]$ such that $h_{\al,\be}\qty(\qty(f^{-1}_\al(g_\be))_1,c_\al(g_\be))=g_\be$. Let $h_{\al',\be'}:\mG_{\al'}\rightarrow\mG_{\be'}$ be the surjection in Lemma \ref{surjection 2star 2star}(b) such that $h_{\al',\be'}\qty(\qty(f^{-1}_{\al'}(g_{\be'}))_1)=g_{\be'}$ for $g_{\be'}\in\mG_{\be'}$. Define a map $h_{\al,\al',\be,\be'}:\qty(\mG_{\al}\cap\mG_{\al'}\cap\mG_{t_3t_4})\times[N-1]\rightarrow\mG_{\be}\times\mG_{\be'}$ by $h_{\al,\al',\be,\be'}(g,c)\coloneqq\qty(h_{\al,\be}(g,c), h_{\al',\be'}(g))$.
		
		\medskip
		For $(g_\be,g_{\be'})\in\mH_{\al,\al',\be,\be'}$, $\qty(f_\al^{-1}(g))_1=\qty(f_{\al'}^{-1}(g'))_1$ must contain all of $\al,\al',t_3t_4$. Note that $\al\cap\al'=\al\cap(t_3t_4)=\varnothing$.
		
		\medskip
		If $(t_3t_4)\cap\al'\neq\varnothing$, $t_3t_4$ and $\al'$ must share a vertex. When $t_3t_4$ and $\al'$ share a degree 1 vertex, $t_4$ must be equal to either $s_1'$ or $s_4'$. In the case when $t_4=s_1'$ (resp.\ $t_4=s_4'$), we may assume that $t_3=s_2'$ and $t_3t_4=s_1's_2'$ (resp.\ $t_3=s_3'$ and $t_3t_4=s_3's_4'$), because otherwise $t_3t_4$ and $\al'$ cannot coexist and $\mH_{\al,\al',\be,\be'}$ will be empty. When $t_3t_4$ and $\al'$ share a degree 2 vertex, $t_3$ must be equal to either $s_2'$ or $s_3'$. In the case when $t_3=s_2'$ (resp.\ $t_3=s_3'$), we may assume that $t_4=s_1'$ and $t_3t_4=s_1's_2'$ (resp.\ $t_4=s_4'$ and $t_3t_4=s_3's_4'$), because otherwise $t_3t_4$ and $\al'$ cannot coexist and $\mH_{\al,\al',\be,\be'}$ will be empty. To sum up, if $(t_3t_4)\cap\al'\neq\varnothing$, we may assume that $t_3t_4$ coincides with an edge of $\al'$.
		
		\medskip
		Thus it suffices to consider the following two subcases: Case (d2i): $(t_3t_4)\cap\al'=\varnothing$ and Case (d2ii): $t_3t_4$ is an edge of $\al'$.
		
		\medskip
		For Case (d2i), $h_{\al,\al',\be,\be'}$ defines a surjection from $\qty(\mG_{\al}\cap\mG_{\al'}\cap\mG_{t_3t_4})\times[N-1]$ onto $\mH_{\al,\al',\be,\be'}$. Indeed, for any $(g_\be,g_{\be'})\in\mH_{\al,\al',\be,\be'}$, letting $g\coloneqq\qty(f_\al^{-1}(g_\be))_1=\qty(f_{\al'}^{-1}(g_{\be'}))_1\in\mG_{\al}\cap\mG_{\al'}\cap\mG_{t_3t_4}$ yields $h_{\al,\al',\be,\be'}(g,c_\al(g_\be))=\qty(h_{\al,\be}(g,c_\al(g_\be)), h_{\al',\be'}(g))=(g_\be,g_{\be'})$. Therefore, we have
		\begin{align*}
			\qty|\mH_{\al,\al',\be,\be'}|\le\left|\qty(\mG_{\al}\cap\mG_{\al'}\cap\mG_{t_3t_4})\times[N-1] \right|=(N-11)!!(N-1)
		\end{align*}
		and
		\begin{align*}
			\p(I_\al=I_{\al'}=1,J_{\be\al}=J_{\be'\al'}=1)\le\frac{(N-1)}{((N-1))_5}\frac{1}{(N-1)^2(N-3)^2}
		\end{align*}
		in this case. Since we may estimate the possibilities of a quadruplet $\al,\al',\be,\be'$ in this class as (using Lemma \ref{cardinality estimates}(d) for the possibilities of $\be,\be'$)
			\begin{align*}
				\qty|\sum_{\substack{\al,\al'\in\Ga_{12}\\ \al\cap\al'=\varnothing}}\sum_{\substack{\be,\be'\in\Ga_{12}\\ \text{Case (d2i)} }}|\le&\underbrace{\qty((n_1)_2n_2)^2}_{\al,\al'}\times\underbrace{4n_1 n_2}_{\be}\times\underbrace{\qty((n_1)_2+2n_2)}_{\be'}=4\qty((n_1)_2)^2n_1{n_2}^3\qty((n_1)_2+2n_2),
		\end{align*}
		 this leads to the estimate
		\begin{align*}
			&\sum_{\substack{\al,\al'\in\Ga_{12}\\ \al\cap\al'=\varnothing}}\sum_{\substack{\be,\be'\in\Ga_{12}\\ \text{Case (d2i)} }}\e\qty[I_\al I_{\al'} J_{\be\al}J_{\be'\al'}]-\sum_{\substack{\al,\al'\in\Ga_{12}\\ \al\cap\al'=\varnothing}}\sum_{\substack{\be,\be'\in\Ga_{12}\\ \text{Case (d2i)} }}p_\al p_{\al'} p_\be p_{\be'}\\
			\le&\sum_{\substack{\al,\al'\in\Ga_{12}\\ \al\cap\al'=\varnothing}}\sum_{\substack{\be,\be'\in\Ga_{12}\\ \text{Case (d2i)} }}\qty(\frac{1}{((N-1))_5(N-1)(N-3)^2}-\frac{1}{(N-1)^4(N-3)^4})\\
			=&\sum_{\substack{\al,\al'\in\Ga_{12}\\ \al\cap\al'=\varnothing}}\sum_{\substack{\be,\be'\in\Ga_{12}\\ \text{Case (d2i)} }}\frac{1}{((N-1))_5(N-1)(N-3)^2}\qty(1-\frac{((N-1))_5(N-1)(N-3)^2}{(N-1)^4(N-3)^4})\\
			=&\sum_{\substack{\al,\al'\in\Ga_{12}\\ \al\cap\al'=\varnothing}}\sum_{\substack{\be,\be'\in\Ga_{12}\\ \text{Case (d2i)} }}\frac{1}{((N-1))_5(N-1)(N-3)^2}\qty(1-\frac{(N-5)(N-7)(N-9)}{(N-1)^2(N-3)})\\
			\le& \frac{4\qty((n_1)_2)^2n_1{n_2}^3\qty((n_1)_2+2n_2)}{((N-1))_5(N-1)(N-3)^2}\frac{16}{N-3}\\
			=& \frac{64\qty((n_1)_2)^2n_1{n_2}^3\qty((n_1)_2+2n_2)}{((N-1))_5(N-1)(N-3)^3}.
		\end{align*}
		
		\medskip
		For Case (d2ii), $h_{\al,\al',\be,\be'}$ defines a surjection from $\qty(\mG_{\al}\cap\mG_{\al'})\times[N-1]$ onto $\mH_{\al,\al',\be,\be'}$. Indeed, for any $(g_\be,g_{\be'})\in\mH_{\al,\al',\be,\be'}$, letting $g\coloneqq\qty(f_\al^{-1}(g_\be))_1=\qty(f_{\al'}^{-1}(g_{\be'}))_1\in\mG_{\al}\cap\mG_{\al'}$ yields $h_{\al,\al',\be,\be'}(g,c_\al(g_\be))=\qty(h_{\al,\be}(g,c_\al(g_\be)), h_{\al',\be'}(g))=(g_\be,g_{\be'})$. Therefore, we have
		\begin{align*}
			\qty|\mH_{\al,\al',\be,\be'}|\le\qty|\qty(\mG_{\al}\cap\mG_{\al'})\times[N-1]|=(N-9)!!(N-1)
		\end{align*}
		and
		\begin{align*}
			\p(I_\al=I_{\al'}=1,J_{\be\al}=J_{\be'\al'}=1)\le\frac{(N-1)}{((N-1))_4}\frac{1}{(N-1)^2(N-3)^2}
		\end{align*}
		in this case. One of the degree 1 vertices of $\be$ is one of the degree 1 vertices of $\al$. Since $t_3t_4$ is an edge of $\al'$, the other degree 1 vertex of $\be$ is one of the degree 1 vertices of $\al'$, and the degree 2 vertex of $\be$ is that of $\al'$. Thus, if $\al,\al'$ are given, there are only four possibilities about the vertices of $\be$ in this case; namely, there are two possibilities about the degree 1 vertex from $\al$ and two possibilities about the degree 1 vertex from $\al'$. Keeping this in mind, we estimate the possibilities of a quadruplet $\al,\al',\be,\be'$ in this class as (also using Lemma \ref{cardinality estimates}(d) for the possibilities of $\be'$)
		\begin{align*}
				\qty|\sum_{\substack{\al,\al'\in\Ga_{12}\\ \al\cap\al'=\varnothing}}\sum_{\substack{\be,\be'\in\Ga_{12}\\ \text{Case (d2ii)} }}|\le&\underbrace{\qty((n_1)_2n_2)^2}_{\al,\al'}\times\underbrace{2\times2\times2}_{\be}\times\underbrace{\qty((n_1)_2+2n_2)}_{\be'}=8\qty((n_1)_2)^2{n_2}^2\qty((n_1)_2+2n_2).
		\end{align*}
		Thus we estimate
		\begin{align*}
			\sum_{\substack{\al,\al'\in\Ga_{12}\\ \al\cap\al'=\varnothing}}\sum_{\substack{\be,\be'\in\Ga_{12}\\ \text{Case (d2ii)} }}\e\qty[I_\al I_{\al'} J_{\be\al}J_{\be'\al'}]\le&\sum_{\substack{\al,\al'\in\Ga_{12}\\ \al\cap\al'=\varnothing}}\sum_{\substack{\be,\be'\in\Ga_{12}\\ \text{Case (d2ii)} }}\frac{1}{((N-1))_4(N-1)(N-3)^2}\\
			\le&  \frac{8\qty((n_1)_2)^2{n_2}^2\qty((n_1)_2+2n_2)}{((N-1))_4(N-1)(N-3)^2}.
		\end{align*}
		
		\medskip
		Therefore, we obtain an estimate for Case (d2) as
		\begin{align*}
			&\sum_{\substack{\al,\al'\in\Ga_{12}\\ \al\cap\al'=\varnothing}}\sum_{\substack{\be,\be'\in\Ga_{12}\\ \text{Case (d2)} }}\e\qty[I_\al I_{\al'} J_{\be\al}J_{\be'\al'}]-\sum_{\substack{\al,\al'\in\Ga_{12}\\ \al\cap\al'=\varnothing}}\sum_{\substack{\be,\be'\in\Ga_{12}\\ \text{Case (d2)} }}p_\al p_{\al'} p_\be p_{\be'}\\
			\le&\underbrace{\frac{64\qty((n_1)_2)^2n_1{n_2}^3\qty((n_1)_2+2n_2)}{((N-1))_5(N-1)(N-3)^3}}_{\text{Case (d2i)}}+\underbrace{\frac{8\qty((n_1)_2)^2{n_2}^2\qty((n_1)_2+2n_2)}{((N-1))_4(N-1)(N-3)^2}}_{\text{Case (d2ii)}}.
		\end{align*}
		
		\medskip
		\emph{Case} (d3): \emph{$\al$ and $\be$ are as in Lemma \ref{surjection 2star 2star}(b), $\al'$ and $\be'$ share one degree 1 vertex only.}
		
		\medskip
		By symmetry, we obtain the same estimate as in Case (d2):
		\begin{align*}
			&\sum_{\substack{\al,\al'\in\Ga_{12}\\ \al\cap\al'=\varnothing}}\sum_{\substack{\be,\be'\in\Ga_{12}\\ \text{Case (d3)} }}\e\qty[I_\al I_{\al'} J_{\be\al}J_{\be'\al'}]-\sum_{\substack{\al,\al'\in\Ga_{12}\\ \al\cap\al'=\varnothing}}\sum_{\substack{\be,\be'\in\Ga_{12}\\ \text{Case (d3)} }}p_\al p_{\al'} p_\be p_{\be'}\\
			\le&\frac{64\qty((n_1)_2)^2n_1{n_2}^3\qty((n_1)_2+2n_2)}{((N-1))_5(N-1)(N-3)^3}+\frac{8\qty((n_1)_2)^2{n_2}^2\qty((n_1)_2+2n_2)}{((N-1))_4(N-1)(N-3)^2}.
		\end{align*}
		
		\medskip
		\emph{Case} (d4): \emph{$\al$ and $\be$ are as in Lemma \ref{surjection 2star 2star}(b), $\al'$ and $\be'$ are as in Lemma \ref{surjection 2star 2star}(b).}
		
		\medskip
		Let $h_{\al,\be}:\mG_\al\rightarrow\mG_\be$ and $h_{\al',\be'}:\mG_{\al'}\rightarrow\mG_{\be'}$ be the surjections in Lemma \ref{surjection 2star 2star}(b) such that $h_{\al,\be}\qty(\qty(f^{-1}_\al(g_\be))_1)=g_\be$ for $g_\be\in\mG_\be$ and $h_{\al',\be'}\qty(\qty(f^{-1}_{\al'}(g_{\be'}))_1)=g_{\be'}$ for $g_{\be'}\in\mG_{\be'}$. Define a map $h_{\al,\al',\be,\be'}:\mG_{\al}\cap\mG_{\al'}\rightarrow\mG_{\be}\times\mG_{\be'}$ by $h_{\al,\al',\be,\be'}(g)\coloneqq\qty(h_{\al,\be}(g), h_{\al',\be'}(g))$.  For any $(g_\be,g_{\be'})\in\mH_{\al,\al',\be,\be'}$, letting $g\coloneqq\qty(f_\al^{-1}(g_\be))_1=\qty(f_{\al'}^{-1}(g_{\be'}))_1\in\mG_{\al}\cap\mG_{\al'}$ yields $h_{\al,\al',\be,\be'}(g)=(g_\be,g_{\be'})$. Therefore, $h_{\al,\al',\be,\be'}$ is a surjection onto $\mH_{\al,\al',\be,\be'}$. We have
		\begin{align*}
			\qty|\mH_{\al,\al',\be,\be'}|\le\qty|\mG_{\al}\cap\mG_{\al'}|=(N-9)!!
		\end{align*}
		and
		\begin{align*}
			\p(I_\al=I_{\al'}=1,J_{\be\al}=J_{\be'\al'}=1)\le\frac{1}{((N-1))_4}\frac{1}{(N-1)^2(N-3)^2}
		\end{align*}
		in this case. Since we may estimate the possibilities of a quadruplet $\al,\al',\be,\be'$ in this class as (using Lemma \ref{cardinality estimates}(d) for the possibilities of $\be,\be'$)
		\begin{align*}
				\qty|\sum_{\substack{\al,\al'\in\Ga_{12}\\ \al\cap\al'=\varnothing}}\sum_{\substack{\be,\be'\in\Ga_{12}\\ \text{Case (d4)} }}|\le&\underbrace{\qty((n_1)_2n_2)^2}_{\al,\al'}\times\underbrace{\qty((n_1)_2+2n_2)^2}_{\be,\be'}=\qty((n_1)_2)^2{n_2}^2\qty((n_1)_2+2n_2)^2,
		\end{align*}
	this leads to the estimate
		\begin{align*}
			&\sum_{\substack{\al,\al'\in\Ga_{12}\\ \al\cap\al'=\varnothing}}\sum_{\substack{\be,\be'\in\Ga_{12}\\ \text{Case (d4)} }}\e\qty[I_\al I_{\al'} J_{\be\al}J_{\be'\al'}]-\sum_{\substack{\al,\al'\in\Ga_{12}\\ \al\cap\al'=\varnothing}}\sum_{\substack{\be,\be'\in\Ga_{12}\\ \text{Case (d4)} }}p_\al p_{\al'} p_\be p_{\be'}\\
			\le&\sum_{\substack{\al,\al'\in\Ga_{12}\\ \al\cap\al'=\varnothing}}\sum_{\substack{\be,\be'\in\Ga_{12}\\ \text{Case (d4)} }}\qty(\frac{1}{((N-1))_4(N-1)^2(N-3)^2}-\frac{1}{(N-1)^4(N-3)^4})\\
			=&\sum_{\substack{\al,\al'\in\Ga_{12}\\ \al\cap\al'=\varnothing}}\sum_{\substack{\be,\be'\in\Ga_{12}\\ \text{Case (d4)} }}\frac{1}{((N-1))_4(N-1)^2(N-3)^2}\qty(1-\frac{((N-1))_4(N-1)^2(N-3)^2}{(N-1)^4(N-3)^4})\\
			=&\sum_{\substack{\al,\al'\in\Ga_{12}\\ \al\cap\al'=\varnothing}}\sum_{\substack{\be,\be'\in\Ga_{12}\\ \text{Case (d4)} }}\frac{1}{((N-1))_4(N-1)^2(N-3)^2}\qty(1-\frac{(N-5)(N-7)}{(N-1)(N-3)})\\
			\le& \frac{\qty((n_1)_2)^2{n_2}^2\qty((n_1)_2+2n_2)^2}{((N-1))_4(N-1)^2(N-3)^2}\frac{8}{N-3}\\
			=&\frac{8\qty((n_1)_2)^2{n_2}^2\qty((n_1)_2+2n_2)^2}{((N-1))_4(N-1)^2(N-3)^3}.
		\end{align*}

		\medskip
		From Cases (d1)--(d4), we conclude that
		\begin{equation}\label{third variance second line case d}
			\begin{split}
			&\sum_{\substack{\al,\al'\in\Ga_{12}\\ \al\cap\al'=\varnothing}}\sum_{\substack{\be\in\Ga_{12}\setminus\{\al\}\\ \be\cap\al\neq\varnothing }}\sum_{\substack{\be'\in\Ga_{12}\setminus\{\al'\}\\ \be'\cap\al'\neq\varnothing }}\e\qty[I_\al I_{\al'} J_{\be\al}J_{\be'\al'}]-\sum_{\substack{\al,\al'\in\Ga_{12}\\ \al\cap\al'=\varnothing}}\sum_{\substack{\be\in\Ga_{12}\setminus\{\al\}\\ \be\cap\al\neq\varnothing }}\sum_{\substack{\be'\in\Ga_{12}\setminus\{\al'\}\\ \be'\cap\al'\neq\varnothing }}p_\al p_{\al'} p_\be p_{\be'}\\
			\le&\underbrace{\frac{416\qty((n_1)_2)^2{n_1}^2{n_2}^4}{((N-1))_6(N-3)^3}+\frac{80\qty((n_1)_2)^2n_1{n_2}^3}{((N-1))_5(N-3)^2}+\frac{64\qty((n_1)_2)^2{n_2}^2}{((N-1))_4(N-3)^2}}_{\text{Case (d1)}}\\
			&+\underbrace{\frac{128\qty((n_1)_2)^2n_1{n_2}^3\qty((n_1)_2+2n_2)}{((N-1))_5(N-1)(N-3)^3}+\frac{16\qty((n_1)_2)^2{n_2}^2\qty((n_1)_2+2n_2)}{((N-1))_4(N-1)(N-3)^2} }_{\text{Case (d2)+Case (d3)}}+\underbrace{\frac{8\qty((n_1)_2)^2{n_2}^2\qty((n_1)_2+2n_2)^2}{((N-1))_4(N-1)^2(N-3)^3}}_{\text{Case (d4)}}.
			\end{split}
		\end{equation}

\medskip
Recalling \eqref{third variance upper bound} and putting all this together, we upper bound the third variance in \eqref{variance decomposition} as
\begin{equation}\label{third variance final bound}
\Var\qty(\sum_{\al\in\Ga_{1j}}\sum_{\substack{\be\in\Ga_{1k}\setminus\{\al\}\\ \be\cap\al\neq\varnothing }}I_\al J_{\be\al})	\le\begin{cases}
	\eqref{third variance first line j1k1}+\eqref{third variance second line case a}	&(j=k=1)\\
		\eqref{third variance first line j1k2}+\eqref{third variance second line case b}	&(j=1,k=2)\\
		\eqref{third variance first line j2k1}+\eqref{third variance second line case c}	&(j=2,k=1)\\
		\eqref{third variance first line j2k2}+\eqref{third variance second line case d}	&(j=k=2)
	\end{cases}.
\end{equation}
		
		\subsubsection{Conclusion: Upper bounds on \eqref{first term unconditional variance}}
		Now assume that $N>15$ and $n_1\ge1$. Let $c_*$ be any real number such that
		\begin{equation}\label{absolute constant n1 n2 N}
			\frac{n_1\vee n_2}{N-15}\le c_*.
		\end{equation}
		Below, we collect the results obtained thus far and simplify them under the assumption \eqref{absolute constant n1 n2 N}.
		
		\medskip
		\underline{The first variance in \eqref{variance decomposition}}: By \eqref{first variance bound},
		\begin{align*}
			\Var\qty(\sum_{\al\in\Ga_{1j}}I_\al(1-J_{\al\al}))\le\begin{cases}
				\qty(\frac{1}{2}+\frac{1}{2}c_*^2)\frac{{n_1}^2}{N-1} &(j=1)\\
			\qty(c_*+8c_*^4)\frac{\qty(n_1\vee n_2)^2}{N-1}	&(j=2)
			\end{cases}.
		\end{align*}
		
		\medskip
		\underline{The second variance in \eqref{variance decomposition}}:
By \eqref{second variance bound},
		\begin{align*}
			\Var\qty(\sum_{\al\in\Ga_{1j}}\sum_{\substack{\be\in\Ga_{1k}\\ \be\cap\al=\varnothing }}I_\al I_\be(1-J_{\be\al}))
			\le&\begin{cases}
			\qty(c_*^2+\frac{3}{2}c_*^4+2c_*^6)\frac{{n_1}^2}{N-1}	&(j=k=1)\\
			\qty(2c_*^3+4c_*^5+72c_*^8)\frac{\qty(n_1\vee n_2)^2}{N-1}	&(j=1,k=2)\\
			\qty(2c_*^3+4c_*^5+8c_*^6+72c_*^8)\frac{\qty(n_1\vee n_2)^2}{N-1}	&(j=2,k=1)\\
			\qty(16c_*^4+256c_*^7+2048c_*^{10})\frac{\qty(n_1\vee n_2)^2}{N-1}	&(j=k=2)
			\end{cases}.
		\end{align*}

		\medskip
		\underline{The third variance in \eqref{variance decomposition}}: By \eqref{third variance final bound} (we use $1\le n_1$ for some terms in all of the cases and also $4n_1n_2+(n_1)_2\le5\qty(n_1\vee n_2)^2$ and $(n_1)_2+2n_2\le3\qty(n_1\vee n_2)^2$ in the case $j=k=2$),
		\begin{align*}
			\Var\qty(\sum_{\al\in\Ga_{1j}}\sum_{\substack{\be\in\Ga_{1k}\setminus\{\al\}\\\be\cap\al\neq\varnothing}}I_\al J_{\be\al})\le&\begin{cases}	
			\qty(4c_*^3+2c_*^4)\frac{{n_1}^2}{N-1}&	(j=k=1)\\
		\qty(64c_*^5+72c_*^6+5c_*^4)\frac{\qty(n_1\vee n_2)^2}{N-1}	&	(j=1,k=2)\\
		\qty(8c_*^4+48c_*^6)\frac{\qty(n_1\vee n_2)^2}{N-1}	&	(j=2,k=1)\\
		\qty(328c_*^6+872c_*^8+64c_*^5)\frac{\qty(n_1\vee n_2)^2}{N-1}	&	(j=k=2)
			\end{cases}.
		\end{align*}

		\medskip
		Now we obtain upper bounds on \eqref{first term unconditional variance} using \eqref{variance decomposition}:
		\begin{align*}
			&\Var\qty(\sum_{\al\in\Ga_{1j}}I_\al\sum_{\be\in\Ga_{1k}}(I_{\be}-J_{\be\al}))\\
			\le&\begin{cases}
			3\qty[\qty(\frac{1}{2}+\frac{1}{2}c_*^2)+\qty(c_*^2+\frac{3}{2}c_*^4+2c_*^6)+\qty(4c_*^3+2c_*^4)]\frac{{n_1}^2}{N-1}	&(j=k=1)\\
			2\qty[\qty(2c_*^3+4c_*^5+72c_*^8)+\qty(64c_*^5+72c_*^6+5c_*^4)]\frac{\qty(n_1\vee n_2)^2}{N-1}	&(j=1,k=2)\\
			2\qty[\qty(2c_*^3+4c_*^5+8c_*^6+72c_*^8)+\qty(8c_*^4+48c_*^6)]\frac{\qty(n_1\vee n_2)^2}{N-1}	&(j=2,k=1)\\
			3\qty[\qty(c_*+8c_*^4)+\qty(16c_*^4+256c_*^7+2048c_*^{10})+\qty(328c_*^6+872c_*^8+64c_*^5)]\frac{\qty(n_1\vee n_2)^2}{N-1}	&(j=k=2)
			\end{cases}
			\\
			=&\begin{cases}
			3\qty[\frac{1}{2}+\frac{3}{2}c_*^2+4c_*^3+\frac{7}{2}c_*^4+2c_*^6]\frac{{n_1}^2}{N-1}	&(j=k=1)\\
			2\qty[2c_*^3+5c_*^4+68c_*^5+72c_*^6+72c_*^8]\frac{\qty(n_1\vee n_2)^2}{N-1}	&(j=1,k=2)\\
			2\qty[2c_*^3+8c_*^4+4c_*^5+56c_*^6+72c_*^8]\frac{\qty(n_1\vee n_2)^2}{N-1}	&(j=2,k=1)\\
		3\qty[c_*+24c_*^4+64c_*^5+328c_*^6+256c_*^7+872c_*^8+2048c_*^{10}]\frac{\qty(n_1\vee n_2)^2}{N-1}		&(j=k=2)
			\end{cases}.
		\end{align*}
		This completes the proof of Proposition \ref{first term in stein coupling bound}.

		\subsection{Proof of Proposition \ref{second term in stein coupling bound}}
	\label{stein coupling bound second term}
		
Let $\al\in\Ga_{1j}$. Decompose
		\begin{align*}
			\sum_{\be\in\Ga_{1}}\qty|I_\be-J_{\be\al}|=\indi_{j=k}I_\al(1-J_{\al\al})+\sum_{\substack{\be\in\Ga_{1}\setminus\{\al\}\\\be\cap\al\neq\varnothing}}\qty|I_\be-J_{\be\al}|+\sum_{\substack{\be\in\Ga_{1}\\\be\cap\al=\varnothing}}\qty|I_\be-J_{\be\al}|.
		\end{align*}
		Note that $J_{\be\al}=I_\be$ if $I_\al=0$. By Lemma \ref{intersecting different isolated trees},
		\begin{align*}
			\sum_{\substack{\be\in\Ga_{1}\setminus\{\al\}\\\be\cap\al\neq\varnothing}}\qty|I_\be-J_{\be\al}|=\sum_{\substack{\be\in\Ga_{1}\setminus\{\al\}\\\be\cap\al\neq\varnothing}}I_\al J_{\be\al}.
		\end{align*}
		We use the following simple observation that only a limited number of $\be$'s intersecting $\al$ will be created by \emph{each} instance of the switching procedure with respect to $\al$. The proof is given in Appendix \ref{proof 2}.
		\begin{lem}
			\label{intersection limited creation}
			Suppose that $\al\in\Ga_1$ and let $H$ be the type of $\al$.  Then we have
			\begin{align*}
				\sum_{\substack{\be\in\Ga_1\setminus\{\al\}\\\be\cap\al\neq\varnothing}}J_{\be\al}\le v(H).
			\end{align*}
		\end{lem}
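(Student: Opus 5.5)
The plan is a deterministic, pathwise argument: fix a realization $g\in\mG_\al$ and indices $b$, and bound the number of $\be\in\Ga_1\setminus\{\al\}$ with $\be\cap\al\neq\varnothing$ that are realized in $f_\al(g,b)$. If $I_\al=0$, then $\mathfrak{g}_\al=\mathfrak{g}$ and $J_{\be\al}=I_\be$. By Lemma \ref{intersecting different isolated trees}, $I_\al I_\be=0$ for every such $\be$, so all these $J_{\be\al}$ vanish when $I_\al=1$ fails only on the event $I_\al=0$ in the trivial sense that the sum is $\sum_\be I_\be$. On that event I would bound it by the same vertex-counting argument applied to $\mathfrak{g}$ itself. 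Conversely, if the lemma is only needed multiplied by $I_\al$, as it is in the decomposition above, then it suffices to treat $I_\al=1$. I would handle both cases with a single counting principle, valid for any configuration $g'$.

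The key observation concerns a vertex $v$ of $\al$. Every $\be\in\Ga_1$ that contains $v$ is isolated, so in $\be$ the vertex $v$ has all of its $d_v$ half-edges (one or two) used. For $\be$ to be realized in $g'$, the partners of $v$'s half-edges in $g'$ must be exactly those prescribed by $\be$. I would then show that, given $g'$ and $v$, at most one $\be\in\Ga_1$ containing $v$ is realized in $g'$. The reason is that the realized isolated component of $g'$ containing $v$ is uniquely determined: an isolated edge or 2-star is pinned down by the partners of $v$'s half-edges together with the partners of those neighbours' half-edges, and isolation forces the entire component. This is essentially the argument behind Lemma \ref{intersecting different isolated trees}: two distinct intersecting members of $\Ga_1$ conflict at some shared half-edge, so they cannot both be realized in the same matching. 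Consequently the realized $\be$'s that intersect $\al$ are pairwise either equal or non-intersecting, and each one contains at least one vertex of $\al$. Distinct realized $\be$'s intersecting $\al$ therefore use disjoint sets of $\al$'s vertices, which gives at most $v(H)$ of them.

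Finally I would apply this with $g'=\mathfrak{g}_\al$, noting that $J_{\be\al}=\indi_{\mathfrak{g}_\al\supset\be}$. This yields $\sum_{\be}J_{\be\al}\le v(H)$ pointwise. Excluding $\be=\al$ does not hurt the bound, and could even sharpen it, but the sharper statement is not needed. The main obstacle is stating the pairwise-exclusion claim carefully: two realized members of $\Ga_1$ that share a vertex must coincide. This should follow directly from Lemma \ref{intersecting different isolated trees} applied within the single matching $\mathfrak{g}_\al$, since that lemma is a statement about arbitrary matchings, namely that $I_\al I_\be=0$ holds for any configuration. The counting step is then a simple injection from realized $\be$'s to the vertices of $\al$, choosing one vertex from each $\be\cap\al$.
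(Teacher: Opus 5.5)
Your argument is correct and is essentially the paper's. The paper groups the intersecting $\be$'s by which vertex $v^\al_i$ of $\al$ they share, and applies Lemma \ref{intersecting different isolated trees} inside the single matching $\mathfrak{g}_\al$ to show that at most one $\be$ per vertex is realized; this is the same as your injection from realized $\be$'s to vertices of $\al$.

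The case split in your first paragraph is unnecessary and should be dropped, along with its assertion that the $J_{\be\al}$ vanish when $I_\al=1$, which is false. Your pathwise bound applied to $g'=\mathfrak{g}_\al$ already holds regardless of the value of $I_\al$.
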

		On the other hand,	by Lemma \ref{disjoint multigraphs non creation},
		\begin{align*}
			\sum_{\substack{\be\in\Ga_{1}\\\be\cap\al=\varnothing}}\qty|I_\be-J_{\be\al}|=\sum_{\substack{\be\in\Ga_{1}\\\be\cap\al=\varnothing}}I_\be(1-J_{\be\al}).
		\end{align*}
		Similarly, only a limited number of $\be$'s disjoint from $\al$ will be destroyed by each instance of the switching procedure with respect to $\al$. The proof is given in Appendix \ref{proof 2}.
		\begin{lem}
			\label{disjoint limited destruction}
				Suppose that $\al\in\Ga_1$ and let $H$ be the type of $\al$. Then we have
			\begin{align*}
				\sum_{\substack{\be\in\Ga_1\\\be\cap\al=\varnothing}}I_\be(1-J_{\be\al})\le e(H).
			\end{align*}
		\end{lem}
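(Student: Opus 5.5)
The plan is to argue pathwise, for every realization of $\mathfrak{g}$ and $B_\al$. I will show that each of the $e(H)$ switching steps in the construction of $\mathfrak{g}_\al$ can destroy at most one realized $\be\in\Ga_1$ disjoint from $\al$.

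First, if $I_\al=0$ then $\mathfrak{g}_\al=\mathfrak{g}$ by \eqref{coupling construction}. Hence $J_{\be\al}=I_\be$ for all $\be$ and the sum vanishes. So assume $I_\al=1$, and let $g_0=\mathfrak{g}\in\mG_\al$ and $g_\ell=f_{\al,\ell}(g_{\ell-1},B_{\al,\ell})$ for $\ell=1,\dots,e(H)$, so that $\mathfrak{g}_\al=g_{e(H)}$. Fix $\be\in\Ga_1$ with $\be\cap\al=\varnothing$ and $I_\be=1$, and suppose $J_{\be\al}=0$. Then some edge of $\be$ present in $g_0$ is absent from $g_{e(H)}$, so there is a first index $\ell$ at which an edge of $\be$ present in $g_{\ell-1}$ is removed.

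Step $\ell$ removes only two pairs: $\{s_{\ell,1},s_{\ell,2}\}$ and $\{t_{\ell,1},t_{\ell,2}\}$. The first is an edge of $\al$. Since $\al$ and $\be$ share no vertex, they share no half-edge, so this pair is not an edge of $\be$. Therefore the destroyed edge of $\be$ must be $\{t_{\ell,1},t_{\ell,2}\}$. By minimality of $\ell$, this pair is an edge of $\be$ present in $g_0$. (Any pair of $g_{\ell-1}$ not already in $g_0$ contains a half-edge of $\al$, which again rules out edges of $\be$; this is consistent.)

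Consequently,
\[
\sum_{\substack{\be\in\Ga_1\\ \be\cap\al=\varnothing}}I_\be(1-J_{\be\al})\le\sum_{\ell=1}^{e(H)}\Big|\big\{\be\in\Ga_1:\ I_\be=1,\ \{t_{\ell,1},t_{\ell,2}\}\text{ is an edge of }\be\big\}\Big|.
\]
It remains to see that each term on the right is at most $1$. Suppose two distinct $\be,\be'\in\Ga_1$ contain the same pair $\{t_{\ell,1},t_{\ell,2}\}$. Then they share a vertex, so by Lemma \ref{intersecting different isolated trees} we have $I_\be I_{\be'}=0$. Hence at most one realized $\be\in\Ga_1$ contains a given pair, and the sum is at most $e(H)$.

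The only delicate point is the bookkeeping in the second paragraph. I must make sure that a $\be$ destroyed at one step and possibly recreated later is still counted only once, namely at its first destruction. I must also make sure that the pair broken at step $\ell$ is genuinely an original edge and not one created earlier from $\al$'s half-edges. Both follow from the fact that every pair created by the switchings contains a half-edge of $\al$.
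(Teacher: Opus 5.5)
Your proof is correct and is essentially the paper's argument. The paper invokes Lemma \ref{destroyed al be disjoint}: a realized $\be$ disjoint from $\al$ is destroyed only if $B_{\al,\ell}\in R_{\al,\be,\ell}$ for some $\ell$. It then uses Lemma \ref{intersecting different isolated trees} to see that the realized $\be$'s are pairwise disjoint, so their rank sets are disjoint at each stage, and concludes by pigeonhole. You derive the destruction mechanism inline (at step $\ell$ only the pair $\{t_{\ell,1},t_{\ell,2}\}$ can be a broken edge of $\be$) and use the same lemma to see that at most one realized $\be$ contains that pair, which is the same counting in slightly different packaging.
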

		By Lemmas \ref{intersection limited creation} and \ref{disjoint limited destruction}, we have
		\begin{align*}
			\sum_{\be\in\Ga_{1}}\qty|I_\be-J_{\be\al}|=&\indi_{j=k}I_\al(1-J_{\al\al})+	\sum_{\substack{\be\in\Ga_{1}\\\be\cap\al\neq\varnothing}}I_\al J_{\be\al}+\sum_{\substack{\be\in\Ga_{1}\\\be\cap\al=\varnothing}}I_\be(1-J_{\be\al})\\
			\le &1+v(H_j)+e(H_j)=2v(H_j).
		\end{align*}
		So we have
		\begin{align*}
			\sum_{\al\in\Ga_{1j}}I_\al\sum_{\be\in\Ga_{1}}\qty|I_\be-J_{\be\al}|\sum_{\ga\in\Ga_{1}}\qty|I_\ga-J_{\ga\al}|\le&4\qty(v(H_j))^2\sum_{\al\in\Ga_{1j}}I_\al
		\end{align*}	
		and 
		\begin{align*}
			\sum_{\al\in\Ga_{1j}}\e\qty[I_\al\sum_{\be\in\Ga_{1}}\qty|I_\be-J_{\be\al}|\sum_{\ga\in\Ga_{1}}\qty|I_\ga-J_{\ga\al}|]\le&4\qty(v(H_j))^2|\Ga_{1j}|p_\al\\
			=&\begin{cases}
			\frac{8(n_1)_2}{N-1}&(j=1)\\
			\frac{36(n_1)_2n_2}{((N-1))_2}		&(j=2)
			\end{cases}.
		\end{align*}
		Thus, using any real number $c_\star\ge	(n_1\vee n_2)/(N-3)$, we upper bound
\begin{align*}
	\sum_{\al\in\Ga_{1}}\e\qty[I_\al\sum_{\be\in\Ga_{1}}\qty|I_\be-J_{\be\al}|\sum_{\ga\in\Ga_{1}}\qty|I_\ga-J_{\ga\al}|]\le&\frac{8(n_1)_2}{N-1}+\frac{36(n_1)_2n_2}{((N-1))_2}\\
	\le&\qty(8c_\star+36c_\star^2)\qty(n_1\vee n_2).
\end{align*}
	This completes the proof of Proposition \ref{second term in stein coupling bound}.

	\subsection{Proof of Proposition \ref{third term in stein coupling bound}}
	\label{stein coupling bound third term}

	Though we do not restrict possible self-loops and possible double edges to be isolated ones, the conclusion of Lemma \ref{intersecting different isolated trees} continues to hold for $\al$ being either a possible isolated edge or a possible isolated 2-star and $\be$ being either a possible self-loop or a possible double edge; the proof is given in Appendix \ref{proof 3}.
	\begin{lem}\label{intersecting isolated tree and selfloop or double edge}
		Let $\al$ be either a possible isolated edge or a possible isolated 2-star, and let $\be$ be either a possible self-loop or a possible double edge, i.e., $\al\in\Ga_{1}$ and $\be\in\Ga_{2}$. If $\al\cap\be\neq\varnothing$, then $I_\al I_\be=0$.
	\end{lem}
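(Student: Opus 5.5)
The plan is to exhibit, in every case where $\al\cap\be\neq\varnothing$, a single half-edge that $\al$ and $\be$ pair with different partners. Since $\mathfrak{g}$ is a perfect matching, $I_\al I_\be=1$ would force that half-edge to be matched to two distinct half-edges at once, which is impossible; hence $I_\al I_\be=0$.

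First I use the observation stated at the start of the appendix. If $\al\in\Ga_1$ shares a vertex $v$ with $\be$, then $\al$ contains every half-edge incident to $v$, because $\al$ is isolated. So $\al$ contains all half-edges of $\be$ that sit at $v$. Next I note that $\be\in\Ga_2$ always has at least one half-edge at each of its vertices. This holds for a self-loop, which uses two half-edges at its vertex, and for a double edge, which uses two half-edges at each of its two vertices. Consequently there is a half-edge $s$ at $v$ lying in both $\al$ and $\be$.

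The key step is to show that the partners of $s$ in $\al$ and in $\be$ must differ. I would argue by degrees. The vertices of $\al$ have degree $1$ or $2$, and the pairing inside $\al$ always joins half-edges at two distinct vertices: an isolated edge joins two degree $1$ vertices, and a 2-star joins a degree $1$ vertex to the degree $2$ vertex. So in $\al$ the partner of $s$ lies at a vertex different from $v$, whose degree is $1$ or $2$.

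\textbf{Case $\be\in\Ga_{21}$ (self-loop).} The partner of $s$ in $\be$ sits at $v$ itself. This differs from the partner of $s$ in $\al$, which sits at another vertex.

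\textbf{Case $\be\in\Ga_{22}$ (double edge $\{\{s,u\},\{t,w\}\}$, with $s,t\in v$ and $u,w\in v'$, $v\neq v'$).} Here $v$ has degree at least $2$, so $v$ must be the degree $2$ vertex of a 2-star $\al$ (an isolated edge has no such vertex). Then $\{s,t\}$ are exactly the two half-edges of that centre. In $\al$, $s$ and $t$ are paired to half-edges at two distinct degree $1$ vertices. In $\be$, however, $s$ and $t$ are paired to $u$ and $w$, which lie at the common vertex $v'$ of degree at least $2$. Therefore the partner of $s$ in $\al$ (at a degree $1$ vertex) differs from $u$. Indeed, if $u$ were at a degree $1$ vertex, then $v'$ could not also hold $w$.

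The step needing the most care is this double-edge case, where I have to rule out every possible coincidence of partners. The degree comparison handles it: the partner vertex in $\be$ has degree at least $2$, whereas the partner vertex of a centre half-edge in $\al$ has degree $1$. With the differing partners established in both cases, the proof is complete.
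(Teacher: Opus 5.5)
Your proof is correct and follows essentially the same route as the paper. Both arguments locate the shared vertex and compare partners: for a self-loop or double edge this vertex must be the degree-$2$ centre of a 2-star, and the half-edges there are paired to degree-$1$ vertices in $\alpha$, but in $\beta$ to each other or to a vertex of degree at least $2$. The only difference is cosmetic. You phrase it uniformly as a single shared half-edge with two distinct partners, and you handle the self-loop case without first discarding isolated edges via Lemma~\ref{cardinality estimates}(e), as the paper does.
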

	Thus, by Lemmas \ref{disjoint multigraphs non creation} and \ref{intersecting isolated tree and selfloop or double edge}, we have
	\begin{align*}
		&\sum_{\al\in\Ga_{1j}}\sum_{\be\in\Ga_{2k}}\e\qty[I_\al\qty|I_\be-J_{\be\al}|]\\
		=&\sum_{\al\in\Ga_{1j}}\sum_{\substack{\be\in\Ga_{2k}\\\be\cap\al=\varnothing}}\e\qty[I_\al I_\be\qty(1-J_{\be\al})]+\sum_{\al\in\Ga_{1j}}\sum_{\substack{\be\in\Ga_{2k}\\\be\cap\al\neq\varnothing}}\e\qty[I_\al J_{\be\al}]\\
		\le&\sum_{\al\in\Ga_{1j}}\sum_{\substack{\be\in\Ga_{2k}\\\be\cap\al=\varnothing}}\frac{1}{((N-1))_{e(H_j)+e(K_k)}}\sum_{\ell=1}^{e(H_j)}\frac{2e(K_k)}{N-2(e(H_j)-\ell)-1}\bigg.\notag\\
		&\quad\quad\quad\quad\bigg.+\sum_{\al\in\Ga_{1j}}\sum_{\substack{\be\in\Ga_{2k}\\\be\cap\al\neq\varnothing}}\frac{1}{((N-1))_{e(H_j)}((N-1))_{e(K_k)}},
	\end{align*}
	where $H_j$ and $K_k$ denote the types of $\Ga_{1j}$ and $\Ga_{2k}$, respectively. Here, note that the upper bound when $\al\cap\be=\varnothing$
	\begin{align}
	\p\qty(I_\al=I_\be=1,J_{\be\al}=0)\le&\frac{1}{((N-1))_{e(H_j)+e(K_k)}}\qty(1-\prod_{\ell=1}^{e(H_j)}\qty(1-\frac{2e(K_k)}{N-2(e(H_j)-\ell)-1}))\notag\\
		\le&\frac{1}{((N-1))_{e(H_j)+e(K_k)}}\sum_{\ell=1}^{e(H_j)}\frac{2e(K_k)}{N-2(e(H_j)-\ell)-1} \label{prob al be exist be destroyed final}
	\end{align}
	(cf.\ \eqref{prob al be exist be destroyed} and \eqref{Bernoulli inequality} from Appendix \ref{subsec second variance}) continues to hold for $\al$ being either a possible isolated edge or a possible isolated 2-star and $\be$ being either a possible self-loop or a possible double edge. Using Lemma \ref{cardinality estimates}(e), we obtain an upper bound for each of the cases:
	
	\medskip
	\underline{$j=k=1$}.
	
	\begin{align*}
		\sum_{\al\in\Ga_{11}}\sum_{\be\in\Ga_{21}}\e\qty[I_\al\qty|I_\be-J_{\be\al}|]\le&\frac{(n_1)_2}{2}\frac{\sum_{i\in[n]}(d_i)_2}{2}\frac{1}{((N-1))_2}\frac{2}{N-1}\\
		=&\frac{(n_1)_2}{((N-1))_2}\la^S_n.
	\end{align*}
	
	\medskip
	\underline{$j=1,k=2$}.

	\begin{align*}
		\sum_{\al\in\Ga_{11}}\sum_{\be\in\Ga_{22}}\e\qty[I_\al\qty|I_\be-J_{\be\al}|]\le&\frac{(n_1)_2}{2}\frac{\sum_{1\le i<j\le n}(d_i)_2(d_j)_2}{2}\frac{1}{((N-1))_3}\frac{4}{N-1}\\
		=&\frac{2(n_1)_2}{(N-1)(N-5)}\la^M_n.
	\end{align*}
	
	\medskip
	\underline{$j=2,k=1$}.

	\begin{align*}
		&\sum_{\al\in\Ga_{12}}\sum_{\be\in\Ga_{21}}\e\qty[I_\al\qty|I_\be-J_{\be\al}|]\\
		\le&(n_1)_2n_2\frac{\sum_{i\in[n]}(d_i)_2}{2}\frac{1}{((N-1))_3}2\qty(\frac{1}{N-3}+\frac{1}{N-1})+\frac{(n_1)_2n_2\times1}{((N-1))_2(N-1)}\\
		\le&(n_1)_2n_2\frac{\sum_{i\in[n]}(d_i)_2}{2}\frac{1}{((N-1))_3}\frac{4}{N-3}+\frac{(n_1)_2n_2}{((N-1))_2(N-1)}\\
		=&\frac{4(n_1)_2n_2}{(N-3)^2(N-5)}\la^S_n+\frac{(n_1)_2n_2}{((N-1))_2(N-1)}.
	\end{align*}
	
	\medskip
	\underline{$j=k=2$}.

	\begin{align*}
		&\sum_{\al\in\Ga_{12}}\sum_{\be\in\Ga_{22}}\e\qty[I_\al\qty|I_\be-J_{\be\al}|]\\
		\le&(n_1)_2n_2\frac{\sum_{1\le i<j\le n}(d_i)_2(d_j)_2}{2}\frac{1}{((N-1))_4}4\qty(\frac{1}{N-3}+\frac{1}{N-1})+\frac{(n_1)_2n_2\sum_{i\in[n]}(d_i)_2}{((N-1))_2((N-1))_2}\\
		\le&(n_1)_2n_2\frac{\sum_{1\le i<j\le n}(d_i)_2(d_j)_2}{2}\frac{1}{((N-1))_4}\frac{8}{N-3}+\frac{(n_1)_2n_2\sum_{i\in[n]}(d_i)_2}{((N-1))_2((N-1))_2}\\
		=&\frac{8(n_1)_2n_2}{((N-3))_3}\la^M_n+\frac{2(n_1)_2n_2}{((N-1))_2(N-3)}\la^S_n.
	\end{align*}
	
	\medskip
	Therefore, using any real number $c_\sharp\ge(n_1\vee n_2)/(N-7)$, we upper bound
	\begin{align*}
		\sum_{\al\in\Ga_1}\sum_{\be\in\Ga_2}\e\qty[I_\al\qty|I_\be-J_{\be\al}|]=&\sum_{j,k=1}^{2}\sum_{\al\in\Ga_{1j}}\sum_{\be\in\Ga_{2k}}\e\qty[I_\al\qty|I_\be-J_{\be\al}|]\\
		\le& c_\sharp^2\la^S_n+2c_\sharp^2\la^M_n+\qty(4c_\sharp^3\la^S_n+c_\sharp^3)+\qty(8c_\sharp^3\la^M_n+2c_\sharp^3\la^S_n)\\
		=&\qty(c_\sharp^2+6c_\sharp^3)\la^S_n+\qty(2c_\sharp^2+8c_\sharp^3)\la^M_n+c_\sharp^3,
	\end{align*}
	which proves Proposition \ref{third term in stein coupling bound}.

	\subsection{Proof of Proposition \ref{fourth term in stein coupling bound}}
	\label{stein coupling bound fourth term}

	 We resort to the following symmetry about our coupling \eqref{Stein coupling conditional dist}. This is an analogue of \cite[Lemma 3.1]{AHH19}, which is about the size-bias coupling approach (cf.\ \eqref{size bias coupling conditional dist}).
	\begin{lem}[Symmetry]
				\label{symmetry}
		For the coupling satisfying \eqref{Stein coupling conditional dist}, we have
		\begin{align*}
			\e\qty[I_\al\qty(I_\be-J_{\be\al})]=\e\qty[I_\be\qty(I_\al-J_{\al\be})]=\cov(I_\al,I_\be),\quad\al,\be\in\Ga.
		\end{align*}
	\end{lem}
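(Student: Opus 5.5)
The plan is to prove both equalities directly from the defining property \eqref{Stein coupling conditional dist} of the coupling, by the same disintegration argument already used in the proof of Theorem \ref{Stein coupling theorem}. Fix $\al,\be\in\Ga$. First observe that, since $I_\al$ is $0$--$1$ valued, $\e\qty[I_\al J_{\be\al}]=p_\al\e\qty[J_{\be\al}\mid I_\al=1]$, where the conditional expectation is taken with respect to the regular conditional distribution $\mL(J_{\ga\al},\ga\in\Ga\mid I_\al=1)$. By \eqref{Stein coupling conditional dist} this regular conditional distribution is $\mL(I_\ga,\ga\in\Ga)$, so its $\be$-marginal gives $\e\qty[J_{\be\al}\mid I_\al=1]=\e I_\be=p_\be$. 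Hence $\e\qty[I_\al J_{\be\al}]=p_\al p_\be$.

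This identity holds for every ordered pair, so with the roles of $\al$ and $\be$ exchanged we also get $\e\qty[I_\be J_{\al\be}]=p_\be p_\al$. Subtracting from $\e[I_\al I_\be]=\e[I_\be I_\al]$ yields
\begin{align*}
\e\qty[I_\al\qty(I_\be-J_{\be\al})]=\e[I_\al I_\be]-p_\al p_\be=\cov(I_\al,I_\be)=\e[I_\be I_\al]-p_\be p_\al=\e\qty[I_\be\qty(I_\al-J_{\al\be})].
\end{align*}
The only case needing comment is $\al=\be$, where the statement reads $\e[I_\al(I_\al-J_{\al\al})]=p_\al-p_\al^2=\Var(I_\al)$. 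This is covered by the same computation and matches the identity $\e[I_\al J_{\al\al}]=p_\al^2$ used at the end of the proof of Theorem \ref{Stein coupling theorem}.

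The only point that needs care is the case $p_\al=0$. There the conditional distribution is only defined as a regular version, but both sides of $\e[I_\al J_{\be\al}]=p_\al p_\be$ then vanish trivially, so no issue arises. I therefore expect no real obstacle: the lemma is a two-line consequence of the marginal identity $\e[I_\al J_{\be\al}]=p_\al p_\be$. Its use in Proposition \ref{fourth term in stein coupling bound} is what requires absolute values, and that will be handled separately via Lemmas \ref{disjoint multigraphs non creation} and \ref{intersecting isolated tree and selfloop or double edge} together with the sign structure of $I_\be-J_{\be\al}$.
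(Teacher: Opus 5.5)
Your proposal is correct and follows essentially the same route as the paper. The paper's proof is the one-line chain $\e[I_\al J_{\be\al}]=p_\al\e[J_{\be\al}\mid I_\al=1]=p_\al p_\be=p_\be\e[J_{\al\be}\mid I_\be=1]=\e[I_\be J_{\al\be}]$, which you reproduce and then subtract from $\e[I_\al I_\be]$, with your remarks on $p_\al=0$ and $\al=\be$ as harmless extra detail.
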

	\begin{proof}
		This is immediate from $\e\qty[I_\al J_{\be\al}]=p_\al\e\qty[J_{\be\al}\middle|I_\al=1]=p_\al p_\be=p_\be\e\qty[J_{\al\be}\middle|I_\be=1]=\e\qty[I_\be J_{\al\be}]$.
	\end{proof}
	Now assume that $\al$ is either a possible self-loop or a possible double edge (i.e., $\al\in\Ga_2$) and $\be$ is either a possible isolated edge or a possible isolated 2-star (i.e., $\be\in\Ga_1$). If $\al\cap\be=\varnothing$, then by Lemma \ref{disjoint multigraphs non creation},
	\begin{align*}
		\e\qty[I_\al\qty|I_\be-J_{\be\al}|]=&\e\qty[I_\al I_\be\qty(1-J_{\be\al})]=	\e\qty[I_\al\qty(I_\be-J_{\be\al})],\\
		\e\qty[I_\be\qty|I_\al-J_{\al\be}|]=&\e\qty[I_\be I_\al\qty(1-J_{\al\be})]=	\e\qty[I_\be\qty(I_\al-J_{\al\be})].
	\end{align*}
	On the other hand, if $\al\cap\be\neq\varnothing$, then by Lemma \ref{intersecting isolated tree and selfloop or double edge},
	\begin{align*}
		\e\qty[I_\al\qty|I_\be-J_{\be\al}|]=&\e\qty[I_\al J_{\be\al}]=-\e\qty[I_\al\qty(I_\be-J_{\be\al})],\\
		\e\qty[I_\be\qty|I_\al-J_{\al\be}|]=&\e\qty[I_\be J_{\al\be}]=-\e\qty[I_\be\qty(I_\al-J_{\al\be})].
	\end{align*}
	Thus, for $\al\in\Ga_2$ and $\be\in\Ga_1$,
	\begin{align*}
		\e\qty[I_\al\qty|I_\be-J_{\be\al}|]=\qty|\e\qty[I_\al\qty(I_\be-J_{\be\al})]|=\qty|\e\qty[I_\be\qty(I_\al-J_{\al\be})]|=\e\qty[I_\be\qty|I_\al-J_{\al\be}|].
	\end{align*}
	The second equality is by Lemma \ref{symmetry}. Therefore,
	\begin{align*}
		\sum_{\al\in\Ga_2}\sum_{\be\in\Ga_1}\e\qty[I_\al\qty|I_\be-J_{\be\al}|]=	\sum_{\be\in\Ga_1}\sum_{\al\in\Ga_2}\e\qty[I_\be\qty|I_\al-J_{\al\be}|]=\sum_{\al\in\Ga_1}\sum_{\be\in\Ga_2}\e\qty[I_\al\qty|I_\be-J_{\be\al}|].
	\end{align*}
	A similar argument appears in \cite[Section 3, Case (c)]{AHH19} or in the context of \emph{monotone coupling} (in the sense of \cite[Definition 2.1.1]{BHJ92} and \cite[DEFINITION in Section 4]{J94}).

		\subsection{Proof of Proposition \ref{fifth term in stein coupling bound}}
	\label{stein coupling bound fifth term}

	Obviously,
	\begin{align*}
		\sum_{\al\in\Ga_2} p_{\al}^2=\frac{\sum_{i\in[n]}(d_i)_2}{2(N-1)^2}+\frac{\sum_{1\le i<j\le n}(d_i)_2(d_j)_2}{2\qty[((N-1))_2]^2}=\frac{\la^S_n}{N-1}+\frac{\la^M_n}{((N-1))_2}.
	\end{align*}
	
	\medskip
	We turn to upper bound the more involved term
	\begin{align*}
		\sum_{\al\in\Ga_{2}}\sum_{\be\in\Ga_{2}\setminus\{\al\}}\e\qty[I_\al|I_\beta-J_{\beta\al}|].
	\end{align*}
	Since $\al$ and $\be$ can be of two distinct types, corresponding to self-loops and double edges, this gives rise to four different cases. For notational reasons it is convenient to further define
	\begin{align*}
		\mu_n^{(4)}=\frac{\sum_{i\in[n]}(d_i)_4}{N-7}.
	\end{align*}
	Obviously, $\mu_n^{(4)}/(N-7)\le\nu_n^2$ holds.
	
	\medskip
	\textbf{(a) $\al$ and $\be$ are possible self-loops.}

	\medskip
	Let $\al\neq\be$ be possible self-loops. First suppose that $\al$ and $\be$ share a half-edge. Then (since $\al\neq\be$) $I_\al I_\be=0$ and $\e\qty[I_\al|I_\beta-J_{\beta\al}|]=\e\qty[I_\al J_{\beta\al}]=p_\al p_\be$. Noting that $\al$ and $\be$ are incident to the same vertex in this case, we compute the possibilities of such a pair of $\al$ and $\be$ as
	\begin{align}\label{intersecting self loops possibilities}
		\underbrace{\sum_{i\in[n]}}_{\text{vertex}}\underbrace{\frac{(d_i)_2}{2}}_{\al}\times\underbrace{2\times(d_i-2)}_{\be}=\sum_{i\in[n]}(d_i)_3.
	\end{align}
	Next suppose that $\al$ and $\be$ do not share any half-edge (but may share a vertex). Then by Lemma \ref{disjoint multigraphs non creation}, $J_{\be\al}=0$ if $I_\be=0$, and thus $\e\qty[I_\al|I_\beta-J_{\beta\al}|]=\e\qty[I_\al I_\be\qty(1-J_{\be\al})]$. We have (cf.\ \eqref{prob al be exist be destroyed}, \eqref{Bernoulli inequality} and \eqref{prob al be exist be destroyed final} from the previous subsections)
	\begin{align*}
		\p\qty(I_\al=I_\be=1,J_{\be\al}=0)\le\frac{1}{((N-1))_2}\frac{2}{N-1}.
	\end{align*}
	By subtracting \eqref{intersecting self loops possibilities} from the total, the possibilities of a pair of $\al$ and $\be$ in this class can be computed as follows:
	\begin{align*}
		&\sum_{i\in[n]}\sum_{j\in[n]}\frac{(d_i)_2(d_j)_2}{4}-\sum_{i\in[n]}\frac{(d_i)_2}{2}-\sum_{i\in[n]}(d_i)_3\\
		=&\sum_{1\le i\neq j\le n}\frac{(d_i)_2(d_j)_2}{4}+\sum_{i\in[n]}\qty(\frac{\qty((d_i)_2)^2}{4}-\frac{(d_i)_2}{2}-(d_i)_3)\\
		=&\sum_{1\le i\neq j\le n}\frac{(d_i)_2(d_j)_2}{4}+\sum_{i\in[n]}\frac{(d_i)_4}{4}\quad(\text{true also for $d_i\le 2$}).
	\end{align*}
	Note that the last two terms correspond to (i) the possibilities of a pair of $\al$ and $\be$ that are incident to two distinct vertices and (ii) the possibilities of a pair of $\al$ and $\be$ that are incident to the same vertex but share no half-edge, respectively. Thus
	\begin{align*}
		\sum_{\al\in\Ga_{21}}\sum_{\be\in\Ga_{21}\setminus\{\al\}}\e\qty[I_\al|I_\beta-J_{\beta\al}|]\le&\frac{\sum_{1\le i\neq j\le n}(d_i)_2(d_j)_2+\sum_{i\in[n]}(d_i)_4}{2((N-1))_2(N-1)}+\frac{\sum_{i\in[n]}(d_i)_3}{(N-1)^2}\\
		\le&\frac{\nu_n^2}{2(N-1)}+\frac{\mu_n^{(4)}}{2(N-1)^2}+\frac{\mu_n^{(3)}}{N-1}\\
		\le&\frac{\nu_n^2+\mu_n^{(3)}}{N-1}.
	\end{align*}
	
	\medskip
	\textbf{(b) $\al$ is a possible self-loop, $\be$ is a possible double edge.}
	
	\medskip
	Let $\al$ be a possible self-loop and $\be$ be a possible double edge. Suppose first that $\al$ and $\be$ share no half-edge (but may share a vertex). Then by Lemma \ref{disjoint multigraphs non creation}, $J_{\be\al}=0$ if $I_\be=0$, and thus $\e\qty[I_\al|I_\beta-J_{\beta\al}|]=\e\qty[I_\al I_\be\qty(1-J_{\be\al})]$. We have (cf.\ \eqref{prob al be exist be destroyed}, \eqref{Bernoulli inequality} and \eqref{prob al be exist be destroyed final} from the previous subsections)
	\begin{align*}
		\p\qty(I_\al=I_\be=1,J_{\be\al}=0)\le\frac{1}{((N-1))_3}\frac{4}{N-1}.
	\end{align*}
	The possibilities of such a pair of $\al$ and $\be$ can be computed as follows:
	\begin{align}
		&\underbrace{\sum_{i\in[n]}}_{\text{$\al$'s vertex}}\underbrace{\frac{(d_i)_2}{2}}_{\al}\underbrace{\qty(\sum_{\substack{1\le j<k\le n\\j\neq i,k\neq i}}\frac{(d_j)_2(d_k)_2}{2}+\sum_{\substack{j\in[n]\\j\neq i}}\frac{(d_i-2)_2(d_j)_2}{2})}_{\be}\label{disjoint al self loop be double edge raw}\\
		=&\sum_{i\in[n]}\sum_{\substack{1\le j<k\le n\\j\neq i,k\neq i}}\frac{(d_i)_2(d_j)_2(d_k)_2}{4}+\sum_{1\le i\neq j\le n}\frac{(d_i)_4(d_j)_2}{4}\notag\\
		=&\sum_{\substack{i,j,k\in[n]\\i\neq j,j\neq k,k\neq i}}\frac{(d_i)_2(d_j)_2(d_k)_2}{8}+\sum_{1\le i\neq j\le n}\frac{(d_i)_4(d_j)_2}{4}.\label{disjoint al self loop be double edge final}
	\end{align}
	Note that the first term in \eqref{disjoint al self loop be double edge final} corresponds to the case where $\al$ and $\be$ are incident to three distinct vertices, whereas the second term ibidem corresponds to the case where they are incident to only two different vertices.
	
	\medskip
	Next suppose that $\al$ and $\be$ share a half-edge. Then $I_\al I_\be=0$ and $\e\qty[I_\al|I_\beta-J_{\beta\al}|]=\e\qty[I_\al J_{\beta\al}]=p_\al p_\be$. By subtracting \eqref{disjoint al self loop be double edge raw} from the total, the possibilities of a pair of $\al$ and $\be$ in this class can be computed as follows: 
	\begin{align*}
		&\sum_{i\in[n]}\frac{(d_i)_2}{2}\sum_{1\le j<k\le n}\frac{(d_j)_2(d_k)_2}{2}-	\sum_{i\in[n]}\frac{(d_i)_2}{2}\qty(\sum_{\substack{1\le j<k\le n\\j\neq i,k\neq i}}\frac{(d_j)_2(d_k)_2}{2}+\sum_{\substack{j\in[n]\\j\neq i}}\frac{(d_i-2)_2(d_j)_2}{2})\\
		=&\sum_{i\in[n]}\frac{(d_i)_2}{2}\qty(\sum_{1\le j<k\le n}\frac{(d_j)_2(d_k)_2}{2}-\sum_{\substack{1\le j<k\le n\\j\neq i,k\neq i}}\frac{(d_j)_2(d_k)_2}{2}-\sum_{\substack{j\in[n]\\j\neq i}}\frac{(d_i-2)_2(d_j)_2}{2})\\
		=&\sum_{i\in[n]}\frac{(d_i)_2}{2}\qty(\sum_{i<k\le n}\frac{(d_i)_2(d_k)_2}{2}+\sum_{1\le j<i}\frac{(d_j)_2(d_i)_2}{2}-\sum_{\substack{j\in[n]\\j\neq i}}\frac{(d_i-2)_2(d_j)_2}{2})\\
		=&\sum_{i\in[n]}\frac{(d_i)_2}{2}\qty(\sum_{\substack{j\in[n]\\j\neq i}}\frac{(d_i)_2(d_j)_2}{2}-\sum_{\substack{j\in[n]\\j\neq i}}\frac{(d_i-2)_2(d_j)_2}{2})\\
		=&\sum_{i\in[n]}\frac{(d_i)_2}{2}\sum_{\substack{j\in[n]\\j\neq i}}\frac{2(2d_i-3)_+(d_j)_2}{2}\quad(\text{true also for $d_i\le2$})\\
		=&\sum_{1\le i\neq j\le n}\frac{(d_i)_2(2d_i-3)_+(d_j)_2}{2}=\sum_{1\le i\neq j\le n}\frac{2(d_i)_3(d_j)_2+(d_i)_2(d_j)_2}{2}\quad(\text{true also for $d_i\le2$}).
	\end{align*}
	Note that last two terms correspond to the cases where a pair of $\al$ and $\be$ is composed of $3+2$ distinct half-edges and $2+2$ distinct half-edges, respectively ($\al$ is considered incident to vertex $v_i$). Thus
	\begin{align*}
		&\sum_{\al\in\Ga_{21}}\sum_{\be\in\Ga_{22}}\e\qty[I_\al|I_\beta-J_{\beta\al}|]\\
		\le&\frac{\frac{1}{2}\sum_{\substack{i,j,k\in[n]\\i\neq j,j\neq k,k\neq i}}(d_i)_2(d_j)_2(d_k)_2+\sum_{1\le i\neq j\le n}(d_i)_4(d_j)_2}{((N-1))_3(N-1)}+\frac{\sum_{1\le i\neq j\le n}\qty(2(d_i)_3(d_j)_2+(d_i)_2(d_j)_2)}{2(N-1)((N-1))_2}\\
		\le&\frac{\nu_n^3}{2(N-1)}+\frac{\mu_n^{(4)}\nu_n}{(N-1)^2}+\frac{2\mu_n^{(3)}\nu_n+\nu_n^2}{2(N-1)}\\
		\le&\frac{3\nu_n^3+2\mu_n^{(3)}\nu_n+\nu_n^2}{2(N-1)}.
	\end{align*}
	
	\medskip
	\textbf{(c) $\al$ is a possible double edge, $\be$ is a possible self-loop.}
	
	\medskip
	The contribution from this case is \emph{equal} to the contribution from Case (b). By replacing the case where $\al\cap\be=\varnothing$ (resp.\ $\al\cap\be\neq\varnothing$) with the case where $\al$ and $\be$ share no (resp.\ a) half-edge, the same proof using the symmetry about our coupling \eqref{Stein coupling conditional dist} (Lemma \ref{symmetry}) as in Appendix \ref{stein coupling bound fourth term} applies here.
	\begin{align*}
		\sum_{\al\in\Ga_{22}}\sum_{\be\in\Ga_{21}}\e\qty[I_\al|I_\beta-J_{\beta\al}|]=\sum_{\al\in\Ga_{21}}\sum_{\be\in\Ga_{22}}\e\qty[I_\al|I_\beta-J_{\beta\al}|]
		\le\frac{3\nu_n^3+2\mu_n^{(3)}\nu_n+\nu_n^2}{2(N-1)}.
	\end{align*}
	
	\medskip
	\textbf{(d) $\al$ and $\be$ are possible double edges.}
	
	\medskip
	Let $\al\neq\be$ be possible double edges. Suppose first that $\al$ and $\be$ share no half-edge (but may share a vertex). Then by Lemma \ref{disjoint multigraphs non creation}, $J_{\be\al}=0$ if $I_\be=0$, and thus $\e\qty[I_\al|I_\beta-J_{\beta\al}|]=\e\qty[I_\al I_\be\qty(1-J_{\be\al})]$. We have (cf.\ \eqref{prob al be exist be destroyed}, \eqref{Bernoulli inequality} and \eqref{prob al be exist be destroyed final} from the previous subsections)
	\begin{align*}
		\p\qty(I_\al=I_\be=1,J_{\be\al}=0)\le\frac{1}{((N-1))_4}4\qty(\frac{1}{N-3}+\frac{1}{N-1})\le\frac{8}{((N-1))_4(N-3)}.
	\end{align*}
	The possibilities of such a pair of $\al$ and $\be$ can be computed as follows:
	\begin{align}
		&\underbrace{\sum_{1\le i<j\le n}}_{\text{$\al$'s vertices}}\underbrace{\frac{(d_i)_2(d_j)_2}{2}}_{\al}\notag\\
		&\quad\times\underbrace{\qty(\sum_{\substack{1\le k<\ell\le n\\\{k,\ell\}\cap\{i,j\}=\varnothing}}\frac{(d_k)_2(d_\ell)_2}{2}+\sum_{\substack{k\in[n]\\k\neq i,k\neq j}}\qty(\frac{(d_i-2)_2(d_k)_2}{2}+\frac{(d_j-2)_2(d_k)_2}{2})+\frac{(d_i-2)_2(d_j-2)_2}{2})}_{\be}\label{disjoint double edges raw}\\
		=&\sum_{1\le i<j\le n}\sum_{\substack{1\le k<\ell\le n\\\{k,\ell\}\cap\{i,j\}=\varnothing}}\frac{(d_i)_2(d_j)_2(d_k)_2(d_\ell)_2}{4}+\sum_{1\le i<j\le n}\sum_{\substack{k\in[n]\\k\neq i,k\neq j}}\qty(\frac{(d_i)_4(d_j)_2(d_k)_2}{4}+\frac{(d_i)_2(d_j)_4(d_k)_2}{4})\notag\\
		&\quad\quad\quad\quad\quad\quad\quad\quad+\sum_{1\le i<j\le n}\frac{(d_i)_4(d_j)_4}{4}\notag\\
		=&\sum_{\substack{i,j,k,\ell\in[n]\\\text{$i,j,k,\ell$: distinct}}}\frac{(d_i)_2(d_j)_2(d_k)_2(d_\ell)_2}{16}+\sum_{\substack{i,j,k\in[n]\\i\neq j,j\neq k,k\neq i}}\frac{(d_i)_4(d_j)_2(d_k)_2}{4}+\sum_{1\le i\neq j\le n}\frac{(d_i)_4(d_j)_4}{8}.\label{disjoint double edges final}
	\end{align}
	Note that the three terms in \eqref{disjoint double edges final} correspond to the cases where $\al$ and $\be$ are incident to four distinct vertices, only three distinct vertices, and only two different vertices, respectively.
	
	\medskip
	Now assume that $\al$ and $\be$ share a half-edge. There is only one case where $\al\neq\be$ are still compatible: $\al$ and $\be$ share a common edge. We can compute the moment $\e\qty[I_\al|I_\beta-J_{\beta\al}|]$ precisely in this case. The proof will be given in Appendix \ref{proof 5}.
	\begin{lem}\label{two double edges share a common edge}
		Suppose that two distinct possible double edges $\al\neq\be$ share a common edge. Then we have
		\begin{align*}
			\e\qty[I_\al|I_\beta-J_{\beta\al}|]=\e\qty[I_\al I_\be\qty(1-J_{\be\al})]=\frac{1}{((N-1))_3}\qty(1-\frac{N-5}{((N-1))_2}).
		\end{align*}
	\end{lem}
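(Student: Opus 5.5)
The plan is to establish the two equalities in the statement separately. First I would show $\e[I_\al|I_\be-J_{\be\al}|]=\e[I_\al I_\be(1-J_{\be\al})]$, and then compute $\p(I_\al=I_\be=1,J_{\be\al}=0)$ exactly by following the switching procedure step by step.

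\emph{Notation.} Write $\al=(e)\cup(a)$ and $\be=(e)\cup(b)$, where $e$ is the common edge and $a\neq b$. As noted just before the lemma, "compatible" means the only shared half-edges are those of $e$. So the half-edges of $e$, $a$ and $b$ are six distinct half-edges, and $\{I_\al=I_\be=1\}$ is the event that the three pairs $e,a,b$ are all in $\mathfrak{g}$. By \eqref{success probability}-type counting,
\[
\p(I_\al=I_\be=1)=\frac{(N-7)!!}{(N-1)!!}=\frac{1}{((N-1))_3}.
\]

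\emph{First equality.} I would show that $I_\al=1$ and $I_\be=0$ force $J_{\be\al}=0$. Given $I_\al=1$, we have $e\in\mathfrak{g}$, so $I_\be=0$ means $b\notin\mathfrak{g}$. Each map $f_{\al,\ell}$ only creates pairs containing a half-edge of the $\ell$-th edge of $\al$. Since $b$ contains no half-edge of $\al$, the pair $b$ can never be created, and hence $\mathfrak{g}_\al\not\supset\be$. This is the same mechanism as in Lemma~\ref{disjoint multigraphs non creation}.

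\emph{Main computation.} On $\{I_\al=I_\be=1\}$, the configuration $\mathfrak{g}_\al$ contains $\be$ iff, in each of the two steps, the uniform index $B_{\al,\ell}$ leaves both $e$ and $b$ intact (or restores them). I would split according to which edge of $\al$ is switched first, that is, according to the order of $e\wedge$ and $a\wedge$.
\begin{itemize}
\item[(i)] \emph{$e$ is switched first.} Step 1 keeps $e$ only via the no-op choice, with probability $1/(N-3)$. At step 2 the choice set has $N-1$ elements, since only the smaller half-edge of $a$ is excluded. Here we must avoid the two half-edges of $b$ and the two half-edges of $e$, which are no longer protected. This gives probability $(N-5)/(N-1)$.
\item[(ii)] \emph{$a$ is switched first.} Step 1 chooses among $N-3$ half-edges, with $e$'s half-edges excluded, and must avoid $b$'s two half-edges; this has probability $(N-5)/(N-3)$. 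Step 2 must be the no-op on $e$, with probability $1/(N-1)$.
\end{itemize}
In both orders, independence of $\mathfrak{g}$ and $B_\al$ gives survival probability $(N-5)/((N-1)(N-3))=(N-5)/((N-1))_2$, and this is uniform in $g\in\mG_\al\cap\mG_\be$. Therefore
\[
\p(I_\al=I_\be=1,J_{\be\al}=0)=\frac{1}{((N-1))_3}\qty(1-\frac{N-5}{((N-1))_2}).
\]

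\emph{Main obstacle.} The delicate step is the bookkeeping at the second switching step: deciding exactly which choices of $t_{\ell,1}$ destroy $e$ or $b$. One has to keep in mind that the half-edges of $e$ are protected from selection only while $e$ is still to be switched later, and that choosing $a$'s own larger half-edge is a harmless no-op. I would argue this carefully along the lines of Lemma~\ref{destroyed al be disjoint}, checking that the count of bad indices at each step does not depend on $g$.
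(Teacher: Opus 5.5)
Your proposal is correct and follows essentially the same route as the paper. The first equality comes from the non-creation mechanism of Lemma~\ref{disjoint multigraphs non creation} applied to the non-shared edge $b$ of $\be$. The probability is then computed by splitting according to whether the common edge or the other edge of $\al$ is switched first, which yields survival probabilities $\frac{1}{N-3}\cdot\frac{N-5}{N-1}$ and $\frac{N-5}{N-3}\cdot\frac{1}{N-1}$ respectively, exactly as in the paper. The parenthetical ``or restores them'' is unnecessary, since neither $e$ nor $b$ can be recreated once broken, but it does not affect the argument.
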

	Since we cannot improve the rate in $N$, we rather use a trivial bound $\e\qty[I_\al|I_\beta-J_{\beta\al}|]\le1/((N-1))_3$. The possibilities of a pair of $\al$ and $\be$ sharing a common edge can be computed as follows:
	\begin{align}\label{two double edges share a common edge cardinality}
		\underbrace{\sum_{1\le i<j\le n}}_{\text{two distinct vertices}}\underbrace{\frac{(d_i)_2(d_j)_2}{2}}_{\al}\times\underbrace{2\times(d_i-2)_1(d_j-2)_1}_{\be}=\sum_{1\le i<j\le n}(d_i)_3(d_j)_3=\sum_{1\le i\neq j\le n}\frac{(d_i)_3(d_j)_3}{2}.
	\end{align}
	
	\medskip
	In all the other configurations of $\al\neq\be$, $\al$ and $\be$ are incompatible: $\al$ and $\be$ share a half-edge, but no common edge, thus the partner of the said half-edge in $\al$ is different from the partner in $\be$. Hence $I_\al I_\be=0$ and $\e\qty[I_\al|I_\beta-J_{\beta\al}|]=\e\qty[I_\al J_{\beta\al}]=p_\al p_\be$. By subtracting \eqref{disjoint double edges raw} and \eqref{two double edges share a common edge cardinality} from the total, we shall count the possibilities of a pair of $\al$ and $\be$ in this class: 
	\begin{align*}
		&\sum_{1\le i<j\le n}\frac{(d_i)_2(d_j)_2}{2}\sum_{1\le k<\ell\le n}\frac{(d_k)_2(d_\ell)_2}{2}-\sum_{1\le i<j\le n}\frac{(d_i)_2(d_j)_2}{2}\\
		&-\sum_{1\le i<j\le n}\frac{(d_i)_2(d_j)_2}{2}\qty(\sum_{\substack{1\le k<\ell\le n\\\{k,\ell\}\cap\{i,j\}=\varnothing}}\frac{(d_k)_2(d_\ell)_2}{2}+\sum_{\substack{k\in[n]\\k\neq i,k\neq j}}\frac{\qty((d_i-2)_2+(d_j-2)_2)(d_k)_2}{2}+\frac{(d_i-2)_2(d_j-2)_2}{2})\\
		&-\sum_{1\le i<j\le n}(d_i)_3(d_j)_3\\
		=&\sum_{1\le i<j\le n}\frac{(d_i)_2(d_j)_2}{2}\left(\sum_{1\le k<\ell\le n}\frac{(d_k)_2(d_\ell)_2}{2}-\sum_{\substack{1\le k<\ell\le n\\\{k,\ell\}\cap\{i,j\}=\varnothing}}\frac{(d_k)_2(d_\ell)_2}{2}\right.\\
		&\quad\quad\quad\left.-\sum_{\substack{k\in[n]\\k\neq i,k\neq j}}\frac{\qty((d_i-2)_2+(d_j-2)_2)(d_k)_2}{2}-\frac{(d_i-2)_2(d_j-2)_2}{2}-2(d_i-2)_1(d_j-2)_1-1\right).
	\end{align*}
	First, note that
	\begin{align*}
		&\sum_{1\le k<\ell\le n}\frac{(d_k)_2(d_\ell)_2}{2}-\sum_{\substack{1\le k<\ell\le n\\\{k,\ell\}\cap\{i,j\}=\varnothing}}\frac{(d_k)_2(d_\ell)_2}{2}-\sum_{\substack{k\in[n]\\k\neq i,k\neq j}}\frac{\qty((d_i-2)_2+(d_j-2)_2)(d_k)_2}{2}\\
		=&\sum_{\substack{k\in[n]\\k\neq i,k\neq j}}\frac{2(2d_i-3)_+(d_k)_2}{2}+\sum_{\substack{k\in[n]\\k\neq i,k\neq j}}\frac{2(2d_j-3)_+(d_k)_2}{2}+\frac{(d_i)_2(d_j)_2}{2}.
	\end{align*}
	Then, 
	\begin{align}
		&\sum_{1\le i<j\le n}\frac{(d_i)_2(d_j)_2}{2}\qty(\sum_{\substack{k\in[n]\\k\neq i,k\neq j}}\frac{2(2d_i-3)_+(d_k)_2}{2}+\sum_{\substack{k\in[n]\\k\neq i,k\neq j}}\frac{2(2d_j-3)_+(d_k)_2}{2})\notag\\
		=&\sum_{1\le i<j\le n}\sum_{\substack{k\in[n]\\k\neq i,k\neq j}}\frac{(d_i)_2(2d_i-3)_+(d_j)_2(d_k)_2}{2}+\sum_{1\le i<j\le n}\sum_{\substack{k\in[n]\\k\neq i,k\neq j}}\frac{(d_j)_2(2d_j-3)_+(d_i)_2(d_k)_2}{2}\notag\\
		=&\sum_{1\le i<j\le n}\sum_{\substack{k\in[n]\\k\neq i,k\neq j}}\frac{2(d_i)_3(d_j)_2(d_k)_2+(d_i)_2(d_j)_2(d_k)_2}{2}+\sum_{1\le i<j\le n}\sum_{\substack{k\in[n]\\k\neq i,k\neq j}}\frac{2(d_j)_3(d_i)_2(d_k)_2+(d_j)_2(d_i)_2(d_k)_2}{2}\notag\\
		=&\qty(\sum_{1\le i<j\le n}\sum_{\substack{k\in[n]\\k\neq i,k\neq j}}+\sum_{1\le j<i\le n}\sum_{\substack{k\in[n]\\k\neq i,k\neq j}})\frac{2(d_i)_3(d_j)_2(d_k)_2+(d_i)_2(d_j)_2(d_k)_2}{2}\notag\\
		=&\sum_{\substack{i,j,k\in[n]\\i\neq j,j\neq k,k\neq i}}\frac{2(d_i)_3(d_j)_2(d_k)_2+(d_i)_2(d_j)_2(d_k)_2}{2}.\label{incompatible double edges three distinct vertices}
	\end{align}
	Note that these two terms correspond to the cases where $\al$ and $\be$ share vertex $v_i$ only and (i) contain three distinct half-edges incident to vertex $v_i$, or (ii) contain two distinct half-edges incident to vertex $v_i$, respectively. For the remaining terms, we can show that (as long as $d_i\wedge d_j\ge2$)
	\begin{align*}
		&\frac{(d_i)_2(d_j)_2}{2}-\frac{(d_i-2)_2(d_j-2)_2}{2}-2(d_i-2)_1(d_j-2)_1-1\\
		=&\begin{cases}
			2d_i^2d_j+2d_id_j^2-3d_i^2-14d_id_j-3d_j^2+19d_i+19d_j-27	&(d_i\ge3,d_j\ge3)\\
			d_i(d_i-1)-1&(d_i\ge3,d_j=2)\\
			d_j(d_j-1)-1&(d_i=2,d_j\ge3)\\
			1	&(d_i=d_j=2)
		\end{cases}\\
		=&2(d_i-2)_2(d_j-2)_1+2(d_i-2)_1(d_j-2)_2+(d_i-2)_2+(d_j-2)_2\\
		&\quad+6(d_i-2)_1(d_j-2)_1+4(d_i-2)_1+4(d_j-2)_1+1.
	\end{align*}
	To guess this formula, note that this is enumerating all the other possibilities about $\be$ which is incompatible with a given $\al$: $\al$ and $\be$ share vertices $v_i$ and $v_j$ using
	{\setlength{\leftmargini}{29.5pt}  	
		\begin{enumerate}
			\renewcommand{\labelenumi}{(\roman{enumi})}
			
			\setlength{\labelsep}{7pt}     
			\setlength{\itemsep}{3pt}      
			
			\item four half-edges from $v_i$ and three from $v_j$,
			
			\item three from $v_i$ and four from $v_j$,
			
			\item four from $v_i$ and two from $v_j$,
			
			\item two from $v_i$ and four from $v_j$,
			
			\item three from $v_i$ and three from $v_j$ (note that we have to exclude the case where $\al$ and $\be$ share a common edge),
			
			\item three from $v_i$ and two from $v_j$,
			
			\item two from $v_i$ and three from $v_j$, and
			
			\item two from $v_i$ and two from $v_j$ (only pairings of the half-edges differ in $\al$ and $\be$).
			
	\end{enumerate} }

	\medskip
	Thus, 
	\begin{align*}
		&\frac{(d_i)_2(d_j)_2}{2}\qty(\frac{(d_i)_2(d_j)_2}{2}-\frac{(d_i-2)_2(d_j-2)_2}{2}-2(d_i-2)_1(d_j-2)_1-1)\\
		=&\frac{\splitfrac{2(d_i)_4(d_j)_3+2(d_i)_3(d_j)_4+(d_i)_4(d_j)_2+(d_i)_2(d_j)_4}{+6(d_i)_3(d_j)_3+4(d_i)_3(d_j)_2+4(d_i)_2(d_j)_3+(d_i)_2(d_j)_2}}{2}
	\end{align*}
	and
	\begin{align}
		&\sum_{1\le i<j\le n}\frac{(d_i)_2(d_j)_2}{2}\qty(\frac{(d_i)_2(d_j)_2}{2}-\frac{(d_i-2)_2(d_j-2)_2}{2}-2(d_i-2)_1(d_j-2)_1-1)\notag\\
		=&\sum_{1\le i\neq j\le n}\frac{2(d_i)_4(d_j)_3+(d_i)_4(d_j)_2+3(d_i)_3(d_j)_3+4(d_i)_3(d_j)_2+\frac{1}{2}(d_i)_2(d_j)_2}{2}.\label{incompatible double edges two distinct vertices}
	\end{align}
	The number of the possibilities of a pair of incompatible $\al$ and $\be$ is the sum of \eqref{incompatible double edges three distinct vertices} and \eqref{incompatible double edges two distinct vertices}.
	
	\medskip
	Putting all this together, we obtain an estimate for Case (d) as
	\begin{align*}
		&\sum_{\al\in\Ga_{22}}\sum_{\be\in\Ga_{22}\setminus\{\al\}}\e\qty[I_\al|I_\beta-J_{\beta\al}|]\\
		\le&\frac{\frac{1}{2}\sum_{\substack{i,j,k,\ell\in[n]\\\text{$i,j,k,\ell$: distinct}}}(d_i)_2(d_j)_2(d_k)_2(d_\ell)_2+2\sum_{\substack{i,j,k\in[n]\\i\neq j,j\neq k,k\neq i}}(d_i)_4(d_j)_2(d_k)_2+\sum_{1\le i\neq j\le n}(d_i)_4(d_j)_4}{((N-1))_4(N-3)}.   \\
		&+\frac{\sum_{1\le i\neq j\le n}(d_i)_3(d_j)_3}{2((N-1))_3}    \\
		&+\frac{\sum_{\substack{i,j,k\in[n]\\i\neq j,j\neq k,k\neq i}}\qty(2(d_i)_3(d_j)_2(d_k)_2+(d_i)_2(d_j)_2(d_k)_2)}{2((N-1))_2((N-1))_2}\\
		&+\frac{\sum_{1\le i\neq j\le n}\qty(2(d_i)_4(d_j)_3+(d_i)_4(d_j)_2+3(d_i)_3(d_j)_3+4(d_i)_3(d_j)_2+\frac{1}{2}(d_i)_2(d_j)_2)}{2((N-1))_2((N-1))_2}\\
		\le&\frac{\nu_n^4}{2(N-1)}+\frac{2\mu_n^{(4)}\nu_n^2}{((N-1))_2}+\frac{\qty(\mu_n^{(4)})^2}{((N-1))_2(N-3)}+\frac{\qty(\mu_n^{(3)})^2}{2(N-1)}+\frac{2\mu_n^{(3)}\nu_n^2+\nu_n^3}{2(N-1)}\\
		&\quad+\frac{2\mu_n^{(4)}\mu_n^{(3)}+\mu_n^{(4)}\nu_n+3\qty(\mu_n^{(3)})^2+4\mu_n^{(3)}\nu_n+\frac{1}{2}\nu_n^2}{2(N-1)^2}\\
		\le&\frac{\nu_n^4}{2(N-1)}+\frac{2\nu_n^4}{N-1}+\frac{\nu_n^4}{N-1}+\frac{\qty(\mu_n^{(3)})^2}{2(N-1)}+\frac{2\mu_n^{(3)}\nu_n^2+\nu_n^3}{2(N-1)}\\
		&\quad+\frac{2\mu_n^{(3)}\nu_n^2+\nu_n^3}{2(N-1)}+\frac{3\qty(\mu_n^{(3)})^2+4\mu_n^{(3)}\nu_n+\frac{1}{2}\nu_n^2}{2(N-1)^2}\\
		=&\frac{7\nu_n^4+\qty(\mu_n^{(3)})^2+4\mu_n^{(3)}\nu_n^2+2\nu_n^3}{2(N-1)}+\frac{3\qty(\mu_n^{(3)})^2+4\mu_n^{(3)}\nu_n+\frac{1}{2}\nu_n^2}{2(N-1)^2}.
	\end{align*}
	
	\medskip
	Therefore, summing over Cases (a)--(d), we obtain
	\begin{align*}
		&\sum_{\al\in\Ga_{2}}\sum_{\be\in\Ga_{2}\setminus\{\al\}}\e\qty[I_\al|I_\beta-J_{\beta\al}|]\\
		\le&\frac{\nu_n^2+\mu_n^{(3)}}{N-1}+\frac{3\nu_n^3+2\mu_n^{(3)}\nu_n+\nu_n^2}{N-1}+\frac{7\nu_n^4+\qty(\mu_n^{(3)})^2+4\mu_n^{(3)}\nu_n^2+2\nu_n^3}{2(N-1)}+\frac{3\qty(\mu_n^{(3)})^2+4\mu_n^{(3)}\nu_n+\frac{1}{2}\nu_n^2}{2(N-1)^2}\\
		=&\frac{7\nu_n^4+\qty(\mu_n^{(3)})^2+4\mu_n^{(3)}\nu_n^2+8\nu_n^3+4\mu_n^{(3)}\nu_n+2\mu_n^{(3)}+4\nu_n^2}{2(N-1)}+\frac{6\qty(\mu_n^{(3)})^2+8\mu_n^{(3)}\nu_n+\nu_n^2}{4(N-1)^2}.
	\end{align*}
	This completes the proof of Proposition \ref{fifth term in stein coupling bound}.

	\section{Proofs of auxiliary lemmas}
	\label{proof of auxiliary lemmas}
	
	\subsection{Proof of Lemma \ref{cardinality estimates}}
	\label{proof cardinality estimates}
	
	(a) This is already mentioned at the beginning of Section \ref{configuration model}, when $\Ga_{11}$, $\Ga_{12}$, $\Ga_{21}$ and $\Ga_{22}$ are introduced.
	
	\medskip
(b) \underline{$j=k=1$}. Let $\al$ be a possible isolated edge and let $v_1^\al$ and $v_2^\al$ denote its degree 1 vertices. Then $\qty{\{v_i,v_j\}:i\neq j,d_i=d_j=1,\{v_i,v_j\}\cap\{v_1^\al,v_2^\al\}\neq\varnothing}\subset\bigcup_{i=1}^2\qty{\{v_i^\al,v_j\}:v_i^\al\neq v_j,d_j=1}$. Thus the number of possible isolated edges intersecting $\al$ is at most $2n_1$.

\medskip
\underline{$j=1,k=2$}. Let $\al$ be a possible isolated edge and let $v_1^\al$ and $v_2^\al$ denote its degree 1 vertices. Then $\qty{\{v_i,v_j,v_k\}:i\neq j,d_i=d_j=1,d_k=2,\{v_i,v_j\}\cap\{v_1^\al,v_2^\al\}\neq\varnothing}\subset\bigcup_{i=1}^2\{\{v_i^\al,v_j,v_k\}:v_i^\al\neq v_j,d_j=1,d_k=2\}$. Thus the number of possibilities about the vertices of a possible isolated 2-star intersecting $\al$ is at most $2n_1n_2$. Given two degree 1 vertices and one degree 2 vertex, there are two ways to permute the half-edges incident to the degree 2 vertex. Thus the number of possible isolated 2-stars intersecting $\al$ is at most $4n_1n_2$.

\medskip
\underline{$j=2,k=1$}. Let $\al$ be a possible isolated 2-star and let $v_{1}^{\al,1}$ and $v_{2}^{\al,1}$ denote its degree 1 vertices. Then $\{\{v_i,v_j\}:i\neq j,d_i=d_j=1,\{v_i,v_j\}\cap\{v_1^{\al,1},v_2^{\al,1}\}\neq\varnothing\}\subset\bigcup_{i=1}^2\{\{v_i^{\al,1},v_j\}:v_i^\al\neq v_j,d_j=1\}$. Thus the number of possible isolated edges intersecting $\al$ is at most $2n_1$. Note that the case where $\al,\be$ share two degree 1 vertices is already counted within $2n_1$.

\medskip
\underline{$j=k=2$}. Let $\al$ be a possible isolated 2-star. Let $v_{1}^{\al,1}$ and $v_{2}^{\al,1}$ denote the degree 1 vertices of $\al$ and let $v^{\al,2}$ denote the degree 2 vertex of $\al$. Then
\begin{align*}
&\qty{\{v_i,v_j,v_k\}:i\neq j,d_i=d_j=1,d_k=2,\{v_i,v_j,v_k\}\cap\{v_1^{\al,1},v_2^{\al,1},v^{\al,2}\}\neq\varnothing}\\
\subset&\qty(\bigcup_{i=1}^2\qty{\{v_i^{\al,1},v_j,v_k\}:v_i^\al\neq v_j,d_j=1,d_k=2})\cup\qty{\{v_i,v_j,v^{\al,2}\}:i\neq j,d_i=d_j=1}.
\end{align*}
Thus the number of possibilities about the vertices of a possible isolated 2-star intersecting $\al$ is at most $2n_1n_2+(n_1)_2/2$. Given two degree 1 vertices and one degree 2 vertex, there are two ways to permute the half-edges incident to the degree 2 vertex. Thus the number of possible isolated 2-stars intersecting $\al$ is at most $4n_1n_2+(n_1)_2$.

\medskip
(c) Since the two degree 1 vertices of $\be$ must be those of $\al$, there are only $n_2$ possibilities about the vertices of $\be$, namely those about the degree 2 vertex. Given two degree 1 vertices and one degree 2 vertex, there are two ways to permute the half-edges incident to the degree 2 vertex. Thus the number of possible isolated 2-stars sharing two degree 1 vertices with $\al$ is at most $2n_2$.

\medskip
(d) Let $\al$ be a possible isolated 2-star. Let $v_{1}^{\al,1}$ and $v_{2}^{\al,1}$ denote the degree 1 vertices of $\al$ and let $v^{\al,2}$ denote the degree 2 vertex of $\al$. Then
\begin{align*}
	&\qty{\{v_i,v_j,v_k\}:i\neq j,d_i=d_j=1,d_k=2,|\{v_i,v_j\}\cap\{v_1^{\al,1},v_2^{\al,1}\}|=1,v_k\neq v^{\al,2}}\\
	\subset&\bigcup_{i=1}^2\qty{\{v_i^{\al,1},v_j,v_k\}:v_i^\al\neq v_j,d_j=1,d_k=2}.
\end{align*}
Thus the number of possibilities about the vertices of a member $\be\in\La^\al$ is at most $2n_1n_2$. Given two degree 1 vertices and one degree 2 vertex, there are two ways to permute the half-edges incident to the degree 2 vertex. Thus $\qty|\La^\al|\le4n_1n_2$.

\medskip
Next, note that for any member $\be\in\qty{\be\in\Ga_{12}\setminus\{\al\}:\be\cap\al\neq\varnothing}\setminus\La^\al$, either (i) $\al$ and $\be$ share two degree 1 vertices, or (ii) $\al$ and $\be$ share one degree 2 vertex (or both). Obviously,
\begin{align*}
	&\qty|\qty{\{v_i,v_j,v_k\}:i\neq j,d_i=d_j=1,d_k=2,\{v_i,v_j\}=\{v_1^{\al,1},v_2^{\al,1}\}}|=n_2,\\
		&\qty|\qty{\{v_i,v_j,v_k\}:i\neq j,d_i=d_j=1,d_k=2,v_k= v^{\al,2}}|=\frac{(n_1)_2}{2}.
\end{align*}
Thus the number of possibilities about the vertices of a member $\be\in\qty{\be\in\Ga_{12}\setminus\{\al\}:\be\cap\al\neq\varnothing}\setminus\La^\al$ is at most $n_2+(n_1)_2/2$. Given two degree 1 vertices and one degree 2 vertex, there are two ways to permute the half-edges incident to the degree 2 vertex. Thus $|\{\be\in\Ga_{12}\setminus\{\al\}:\be\cap\al\neq\varnothing\}\setminus\La^\al|\le 2n_2+(n_1)_2$.

\medskip
(e)  	If $\al$ is a possible isolated edge, then $\al$ consists only of degree 1 vertices, whereas a possible self-loop or a possible double edge consists of vertices of degree higher than or equal to 2. Therefore there is no $\be\in\Ga_{2}$ intersecting $\al$.

\medskip
Suppose that $\al$ is a possible isolated 2-star. The vertex of a possible self-loop intersecting $\al$ must be $\al$'s degree 2 vertex, which is assigned only two half-edges. Thus there is only one possible self-loop intersecting $\al$. One of the vertices of a possible double edge intersecting $\al$ must be $\al$'s degree 2 vertex. Given the other vertex $v_i$, there are $(d_i)_2/2$ choices for the two half-edges of a possible double edge intersecting $\al$ which are incident to $v_i$, and there are two possible pairings of the two chosen half-edges incident to $v_i$ to the two half-edges incident to $\al$'s degree 2 vertex. Thus there are at most $\sum_{i\in[n]}(d_i)_2/2\times2$ possibilities about a possible double edge intersecting $\al$. \qed
	
	\subsection{Proofs for Appendix \ref{stein coupling bound first term}}
	\label{proofs}
	
	\subsubsection{Proof of Lemma \ref{intersecting different isolated trees}}
	We only consider the case where $\al,\be\in\Ga_{12}$, i.e., both $\al$ and $\be$ are possible isolated 2-stars. The other cases are easier. Suppose that $\al\cap\be\neq \varnothing$ and $\al\neq\be$. Suppose for contradiction that any half-edge $s$ shared by $\al$ and $\be$ has the same partner in $\al$ and in $\be$. Since $\al\cap\be\neq\varnothing$, $\al$ and $\be$ share a vertex.
	
	\medskip
	Suppose first that $\al$ and $\be$ share a degree 2 vertex. Let $t_1,t_2$ denote the two half-edges incident to that degree 2 vertex. $\al$ and $\be$ share $t_1$ and $t_2$ (since $\al$ and $\be$ are isolated ones). By what we have supposed for contradiction, $t_j$ has the same partner in $\al$ and in $\be$, which is denoted $s_j$, $j=1,2$. $s_j$ is incident to a degree 1 vertex and $\al$ and $\be$ also share that degree 1 vertex, $j=1,2$. All this together means that $\al=\be=(s_1t_1)\cup(t_2s_2)$, contradicting $\al\neq\be$.
	
	\medskip
	Suppose next that $\al$ and $\be$ share a degree 1 vertex, whose half-edge is denoted $s$. Then by what we have supposed for contradiction, $s$ has the same partner in $\al$ and in $\be$, which is denoted $t$, and in particular, $\al$ and $\be$ inevitably share the degree 2 vertex to which $t$ is incident. The preceding argument now shows that $\al=\be$, again contradicting $\al\neq\be$. \qed

	\subsubsection{Proof of Lemma \ref{disjoint multigraphs non creation}}
	\label{proof of disjoint multigraphs non creation}
	
	Let $s_{\ell,1}s_{\ell,2}$, $1\le\ell\le e(H)$ denote the $e(H)$ edges of $\al$ arranged in ascending order such that $s_{1,1}\wedge s_{1,2}<\cdots<s_{e(H),1}\wedge s_{e(H),2}$. Suppose that $\al$ and $\be$ share no half-edge, and let $g\in\mG_\al\setminus\mG_\be$ and $b=\qty(b_\ell)_{\ell=1}^{e(H)}\in\prod_{\ell=1}^{e(H)}\qty[N-2(e(H)-\ell)-1]$. Define recursively $g_0\coloneqq g\in\mG_\al$ and $g_\ell\coloneqq f_{\al,\ell}(g_{\ell-1},b_\ell)\in\mG_{\al,\ell}$, $\ell=1,\dots,e(H)$. Then $f_{\al}(g,b)=g_{e(H)}$ by definition. For each $1\le\ell\le e(H)$, let $t_{\ell,1}$ denote the $b_\ell$-th smallest half-edge among
	\begin{align*}
		\{1,\dots,N \}\setminus\qty(\{s_{\ell,1}\wedge s_{\ell,2} \}\uplus\biguplus_{m=\ell+1}^{e(H)}\{s_{m,1},s_{m,2} \}),
	\end{align*}
	and let $t_{\ell,2}$ denote the partner of $t_{\ell,1}$ in $g_{\ell-1}$. We prove that $g_{\ell}\not\supset\be$ for all $0\le\ell\le e(H)$ by induction on $\ell$. $g_0=g\not\supset\be$. By induction hypothesis, $g_{\ell-1}\not\supset\be$, and in particular there is an edge $uv$ of $\be$ which is not in $g_{\ell-1}$. Since $\al$ and $\be$ share no half-edge, neither $s_{\ell,1}$ nor $s_{\ell,2}$ is a half-edge of $\be$. Thus neither $(s_{\ell,1}\wedge s_{\ell,2})t_{\ell,1}$ nor $(s_{\ell,1}\vee s_{\ell,2})t_{\ell,2}$ equals $uv$, which implies
	\begin{align*}
		g_\ell=f_{\al,\ell}(g_{\ell-1},b_\ell)=&\qty(g_{\ell-1}\setminus\qty{ \{s_{\ell,1},s_{\ell,2}\},\{t_{\ell,1},t_{\ell,2}\} })\uplus\qty{ \{s_{\ell,1}\wedge s_{\ell,2},t_{\ell,1} \},\{s_{\ell,1}\vee s_{\ell,2},t_{\ell,2}\} }\\
		\not\supset&\be.
	\end{align*}
	Therefore, $g_{\ell}\not\supset\be$ for all $0\le\ell\le e(H)$, whence $f_{\al}(g,b)=g_{e(H)}\not\supset\be$. 
	
	\medskip
	Finally, suppose that $\al$ and $\be$ share no half-edge and $I_\be=0$. If $I_\al=0$, then $J_{\be\al}=I_\be=0$. If $I_\al=1$, then $\mathfrak{g}\in\mG_\al\setminus\mG_\be$ and we have $\mathfrak{g}_\al=f_\al(\mathfrak{g},B_\al)\not\supset\be$, i.e., $J_{\be\al}=0$. \qed
	
	\subsubsection{Proof of Lemma \ref{destroyed al be disjoint}}
	Let $g\in\mG_\al\cap\mG_\be$ and $b=\qty(b_{\ell})_{\ell=1}^{e(H)}\in\prod_{\ell=1}^{e(H)}[N-2(e(H)-\ell)-1]$. Define recursively $g_0\coloneqq g\in\mG_\al$ and $g_\ell\coloneqq f_{\al,\ell}(g_{\ell-1},b_\ell)\in\mG_{\al,\ell}$, $\ell=1,\dots,e(H)$. Then $f_{\al}(g,b)=g_{e(H)}$ by definition. For each $1\le\ell\le e(H)$, let $t_{\ell,1}$ denote the $b_\ell$-th smallest half-edge among
	\begin{equation}\label{destroyed lemma choice}
		\{1,\dots,N \}\setminus\qty(\{s_{\ell,1}\wedge s_{\ell,2} \}\uplus\biguplus_{m=\ell+1}^{e(H)}\{s_{m,1},s_{m,2} \}),
	\end{equation}
	and let $t_{\ell,2}$ denote the partner of $t_{\ell,1}$ in $g_{\ell-1}$. First suppose that $b_{\ell}\notin R_{\al,\be,\ell}$ for all $1\le\ell\le e(H)$.  We prove that $g_{\ell}\supset\be$ for all $0\le\ell\le e(H)$ by induction on $\ell$. $g_0\supset\be$. By induction hypothesis, $g_{\ell-1}\supset\be$. Since $b_\ell\notin R_{\al,\be,\ell}$, $t_{\ell,1}$ is not a half-edge of $\be$. Thus $t_{\ell,1}t_{\ell,2}$ is not an edge of $\be$. Since $\al$ and $\be$ share no half-edge, $s_{\ell,1}s_{\ell,2}$ is also not an edge of $\be$. Thus $g_{\ell-1}\setminus\qty{ \{s_{\ell,1},s_{\ell,2}\},\{t_{\ell,1},t_{\ell,2}\} }$ still contains all the edges of $\be$. Thus
	\begin{align*}
		g_\ell=f_{\al,\ell}(g_{\ell-1},b_\ell)=&\qty(g_{\ell-1}\setminus\qty{ \{s_{\ell,1},s_{\ell,2}\},\{t_{\ell,1},t_{\ell,2}\} })\uplus\qty{ \{s_{\ell,1}\wedge s_{\ell,2},t_{\ell,1} \},\{s_{\ell,1}\vee s_{\ell,2},t_{\ell,2}\} }\\
		\supset&\be.
	\end{align*}
	So $g_{\ell}\supset\be$ for all $0\le\ell\le e(H)$, whence $f_{\al}(g,b)=g_{e(H)}\supset\be$.
	
	\medskip
	Conversely, suppose this time that there is $1\le\ell_0\le e(H)$ such that $b_{\ell_0}\in R_{\al,\be,\ell_0}$. We prove that $g_{\ell}\not\supset\be$ for all $\ell_0\le\ell\le e(H)$ by induction on $\ell$. Since $t_{\ell_0,1}$ is the $b_{\ell_0}$-th smallest half-edge among \eqref{destroyed lemma choice}, it is a half-edge of $\be$. Since $\al$ and $\be$ share no half-edge, $s_{\ell_0,1}\wedge s_{\ell_0,2}$ is not a half-edge of $\be$. In $g_{\ell_0}$,
	\begin{align*}
		g_{\ell_0}=&f_{\al,\ell_0}(g_{\ell_0-1},b_{\ell_0})\\
		=&\qty(g_{\ell_0-1}\setminus\qty{ \{s_{\ell_0,1},s_{\ell_0,2}\},\{t_{\ell_0,1},t_{\ell_0,2}\} })\uplus\qty{ \{s_{\ell_0,1}\wedge s_{\ell_0,2},t_{\ell_0,1} \},\{s_{\ell_0,1}\vee s_{\ell_0,2},t_{\ell_0,2}\} },
	\end{align*}
	$t_{\ell_0,1}$ is paired to $s_{\ell_0,1}\wedge s_{\ell_0,2}$, so $\be$'s edge containing $t_{\ell_0,1}$ is not formed in $g_{\ell_0}$. Thus $g_{\ell_0}\not\supset\be$. By induction hypothesis, $g_{\ell-1}\not\supset\be$, and in particular there is an edge $uv$ of $\be$ which is not in $g_{\ell-1}$. Since $\al$ and $\be$ share no half-edge, neither $s_{\ell,1}$ nor $s_{\ell,2}$ is a half-edge of $\be$. Thus neither $(s_{\ell,1}\wedge s_{\ell,2})t_{\ell,1}$ nor $(s_{\ell,1}\vee s_{\ell,2})t_{\ell,2}$ equals $uv$, which implies
	\begin{align*}
		g_\ell=f_{\al,\ell}(g_{\ell-1},b_\ell)=&\qty(g_{\ell-1}\setminus\qty{ \{s_{\ell,1},s_{\ell,2}\},\{t_{\ell,1},t_{\ell,2}\} })\uplus\qty{ \{s_{\ell,1}\wedge s_{\ell,2},t_{\ell,1} \},\{s_{\ell,1}\vee s_{\ell,2},t_{\ell,2}\} }\\
		\not\supset&\be.
	\end{align*}
	Therefore, $g_{\ell}\not\supset\be$ for all $\ell_0\le\ell\le e(H)$, whence $f_{\al}(g,b)=g_{e(H)}\not\supset\be$. \qed

	\subsubsection{Proof of Lemma \ref{surjection edge edge}}
	Let $\al$ and $\be$ be possible isolated edges such that $\al\cap\be\neq\varnothing$ and $\al\neq\be$. Write $\al=s_1s_2$ and $\be=t_1t_2$, and assume that $s_2=t_1$ without loss of generality. Let $g\in\mG_\al$. Define $b_1=b_{1}(g)\in[N-1]$ by
	\begin{align*}
		b_{1}(g)\coloneqq\begin{cases}
		\begin{aligned}&	\text{the rank of the partner of $t_2$ in $g$}\\&\quad\text{among the $N-1$ half-edges other than $s_1$}\end{aligned}	&  (s_1<s_2)  \\
			\text{the rank of $t_2$ among the $N-1$ half-edges other than $s_2$}	&  (s_2<s_1)
		\end{cases}.
	\end{align*}
	Then define a map $h_{\al,\be}:\mG_\al\rightarrow\mG$ by $h_{\al,\be}(g)\coloneqq f_\al(g,b_{1}(g))$.
	Namely, we restrict $f_\al$ such that the bond between $s_1$ and $s_2$ is surely unpaired and the edge $\be=t_1t_2=s_2t_2$ is surely created. Let $g_\be\in\mG_\be$. Then $\qty(f^{-1}_\al(g_\be))_2$ is the rank of the partner of $s_1\wedge s_2$ in $g_\be$ among the $N-1$ half-edges other than $s_1\wedge s_2$,  and the partner of $s_1$ in $g_\be$ is the partner of $t_2$ in $\qty(f^{-1}_\al(g_\be))_1$. Thus $\qty(f^{-1}_\al(g_\be))_2=b_1\qty(\qty(f^{-1}_\al(g_\be))_1)$ and $h_{\al,\be}\qty(\qty(f^{-1}_\al(g_\be))_1)=f_\al\qty(\qty(f^{-1}_\al(g_\be))_1,\qty(f^{-1}_\al(g_\be))_2)=g_\be$. This shows that $h_{\al,\be}$ is a surjection onto $\mG_\be$. \qed
	
	\subsubsection{Proof of Lemma \ref{surjection al edge be 2star}}
	(a) Let $g\in\mG_\al$. Define $b_1=b_{1}(g)\in[N-1]$ by
	\begin{align*}
		b_{1}(g)\coloneqq\begin{cases}\begin{aligned}&
			\text{the rank of the partner of $t_2$ in $g$ }\\&\quad\text{among the $N-1$ half-edges other than $s_1$}\end{aligned}	&  (s_1<s_2)  \\
			\text{the rank of $t_2$ among the $N-1$ half-edges other than $s_2$}	&  (s_2<s_1)
		\end{cases}.
	\end{align*}
	Then define a map $h_{\al,\be}:\mG_\al\rightarrow\mG$ by $h_{\al,\be}(g)\coloneqq f_\al(g,b_{1}(g))$.
	Namely, we restrict $f_\al$ such that the bond between $s_1$ and $s_2$ is surely unpaired and one of the edges of $\be$, $t_1t_2=s_2t_2$ is surely created. Let $g_\be\in\mG_\be$. One of the edges of $\be$, $t_3t_4$ remains in $\qty(f^{-1}_\al(g_\be))_1$. $\qty(f^{-1}_\al(g_\be))_2$ is the rank of the partner of $s_1\wedge s_2$ in $g_\be$ among the $N-1$ half-edges other than $s_1\wedge s_2$,  and the partner of $s_1$ in $g_\be$ is the partner of $t_2$ in $\qty(f^{-1}_\al(g_\be))_1$. Thus $\qty(f^{-1}_\al(g_\be))_2=b_1\qty(\qty(f^{-1}_\al(g_\be))_1)$ and $h_{\al,\be}\qty(\qty(f^{-1}_\al(g_\be))_1)=f_\al\qty(\qty(f^{-1}_\al(g_\be))_1,\qty(f^{-1}_\al(g_\be))_2)=g_\be$. This shows that $h_{\al,\be}$ is a surjection from $\mG_\al\cap\mG_{t_3t_4}$ onto $\mG_\be$.
	
	\medskip
	(b) Let $g\in\mG_\al$. Define $b_1\in[N-1]$ by
	\begin{align*}
		b_{1}\coloneqq\begin{cases}
			\text{the rank of $t_2$ among the $N-1$ half-edges other than $s_1$}	&  (s_1<s_2)  \\
			\text{the rank of $t_3$ among the $N-1$ half-edges other than $s_2$}	&  (s_2<s_1)
		\end{cases}.
	\end{align*}
	Then define a map $h_{\al,\be}:\mG_\al\rightarrow\mG$ by $h_{\al,\be}(g)\coloneqq f_\al(g,b_{1})$. Namely, we restrict $f_\al$ such that the bond between $s_1$ and $s_2$ is surely unpaired and the edge $t_1t_2 =s_1t_2$ is surely created if $s_1<s_2$, whereas the edge $t_3t_4=t_3s_2$ is surely created if $s_2<s_1$. Note that if the self-loop $t_2t_3$ exists in $g$, then both of the edges $t_1t_2=s_1t_2$ and $t_3t_4=t_3s_2$ will be created simultaneously by $h_{\al,\be}$, whether $s_1<s_2$ or $s_2<s_1$.
	
	\medskip
	Let $g_\be\in\mG_\be$. Then in $\qty(f^{-1}_\al(g_\be))_1$, $t_2$ and $t_3$ are paired to each other to form a self-loop $t_2t_3$. $\qty(f^{-1}_\al(g_\be))_2$ is the rank of the partner of $s_1\wedge s_2$ in $g_\be$ among the $N-1$ half-edges other than $s_1\wedge s_2$, and the partners of $s_1=t_1$ and $s_2=t_4$ in $g_\be$ are $t_2$ and $t_3$, respectively. Thus $\qty(f^{-1}_\al(g_\be))_2=b_1$ and $h_{\al,\be}\qty(\qty(f^{-1}_\al(g_\be))_1)=f_\al\qty(\qty(f^{-1}_\al(g_\be))_1,\qty(f^{-1}_\al(g_\be))_2)=g_\be$. This shows that $h_{\al,\be}$ is a surjection from $\mG_\al\cap\mG_{t_2t_3}$ onto $\mG_\be$. \qed

	\subsubsection{Proof of Lemma \ref{surjection al 2star be edge}}
	Let $\al$ be a possible isolated 2-star and let $\be$ be a possible isolated edge such that $\al\cap\be\neq\varnothing$ and $\al\neq\be$. Write $\al=(s_1s_2)\cup (s_3s_4)$ and $\be=t_1t_2$, where $s_2$ and $s_3$ are incident to $\al$'s degree 2 vertex. There are two possible cases to consider: (a) $\al$ and $\be$ share one degree 1 vertex only, and (b) $\al$ and $\be$ share two degree 1 vertices. Without loss of generality, we assume that $s_4=t_1$ in Case (a) and we assume that $s_1=t_1$ and $s_4=t_2$ in Case (b).
	
	\medskip
	\underline{Case (a)}. First consider the case of $s_3\wedge s_4<s_1\wedge s_2$. Let $g\in\mG_\al$ and $b_2\in[N-1]$. Define $b_1=b_1(g)\in[N-3]$ by
	\begin{align*}
		b_1(g)\coloneqq\begin{cases}\begin{aligned}&
			\text{the rank of the partner of $t_2$ in $g$}\\&\quad\text{among the $N-3$ half-edges other than $s_1,s_2,s_3$}\end{aligned}	& (s_3<s_4) \\
			\text{the rank of $t_2$ among the $N-3$ half-edges other than $s_1,s_2,s_4$}	& (s_4<s_3)
		\end{cases}.
	\end{align*}
	Then define a map $h_{\al,\be}:\mG_\al\times[N-1]\rightarrow\mG$ by $h_{\al,\be}(g,b_2)\coloneqq f_\al(g,b_1(g),b_2)=f_{\al,2}\qty(f_{\al,1}\qty(g,b_1(g)),b_2)$. Namely, we restrict $f_{\al,1}$ such that it surely breaks up the bond between $s_3$ and $s_4$ and creates the edge $t_1t_2=s_4t_2$. Let $g_\be\in\mG_\be$. Let $\qty(f^{-1}_{\al,2}\qty(g_\be))_1\in\mG_{\al,1}=\mG_{s_1s_2}$ be the first coordinate of $f^{-1}_{\al,2}\qty(g_\be)$.  The second coordinate of $f^{-1}_{\al,1}\qty(\qty(f^{-1}_{\al,2}\qty(g_\be))_1)$,
	\begin{align*}
		\qty(f^{-1}_{\al,1}\qty(\qty(f^{-1}_{\al,2}\qty(g_\be))_1))_2\in[N-3],
	\end{align*}
	is the rank of the partner of $s_3\wedge s_4$ in $\qty(f^{-1}_{\al,2}\qty(g_\be))_1$ among the $N-3$ half-edges other than $s_3\wedge s_4,s_1,s_2$. The edge $t_1t_2=s_4t_2$ remains in $\qty(f^{-1}_{\al,2}\qty(g_\be))_1$, i.e., the partner of $s_4$ in $\qty(f^{-1}_{\al,2}\qty(g_\be))_1$ is $t_2$. The partner of $s_3$ in $\qty(f^{-1}_{\al,2}\qty(g_\be))_1$ equals the partner of $t_2$ in the first coordinate of $f^{-1}_{\al,1}\qty(\qty(f^{-1}_{\al,2}\qty(g_\be))_1)$,
	\begin{align*}
		\qty(f^{-1}_{\al,1}\qty(\qty(f^{-1}_{\al,2}\qty(g_\be))_1))_1=\qty(f^{-1}_\al(g_\be))_1\in\mG_\al.
	\end{align*}
	Thus we have
	\begin{align*}
		\qty(f^{-1}_{\al,1}\qty(\qty(f^{-1}_{\al,2}\qty(g_\be))_1))_2=b_1\qty(\qty(f^{-1}_\al(g_\be))_1).
	\end{align*}
	By taking $g=\qty(f^{-1}_\al(g_\be))_1\in\mG_\al$ and $b_2=\qty(f^{-1}_{\al,2}\qty(g_\be))_2\in[N-1]$, we have
	\begin{align*}
		\qty(b_1\qty(g),b_2)=\qty(\qty(f^{-1}_{\al,1}\qty(\qty(f^{-1}_{\al,2}\qty(g_\be))_1))_2,\qty(f^{-1}_{\al,2}\qty(g_\be))_2)	=\qty(f^{-1}_\al(g_\be))_2\in[N-3]\times[N-1]
	\end{align*}
	and $h_{\al,\be}(g,b_2)=f_\al\qty(\qty(f^{-1}_\al(g_\be))_1,\qty(f^{-1}_\al(g_\be))_2)=g_\be$. This shows that $h_{\al,\be}$ is a surjection from $\mG_\al\times[N-1]$ onto $\mG_\be$ with the desired property.
	
	\medskip
	Next consider the case of $s_1\wedge s_2<s_3\wedge s_4$. Let $g\in\mG_\al$ and $b_1\in[N-3]$. Define $b_2=b_2(g,b_1)\in[N-1]$ by
	\begin{align*}
		b_2(g,b_1)\coloneqq\begin{cases}\begin{aligned}&
			\text{the rank of the partner of $t_2$ in $f_{\al,1}(g,b_1)$}\\&\quad\text{among the $N-1$ half-edges other than $s_3$}\end{aligned}	& (s_3<s_4) \\
			\text{the rank of $t_2$ among the $N-1$ half-edges other than $s_4$}	& (s_4<s_3)
		\end{cases}.
	\end{align*}
	Then define a map $h_{\al,\be}:\mG_\al\times[N-3]\rightarrow\mG$ by $h_{\al,\be}(g,b_1)\coloneqq f_\al\qty(g,b_1,b_2(g,b_1))=f_{\al,2}\qty(f_{\al,1}\qty(g,b_1),b_2(g,b_1))$.
	Namely, we restrict $f_{\al,2}$ such that it surely breaks up the bond between $s_3$ and $s_4$ and creates the edge $t_1t_2=s_4t_2$. Let $g_\be\in\mG_\be$. Take $g\in\mG_\al$ as $g=\qty(f^{-1}_\al(g_\be))_1$. Let $\qty(f^{-1}_{\al,2}\qty(g_\be))_1\in\mG_{\al,1}=\mG_{s_3s_4}$ be the first coordinate of $f^{-1}_{\al,2}\qty(g_\be)$. Take $b_1=b_1(g_\be)\in[N-3]$ as the second coordinate of $f^{-1}_{\al,1}\qty(\qty(f^{-1}_{\al,2}\qty(g_\be))_1)$, 
	\begin{align*}
		b_1=\qty(f^{-1}_{\al,1}\qty(\qty(f^{-1}_{\al,2}\qty(g_\be))_1))_2.
	\end{align*}
	Then $f_{\al,1}\qty(g,b_1)=\qty(f^{-1}_{\al,2}\qty(g_\be))_1$. The second coordinate of $f^{-1}_{\al,2}\qty(g_\be)$, $\qty(f^{-1}_{\al,2}\qty(g_\be))_2\in[N-1]$, is the rank of the partner of $s_3\wedge s_4$ in $g_\be$ among the $N-1$ half-edges other than $s_3\wedge s_4$. The partner of $s_3$ in $g_\be$ is the partner of $t_2$ in $\qty(f^{-1}_{\al,2}\qty(g_\be))_1$. The partner of $s_4=t_1$ in $g_\be$ is $t_2$. Thus $\qty(f^{-1}_{\al,2}\qty(g_\be))_2=b_2(g,b_1)$ and we have
	\begin{align*}
		\qty(b_1,b_2(g,b_1))=\qty(\qty(f^{-1}_{\al,1}\qty(\qty(f^{-1}_{\al,2}\qty(g_\be))_1))_2,\qty(f^{-1}_{\al,2}\qty(g_\be))_2)=\qty(f^{-1}_\al\qty(g_\be))_2\in[N-3]\times[N-1]
	\end{align*}
	and $h_{\al,\be}(g,b_1)=f_\al\qty(\qty(f^{-1}_\al(g_\be))_1,\qty(f^{-1}_\al(g_\be))_2)=g_\be$.  This shows that $h_{\al,\be}$ is a surjection from $\mG_\al\times[N-3]$ onto $\mG_\be$. By extending the domain of $h_{\al,\be}$ to $\mG_\al\times[N-1]$ (taking $h_{\al,\be}(g,b_1)$ for $b_1=N-2,N-1$ to be, say, $g$), we prove that $h_{\al,\be}$ is a surjection from $\mG_\al\times[N-1]$ onto $\mG_\be$ with the desired property.
	
	\medskip	
	\underline{Case (b)}. We may assume that $s_1\wedge s_2<s_3\wedge s_4$ in this case without loss of generality. Let $g\in\mG_\al$ and $b_1\in[N-3]$. Define $b_2=b_2(g,b_1)\in[N-1]$ by
	\begin{align*}
		b_2(g,b_1)\coloneqq\begin{cases}\begin{aligned}&
			\text{the rank of the partner of $s_1$ in $f_{\al,1}(g,b_1)$}\\&\quad\text{among the $N-1$ half-edges other than $s_3$}\end{aligned}	& (s_3<s_4) \\
			\text{the rank of $s_1$ among the $N-1$ half-edges other than $s_4$}	& (s_4<s_3)
		\end{cases}.
	\end{align*}
	Then define a map $h_{\al,\be}:\mG_\al\times[N-3]\rightarrow\mG$ by $h_{\al,\be}(g,b_1)\coloneqq f_\al\qty(g,b_1,b_2(g,b_1))=f_{\al,2}\qty(f_{\al,1}\qty(g,b_1),b_2(g,b_1))$.
	Namely, we restrict $f_{\al,2}$ such that it surely breaks up the bond between $s_3$ and $s_4$ and creates the edge $t_1t_2=s_1s_4$. Let $g_\be\in\mG_\be=\mG_{s_1s_4}$. Take $g\in\mG_\al$ as $g=\qty(f^{-1}_\al(g_\be))_1$. Let $\qty(f^{-1}_{\al,2}\qty(g_\be))_1\in\mG_{\al,1}=\mG_{s_3s_4}$ be the first coordinate of $f^{-1}_{\al,2}\qty(g_\be)$. Take $b_1=b_1(g_\be)\in[N-3]$ as the second coordinate of $f^{-1}_{\al,1}\qty(\qty(f^{-1}_{\al,2}\qty(g_\be))_1)$, \begin{align*}
		b_1=\qty(f^{-1}_{\al,1}\qty(\qty(f^{-1}_{\al,2}\qty(g_\be))_1))_2.
	\end{align*}
	Then $f_{\al,1}\qty(g,b_1)=\qty(f^{-1}_{\al,2}\qty(g_\be))_1$. The second coordinate of $f^{-1}_{\al,2}\qty(g_\be)$, $\qty(f^{-1}_{\al,2}\qty(g_\be))_2\in[N-1]$, is the rank of the partner of $s_3\wedge s_4$ in $g_\be$ among the $N-1$ half-edges other than $s_3\wedge s_4$, and the partner of $s_3$ in $g_\be$ is the partner of $s_1$ in $\qty(f^{-1}_{\al,2}\qty(g_\be))_1$. Thus $\qty(f^{-1}_{\al,2}\qty(g_\be))_2=b_2(g,b_1)$ and we have
	\begin{align*}
		\qty(b_1,b_2(g,b_1))=\qty(\qty(f^{-1}_{\al,1}\qty(\qty(f^{-1}_{\al,2}\qty(g_\be))_1))_2,\qty(f^{-1}_{\al,2}\qty(g_\be))_2)=\qty(f^{-1}_\al\qty(g_\be))_2\in[N-3]\times[N-1]
	\end{align*}
	and $h_{\al,\be}(g,b_1)=f_\al\qty(\qty(f^{-1}_\al(g_\be))_1,\qty(f^{-1}_\al(g_\be))_2)=g_\be$.  This shows that $h_{\al,\be}$ is a surjection from $\mG_\al\times[N-3]$ onto $\mG_\be$. By extending the domain of $h_{\al,\be}$ to $\mG_\al\times[N-1]$ (taking $h_{\al,\be}(g,b_1)$ for $b_1=N-2,N-1$ to be, say, $g$), we prove that $h_{\al,\be}$ is a surjection from $\mG_\al\times[N-1]$ onto $\mG_\be$ with the desired property.
	
	\medskip
	Therefore, there exists a desired surjection $h_{\al,\be}$ for each of Cases (a) and (b), completing the proof of the lemma. \qed

	\subsubsection{Proof of Lemma \ref{surjection 2star 2star}}
	(a) First consider the case of $s_3\wedge s_4<s_1\wedge s_2$.  Let $g\in\mG_\al$ and $b_2\in[N-1]$. Define $b_1=b_1(g)\in[N-3]$ by
	\begin{align*}
		b_1(g)\coloneqq\begin{cases}\begin{aligned}&
			\text{the rank of the partner of $t_2$ in $g$}\\&\quad\text{among the $N-3$ half-edges other than $s_1,s_2,s_3$}\end{aligned}	& (s_3<s_4) \\
			\text{the rank of $t_2$ among the $N-3$ half-edges other than $s_1,s_2,s_4$}	& (s_4<s_3)
		\end{cases}.
	\end{align*}
	Then define a map $h_{\al,\be}:\mG_\al\times[N-1]\rightarrow\mG$ by $h_{\al,\be}(g,b_2)\coloneqq f_\al(g,b_1(g),b_2)=f_{\al,2}\qty(f_{\al,1}\qty(g,b_1(g)),b_2)$. Namely, we restrict $f_{\al,1}$ such that it surely breaks up the bond between $s_3$ and $s_4$ and creates the edge $t_1t_2=s_4t_2$. Let $g_\be\in\mG_\be$. Let $\qty(f^{-1}_{\al,2}\qty(g_\be))_1\in\mG_{\al,1}=\mG_{s_1s_2}$ be the first coordinate of $f^{-1}_{\al,2}\qty(g_\be)$.  The second coordinate of $f^{-1}_{\al,1}\qty(\qty(f^{-1}_{\al,2}\qty(g_\be))_1)$,
	\begin{align*}
		\qty(f^{-1}_{\al,1}\qty(\qty(f^{-1}_{\al,2}\qty(g_\be))_1))_2\in[N-3],
	\end{align*}
	is the rank of the partner of $s_3\wedge s_4$ in $\qty(f^{-1}_{\al,2}\qty(g_\be))_1$ among the $N-3$ half-edges other than $s_3\wedge s_4,s_1,s_2$. The edge $t_1t_2=s_4t_2$ remains in $\qty(f^{-1}_{\al,2}\qty(g_\be))_1$, i.e., the partner of $s_4$ in $\qty(f^{-1}_{\al,2}\qty(g_\be))_1$ is $t_2$. The partner of $s_3$ in $\qty(f^{-1}_{\al,2}\qty(g_\be))_1$ equals the partner of $t_2$ in the first coordinate of $f^{-1}_{\al,1}\qty(\qty(f^{-1}_{\al,2}\qty(g_\be))_1)$,
	\begin{align*}
		\qty(f^{-1}_{\al,1}\qty(\qty(f^{-1}_{\al,2}\qty(g_\be))_1))_1=\qty(f^{-1}_\al(g_\be))_1\in\mG_\al.
	\end{align*}
	Thus we have
	\begin{align*}
		\qty(f^{-1}_{\al,1}\qty(\qty(f^{-1}_{\al,2}\qty(g_\be))_1))_2=b_1\qty(\qty(f^{-1}_\al(g_\be))_1).
	\end{align*}
	By taking $g=\qty(f^{-1}_\al(g_\be))_1\in\mG_\al$ and $b_2=\qty(f^{-1}_{\al,2}\qty(g_\be))_2\in[N-1]$, we have
	\begin{align*}
		\qty(b_1\qty(g),b_2)=\qty(\qty(f^{-1}_{\al,1}\qty(\qty(f^{-1}_{\al,2}\qty(g_\be))_1))_2,\qty(f^{-1}_{\al,2}\qty(g_\be))_2)	=\qty(f^{-1}_\al(g_\be))_2\in[N-3]\times[N-1]
	\end{align*}
	and $h_{\al,\be}(g,b_2)=f_\al\qty(\qty(f^{-1}_\al(g_\be))_1,\qty(f^{-1}_\al(g_\be))_2)=g_\be$. It is straightforward to check that the edge $t_3t_4$ remains in $g=\qty(f^{-1}_\al(g_\be))_1$.\footnote{Let $u_{2,1},u_{2,2}$ denote the partners of $s_1\wedge s_2$ and $s_1\vee s_2$ in $g_\be$, respectively. Since $t_3$ and $t_4$ are paired to each other in $g_\be$, $\{u_{2,1},u_{2,2}\}\cap\{t_3,t_4\}=\varnothing$. Thus \begin{align*}
				\qty(f^{-1}_{\al,2}(g_\be))_1=
			\qty(g_\be\setminus\qty{\{ s_{1}\wedge s_{2},u_{2,1}\},\{s_{1}\vee s_{2},u_{2,2}\}})\uplus\qty{\{s_{1},s_{2}\},\{u_{2,1},u_{2,2} \} }
	\end{align*}
	still contains the edge $t_3t_4$. Let $u_{1,1},u_{1,2}$ denote the partners of $s_3\wedge s_4$ and $s_3\vee s_4$ in $	\qty(f^{-1}_{\al,2}(g_\be))_1$, respectively. Since $t_3$ and $t_4$ are paired to each other in $\qty(f^{-1}_{\al,2}(g_\be))_1$, $\{u_{1,1},u_{1,2}\}\cap\{t_3,t_4\}=\varnothing$. Thus
\begin{align*}
		\qty(f^{-1}_{\al}(g_\be))_{1}=\qty(f^{-1}_{\al,1}\qty(\qty(f^{-1}_{\al,2}(g_\be))_1))_1=
	\qty(\qty(f^{-1}_{\al,2}(g_\be))_1\setminus\qty{\{ s_{3}\wedge s_{4},u_{1,1}\},\{s_{3}\vee s_{4},u_{1,2}\}})\uplus\qty{\{s_{3},s_{4}\},\{u_{1,1},u_{1,2} \} }
	\end{align*}
still contains the edge $t_3t_4$.} This shows that $h_{\al,\be}$ is a surjection from $\qty(\mG_\al\cap\mG_{t_3t_4})\times[N-1]$ onto $\mG_\be$ with the desired property.
	
	\medskip
	Next consider the case of $s_1\wedge s_2<s_3\wedge s_4$. Let $g\in\mG_\al$ and $b_1\in[N-3]$. Define $b_2=b_2(g,b_1)\in[N-1]$ by
	\begin{align*}
		b_2(g,b_1)\coloneqq\begin{cases}\begin{aligned}&
			\text{the rank of the partner of $t_2$ in $f_{\al,1}(g,b_1)$ }\\&\quad\text{among the $N-1$ half-edges other than $s_3$}\end{aligned}	& (s_3<s_4) \\
			\text{the rank of $t_2$ among the $N-1$ half-edges other than $s_4$}	& (s_4<s_3)
		\end{cases}.
	\end{align*}
	Then define a map $h_{\al,\be}:\mG_\al\times[N-3]\rightarrow\mG$ by $h_{\al,\be}(g,b_1)\coloneqq f_\al\qty(g,b_1,b_2(g,b_1))=f_{\al,2}\qty(f_{\al,1}\qty(g,b_1),b_2(g,b_1))$.
	Namely, we restrict $f_{\al,2}$ such that it surely breaks up the bond between $s_3$ and $s_4$ and creates the edge $t_1t_2=s_4t_2$. Let $g_\be\in\mG_\be$. Take $g\in\mG_\al$ as $g=\qty(f^{-1}_\al(g_\be))_1$. Let $\qty(f^{-1}_{\al,2}\qty(g_\be))_1\in\mG_{\al,1}=\mG_{s_3s_4}$ be the first coordinate of $f^{-1}_{\al,2}\qty(g_\be)$. Take $b_1=b_1(g_\be)\in[N-3]$ as the second coordinate of $f^{-1}_{\al,1}\qty(\qty(f^{-1}_{\al,2}\qty(g_\be))_1)$, 
	\begin{align*}
		b_1=\qty(f^{-1}_{\al,1}\qty(\qty(f^{-1}_{\al,2}\qty(g_\be))_1))_2.
	\end{align*}
	Then $f_{\al,1}\qty(g,b_1)=\qty(f^{-1}_{\al,2}\qty(g_\be))_1$. The second coordinate of $f^{-1}_{\al,2}\qty(g_\be)$, $\qty(f^{-1}_{\al,2}\qty(g_\be))_2\in[N-1]$, is the rank of the partner of $s_3\wedge s_4$ in $g_\be$ among the $N-1$ half-edges other than $s_3\wedge s_4$. The partner of $s_3$ in $g_\be$ is the partner of $t_2$ in $\qty(f^{-1}_{\al,2}\qty(g_\be))_1$. The partner of $s_4=t_1$ in $g_\be$ is $t_2$. Thus $\qty(f^{-1}_{\al,2}\qty(g_\be))_2=b_2(g,b_1)$ and we have
	\begin{align*}
		\qty(b_1,b_2(g,b_1))=\qty(\qty(f^{-1}_{\al,1}\qty(\qty(f^{-1}_{\al,2}\qty(g_\be))_1))_2,\qty(f^{-1}_{\al,2}\qty(g_\be))_2)=\qty(f^{-1}_\al\qty(g_\be))_2\in[N-3]\times[N-1]
	\end{align*}
	and $h_{\al,\be}(g,b_1)=f_\al\qty(\qty(f^{-1}_\al(g_\be))_1,\qty(f^{-1}_\al(g_\be))_2)=g_\be$. It is straightforward to check that the edge $t_3t_4$ remains in $g=\qty(f^{-1}_\al(g_\be))_1$.\footnote{Let $u_{2,1},u_{2,2}$ denote the partners of $s_3\wedge s_4$ and $s_3\vee s_4$ in $g_\be$, respectively. Since $t_3$ and $t_4$ are paired to each other in $g_\be$, $\{u_{2,1},u_{2,2}\}\cap\{t_3,t_4\}=\varnothing$. Thus \begin{align*}
			\qty(f^{-1}_{\al,2}(g_\be))_1=
			\qty(g_\be\setminus\qty{\{ s_{3}\wedge s_{4},u_{2,1}\},\{s_{3}\vee s_{4},u_{2,2}\}})\uplus\qty{\{s_{3},s_{4}\},\{u_{2,1},u_{2,2} \} }
		\end{align*}
		still contains the edge $t_3t_4$. Let $u_{1,1},u_{1,2}$ denote the partners of $s_1\wedge s_2$ and $s_1\vee s_2$ in $	\qty(f^{-1}_{\al,2}(g_\be))_1$, respectively. Since $t_3$ and $t_4$ are paired to each other in $\qty(f^{-1}_{\al,2}(g_\be))_1$, $\{u_{1,1},u_{1,2}\}\cap\{t_3,t_4\}=\varnothing$. Thus
		\begin{align*}
			\qty(f^{-1}_{\al}(g_\be))_{1}=\qty(f^{-1}_{\al,1}\qty(\qty(f^{-1}_{\al,2}(g_\be))_1))_1=
			\qty(\qty(f^{-1}_{\al,2}(g_\be))_1\setminus\qty{\{ s_{1}\wedge s_{2},u_{1,1}\},\{s_{1}\vee s_{2},u_{1,2}\}})\uplus\qty{\{s_{1},s_{2}\},\{u_{1,1},u_{1,2} \} }
		\end{align*}
		still contains the edge $t_3t_4$.} This shows that $h_{\al,\be}$ is a surjection from $\qty(\mG_\al\cap\mG_{t_3t_4})\times[N-3]$ onto $\mG_\be$. By extending the domain of $h_{\al,\be}$ to $\qty(\mG_\al\cap\mG_{t_3t_4})\times[N-1]$ (taking $h_{\al,\be}(g,b_1)$ for $b_1=N-2,N-1$ to be, say, $g$), we prove that $h_{\al,\be}$ is a surjection from $\qty(\mG_\al\cap\mG_{t_3t_4})\times[N-1]$ onto $\mG_\be$ with the desired property.
	
	\medskip
	(b) By taking symmetry into account, there are in total five cases to consider other than (a) about $\al\cap\be,\al\neq\be$. In the following, we keep the convention that $s_2$ and $s_3$ (if appear) are incident to $\al$'s degree 2 vertex and $t_2$ and $t_3$ (if appear) are incident to $\be$'s degree 2 vertex. 
	{\setlength{\leftmargini}{29.5pt}  	
		\begin{itemize}
			\setlength{\labelsep}{7pt}     
			\setlength{\itemsep}{3pt}      
			
			\item[(b1)] $\al$ and $\be$ share two degree 1 vertices only, $\al=(s_1s_2)\cup(s_3s_4)$, $\be=(s_1t_2)\cup(t_3s_4)$, where $s_1,s_2,s_3,s_4,t_2,t_3$ are distinct.
			
			\item[(b2)] $\al$ and $\be$ share one degree 2 vertex only, $\al=(s_1s_2)\cup(s_3s_4)$, $\be=(t_1s_2)\cup(s_3t_4)$, where $s_1,s_2,s_3,s_4,t_1,t_4$ are distinct.
			
			\item[(b3)] $\al$ and $\be$ share one degree 1 vertex and one degree 2 vertex only, $\al=(s_1s_2)\cup(s_3s_4)$, $\be=(s_1s_2)\cup(s_3t_4)$, where $s_1,s_2,s_3,s_4,t_4$ are distinct.
			
			\item[(b4)]	$\al$ and $\be$ share one degree 1 vertex and one degree 2 vertex only, $\al=(s_1s_2)\cup(s_3s_4)$, $\be=(s_1s_3)\cup(s_2t_4)$, where $s_1,s_2,s_3,s_4,t_4$ are distinct.
			
			\item[(b5)] $\al$ and $\be$ share two degree 1 vertices and one degree 2 vertex, $\al=(s_1s_2)\cup(s_3s_4)$, $\be=(s_1s_3)\cup(s_2s_4)$ (where of course $s_1,s_2,s_3,s_4$ are distinct).
			
	\end{itemize} }
	
	\medskip
	\underline{Case (b1)}. We may assume that $s_1\wedge s_2<s_3\wedge s_4$ without loss of generality in this case. Let $g\in\mG_\al$. Define $b_1=b_1(g)\in[N-3]$ by
	\begin{align*}
		b_1(g)\coloneqq\begin{cases}
			\text{the rank of $t_2$ among the $N-3$ half-edges other than $s_1,s_3,s_4$}&(s_1<s_2)\\
		\begin{aligned}&	\text{the rank of the partner of $t_2$ in $g$ }\\&\quad\text{among the $N-3$ half-edges other than $s_2,s_3,s_4$} \end{aligned}&(s_2<s_1)
		\end{cases}.
	\end{align*}
	Then define $b_2=b_2(g)\in[N-1]$ by
	\begin{align*}
		b_2(g)\coloneqq\begin{cases}\begin{aligned}&
			\text{the rank of the partner of $t_3$ in $f_{\al,1}(g,b_1(g))$}\\&\quad\text{among the $N-1$ half-edges other than $s_3$}\end{aligned}&(s_3<s_4)\\
			\text{the rank of $t_3$ among the $N-1$ half-edges other than $s_4$} &(s_4<s_3)
		\end{cases}.
	\end{align*}
	Then define a map $h_{\al,\be}:\mG_\al\rightarrow\mG$ by $h_{\al,\be}(g)\coloneqq f_\al(g,b_1(g),b_2(g))=f_{\al,2}\qty(f_{\al,1}(g,b_1(g)),b_2(g))$. Namely, we restrict $f_{\al,1}$ such that it surely breaks up the bond between $s_1$ and $s_2$ and creates the edge $s_1t_2$, and restrict $f_{\al,2}$ such that it surely breaks up the bond between $s_3$ and $s_4$ and creates the edge $t_3s_4$. Let $g_\be\in\mG_\be$. Let $g=\qty(f^{-1}_\al(g_\be))_1\in\mG_\al$. Let $\qty(f^{-1}_{\al,2}(g_\be))_1\in\mG_{\al,1}=\mG_{s_3s_4}$ be the first coordinate of $f^{-1}_{\al,2}(g_\be)$. The second coordinate of $f^{-1}_{\al,1}\qty(\qty(f^{-1}_{\al,2}\qty(g_\be))_1)$,
	\begin{align*}
		\qty(f^{-1}_{\al,1}\qty(\qty(f^{-1}_{\al,2}\qty(g_\be))_1))_2\in[N-3],
	\end{align*}
	is the rank of the partner of $s_1\wedge s_2$ in $\qty(f^{-1}_{\al,2}(g_\be))_1$ among the $N-3$ half-edges other than $s_1\wedge s_2,s_3,s_4$. The edge $s_1t_2$ still exists in $\qty(f^{-1}_{\al,2}(g_\be))_1$, i.e., the partner of $s_1$ in $\qty(f^{-1}_{\al,2}(g_\be))_1$ is $t_2$. The partner of $s_2$ in $\qty(f^{-1}_{\al,2}(g_\be))_1$ is paired to $t_2$ in $g$. Thus,
	\begin{align*}
		b_1(g)=\qty(f^{-1}_{\al,1}\qty(\qty(f^{-1}_{\al,2}\qty(g_\be))_1))_2
	\end{align*}
	holds. Hence $f_{\al,1}\qty(g,b_1(g))=\qty(f^{-1}_{\al,2}\qty(g_\be))_1$ holds. The second coordinate of $f^{-1}_{\al,2}\qty(g_\be)$, $\qty(f^{-1}_{\al,2}\qty(g_\be))_2\in[N-1]$, is the rank of the partner of $s_3\wedge s_4$ in $g_\be$ among the $N-1$ half-edges other than $s_3\wedge s_4$. $s_4$ is paired to $t_3$ in $g_\be$, and the partner of $s_3$ in $g_\be$ is paired to $t_3$ in $\qty(f^{-1}_{\al,2}\qty(g_\be))_1=f_{\al,1}\qty(g,b_1(g))$. Thus, $b_2(g)=\qty(f^{-1}_{\al,2}\qty(g_\be))_2$ holds. Finally, we get
	\begin{align*}
		h_{\al,\be}\qty(g)=f_{\al,2}\qty(f_{\al,1}(g,b_1(g)),b_2(g))=f_{\al,2}\qty(\qty(f^{-1}_{\al,2}\qty(g_\be))_1,\qty(f^{-1}_{\al,2}\qty(g_\be))_2)=g_\be.
	\end{align*}
	This shows that $h_{\al,\be}$ is a surjection from $\mG_\al$ onto $\mG_\be$ with the desired property.
	
	\medskip
	\underline{Case (b2)}. We may assume that $s_1\wedge s_2<s_3\wedge s_4$ without loss of generality in this case. Let $g\in\mG_\al$. Define $b_1=b_1(g)\in[N-3]$ by
	\begin{align*}
		b_1(g)\coloneqq\begin{cases}\begin{aligned}&
			\text{the rank of the partner of $t_1$ in $g$}\\&\quad\text{among the $N-3$ half-edges other than $s_1,s_3,s_4$}\end{aligned}&(s_1<s_2)\\
			\text{the rank of $t_1$ among the $N-3$ half-edges other than $s_2,s_3,s_4$} &(s_2<s_1)
		\end{cases}.
	\end{align*}
	Then define $b_2=b_2(g)\in[N-1]$ by
	\begin{align*}
		b_2(g)\coloneqq\begin{cases}
			\text{the rank of $t_4$ among the $N-1$ half-edges other than $s_3$}&(s_3<s_4)\\
		\begin{aligned}	&\text{the rank of the partner of $t_4$ in $f_{\al,1}(g,b_1(g))$}\\&\quad\text{among the $N-1$ half-edges other than $s_4$}\end{aligned} &(s_4<s_3)
		\end{cases}.
	\end{align*}
	Then define a map $h_{\al,\be}:\mG_\al\rightarrow\mG$ by $h_{\al,\be}(g)\coloneqq f_\al(g,b_1(g),b_2(g))=f_{\al,2}\qty(f_{\al,1}(g,b_1(g)),b_2(g))$. Namely, we restrict $f_{\al,1}$ such that it surely breaks up the bond between $s_1$ and $s_2$ and creates the edge $t_1s_2$, and restrict $f_{\al,2}$ such that it surely breaks up the bond between $s_3$ and $s_4$ and creates the edge $s_3t_4$. Let $g_\be\in\mG_\be$. Let $g=\qty(f^{-1}_\al(g_\be))_1\in\mG_\al$. Let $\qty(f^{-1}_{\al,2}(g_\be))_1\in\mG_{\al,1}=\mG_{s_3s_4}$ be the first coordinate of $f^{-1}_{\al,2}(g_\be)$. The second coordinate of $f^{-1}_{\al,1}\qty(\qty(f^{-1}_{\al,2}\qty(g_\be))_1)$,
	\begin{align*}
		\qty(f^{-1}_{\al,1}\qty(\qty(f^{-1}_{\al,2}\qty(g_\be))_1))_2\in[N-3],
	\end{align*}
	is the rank of the partner of $s_1\wedge s_2$ in $\qty(f^{-1}_{\al,2}(g_\be))_1$ among the $N-3$ half-edges other than $s_1\wedge s_2,s_3,s_4$. The edge $t_1s_2$ still exists in $\qty(f^{-1}_{\al,2}(g_\be))_1$, i.e., the partner of $s_2$ in $\qty(f^{-1}_{\al,2}(g_\be))_1$ is $t_1$. The partner of $s_1$ in $\qty(f^{-1}_{\al,2}(g_\be))_1$ is paired to $t_1$ in $g$. Thus,
	\begin{align*}
		b_1(g)=\qty(f^{-1}_{\al,1}\qty(\qty(f^{-1}_{\al,2}\qty(g_\be))_1))_2
	\end{align*}
	holds. Hence $f_{\al,1}\qty(g,b_1(g))=\qty(f^{-1}_{\al,2}\qty(g_\be))_1$ holds. The second coordinate of $f^{-1}_{\al,2}\qty(g_\be)$, $\qty(f^{-1}_{\al,2}\qty(g_\be))_2\in[N-1]$, is the rank of the partner of $s_3\wedge s_4$ in $g_\be$ among the $N-1$ half-edges other than $s_3\wedge s_4$. $s_3$ is paired to $t_4$ in $g_\be$, and the partner of $s_4$ in $g_\be$ is paired to $t_4$ in $\qty(f^{-1}_{\al,2}\qty(g_\be))_1=f_{\al,1}\qty(g,b_1(g))$. Thus, $b_2(g)=\qty(f^{-1}_{\al,2}\qty(g_\be))_2$ holds. Finally, we get
	\begin{align*}
		h_{\al,\be}\qty(g)=f_{\al,2}\qty(f_{\al,1}(g,b_1(g)),b_2(g))=f_{\al,2}\qty(\qty(f^{-1}_{\al,2}\qty(g_\be))_1,\qty(f^{-1}_{\al,2}\qty(g_\be))_2)=g_\be.
	\end{align*}
	This shows that $h_{\al,\be}$ is a surjection from $\mG_\al$ onto $\mG_\be$ with the desired property.
	
	\medskip
	\underline{Case (b3)}. First consider the case of $s_1\wedge s_2<s_3\wedge s_4$. Let $g\in\mG_\al$. Let $b_1\in[N-3]$ be the rank of $s_1\vee s_2$ among the $N-3$ half-edges other than $s_1\wedge s_2,s_3,s_4$. Then define $b_2=b_2(g)\in[N-1]$ by
	\begin{align*}
		b_2(g)\coloneqq\begin{cases}
			\text{the rank of $t_4$ among the $N-1$ half-edges other than $s_3$}&(s_3<s_4)\\
		\begin{aligned}&	\text{the rank of the partner of $t_4$ in $f_{\al,1}(g,b_1)$}\\&\quad\text{among the $N-1$ half-edges other than $s_4$}\end{aligned} &(s_4<s_3)
		\end{cases}.
	\end{align*}
	Then define a map $h_{\al,\be}:\mG_\al\rightarrow\mG$ by $h_{\al,\be}(g)\coloneqq f_\al(g,b_1,b_2(g))=f_{\al,2}\qty(f_{\al,1}(g,b_1),b_2(g))$. Namely, we restrict $f_{\al,1}$ such that it surely keeps the edge $s_1s_2$, and restrict $f_{\al,2}$ such that it surely breaks up the bond between $s_3$ and $s_4$ and creates the edge $s_3t_4$. Let $g_\be\in\mG_\be$. Let $g=\qty(f^{-1}_\al(g_\be))_1\in\mG_\al$. Let $\qty(f^{-1}_{\al,2}(g_\be))_1\in\mG_{\al,1}=\mG_{s_3s_4}$ be the first coordinate of $f^{-1}_{\al,2}(g_\be)$. The second coordinate of $f^{-1}_{\al,1}\qty(\qty(f^{-1}_{\al,2}\qty(g_\be))_1)$,
	\begin{align*}
		\qty(f^{-1}_{\al,1}\qty(\qty(f^{-1}_{\al,2}\qty(g_\be))_1))_2\in[N-3],
	\end{align*}
	is the rank of the partner of $s_1\wedge s_2$ in $\qty(f^{-1}_{\al,2}(g_\be))_1$ among the $N-3$ half-edges other than $s_1\wedge s_2,s_3,s_4$. Since the edge $s_1s_2$ still exists in $\qty(f^{-1}_{\al,2}(g_\be))_1$,
	\begin{align*}
		b_1=\qty(f^{-1}_{\al,1}\qty(\qty(f^{-1}_{\al,2}\qty(g_\be))_1))_2
	\end{align*}
	holds. Hence $f_{\al,1}\qty(g,b_1)=\qty(f^{-1}_{\al,2}\qty(g_\be))_1$ holds. The second coordinate of $f^{-1}_{\al,2}\qty(g_\be)$, $\qty(f^{-1}_{\al,2}\qty(g_\be))_2\in[N-1]$, is the rank of the partner of $s_3\wedge s_4$ in $g_\be$ among the $N-1$ half-edges other than $s_3\wedge s_4$. $s_3$ is paired to $t_4$ in $g_\be$, and the partner of $s_4$ in $g_\be$ is paired to $t_4$ in $\qty(f^{-1}_{\al,2}\qty(g_\be))_1=f_{\al,1}\qty(g,b_1(g))$. Thus, $b_2(g)=\qty(f^{-1}_{\al,2}\qty(g_\be))_2$ holds. Finally, we get
	\begin{align*}
		h_{\al,\be}\qty(g)=f_{\al,2}\qty(f_{\al,1}(g,b_1),b_2(g))=f_{\al,2}\qty(\qty(f^{-1}_{\al,2}\qty(g_\be))_1,\qty(f^{-1}_{\al,2}\qty(g_\be))_2)=g_\be.
	\end{align*}
	This shows that $h_{\al,\be}$ is a surjection from $\mG_\al$ onto $\mG_\be$ with the desired property.

	\medskip
	Next consider the case of $s_3\wedge s_4<s_1\wedge s_2$. Define $b_1=b_1(g)\in[N-3]$ by
	\begin{align*}
		b_1(g)\coloneqq\begin{cases}
			\text{the rank of $t_4$ among the $N-3$ half-edges other than $s_3,s_1,s_2$}&(s_3<s_4)\\
		\begin{aligned}&	\text{the rank of the partner of $t_4$ in $g$ }\\&\quad\text{among the $N-3$ half-edges other than $s_4,s_1,s_2$} \end{aligned}&(s_4<s_3)
		\end{cases}.
	\end{align*}
	Let $b_2\in[N-1]$ be the rank of $s_1\vee s_2$ among the $N-1$ half-edges other than $s_1\wedge s_2$. Then define a map $h_{\al,\be}:\mG_\al\rightarrow\mG$ by $h_{\al,\be}(g)\coloneqq f_\al(g,b_1(g),b_2)=f_{\al,2}\qty(f_{\al,1}(g,b_1(g)),b_2)$. Namely, we restrict $f_{\al,1}$ such that it surely breaks up the bond between $s_3$ and $s_4$ and creates the edge $s_3t_4$, and restrict $f_{\al,2}$ such that it surely keeps the edge $s_1s_2$.  Let $g_\be\in\mG_\be$. Let $g=\qty(f^{-1}_\al(g_\be))_1\in\mG_\al$. Let $\qty(f^{-1}_{\al,2}(g_\be))_1\in\mG_{\al,1}=\mG_{s_1s_2}$ be the first coordinate of $f^{-1}_{\al,2}(g_\be)$. The second coordinate of $f^{-1}_{\al,1}\qty(\qty(f^{-1}_{\al,2}\qty(g_\be))_1)$,
	\begin{align*}
		\qty(f^{-1}_{\al,1}\qty(\qty(f^{-1}_{\al,2}\qty(g_\be))_1))_2\in[N-3],
	\end{align*}
	is the rank of the partner of $s_3\wedge s_4$ in $\qty(f^{-1}_{\al,2}(g_\be))_1$ among the $N-3$ half-edges other than $s_3\wedge s_4,s_1,s_2$. The edge $s_3t_4$ still exists in $\qty(f^{-1}_{\al,2}(g_\be))_1$, i.e., the partner of $s_3$ in $\qty(f^{-1}_{\al,2}(g_\be))_1$ is $t_4$. The partner of $s_4$ in $\qty(f^{-1}_{\al,2}(g_\be))_1$ is paired to $t_4$ in $g$. Thus,
	\begin{align*}
		b_1(g)=\qty(f^{-1}_{\al,1}\qty(\qty(f^{-1}_{\al,2}\qty(g_\be))_1))_2
	\end{align*}
	holds. Hence $f_{\al,1}\qty(g,b_1(g))=\qty(f^{-1}_{\al,2}\qty(g_\be))_1$ holds. The second coordinate of $f^{-1}_{\al,2}\qty(g_\be)$, $\qty(f^{-1}_{\al,2}\qty(g_\be))_2\in[N-1]$, is the rank of the partner of $s_1\wedge s_2$ in $g_\be$ among the $N-1$ half-edges other than $s_1\wedge s_2$. Since $s_1$ and $s_2$ are paired to each other in $g_\be$, $b_2=\qty(f^{-1}_{\al,2}\qty(g_\be))_2$ holds. Finally, we get
	\begin{align*}
		h_{\al,\be}\qty(g)=f_{\al,2}\qty(f_{\al,1}(g,b_1(g)),b_2)=f_{\al,2}\qty(\qty(f^{-1}_{\al,2}\qty(g_\be))_1,\qty(f^{-1}_{\al,2}\qty(g_\be))_2)=g_\be.
	\end{align*}
	This shows that $h_{\al,\be}$ is a surjection from $\mG_\al$ onto $\mG_\be$ with the desired property.
	
	\medskip
	\underline{Case (b4)}. First consider the case of $s_1\wedge s_2<s_3\wedge s_4$. Let $g\in\mG_\al$. Define $b_1=b_1(g)\in[N-3]$ by
	\begin{align*}
		b_1(g)\coloneqq\begin{cases}\begin{aligned}&
			\text{the rank of the partner of $t_4$ in $g$}\\&\quad\text{among the $N-3$ half-edges other than $s_1,s_3,s_4$}\end{aligned}&(s_1<s_2)\\
			\text{the rank of $t_4$ among the $N-3$ half-edges other than $s_2,s_3,s_4$} &(s_2<s_1)
		\end{cases}.
	\end{align*}
	Then define $b_2=b_2(g)\in[N-1]$ by
	\begin{align*}
		b_2(g)\coloneqq\begin{cases}
			\text{the rank of $s_1$ among the $N-1$ half-edges other than $s_3$}&(s_3<s_4)\\
		\begin{aligned}	&\text{the rank of the partner of $s_1$ in $f_{\al,1}(g,b_1(g))$ }\\&\quad\text{among the $N-1$ half-edges other than $s_4$}\end{aligned} &(s_4<s_3)
		\end{cases}.
	\end{align*}
	Then define a map $h_{\al,\be}:\mG_\al\rightarrow\mG$ by $h_{\al,\be}(g)\coloneqq f_\al(g,b_1(g),b_2(g))=f_{\al,2}\qty(f_{\al,1}(g,b_1(g)),b_2(g))$. Namely, we restrict $f_{\al,1}$ such that it surely breaks up the bond between $s_1$ and $s_2$ and creates the edge $s_2t_4$, and restrict $f_{\al,2}$ such that it surely breaks up the bond between $s_3$ and $s_4$ and creates the edge $s_1s_3$. Let $g_\be\in\mG_\be$. Let $g=\qty(f^{-1}_\al(g_\be))_1\in\mG_\al$. Let $\qty(f^{-1}_{\al,2}(g_\be))_1\in\mG_{\al,1}=\mG_{s_3s_4}$ be the first coordinate of $f^{-1}_{\al,2}(g_\be)$. The second coordinate of $f^{-1}_{\al,1}\qty(\qty(f^{-1}_{\al,2}\qty(g_\be))_1)$,
	\begin{align*}
		\qty(f^{-1}_{\al,1}\qty(\qty(f^{-1}_{\al,2}\qty(g_\be))_1))_2\in[N-3],
	\end{align*}
	is the rank of the partner of $s_1\wedge s_2$ in $\qty(f^{-1}_{\al,2}(g_\be))_1$ among the $N-3$ half-edges other than $s_1\wedge s_2,s_3,s_4$. The edge $s_2t_4$ still exists in $\qty(f^{-1}_{\al,2}(g_\be))_1$, i.e., the partner of $s_2$ in $\qty(f^{-1}_{\al,2}(g_\be))_1$ is $t_4$. The partner of $s_1$ in $\qty(f^{-1}_{\al,2}(g_\be))_1$ is paired to $t_4$ in $g$. Thus,
	\begin{align*}
		b_1(g)=\qty(f^{-1}_{\al,1}\qty(\qty(f^{-1}_{\al,2}\qty(g_\be))_1))_2
	\end{align*}
	holds. Hence $f_{\al,1}\qty(g,b_1(g))=\qty(f^{-1}_{\al,2}\qty(g_\be))_1$ holds. The second coordinate of $f^{-1}_{\al,2}\qty(g_\be)$, $\qty(f^{-1}_{\al,2}\qty(g_\be))_2\in[N-1]$, is the rank of the partner of $s_3\wedge s_4$ in $g_\be$ among the $N-1$ half-edges other than $s_3\wedge s_4$. $s_3$ is paired to $s_1$ in $g_\be$, and the partner of $s_4$ in $g_\be$ is paired to $s_1$ in $\qty(f^{-1}_{\al,2}\qty(g_\be))_1=f_{\al,1}\qty(g,b_1(g))$. Thus, $b_2(g)=\qty(f^{-1}_{\al,2}\qty(g_\be))_2$ holds. Finally, we get
	\begin{align*}
		h_{\al,\be}\qty(g)=f_{\al,2}\qty(f_{\al,1}(g,b_1(g)),b_2(g))=f_{\al,2}\qty(\qty(f^{-1}_{\al,2}\qty(g_\be))_1,\qty(f^{-1}_{\al,2}\qty(g_\be))_2)=g_\be.
	\end{align*}
	This shows that $h_{\al,\be}$ is a surjection from $\mG_\al$ onto $\mG_\be$ with the desired property.
	
	\medskip
	Next consider the case of $s_3\wedge s_4<s_1\wedge s_2$. Let $g\in\mG_\al$. Define $b_1=b_1(g)\in[N-3]$ by
	\begin{align*}
		b_1(g)\coloneqq\begin{cases}
			\text{the rank of $t_4$ among the $N-3$ half-edges other than $s_3,s_1,s_2$}&(s_3<s_4)\\
		\begin{aligned}&	\text{the rank of the partner of $t_4$ in $g$ }\\&\quad\text{among the $N-3$ half-edges other than $s_4,s_1,s_2$} \end{aligned}&(s_4<s_3)
		\end{cases}.
	\end{align*}
	Define $b_2\in[N-1]$ as
	\begin{align*}
		b_2\coloneqq\begin{cases}
			\text{the rank of $s_3$ among the $N-1$ half-edges other than $s_1$}&(s_1<s_2)\\
			\text{the rank of $t_4$ among the $N-1$ half-edges other than $s_2$} &(s_2<s_1)
		\end{cases}.
	\end{align*}
	Then define a map $h_{\al,\be}:\mG_\al\rightarrow\mG$ by $h_{\al,\be}(g)\coloneqq f_\al(g,b_1(g),b_2)=f_{\al,2}\qty(f_{\al,1}(g,b_1(g)),b_2)$. Namely, we restrict $f_{\al,1}$ such that it surely breaks up the bond between $s_3$ and $s_4$ and creates the edge $s_3t_4$, and restrict $f_{\al,2}$ such that it surely breaks up the bond between $s_1$ and $s_2$ and creates the edge $s_1s_3$ if $s_1<s_2$, the edge $s_2t_4$ if $s_2<s_1$. Note that the edge $s_3t_4$ exists in $f_{\al,1}(g,b_1(g))$, thus both of the edges $s_1s_3$ and $s_2t_4$ will be created simultaneously by $h_{\al,\be}$, whether $s_1<s_2$ or $s_2<s_1$. Let $g_\be\in\mG_\be$. Let $g=\qty(f^{-1}_\al(g_\be))_1\in\mG_\al$. Let $\qty(f^{-1}_{\al,2}(g_\be))_1\in\mG_{\al,1}=\mG_{s_1s_2}$ be the first coordinate of $f^{-1}_{\al,2}(g_\be)$. The second coordinate of $f^{-1}_{\al,1}\qty(\qty(f^{-1}_{\al,2}\qty(g_\be))_1)$,
	\begin{align*}
		\qty(f^{-1}_{\al,1}\qty(\qty(f^{-1}_{\al,2}\qty(g_\be))_1))_2\in[N-3],
	\end{align*}
	is the rank of the partner of $s_3\wedge s_4$ in $\qty(f^{-1}_{\al,2}(g_\be))_1$ among the $N-3$ half-edges other than $s_3\wedge s_4,s_1,s_2$. The edge $s_3t_4$ exists in $\qty(f^{-1}_{\al,2}(g_\be))_1$, i.e., the partner of $s_3$ in $\qty(f^{-1}_{\al,2}(g_\be))_1$ is $t_4$. The partner of $s_4$ in $\qty(f^{-1}_{\al,2}(g_\be))_1$ is paired to $t_4$ in $g$. Thus,
	\begin{align*}
		b_1(g)=\qty(f^{-1}_{\al,1}\qty(\qty(f^{-1}_{\al,2}\qty(g_\be))_1))_2
	\end{align*}
	holds. Hence $f_{\al,1}\qty(g,b_1(g))=\qty(f^{-1}_{\al,2}\qty(g_\be))_1$ holds. The second coordinate of $f^{-1}_{\al,2}\qty(g_\be)$, $\qty(f^{-1}_{\al,2}\qty(g_\be))_2\in[N-1]$, is the rank of the partner of $s_1\wedge s_2$ in $g_\be$ among the $N-1$ half-edges other than $s_1\wedge s_2$. $s_1$ is paired to $s_3$ in $g_\be$, and $s_2$ is paired to $t_4$ in $g_\be$. Thus, $b_2=\qty(f^{-1}_{\al,2}\qty(g_\be))_2$ holds. Finally, we get
	\begin{align*}
		h_{\al,\be}\qty(g)=f_{\al,2}\qty(f_{\al,1}(g,b_1(g)),b_2)=f_{\al,2}\qty(\qty(f^{-1}_{\al,2}\qty(g_\be))_1,\qty(f^{-1}_{\al,2}\qty(g_\be))_2)=g_\be.
	\end{align*}
	This shows that $h_{\al,\be}$ is a surjection from $\mG_\al$ onto $\mG_\be$ with the desired property.
	
	\medskip
	\underline{Case (b5)}. We may assume that $s_1\wedge s_2<s_3\wedge s_4$ without loss of generality in this case. Let $g\in\mG_\al$. Let $b_1\in[N-3]$ be the rank of $s_1\vee s_2$ among the $N-3$ half-edges other than $s_1\wedge s_2,s_3,s_4$. Define $b_2\in[N-1]$ as
	\begin{align*}
		b_2\coloneqq\begin{cases}
			\text{the rank of $s_1$ among the $N-1$ half-edges other than $s_3$}&(s_3<s_4)\\
			\text{the rank of $s_2$ among the $N-1$ half-edges other than $s_4$} &(s_4<s_3)
		\end{cases}.
	\end{align*}
	Then define a map $h_{\al,\be}:\mG_\al\rightarrow\mG$ by $h_{\al,\be}(g)\coloneqq f_\al(g,b_1,b_2)=f_{\al,2}\qty(f_{\al,1}(g,b_1),b_2)$. Namely, we restrict $f_{\al,1}$ such that it surely keeps the edge $s_1s_2$, and restrict $f_{\al,2}$ such that it surely breaks up the bond between $s_3$ and $s_4$ and creates the edge $s_1s_3$ if $s_3<s_4$, the edge $s_2s_4$ if $s_4<s_3$. Note that the edge $s_1s_2$ is kept in $f_{\al,1}(g,b_1)$, thus both of the edges $s_1s_3$ and $s_2s_4$ will be created simultaneously by $h_{\al,\be}$, whether $s_3<s_4$ or $s_4<s_3$. Let $g_\be\in\mG_\be$. Let $g=\qty(f^{-1}_\al(g_\be))_1\in\mG_\al$. Let $\qty(f^{-1}_{\al,2}(g_\be))_1\in\mG_{\al,1}=\mG_{s_3s_4}$ be the first coordinate of $f^{-1}_{\al,2}(g_\be)$. The second coordinate of $f^{-1}_{\al,1}\qty(\qty(f^{-1}_{\al,2}\qty(g_\be))_1)$,
	\begin{align*}
		\qty(f^{-1}_{\al,1}\qty(\qty(f^{-1}_{\al,2}\qty(g_\be))_1))_2\in[N-3],
	\end{align*}
	is the rank of the partner of $s_1\wedge s_2$ in $\qty(f^{-1}_{\al,2}(g_\be))_1$ among the $N-3$ half-edges other than $s_1\wedge s_2,s_3,s_4$. Since the edge $s_1s_2$ exists in $\qty(f^{-1}_{\al,2}(g_\be))_1$,
	\begin{align*}
		b_1=\qty(f^{-1}_{\al,1}\qty(\qty(f^{-1}_{\al,2}\qty(g_\be))_1))_2
	\end{align*}
	holds. Hence $f_{\al,1}\qty(g,b_1)=\qty(f^{-1}_{\al,2}\qty(g_\be))_1$ holds. The second coordinate of $f^{-1}_{\al,2}\qty(g_\be)$, $\qty(f^{-1}_{\al,2}\qty(g_\be))_2\in[N-1]$, is the rank of the partner of $s_3\wedge s_4$ in $g_\be$ among the $N-1$ half-edges other than $s_3\wedge s_4$. $s_3$ is paired to $s_1$ in $g_\be$, and $s_4$ is paired to $s_2$ in $g_\be$. Thus, $b_2=\qty(f^{-1}_{\al,2}\qty(g_\be))_2$ holds. Finally, we get
	\begin{align*}
		h_{\al,\be}\qty(g)=f_{\al,2}\qty(f_{\al,1}(g,b_1),b_2)=f_{\al,2}\qty(\qty(f^{-1}_{\al,2}\qty(g_\be))_1,\qty(f^{-1}_{\al,2}\qty(g_\be))_2)=g_\be.
	\end{align*}
	This shows that $h_{\al,\be}$ is a surjection from $\mG_\al$ onto $\mG_\be$ with the desired property.

	\medskip
	Therefore, there exists a desired surjection $h_{\al,\be}$ for each of Cases (b1)--(b5), completing the proof of (b) and thus the lemma. \qed

	\subsection{Proofs for Appendix \ref{stein coupling bound second term}}
			\label{proof 2}
			
	\subsubsection{Proof of Lemma \ref{intersection limited creation}}
	Fix an arbitrary event $\omega$. Let $v^\al_i$, $i=1,\dots,v(H)$ denote the labels of the vertices of $\al$. For each $1\le i\le v(H)$, define
	\begin{align*}
		\La^i\coloneqq\qty{\be\in\Ga_1:\text{$\be\neq\al$ and $\be$ shares vertex $v^\al_i$ with $\al$}}.
	\end{align*}
	Then obviously $\qty{\be\in\Ga_1\setminus\{\al\}:\be\cap\al\neq\varnothing}=\bigcup_{i=1}^{v(H)}\La^i=\biguplus_{i=1}^{v(H)}\qty(\La^i\setminus\bigcup_{j=1}^{i-1}\La^j)$ and
	\begin{align}\label{intersection limited creation union bound}
		\sum_{\substack{\be\in\Ga_1\setminus\{\al\}\\\be\cap\al\neq\varnothing}}J_{\be\al}=\sum_{i=1}^{v(H)}\sum_{\be\in\La^i\setminus\bigcup_{j=1}^{i-1}\La^j}J_{\be\al}\le\sum_{i=1}^{v(H)}\sum_{\be\in\La^i}J_{\be\al}.
	\end{align}
	Now fix arbitrary $1\le i\le v(H)$, and suppose that $J_{\be\al}=1$ for some $\be\in\La^i$. Then $\be$ is realized in $\mathfrak{g}_\al$ (which is equal to $\mathfrak{g}$ if $I_\al=0$, or the perfect matching of $N$ half-edges after the switching if $I_\al=1$). Any other member $\be'$ of $\La^i$ shares vertex $v^\al_i$ with $\be$, so that $\be'\neq\be$ and $\be'\cap\be\neq\varnothing$. Thus by Lemma \ref{intersecting different isolated trees}, any other member $\be'$ of $\La^i$ is incompatible with $\be$ as a matching of half-edges, and cannot be realized in $\mathfrak{g}_\al$. Therefore, if $J_{\be\al}=1$ for some $\be\in\La^i$, then $J_{\be'\al}=0$ for all $\be'\in\La^i\setminus\{\be\}$. This shows that for any $1\le i\le v(H)$, the sum $\sum_{\be\in\La^i}J_{\be\al}$ is at most $1$. Now the claim follows from \eqref{intersection limited creation union bound}. \qed

		\subsubsection{Proof of Lemma \ref{disjoint limited destruction}}
	Fix an arbitrary event $\omega$. If $I_\al=0$, then $J_{\be\al}=I_\be$ and $I_\be(1-J_{\be\al})=I_\be(1-I_\be)=0$. Thus we may assume that $I_\al=1$, i.e., $\mathfrak{g}\in\mG_\al$. Suppose for contradiction that there are distinct $\be_1,\dots,\be_{M}\in\Ga_1$ with $M\ge e(H)+1$ such that $\be_{m}\cap\al=\varnothing$ and $I_{\be_m}(1-J_{\be_m\al})=1$, $m=1,\dots, M$. Since $I_{\be_1}=\cdots=I_{\be_M}=1$, by Lemma \ref{intersecting different isolated trees}, $\be_1,\dots,\be_M$ are all disjoint. For $\ell=1,\dots, e(H)$, let $R_{\al,\be_m,\ell}$ be as in Lemma \ref{destroyed al be disjoint} (i.e., the ranks of the half-edges of $\be_m$ in the $\ell$-th stage of the switching procedure with respect to $\al$). Then for each $1\le \ell\le e(H)$, $R_{\al,\be_m,\ell}$, $1\le m\le M$ are disjoint. For each $1\le m\le M$, since $\mathfrak{g}\in\mG_\al\cap\mG_{\be_m}$ and $J_{\be_m\al}=0$, there exists $1\le\ell_m\le e(H)$ such that $B_{\al,\ell_m}\in R_{\al,\be_m,\ell_m}$ by Lemma \ref{destroyed al be disjoint}. Since $M>e(H)$, there are $1\le m_1\neq m_2\le M$ such that $\ell_{m_1}=\ell_{m_2}$ by the pigeonhole principle. Writing $\ell_*\coloneqq\ell_{m_1}=\ell_{m_2}$, we have $B_{\al,\ell_*}\in R_{\al,\be_{m_1},\ell_*}$ and $B_{\al,\ell_*}\in R_{\al,\be_{m_2},\ell_*}$, which contradicts $R_{\al,\be_{m_1},\ell_*}\cap R_{\al,\be_{m_2},\ell_*}=\varnothing$. Therefore, the number of $\be\in\Ga_1$ such that $\be\cap\al=\varnothing$ and $I_\be(1-J_{\be\al})=1$ is at most $e(H)$, proving the lemma. \qed
	
	\subsection{Proof of Lemma \ref{intersecting isolated tree and selfloop or double edge} in Appendix \ref{stein coupling bound third term}}
	\label{proof 3}
	
	As we have seen in Lemma \ref{cardinality estimates}(e), if $\al$ is a possible isolated edge, then there is no $\be\in\Ga_2$ intersecting $\al$. Thus we may assume that $\al$ is a possible isolated 2-star.
	
	\medskip
	 If $\be$ is a possible self-loop and $\al\cap\be\neq\varnothing$, then $\be$'s only vertex must be the degree 2 vertex of $\al$. In $\al$, each of the two half-edges incident to that degree 2 vertex must be paired to a half-edge incident to a degree 1 vertex, whereas in $\be$, the two half-edges incident the said degree 2 vertex must be paired to each other. Thus $\al$ and $\be$ cannot coexist.
	
	\medskip
	If $\be$ is a possible double edge and $\al\cap\be\neq\varnothing$, then one of the two vertices of $\be$ must be the degree 2 vertex of $\al$. In $\al$, each of the two half-edges incident to that degree 2 vertex must be paired to a half-edge incident to a degree 1 vertex. On the other hand, in $\be$, the two half-edges incident to the said degree 2 vertex must be paired to two half-edges incident to the other vertex of $\be$, which is of degree higher than or equal to 2, thus does not belong to $\al$. Therefore $\al$ and $\be$ cannot coexist in this case also. \qed

	\subsection{Proof of Lemma \ref{two double edges share a common edge} in Appendix \ref{stein coupling bound fifth term}}
	\label{proof 5}
	
	We may describe $\al$ and $\be$ sharing a common edge as follows: Let $s_1,s_2,s_3$ be three distinct half-edges incident to a vertex, and let $t_1,t_2,t_3$ be three distinct half-edges incident to another vertex. We assume that $\al$ is composed of two possible edges $s_1t_1$ and $s_2t_2$, $\be$ is composed of two possible edges $s_2t_2$ and $s_3t_3$, and $s_2t_2$ is the common edge.
	
	\medskip
We first show that if $I_\be=0$, then $J_{\be\al}=0$. Suppose that $I_\be=0$. Then $s_2t_2$ or $s_3t_3$ is not realized in $\mathfrak{g}$. If $I_\al=0$, then $J_{\be\al}=I_\be=0$. Thus it suffices to consider the case where $I_\al=1$ (i.e., $s_1t_1$ and $s_2t_2$ are realized in $\mathfrak{g}$) and $s_3t_3$ is not realized in $\mathfrak{g}$. In this case, we can follow the proof of Lemma \ref{disjoint multigraphs non creation} (Appendix \ref{proof of disjoint multigraphs non creation}) with all references to
$\be$ replaced by the possible edge $s_3t_3$ to conclude that $\mathfrak{g}_\al\not\supset s_3t_3$, whence $J_{\be\al}=0$. To sum up, $J_{\be\al}=0$ if $I_\be=0$, and thus $\e\qty[I_\al|I_\beta-J_{\beta\al}|]=\e\qty[I_\al I_\be\qty(1-J_{\be\al})]=\p\qty(I_\al=I_\be=1,J_{\be\al}=0)$.
	
	\medskip
	In order to compute the probability $\p\qty(I_\al=I_\be=1,J_{\be\al}=0)$, first consider the case of $s_1\wedge t_1<s_2\wedge t_2$. Let $R_1$ denote the set of the ranks of $s_3$ and $t_3$ among the $N-3$ half-edges other than $s_1\wedge t_1,s_2,t_2$, and let $r_2$ denote the rank of $s_2\vee t_2$ among the $N-1$ half-edges other than $s_2\wedge t_2$. Let $g\in\mG_\al\cap\mG_\be$ and $b_{1}\in[N-3]$, and let $g_1\coloneqq f_{\al,1}(g,b_1)$. Let $u_{1,1}$ denote the $b_1$-th smallest half-edge among the $N-3$ half-edges other than $s_1\wedge t_1,s_2,t_2$ and let $u_{1,2}$ be $u_{1,1}$'s partner in $g$. If $b_1\notin R_1$, then by the definition of $R_1$, $\{u_{1,1},u_{1,2} \}\cap\{s_2,t_2,s_3,t_3 \}=\varnothing$, thus
	\begin{align*}
		g_1=f_{\al,1}(g,b_1)=\qty(g\setminus\qty{ \{s_{1},t_{1}\},\{u_{1,1},u_{1,2}\} })\uplus\qty{ \{s_{1}\wedge t_{1},u_{1,1} \},\{s_{1}\vee t_{1},u_{1,2}\} }\supset\be.
	\end{align*}
	Hence by the definition of $r_2$, $f_\al(g,b_1,r_2)=f_{\al,2}(g_1,r_2)=g_1\supset\be$. Conversely, if $b_1\in R_1$, then $u_{1,1}u_{1,2}=s_3t_3$, $g_1=f_{\al,1}(g,b_1)\not\supset\be$, and
	\begin{align*}
		f_\al(g,b_1,b_2)=f_{\al,2}(g_1,b_2)=\qty(g_1\setminus\qty{ \{s_{2},t_{2}\},\{u_{2,1},u_{2,2}\} })\uplus\qty{ \{s_{2}\wedge t_{2},u_{2,1} \},\{s_{2}\vee t_{2},u_{2,2}\} }\not\supset\be
	\end{align*}
	for any $b_2\in[N-1]$, where $u_{2,1}$ denotes the $b_2$-th smallest half-edge among the $N-1$ half-edges other than $s_2\wedge t_2$ and $u_{2,2}$ denotes $u_{2,1}$'s partner in $g_1$. Similarly, if $b_2\in[N-1]\setminus\{r_2\}$, then for any $b_1\in[N-3]$, $f_\al(g,b_1,b_2)=f_{\al,2}(g_1,b_2)\not\supset s_2t_2$, hence $f_\al(g,b_1,b_2)\not\supset\be$. Therefore, for $g\in\mG_\al\cap\mG_\be$, $f_\al(g,b_1,b_2)\supset\be$ if and only if $b_1\in[N-3]\setminus R_1$ and $b_2=r_2$. From this observation, and using the independence, we have
	\begin{align*}
		\p\qty(I_\al=I_\be=1,J_{\be\al}=0)=&\p\qty(\{I_\al=I_\be=1\}\cap\qty(\{B_{\al,1}\in R_1\}\cup \{B_{\al,2}\neq r_2\}))\\
		=&\p\qty(I_\al=I_\be=1)\qty(1-\p\qty(\{B_{\al,1}\notin R_1\}\cap\{B_{\al,2}=r_2\}))\\
		=&\frac{1}{((N-1))_3}\qty(1-\frac{N-5}{N-3}\frac{1}{N-1}).
	\end{align*}
	
	\medskip
	Next consider the case of $s_2\wedge t_2<s_1\wedge t_1$. Let $r_1$ denote the rank of $s_2\vee t_2$ among the $N-3$ half-edges other than $s_2\wedge t_2,s_1,t_1$, and let $R_2$ denote the set of the ranks of $s_2,t_2,s_3,t_3$ among the $N-1$ half-edges other than $s_1\wedge t_1$. Let $g\in\mG_\al\cap\mG_\be$, $b_1\in[N-3]$ and $b_{2}\in[N-1]$, and let $g_1\coloneqq f_{\al,1}(g,b_1)$. Let $u_{2,1}$ denote the $b_2$-th smallest half-edge among the $N-1$ half-edges other than $s_1\wedge t_1$ and let $u_{2,2}$ be $u_{2,1}$'s partner in $g_1$. If $b_1=r_1$ and $b_2\notin R_2$, then by the definition of $r_1$, $g_1=f_{\al,1}(g,r_1)=g\supset\be$, and by the definition of $R_2$, $\{u_{2,1},u_{2,2} \}\cap\{s_2,t_2,s_3,t_3 \}=\varnothing$. Thus
	\begin{align*}
		f_\al(g,r_1,b_2)=f_{\al,2}(g_1,b_2)=\qty(g_1\setminus\qty{ \{s_{1},t_{1}\},\{u_{2,1},u_{2,2}\} })\uplus\qty{ \{s_{1}\wedge t_{1},u_{2,1} \},\{s_{1}\vee t_{1},u_{2,2}\} }\supset\be.
	\end{align*}
	Conversely, if $b_1\in[N-3]\setminus\{r_1\}$, then $g_1=f_{\al,1}(g,b_1)\not\supset s_2t_2$, hence $f_\al(g,b_1,b_2)=f_{\al,2}(g_1,b_2)\not\supset s_2t_2$ and $f_\al(g,b_1,b_2)\not\supset\be$ for any $b_2\in[N-1]$. Similarly, if $b_2\in R_2$, then $u_{2,1}$ will be one of $s_2,t_2,s_3,t_3$ and paired to $s_1\wedge t_1$ in $f_\al(g,b_1,b_2)=f_{\al,2}(g_1,b_2)$. It means that $s_2t_2$ or $s_3t_3$ will not be formed in $f_\al(g,b_1,b_2)$, implying that $f_\al(g,b_1,b_2)\not\supset \be$ for any $b_1\in[N-3]$. Therefore, for $g\in\mG_\al\cap\mG_\be$, $f_\al(g,b_1,b_2)\supset\be$ if and only if $b_1=r_1$ and $b_2\in[N-1]\setminus R_2$. From this observation, and using the independence, we have
	\begin{align*}
		\p\qty(I_\al=I_\be=1,J_{\be\al}=0)=&\p\qty(\{I_\al=I_\be=1\}\cap\qty(\{B_{\al,1}\neq r_1\}\cup \{B_{\al,2}\in R_2\}))\\
		=&\p\qty(I_\al=I_\be=1)\qty(1-\p\qty(\{B_{\al,1}= r_1\}\cap\{B_{\al,2}\notin R_2\}))\\
		=&\frac{1}{((N-1))_3}\qty(1-\frac{1}{N-3}\frac{N-5}{N-1}).
	\end{align*}
	
	\medskip
	In either case,
	\begin{align*}
		\p\qty(I_\al=I_\be=1,J_{\be\al}=0)=\frac{1}{((N-1))_3}\qty(1-\frac{N-5}{((N-1))_2}),
	\end{align*}
	and this completes the proof. \qed

\end{document}